\def\A{\mathbb A}
\def\R{{\mathbb R}}
\def\N{{\mathbb N}}
\def\EE{{\mathcal E}}
\def\BB{{\mathcal B}}
\def\MM{{\mathcal M}}
\def\NN{{\mathcal N}}
\def\OO{{\mathcal O}}
\def\PP{{\mathcal P}}
\def\RR{{\mathcal R}}
\def\SS{{\mathcal S}}
\def\TT{{\mathcal T}}
\def\JJ{{\mathcal J}}
\def\XX{{\mathcal X}}
\def\YY{{\mathcal Y}}
\def\uint{u^{{\rm int}}}
\def\vint{v^{{\rm int}}}
\def\bw{{\boldsymbol{w}}}
\def\bB{{\boldsymbol{B}}}
\def\norm#1#2{\|#1\|_{#2}}
\def\set#1#2{\big\{#1\,:\,#2\big\}}
\def\eps{\varepsilon}
\def\v{\mathbf{v}}
\newcommand{\normHmeh}[3][]{#1\|#2#1\|_{H^{-1/2}(#3)}}
\newcommand{\normLtwo}[3][]{#1\|#2#1\|_{L^2(#3)}}
\newcommand{\normHeh}[3][]{#1\|#2#1\|_{H^{1/2}(#3)}}
\def\normL2#1#2{\|#1\|_{L^2(#2)}}
\newcommand{\dual}[3][]{#1\langle#2\,,\,#3#1\rangle}
\newcounter{constantsnumber}
\def\namec#1#2{%
 \ifthenelse{\equal{#1}{rel}}{C_{\rm rel}}{%
  \ifthenelse{\equal{#1}{mesh}}{C_{\rm mesh}}{%
  \ifthenelse{\equal{#1}{sz}}{C_{\rm sz}}{%
  \ifthenelse{\equal{#1}{dislocrel}}{C_{\rm dlr}}{%
  \ifthenelse{\equal{#1}{eff}}{C_{\rm eff}}{%
  \ifthenelse{\equal{#1}{main}}{C_{\rm V}}{%
  \ifthenelse{\equal{#1}{opt}}{C_{\rm opt}}{%
  \ifthenelse{\equal{#1}{normequiv}}{C_{\rm norm}}{%
  \ifthenelse{\equal{#1}{reliable}}{C_{\rm rel}}{%
  \ifthenelse{\equal{#1}{efficient}}{C_{\rm eff}}{%
  \ifthenelse{\equal{#1}{dlr}}{C_{\rm dlr}}{%
  \ifthenelse{\equal{#1}{stable}}{C_{\rm stab}}{%
  \ifthenelse{\equal{#1}{reduction}}{C_{\rm red}}{%
   \ifthenelse{\equal{#1}{unibound}}{C_{\rm hot}}{%
    \ifthenelse{\equal{#1}{hotConst}}{C_{\rm hot}}{%
   \ifthenelse{\equal{#1}{inverseK}}{C_{\rm K}}{%
  \ifthenelse{\equal{#1}{refined}}{C_{\rm ref}}{%
  \ifthenelse{\equal{#1}{estconv}}{C_{\rm est}}{%
  \ifthenelse{\equal{#1}{optimal}}{C_{\rm opt}}{%
  \ifthenelse{\equal{#1}{qo}}{C_{\rm qo}}{%
  \ifthenelse{\equal{#1}{mon}}{C_{\rm mon}}{%
  \ifthenelse{\equal{#1}{cea}}{C_{\mbox{\scriptsize C\'ea}}}{%
  \ifthenelse{\equal{#2}{newcounter}}{\refstepcounter{constantsnumber}\label{const#1}}{}C_{\ref{const#1}}}%
}}}}}}}}}}}}}}}}}}}}}}
\newcounter{contractionnumber}
\def\nameq#1#2{%
  \ifthenelse{\equal{#1}{reduction}}{q_{\rm red}}{%
  \ifthenelse{\equal{#1}{estconv}}{q_{\rm est}}{%
  \ifthenelse{\equal{#1}{cea}}{q_{\mbox{\scriptsize C\'ea}}}{%
  \ifthenelse{\equal{#2}{newcounter}}{\refstepcounter{contractionnumber}\label{contraction#1}}{}q_{\ref{contraction#1}}}%
}}}
\def\namer#1#2{%
  \ifthenelse{\equal{#1}{reduction}}{\rho_{\rm red}}{%
  \ifthenelse{\equal{#1}{estconv}}{\rho_{\rm est}}{%
  \ifthenelse{\equal{#1}{cea}}{\rho_{\mbox{\scriptsize C\'ea}}}{%
  \ifthenelse{\equal{#1}{qo}}{\rho_{\mbox{\scriptsize qo}}}{%
  \ifthenelse{\equal{#2}{newcounter}}{\refstepcounter{contractionnumber}\label{contraction#1}}{}\rho_{\ref{contraction#1}}}%
}}}}
\def\dist{{\rm dist}}
\newtheorem{theorem}{Theorem}
\newtheorem{lemma}[theorem]{Lemma}
\newtheorem{algorithm}[theorem]{Algorithm}
\newtheorem{definition}[theorem]{Definition}
\newenvironment{remark}{\medskip\noindent\textbf{Remark.}\ \it}{\qed\smallskip}
\def\T{\mathbb T}
\def\v{\boldsymbol{v}}
\def\w{\boldsymbol{w}}
\def\B{\boldsymbol{B}}
\def\A{\boldsymbol{A}}
\numberwithin{equation}{section}
\numberwithin{theorem}{section}
\begin{document}
\title{Optimal adaptivity for\\non-symmetric FEM/BEM coupling}
\author{Michael Feischl}
\address{School of Mathematics and Statistics,
         The University of New South Wales,
         Sydney 2052, Australia}
\email{m.feischl@unsw.edu.au}
\thanks{Supported by the Australian Research Council (ARC) under
grant number DE170100222 and by the Austrian Research Fund (FWF) under grant number P27005.
}
\begin{abstract}We develop a framework which allows us to prove the essential general quasi-orthogonality for the non-symmetric Johnson-N\'ed\'elec finite element/boundary element coupling.
	General quasi-orthogonality was first proposed in~\cite{axioms} as a necessary ingredient of optimality proofs and is the major difficulty on the way to prove
	rate optimal convergence of adaptive algorithms for many strongly non-symmetric problems. The proof exploits a new connection between the general quasi-orthogonality and
	$LU$-factorization of infinite matrices. 
	We then derive that a standard adaptive algorithm for the Johnson-N\'ed\'elec coupling converges with optimal rates.
	The developed techniques are fairly general and can most likely be applied to other problems like Stokes equation.
	\end{abstract}
\maketitle 

\section{Introduction}
The theory of rate optimal adaptive algorithms for finite element methods originated in the seminal paper~\cite{stevenson07} by Stevenson and was further improved in~\cite{ckns}
by Cascon, Kreuzer, Nochetto, and Siebert.
These papers prove essentially, that a standard adaptive algorithm of the form
\begin{align*}
 \fbox{Solve}\longrightarrow\fbox{Estimate}\longrightarrow\fbox{Mark}\longrightarrow\fbox{Refine}
\end{align*}
generates asymptotically optimal meshes for the approximation of the solution of a Poisson problem.
The new ideas sparked a multitude of papers applying and extending the techniques to different problems, see e.g.,~\cite{ks,cn} for conforming methods,~\cite{rabus10,BeMao10,bms09,cpr13,mzs10}  
for nonconforming methods,~\cite{LCMHJX,CR2012,HuangXu} for mixed formulations, and~\cite{fkmp,gantumur,affkp,ffkmp:part1,ffkmp:part2} for boundary element methods (the list is not exhausted, see also~\cite{axioms} and
the references therein).
All the mentioned results, however, focus on symmetric problems in the sense that the underlying equation induces a symmetric operator. 
The first proof of rate optimality for a non-symmetric problem which does not rely on additional assumptions is given in~\cite{nonsymm} for a general second order elliptic operator with non-vanishing diffusion
coefficient of the form
\begin{align*}
-{\rm div}(A\nabla u) + b\cdot \nabla u + cu=f.
\end{align*}
This approach, however, relies heavily on the fact that the non-symmetric part of the operator $ (b\cdot \nabla u + cu)$ is only a compact perturbation (one differentiation instead of two for the diffusion part). The present work aims to shed some light on the completely unexplored 
world of rate optimality for strongly non-symmetric problems (meaning that the non-symmetric part of the operator is not substantially \emph{smaller} in any sense). Although the work is focused on the particular model problem of Johnson-N\'ed\'elec FEM/BEM coupling, we believe that the developed techniques
will be very useful for many other non-symmetric problems of the form $Au=F$ for a non-symmetric operator $A$ and a right-hand side $F$. 

For the particular case of FEM/BEM coupling, only convergence of
the adaptive algorithm is known. This was first proven rigorously in~\cite{fembem} for the standard residual based error estimator which was first derived in~\cite{afp}.

Using the abstract framework for rate optimality developed in~\cite{axioms}, we observe that the major obstacle is the \emph{general quasi-orthogonality} property introduced in~\cite{axioms}.
The property is a generalization of the usual orthogonality property
\begin{align}\label{eq:orth}
 \norm{u-u_{\ell+1}}{}^2+\norm{u_{\ell+1}-u_\ell}{}^2 =\norm{u-u_\ell}{}^2,
\end{align}
for increasingly accurate nested Galerkin approximations $u_\ell,u_{\ell+1}$ of the exact solution $u$. The orthogonality~\eqref{eq:orth} follows immediately from the well-known Galerkin orthogonality, if the underlying problem
induces a symmetric bilinear form and thus a Hilbert (energy-) norm $\norm{\cdot}{}$. If the problem is non-symmetric, however,~\eqref{eq:orth} fails to hold (even in approximate forms usually called quasi-orthogonality)
and this breaks all existing optimality proofs. In~\cite{axioms}, we prove that \emph{general quasi-orthogonality} is the weakest possible orthogonality condition in the sense that it is necessary to prove optimality.
While rigorously stated in Section~\ref{sec:axioms} below, \emph{general quasi-orthogonality} roughly implies that the approximation error has a decomposition of the form
\begin{align*}
 \norm{u-u_\ell}{}^2\simeq \norm{u_{\ell+1}-u_\ell}{}^2+\norm{u_{\ell+1}-u_{\ell+2}}{}^2+\ldots\quad\text{for all }\ell\in\N
\end{align*}
for nested Galerkin approximations $u_\ell,u_{\ell+1},u_{\ell+2},\ldots$ of $u$. While this property seems hard to prove by itself, we discover an interesting connection to the $LU$-factorization of infinite matrices in this work. 

This connection can be formulated as follows: Assume that there exists a Riesz basis $B$ of the underlying Hilbert space such that the problem $Au=F$ can be equivalently stated as a matrix equation
\begin{align*}
 Mx=G\quad\text{with}\quad M\in \R^{\N\times\N},\,G\in\R^\N,
\end{align*}
where $M_{vw}=\dual{Aw}{v}$ for all $v,w\in B$, $G_v:=\dual{F}{v}$, and $u=\sum_{v\in B}x_v v$. If the matrix $M$ has an $LU$-factorization $M=LU$ for lower/upper-triangular infinite matrices $L,U\in\R^{\N\times\N}$ such that
$L,U,L^{-1},U^{-1}\colon \ell_2\to\ell_2$ are bounded operators, then \emph{general quasi-orthogonality} holds. 

We exploit the stated connection by constructing a suitable Riesz basis for the particular FEM/BEM coupling and then proving that a bounded $LU$ factorization exists. The construction of the Riesz basis 
is quite challenging, since for the FEM/BEM coupling, we need compatible basis functions in $H^1$ and $H^{-1/2}$. This requires an extension
of the well-known Scott-Zhang projection to make operators on different level commute with each other. To prove that the resulting matrix $M$ has the desired $LU$-factorization,
we rely on techniques from wavelet methods, which prove that $M$ is exponentially decaying (in the sense of Jaffard class matrices). While being probably an artifact of the proof, we are forced
to introduce a grading condition on the adaptively generated meshes, as was also done in~\cite{l2opt}.

This opens the door to prove rate optimality of the adaptive algorithm. It turns out that all the other
requirements for rate optimality (formulated in~\cite{axioms}) can be shown by combining arguments from FEM and BEM (for both methods, rate optimality has been proved already).

The strategy of the proof is very general, but the details a tailored to the present FEM/BEM coupling problem. We are confident that similar techniques can be used to prove optimality of adaptive algorithms for the
Stokes equation and many other related non-symmetric problems. The author would like to note that, to the best of his knowledge, the theory of $LU$-factorization of infinite matrices currently cannot
answer very interesting (and for this work very useful) questions like: \emph{Which positive definite matrices have a bounded factorization?} This is the reason why this paper is quite technical
despite the simple underlying idea. It is possible that advances in this direction could improve (e.g., by removing the grading condition) and simplify the present result.

\medskip

As an interesting side result, the Riesz basis constructed in Section~\ref{section:basis} below can, in principle, be computed and used for actual implementations. This brings the benefit of
uniformly bounded condition numbers of the involved matrices without preconditioning.

\subsection{Outline of the paper}
The main result is given in Theorem~\ref{thm:opt} in Section~\ref{section:apps}. 
In Section~\ref{section:exp}, we introduce
the class of Jaffard matrices, and show that they admit a bounded $LU$-factorization under certain conditions.
In Section~\ref{section:lu}, we show that general quasi-orthogonality is equivalent to the fact that a certain (infinite)
system matrix of the problem at hand has a bounded $LU$-factorization. This observation is the key element of the paper. 
The remainder of the work is devoted to building a system matrix for the Johnson-N\'ed\'elec coupling, which fits into this framework
of Jaffard class matrices. Therefore, we construct a local wavelet basis in Section~\ref{section:basis}. To that end,
we use a new quasi-interpolation operator from Section~\ref{section:szb} which is based on the classical Scott-Zhang projection.
In Section~\ref{section:metric} we construct certain metrics which characterize the exponential decay of the system matrix.
Finally, Section~\ref{section:disc} constructs the system matrix.

\subsection{Notation}
We use $\#A$ to denote the cardinality of a set $A$. Moreover, $I$ denotes the identity matrix $I\in\R^{n\times n}$, $I_{ij}=0$ for all $i\neq j$ and $I_{ij}=1$ for all $i=j$. The dimension $n\in\N$
is only specified when not clear from the context. The standard space of squared summable sequences is denoted by $\ell_2=\ell_2(\N)$. We denote the $\ell_2$-norm by $\norm{\cdot}{\ell_2}$, whereas the operator norm for operators on $\ell_2$ is denoted by $\norm{\cdot}{2}$.
Operators on $\ell_2$ are often identified with infinite matrices $M\in\R^{\N\times\N}$ and we use the norms $\norm{M}{1}:=\sup_{i\in\N}\sum_{j\in\N}|M_{ij}|$ and 
$\norm{M}{\infty}:=\sup_{j\in\N}\sum_{i\in\N}|M_{ij}|$.
\section{General assumptions}
\subsection{Preliminaries}
In the following, $\Omega\subseteq \R^2$ is a polygonal domain with boundary $\Gamma:=\partial\Omega$.
Given a Lipschitz domain $\omega\subseteq \R^2$, we denote by $H^s(\omega)$ the usual Sobolev spaces for $s\geq 0$. For non-integer values of $s$, we use real interpolation to define $H^s(\omega)$. 
Their dual spaces $\widetilde H^{-s}(\omega)$ are defined by extending the $L^2$-scalar product.
Given $\gamma\subseteq \partial\omega$, we define $H^s(\gamma)$ as the trace space of $H^{s+1/2}(\omega)$ for all $s>0$. Again, the dual space $\widetilde H^{-s}(\gamma)$ is defined via the extended 
$L^2$-scalar product.

\begin{remark}
There is no reason for the author to believe that the methods developed in this work are restricted to the 2D case. However, the technical difficulties are already substantial for $d=2$ and thus we
decided to restrict to this case for clarity of presentation. 
\end{remark}

\subsection{Variational form}
The main goal of this paper is to prove optimality of FEM/BEM coupling. However, most of the methods work in a much broader context. Therefore, we start with an abstract variational problem and
go back to the concrete application in Section~\ref{section:apps}. To that end, suppose $\XX$ is a separable Hilbert space. Moreover, suppose that $(\XX_\ell)_{\ell\in\N}$ is a nested sequence of subspaces, i.e.,
\begin{align*}
 \XX_\ell\subseteq \XX_k\subseteq \XX\quad\text{for all }\ell\leq k\in \N.
\end{align*}
Assume that $a(\cdot,\cdot)\colon \XX\times \XX\to \R $ is a bounded bilinear form, which is additionally elliptic, i.e., 
\begin{align}\label{eq:aelliptic}
\inf_{x\in\XX}\frac{a(x,x)}{\norm{x}{\XX}^2}=c_0>0.
\end{align}
For $f\in \XX^*$, define $u\in\XX$ and $u_\ell\in \XX_\ell$ for all $\ell\in\N$ as the unique solutions of
\begin{align}\label{eq:solutions}
a(u,v)=f(v)\quad\text{for all }v\in\XX\quad{and}\quad a(u_\ell,v)=f(v)\quad\text{for all } v\in\XX_\ell.
\end{align}
We further assume that $\XX$ is a space of functions on the domain $\Omega$ and that the subspaces $\XX_\ell$ correspond to some triangulations $\TT_\ell$ discussed in detail below.

\subsection{Mesh refinement}\label{section:mesh}
Let $\TT_0$ be a triangulation of $\Omega$ into compact triangles which resolves the corners of $\Gamma$. Given two triangulations $\TT,\TT^\prime$, we write $\TT^\prime={\rm refine}(\TT,\MM)$ for some $\MM\subseteq \TT$ if 
$\TT^\prime$ is generated from $\TT$ by refinement of all $T\in\MM$ via newest vertex bisection. 
We write $\TT^\prime\in{\rm refine}(\TT)$
if $\TT^\prime$ is generated from $\TT$ by a finite number of iterated newest-vertex-bisection  refinements and we denote the set of all possible refinements by $\T:={\rm refine}(\TT_0)$.
Given  $\omega\subseteq\Omega$, we call $\TT^\prime|_\omega$ a local refinement of $\TT$,
if there exists $\TT^{\prime\prime}\in {\rm refine}(\TT)$ such that $\TT^\prime|_\omega=\TT^{\prime\prime}|_\omega$.
Given $T\in\TT$ for some $\TT\in\T$, ${\rm level}(T)$ denotes the number of bisections necessary to generate $T$ from a parent element in $\TT_0$.

We define $\NN(\TT)$ as the set of nodes of $\TT$ and $\EE(\TT)$ as the set of edges of $\TT$.

We define $h_\TT\in\PP^0(\TT)$ as the mesh-size function by $h_\TT|_T:={\rm diam}(T)$ for all $T\in\TT$.

\medskip

Given $T\in\TT\in\T$, we define the patch
\begin{align*}
 \omega(T,\TT):=\set{T^\prime\in\TT}{T\cap T^\prime\neq \emptyset}.
\end{align*}
Given a subset $\Omega^\prime\subseteq \Omega$, we define the patch
\begin{align*}
 \omega(\Omega^\prime,\TT):=\set{T\in\TT}{\big(\bigcup\omega(T,\TT)\big)^\circ\cap \Omega^\prime\neq \emptyset},
\end{align*}
where $(\cdot)^\circ$ denotes the interior of a set. Note that in case of $\Omega^\prime=T$, the two definitions coincide.
The extended patches $\omega^k(\Omega^\prime,\TT)$ are defined iteratively by
\begin{align*}
 \omega^1(\Omega^\prime,\TT):=\omega(\Omega^\prime,\TT),\quad\text{and}\quad  \omega^k(\Omega^\prime,\TT):=\omega(\bigcup\omega^{k-1}(\Omega^\prime,\TT),\TT).
\end{align*}

\begin{definition}\label{def:widehat}
	We consider an auxiliary sequence $(\widehat\TT_\ell)_{\ell\in\N}$ of uniform refinements such that $\widehat\TT_0=\TT_0$ and
	\begin{align*}
		\widehat\TT_{\ell+1}={\rm refine}^k(\widehat\TT_\ell,\widehat\TT_\ell),
	\end{align*}
	which means that each element of $\widehat\TT_\ell$ is bisected $k$-times to obtain $\widehat\TT_{\ell+1}$.
	There exist constants $C_{\rm base},C_{\rm mesh}\geq 1$ which depend on $k$ and  on $\TT_0$ such that
	\begin{align*}
	C^{-1}_{\rm base}C_{\rm mesh}^{-\ell}\leq	{\rm diam}(T)\leq C_{\rm base}C_{\rm mesh}^{-\ell}
	\end{align*}
	for all $T\in\widehat\TT_\ell$ and all $\ell\in\N$.
	We choose $k=k_{\rm mesh}$ sufficiently large such that $C_{\rm mesh}\geq (C_{\rm sz}+1)^4$, where $C_{\rm sz}$ is defined in Lemma~\ref{lem:gensz} below.
	
\end{definition}

\subsection{Adaptive algorithm}

Given a triangulation $\TT\in\T$, we assume that we can compute an error estimator $\eta(\TT)=\sqrt{\sum_{T\in\TT}\eta_T(\TT)^2}$.
In the application to the FEM-BEM coupling below, we have to restrict to adaptive triangulations with mild grading in the sense that there exists $D_{\rm grad}\in\N$ such that
\begin{align}\label{eq:graded}
 |{\rm level}(T)-{\rm level}(T^\prime)|\leq 1\quad\text{for all }T^\prime \in \omega^{D_{\rm grad}}(T,\TT).
\end{align}
This condition is necessary for the present proof and also appears in~\cite{l2opt} to prove optimal convergence in the $L^2$-norm. 
By $\T_{\rm grad}\subseteq \T$, we denote all triangulations which satisfy~\eqref{eq:graded} for a given $D_{\rm grad}\in\N$. Lemma~\ref{lem:approxclass} below shows that the restriction
does not alter the optimal convergence rate. Numerical experiments (see, e.g.,~\cite{fembem}) suggest that the restriction is not even necessary for optimal convergence rate, and thus
might just be an artifact of the proof. In the following, we assume that $D_{\rm grad}$ is sufficiently large to satisfy all the conditions in the proofs below.

We assume that the sequence $\TT_\ell$ is generated by an adaptive algorithm of the form
\begin{algorithm}\label{algorithm}
Input: $\ell=0$, $\TT_0$, $D_{\rm grad}\in\N$, $0< \theta\leq 1$, $f\in \XX^\star$.\\
For $\ell=0,1,\ldots$ do:
 \begin{enumerate}
  \item Compute $u_\ell\in\XX_\ell$.
  \item Compute error estimator $\eta_T(\TT_\ell)$ for all $T\in\TT_\ell$.
  \item Mark set of minimal cardinality $\MM_\ell\subseteq \TT_\ell$ such that
  \begin{align}\label{eq:doerfler}
   \sum_{T\in\MM_\ell}\eta_T(\TT_\ell)\geq \theta \sum_{T\in\TT_\ell}\eta_T(\TT_\ell).
  \end{align}
  \item Refine at least the elements $\MM_\ell$ of $\TT_\ell$ to obtain $\TT_{\ell+1}$.
  \item Refine additional elements to ensure that $\TT_{\ell+1}$ satisfies~\eqref{eq:graded}.
 \end{enumerate}
Output: sequence of meshes $\TT_\ell$ and corresponding solutions $u_\ell$.
\end{algorithm}

\subsection{Rate optimality}
We aim to analyze the best possible algebraic convergence rate which can be obtained by the adaptive algorithm.
This is mathematically characterized as follows: For the exact solution $u\in \XX$, we define an approximation class $\A_s$ by
\begin{align}\label{def:approxclass}
 u \in \A_s
 \quad\overset{\rm def.}{\Longleftrightarrow}\quad
\norm{u}{\A_s}:= \sup_{N\in\N}\min_{\TT\in\T\atop \#\TT-\#\TT_0\leq N}N^s\eta(\TT) < \infty.
\end{align}
 By definition, a convergence rate $\eta(\TT) = \OO(N^{-s})$ is theoretically possible if the optimal meshes are chosen.
 In view of mildly graded triangulations, we define
 \begin{align*}
   u \in \A_s^{\rm grad}
 \quad\overset{\rm def.}{\Longleftrightarrow}\quad
\norm{u}{\A_s^{\rm grad}}:= \sup_{N\in\N}\min_{\TT\in\T_{\rm grad} \atop \#\TT-\#\TT_0\leq N}N^s\eta(\TT) < \infty.
 \end{align*}
In Lemma~\eqref{lem:approxclass} below, we show that in many situations $\A_s^{\rm grad}=\A_s$.
In the spirit of~\cite{axioms}, rate optimality of the adaptive algorithm means that there exists a constant $C_{\rm opt}>0$ such that
\begin{align*}
 C_{\rm opt}^{-1}\norm{u}{\A_s}\leq\sup_{\ell\in\N_0}\frac{\eta(\TT_\ell)}{(\#\TT_\ell-\#\TT_0+1)^{-s}}\leq C_{\rm opt}\norm{u}{\A_s},
\end{align*}
for all $s>0$ with $\norm{u}{\A_s}<\infty$.

\subsection{The Axioms}\label{sec:axioms}
To formulate the axioms below, we define for a given triangulation $\TT\in\T$ the corresponding space $\XX_\TT$ as well as the discrete solution $u_\TT\in\XX_\TT$ of 
\begin{align*}
 a(u_\TT,v)=\dual{f}{v}\quad\text{for all }v\in\XX_\TT.
 \end{align*}
As proved in~\cite{axioms}, we need to check the axioms~(A1)--(A4) to ensure rate optimality for a given adaptive algorithm: There exist constant $C_{\rm red}$, $C_{\rm stab}$, $C_{\rm qo}$, $C_{\rm dlr}$,
$C_{\rm ref}>0$,
and $0\leq q_{\rm red}<1$
such that

\renewcommand{\theenumi}{{\rm A\arabic{enumi}}}%
\newcounter{subterm}%
\begin{enumerate}
\item \label{A:stable}\textbf{Stability on non-refined elements}: For all refinements $\widehat\TT\in\T$ of a triangulation $\TT\in\T$,
for all subsets $\mathcal{S}\subseteq\TT\cap\widehat\TT$ of non-refined elements, it holds that
\begin{align*}
\Big|\Big(\sum_{T\in\mathcal{S}}\eta_T(\widehat\TT)^2\Big)^{1/2}-\Big(\sum_{T\in\mathcal{S}}\eta_T(\TT)^2\Big)^{1/2}\Big|\leq C_{\rm stab}\,\norm{u_\TT- u_{\widehat\TT}}{\XX}.
\end{align*}
\item \label{A:reduction}\textbf{Reduction property on refined elements}: Any refinement $\widehat\TT\in\T$ of a triangulation $\TT\in\T$ satisfies
\begin{align*}
\sum_{T\in\widehat\TT\setminus\TT}\eta_T(\widehat\TT)^2\leq q_{\rm red} \sum_{T\in\TT\setminus\widehat\TT}\eta_T(\TT)^2 + C_{\rm red}\norm{u_\TT- u_{\widehat\TT}}{\XX}^2.
\end{align*}
\item\label{A:qosum}\textbf{General quasi-orthogonality}: For one sufficiently small $\eps\leq 0$ the output of Algorithm~\ref{algorithm}
satisfies for all $\ell,N\in\N_0$
\begin{align}\label{eq:qosum2}
 \sum_{k=\ell}^{\ell+N} \Big(\norm{u_{k+1}-u_k}{\XX}^2-C_{\rm qo}\eps\norm{u-u_k}{\XX}^2\Big)\leq C_{\rm qo} \norm{u-u_\ell}{\XX}^2.
\end{align}
\item\label{A:dlr}\textbf{Discrete reliability}:
For all refinements $\widehat\TT\in\T$ of a triangulation $\TT\in\T$,
there exists a subset $\RR(\TT,\widehat\TT)\subseteq\TT$ with $\TT\backslash\widehat\TT \subseteq\RR(\TT,\widehat\TT)$ and
$|\RR(\TT,\widehat\TT)|\le C_{\rm ref}|\TT\backslash\widehat\TT|$ such that
\begin{align*}
 \norm{u_{\widehat\TT}-u_\TT}{\XX}^2
 \leq C_{\rm dlr}^2\sum_{T\in\RR(\TT,\widehat\TT)}\eta_T(\TT)^2.
\end{align*}
\end{enumerate}

In some situations, it might even be possible to prove the stronger form of (A3), namely
\begin{align}\label{eq:qosum}
C_{\rm qo}^{-1} \norm{u-u_\ell}{\XX}^2\leq \sum_{k=\ell}^\infty \norm{u_{k+1}-u_k}{\XX}^2\leq C_{\rm qo} \norm{u-u_\ell}{\XX}^2\quad\text{\rm for all }\ell\in\N.
\end{align}

The main obstacle is the general quasi-orthogonality (A3) and its proof for the particular FEM/BEM coupling below takes up the majority of this work.
The other axioms follow from the combination of techniques for FEM and BEM. 
\begin{lemma}\label{lem:approxclass}
Under (A1)--(A2) and (A4), there holds $\A_s=\A_s^{\rm grad}$.
\end{lemma}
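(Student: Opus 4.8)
Only the inequality $\norm{u}{\A_s^{\rm grad}}\le C\,\norm{u}{\A_s}$ needs proof, since $\T_{\rm grad}\subseteq\T$ makes the minimum in \eqref{def:approxclass} run over a larger set and hence gives $\norm{u}{\A_s}\le\norm{u}{\A_s^{\rm grad}}$ for free, i.e. $\A_s^{\rm grad}\subseteq\A_s$; together with the inclusion $\A_s\subseteq\A_s^{\rm grad}$ to be shown this yields $\A_s=\A_s^{\rm grad}$. All minima in \eqref{def:approxclass} are attained, because for fixed $N$ there are only finitely many newest-vertex-bisection refinements of $\TT_0$ with at most $N$ additional elements. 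The plan rests on three ingredients: a grading-closure estimate, stability of the estimator under this closure, and a reindexing of the approximation classes; the first is where I expect the real work to be.

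\emph{(i) Grading closure.} I would first show that there is a constant $C_{\rm grad}\ge 1$, depending only on $\TT_0$ and $D_{\rm grad}$, such that every $\TT\in\T$ admits a refinement $\TT^+\in\T_{\rm grad}$ with $\TT^+\in{\rm refine}(\TT)$ and
\begin{align*}
\#\TT^+-\#\TT_0\le C_{\rm grad}\,(\#\TT-\#\TT_0).
\end{align*}
The mesh $\TT^+$ is constructed exactly as in step~(5) of Algorithm~\ref{algorithm}: starting from $\TT$, iteratively bisect the coarser of any two elements violating \eqref{eq:graded}, and iterate until \eqref{eq:graded} holds. Termination, and the fact that the outcome again lies in $\T_{\rm grad}$, follow from the closure theory for newest vertex bisection (NVB completion terminates; the $D_{\rm grad}$-layer grading can be enforced without destroying conformity). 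The cardinality bound is a grading variant of the Binev--Dahmen--DeVore/Stevenson mesh-closure estimate: the extra elements produced when enforcing the $D_{\rm grad}$-layer condition are, level by level, dominated by a geometric series in the number of elements already refined, so the total overhead is a fixed multiple of $\#\TT-\#\TT_0$. This is precisely the combinatorial argument carried out for the same grading condition in~\cite{l2opt}, which I would invoke. I expect this step to be the main obstacle, since it requires careful bookkeeping of how refinement propagates across several mesh levels; it is, however, pure mesh theory, independent of the bilinear form $a(\cdot,\cdot)$ and of the estimator.

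\emph{(ii) Estimator stability.} Next I would prove $\eta(\TT^+)\le C_\eta\,\eta(\TT)$. Since $\TT^+\in{\rm refine}(\TT)$ implies $\XX_\TT\subseteq\XX_{\TT^+}$, the Galerkin solutions $u_\TT,u_{\TT^+}$ are defined, and (A4) applied to the pair $(\TT,\TT^+)$ gives $\norm{u_\TT-u_{\TT^+}}{\XX}^2\le C_{\rm dlr}^2\sum_{T\in\RR(\TT,\TT^+)}\eta_T(\TT)^2\le C_{\rm dlr}^2\,\eta(\TT)^2$, using $\RR(\TT,\TT^+)\subseteq\TT$. Splitting $\eta(\TT^+)^2$ over $\TT\cap\TT^+$ and $\TT^+\setminus\TT$, applying (A1) with $\mathcal{S}=\TT\cap\TT^+$ to the first part and (A2) with $(\TT,\TT^+)$ to the second, and inserting the previous bound,
\begin{align*}
\eta(\TT^+)^2\le C\big(\eta(\TT)^2+\norm{u_\TT-u_{\TT^+}}{\XX}^2\big)\le C\,(1+C_{\rm dlr}^2)\,\eta(\TT)^2=:C_\eta^2\,\eta(\TT)^2.
\end{align*}
It is essential here to use the \emph{discrete} reliability (A4) rather than plain reliability, so that $\eta(\TT^+)$ does not reappear on the right-hand side.

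\emph{(iii) Reindexing.} Finally, fix $N\in\N$ and let $\TT_N\in\T$ attain the minimum in \eqref{def:approxclass}, so that $N^s\eta(\TT_N)\le\norm{u}{\A_s}$; let $\TT_N^+\in\T_{\rm grad}$ be its grading closure from (i), so $\#\TT_N^+-\#\TT_0\le C_{\rm grad}N$ and $\eta(\TT_N^+)\le C_\eta\,\eta(\TT_N)$ by (ii). Then with $M:=\lceil C_{\rm grad}N\rceil$,
\begin{align*}
M^s\min_{\substack{\TT\in\T_{\rm grad}\\ \#\TT-\#\TT_0\le M}}\eta(\TT)\;\le\;M^s\,\eta(\TT_N^+)\;\le\;(C_{\rm grad}N+1)^s\,C_\eta\,\eta(\TT_N)\;\le\;C\,N^s\eta(\TT_N)\;\le\;C\,\norm{u}{\A_s}.
\end{align*}
Since $M\mapsto\min_{\TT\in\T_{\rm grad},\,\#\TT-\#\TT_0\le M}\eta(\TT)$ is non-increasing and $C_{\rm grad}\ge 1$, a standard monotone reindexing upgrades this from the subsequence $M=\lceil C_{\rm grad}N\rceil$, $N\in\N$, to all $M\in\N$ (the finitely many values below $\lceil C_{\rm grad}\rceil$ contribute only the finite constant $\lceil C_{\rm grad}\rceil^s\eta(\TT_0)$), so that $\norm{u}{\A_s^{\rm grad}}\le C\,\norm{u}{\A_s}+C\,\eta(\TT_0)$, which is finite whenever $\norm{u}{\A_s}<\infty$. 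Hence $\A_s\subseteq\A_s^{\rm grad}$, completing the argument.
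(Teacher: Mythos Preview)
Your proof is correct and follows essentially the same route as the paper: the trivial inclusion $\A_s^{\rm grad}\subseteq\A_s$, a grading-closure cardinality bound taken from~\cite{l2opt}, quasi-monotonicity of the estimator, and a reindexing. The only cosmetic difference is that in step~(ii) you spell out the quasi-monotonicity argument directly from (A1), (A2), (A4), whereas the paper simply cites~\cite[Lemma~3.4]{axioms}, which packages exactly that computation.
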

\begin{proof}
The assumption in~\cite[Section~A.3]{l2opt} and~\eqref{eq:graded} are equivalent up to shape regularity.
The definition of $\A_s$ and $\A_s^{\rm grad}$ implies $\A_s^{\rm grad}\subseteq \A_s$.
Let $\TT=\TT^n\in\T$ being generated from $\TT_0$ by iterated refinements $\TT^{j+1}={\rm refine}(\TT^j,\MM^j)$ for $j=0,\ldots,n$.
Since every newest-vertex bisection refinement generates at least two sons, we have $\#\TT\geq \sum_{j=0}^n\#\MM^j$.
 The result~\cite[Theorem~4]{l2opt} shows that by refining all elements in the $\MM^j$ and additionally making sure that~\eqref{eq:graded} holds,
 we find $\TT^{\rm grad}\in\T^{\rm grad}\cap {\rm refine}(\TT)$ with $\#\TT^{\rm grad}\leq C\sum_{j=0}^n\#\MM^j\leq C\#\TT$. The constant $C>0$ depend only on $D_{\rm grad}$ and $\TT_0$.
 From~\cite[Lemma~3.4]{axioms}, we see that (A1)--(A2)\&(A4) imply quasi-monotonicity of $\eta$ in the sense
 \begin{align*}
  \eta(\TT^{\rm grad})\lesssim \eta(\TT).
 \end{align*}
This shows
\begin{align*}
 \min_{\TT\in\T_{\rm grad} \atop \#\TT-\#\TT_0\leq N}N^s\eta(\TT)\lesssim  \min_{\TT\in\T \atop \#\TT-\#\TT_0\leq N/C}N^s\eta(\TT)\leq C^s\norm{u}{\A_s}
\end{align*}
and concludes the proof. 
\end{proof}

\section{General quasi-orthogonality and $LU$-factorization}\label{section:lu0}
In this section, we establish the link between general quasi-orthogonality (A3) and $LU$-factorization of infinite matrices. To that end, we first introduce exponentially decaying matrices.
\subsection{Jaffard class matrices}\label{section:exp}
Jaffard class matrices generalize the notion of matrices which decay exponentially away from the diagonal. 
The generalization allows to replace the distance $|i-j|$ between indices by a general metric $d(i,j)$. This class was introduced and analyzed in~\cite{rescue}.

\begin{definition}[Jaffard class]\label{def:wavelet}
We say that an infinite matrix $M\in\R^{\N\times\N}$ is of Jaffard class, $M\in \JJ(d,\gamma,C)$ for
some metric $d(\cdot,\cdot)\colon \N\times \N\to [0,\infty)$ and some $\gamma>0$ if for all $0<\gamma^\prime<\gamma$ there exists $C({\gamma^\prime})>0$ such that
\begin{align}\label{eq:Mexp}
|M_{ij}|\leq C({\gamma^\prime}) \exp(-\gamma^\prime d(i,j))\quad\text{for all }i,j\in\N.
\end{align}
Moreover, the metric $d(\cdot,\cdot)$ must satisfy for all $\eps>0$ 
\begin{align}\label{eq:Mdist}
	\sup_{i\in\N}\sum_{j\in\N}\exp(-\eps d(i,j))<\infty.
\end{align}
We also write $M\in \JJ$ to state the existence of parameters $d,\gamma,C$ such that $M\in \JJ(d,\gamma,C)$.
\end{definition}

\begin{definition}[banded matrix]\label{def:banded}
We say that an infinite matrix $M\in\R^{\N\times\N}$ is banded with respect to some metric $d(\cdot,\cdot)\colon \N\times \N\to [0,\infty)$ if there exists a bandwidth $b\geq 1$ such that
\begin{align}\label{eq:banded}
d(i,j)> b \quad\implies\quad M_{ij}=0\quad\text{for all }i,j\in\N.
\end{align}
In this case, we write $M\in \BB(d,b)$.
Note that we do not require $d(\cdot,\cdot)$ to satisfy~\eqref{eq:Mdist}.
We also write $M\in \BB$ or $M\in\BB(d)$ to state that the missing parameters exist.
\end{definition}	 

The following technical lemmas prove some straightforward facts about infinite matrices.
\begin{lemma}\label{lem:bandmult}
Let $M^{i,j}\in \BB(d,b_{j})$, $i=1,\ldots,n$, $j=1,\ldots,m$ for some $m,n\in\N$ with respect to some metric $d(\cdot,\cdot)$ and respective bandwidths $b_{j}\in\N$.
Then, there holds 
\begin{align*}
\sum_{i=1}^n \prod_{j=1}^m M^{i,j}\in\BB(d,\sum_{j=1}^m b_j). 
\end{align*}
\end{lemma}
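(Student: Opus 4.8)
The statement is a stability property for the structure $\BB(d,\cdot)$ under sums and finite products. The plan is to handle the product of a fixed family first, and then take the sum. For a single product $\prod_{j=1}^m M^{i,j}$ with $M^{i,j}\in\BB(d,b_j)$, I would argue by induction on $m$, so it suffices to show that $M\in\BB(d,b)$ and $N\in\BB(d,b')$ imply $MN\in\BB(d,b+b')$. Write $(MN)_{ik}=\sum_{\ell\in\N}M_{i\ell}N_{\ell k}$; a summand is nonzero only if $M_{i\ell}\neq 0$ and $N_{\ell k}\neq 0$, hence only if $d(i,\ell)\leq b$ and $d(\ell,k)\leq b'$. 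By the triangle inequality for the metric $d$, this forces $d(i,k)\leq d(i,\ell)+d(\ell,k)\leq b+b'$. Therefore whenever $d(i,k)>b+b'$ the entire sum vanishes, i.e. $(MN)_{ik}=0$, which is exactly $MN\in\BB(d,b+b')$. (One should note the sum defining $(MN)_{ik}$ is in fact finite once we fix $i$ or $k$, but for the banded-support conclusion we only need the pointwise implication, so no convergence subtlety intervenes.)

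Iterating this, $\prod_{j=1}^m M^{i,j}\in\BB(d,\sum_{j=1}^m b_j)$ for each fixed $i\in\{1,\dots,n\}$; note the bandwidth bound $\sum_{j=1}^m b_j$ is the \emph{same} for every $i$, which is the point that makes the next step work. It remains to observe that $\BB(d,b)$ is closed under finite sums with a \emph{common} bandwidth: if $P,Q\in\BB(d,b)$ then $(P+Q)_{ik}=P_{ik}+Q_{ik}=0$ whenever $d(i,k)>b$, so $P+Q\in\BB(d,b)$. Applying this to $P^i:=\prod_{j=1}^m M^{i,j}\in\BB(d,\sum_j b_j)$ for $i=1,\dots,n$ gives $\sum_{i=1}^n P^i\in\BB(d,\sum_{j=1}^m b_j)$, which is the claim.

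There is essentially no hard part here: the only thing being used is the triangle inequality for $d$ and the elementary fact that a sum over an index set is zero as soon as every term is zero. The one place to be slightly careful is bookkeeping of the bandwidths: the induction on products must carry the bandwidth as a sum $\sum_j b_j$, and for the final summation over $i$ one must observe that this sum does \emph{not} depend on $i$, so a single bandwidth governs all $n$ terms. (If the $b_j$ were allowed to depend on $i$ one would instead get the bandwidth $\max_i \sum_j b_{i,j}$, but in the stated hypotheses $b_j$ is independent of $i$, so the clean bound $\sum_{j=1}^m b_j$ holds.) I would write the single-product step as a displayed implication chain through the triangle inequality and then simply remark that sums and the induction step are immediate.
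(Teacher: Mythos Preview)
Your proof is correct and follows essentially the same approach as the paper: closure of $\BB(d,b)$ under sums is immediate, the product of two banded matrices is handled via the triangle inequality for $d$, and induction on the number of factors completes the argument. Your write-up is in fact more careful than the paper's about the bookkeeping (e.g.\ the observation that the bandwidth $\sum_j b_j$ is independent of $i$), but the underlying argument is the same.
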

\begin{proof}
Obviously, $\BB(d,b)$ is closed under summation. 
The definition of the matrix product shows 
\begin{align*}
 (M^{i,1}M^{i,2})_{ij} = \sum_{k=1}^\infty (M^{i,1})_{ik}(M^{i,2})_{kj} = \sum_{k\in\N\atop d(i,k)\leq  b_1,\,d(k,j)\leq b_2} (M^{i,1})_{ik}(M^{i,2})_{kj},
\end{align*}
and hence $(M^{i,1}M^{i,2})_{ij}=0$ if $b_1+b_2<d(i,j)\leq d(i,k)+d(k,j)$. Induction on $j$ proves $\prod_{j=1}^m M^{i,j}\in \BB(d,\sum_{j=1}^m b_j)$.
This concludes the proof.
\end{proof}

\begin{lemma}\label{lem:norm}
	Let $M\in\JJ(d,\gamma,C)$, then $|M|\colon \ell_2\to\ell_2$ is a bounded operator
	(the modulus $|M|$ is understood entry wise).
\end{lemma}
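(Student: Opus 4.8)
The plan is to apply the classical Schur test to the nonnegative matrix $N:=|M|$, $N_{ij}=|M_{ij}|$. First I would fix one single exponent $\gamma^\prime$ with $0<\gamma^\prime<\gamma$ (say $\gamma^\prime=\gamma/2$); then \eqref{eq:Mexp} gives the pointwise bound $N_{ij}\le C(\gamma^\prime)\exp(-\gamma^\prime d(i,j))$ for all $i,j\in\N$, which is all that is needed.

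The two key estimates are uniform bounds on row and column sums. Applying \eqref{eq:Mdist} with $\eps=\gamma^\prime$ directly yields
\[
 \norm{N}{1}=\sup_{i\in\N}\sum_{j\in\N}N_{ij}\le C(\gamma^\prime)\,\sup_{i\in\N}\sum_{j\in\N}\exp(-\gamma^\prime d(i,j))=:A<\infty.
\]
For the column sums I would use that a metric is symmetric, $d(i,j)=d(j,i)$, so that $\sum_{i}\exp(-\gamma^\prime d(i,j))=\sum_{i}\exp(-\gamma^\prime d(j,i))$, and \eqref{eq:Mdist} applies again to give $\norm{N}{\infty}=\sup_{j\in\N}\sum_{i\in\N}N_{ij}\le A<\infty$.

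Finally I would combine the two via the elementary Schur argument: for $x\in\ell_2$, Cauchy--Schwarz gives
\[
 \norm{Nx}{\ell_2}^2=\sum_{i}\Big(\sum_{j}\sqrt{N_{ij}}\,\sqrt{N_{ij}}\,x_j\Big)^2\le\sum_{i}\Big(\sum_{j}N_{ij}\Big)\Big(\sum_{j}N_{ij}x_j^2\Big)\le A\sum_{j}x_j^2\sum_{i}N_{ij}\le A^2\norm{x}{\ell_2}^2,
\]
so $|M|=N$ is bounded on $\ell_2$ with $\norm{|M|}{2}\le A$. (Equivalently, the bounds on $\norm{N}{1}$ and $\norm{N}{\infty}$ say that $|M|$ maps $\ell_1\to\ell_1$ and $\ell_\infty\to\ell_\infty$ boundedly, and Riesz--Thorin interpolation gives boundedness on $\ell_2$.) There is essentially no hard step; the only thing to be careful about is that \eqref{eq:Mdist} is stated for row sums only, so one genuinely needs the symmetry of the metric $d$ to transfer it to column sums.
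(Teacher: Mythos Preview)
Your proof is correct and follows essentially the same route as the paper: fix a $\gamma'<\gamma$, bound row and column sums via \eqref{eq:Mexp} and \eqref{eq:Mdist}, then conclude by the Schur/interpolation estimate $\norm{\cdot}{2}^2\le\norm{\cdot}{1}\norm{\cdot}{\infty}$. Your version is in fact a bit more careful, since you make explicit that the column-sum bound requires the symmetry of $d$, which the paper hides behind the word ``analogously''.
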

\begin{proof}
	Given $0<\gamma^\prime<\gamma$, there holds for all $i\in\N$ with~\eqref{eq:Mexp} and~\eqref{eq:Mdist}
	\begin{align*}
		\sup_{i\in\N}\sum_{j\in\N}|M_{ij}|\lesssim \sup_{i\in\N}	\sum_{j\in\N}\exp(-\gamma^\prime d(i,j))<\infty.
	\end{align*}
	Analogously, we obtain $\sup_{i\in\N}\sum_{j\in\N}|M_{ji}|<\infty$. The standard interpolation estimate $\norm{\cdot}{2}^2\leq \norm{\cdot}{1}\norm{\cdot}{\infty}$ concludes the proof.
\end{proof}

  \begin{lemma}\label{lem:invbound}
	Let $M\in\JJ(d,\gamma,C)$ such that $M$ is additionally elliptic, i.e.,
	\begin{align}\label{eq:matelliptic}
		\sup_{x\in\ell_2}\frac{Mx\cdot x}{\norm{x}{\ell_2}^2}=C_{\rm ell}>0.
	\end{align}
	Then, 
	\begin{align*}
	\overline{M}\in\R^{\N\times\N}\quad\text{with}\quad	\overline{M}_{ij}:= \sup_{n\in\N\atop n\geq \max\{i,j\}}|(M|_{\{1,\ldots,n\}\times \{1,\ldots,n\}})^{-1}|_{ij}
	\end{align*}
	is of exponential class $(d,\widetilde \gamma,\widetilde C)$ and thus a bounded operator $\overline{M}\colon \ell_2\to\ell_2$.
	The constant $\widetilde\gamma$ depends only on $C_{\rm ell}>0$, $d$, and $\gamma$, whereas for all $0<\gamma^\prime< \widetilde \gamma$, $\widetilde C(\gamma^\prime)$ depends only on an upper bound for $C(\gamma^\prime)$
	and on $C_{\rm ell}>0$.
\end{lemma}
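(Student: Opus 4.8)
The plan is to establish the exponential decay of the inverse matrices $(M|_{n\times n})^{-1}$ uniformly in $n$, which immediately yields the exponential decay of the supremum $\overline M$ and hence, via Lemma~\ref{lem:norm}, its $\ell_2$-boundedness. The key mechanism is a Neumann-series / Combes--Thomas type argument adapted to the Jaffard class, which is exactly the technique from~\cite{rescue}. First I would fix $n\in\N$ and write $M_n:=M|_{\{1,\dots,n\}\times\{1,\dots,n\}}$. Ellipticity~\eqref{eq:matelliptic} passes to $M_n$ (restriction to a subspace only increases the infimum of the Rayleigh-type quotient after symmetrization, so $M_n x\cdot x\geq C_{\rm ell}\norm{x}{\ell_2}^2$ for $x$ supported on $\{1,\dots,n\}$), so $M_n$ is invertible with $\norm{M_n^{-1}}{2}\leq C_{\rm ell}^{-1}$, a bound independent of $n$.

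Next I would run the standard perturbation trick: for a fixed $0<\gamma'<\gamma$ introduce the diagonal conjugation $E_\rho:={\rm diag}(e^{\rho\, d(i,i_0)})_{i}$ for a reference index $i_0$ and a small parameter $\rho>0$, and estimate $E_\rho M_n E_\rho^{-1}=M_n+R_\rho$ where $(R_\rho)_{ij}=(e^{\rho(d(i,i_0)-d(j,i_0))}-1)(M_n)_{ij}$. Using the triangle inequality $|d(i,i_0)-d(j,i_0)|\leq d(i,j)$ together with $|M_{ij}|\leq C(\gamma')e^{-\gamma' d(i,j)}$ and the summability~\eqref{eq:Mdist}, one gets $\norm{R_\rho}{1},\norm{R_\rho}{\infty}=\OO(\rho)$ as $\rho\to0$ (with constants controlled by $C(\gamma')$, $\gamma'$, and the metric), hence $\norm{R_\rho}{2}\leq C\rho$. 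Choosing $\rho$ small enough that $C\rho< C_{\rm ell}/2$ makes $E_\rho M_n E_\rho^{-1}=M_n(I+M_n^{-1}R_\rho)$ invertible with operator norm of its inverse bounded by $2C_{\rm ell}^{-1}$, uniformly in $n$ and in the choice of reference index $i_0$. Reading off the $(i,j)$ entry with $i_0=j$ (or $i_0=i$) gives $e^{\rho d(i,j)}|(M_n^{-1})_{ij}|\leq \norm{E_\rho M_n^{-1}E_\rho^{-1}}{2}\leq 2C_{\rm ell}^{-1}$, i.e.\ $|(M_n^{-1})_{ij}|\leq 2C_{\rm ell}^{-1}e^{-\rho d(i,j)}$. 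Since this bound is uniform in $n\geq\max\{i,j\}$, the same bound holds for $\overline M_{ij}$; setting $\widetilde\gamma$ to be the supremum of admissible $\rho$ (which depends only on $C_{\rm ell}$, $\gamma$, and $d$) and tracking that $\widetilde C(\gamma')$ depends only on an upper bound for $C(\gamma')$ and on $C_{\rm ell}$, we conclude $\overline M\in\JJ(d,\widetilde\gamma,\widetilde C)$. Finally, Lemma~\ref{lem:norm} gives that $|\overline M|=\overline M$ is a bounded operator on $\ell_2$.

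I expect the main technical obstacle to be the careful bookkeeping in the perturbation estimate: one must verify that the $\OO(\rho)$ bound on $\norm{R_\rho}{1}+\norm{R_\rho}{\infty}$ is genuinely uniform in $n$ and $i_0$, which hinges on the elementary bound $|e^{\rho t}-1|\leq \rho|t|e^{\rho|t|}$ combined with $|t|=|d(i,i_0)-d(j,i_0)|\leq d(i,j)$ and a slight loss of decay (replacing $\gamma'$ by $\gamma'-\rho$, say) absorbed by~\eqref{eq:Mdist}. A secondary point requiring care is the transition from the symmetric ellipticity statement to invertibility of the possibly non-symmetric $M_n$: the quotient in~\eqref{eq:matelliptic} controls only the symmetric part, but since $Mx\cdot x=\tfrac12(M+M^\top)x\cdot x$ this already yields $\norm{M_n^{-1}}{2}\leq C_{\rm ell}^{-1}$ via the Lax--Milgram estimate, so no extra hypothesis is needed. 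Everything else is routine once these two points are in place.
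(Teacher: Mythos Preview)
Your proposal is correct and is essentially the same approach as the paper: the paper's proof simply invokes \cite[Proposition~2]{rescue} for the uniform Jaffard-class bound on $(M|_{n\times n})^{-1}$ and then takes the supremum over $n$, while you unpack that cited result explicitly via the standard Combes--Thomas diagonal conjugation. Both arguments rely on the same mechanism (ellipticity is inherited by all principal submatrices uniformly, and the Jaffard decay survives inversion with $n$-independent constants), and both finish with Lemma~\ref{lem:norm}; the only difference is that you reproduce the content of the reference rather than citing it.
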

\begin{proof} 
	The result~\cite[Proposition~2]{rescue}
	shows that $(M|_{\{1,\ldots,n\}\times \{1,\ldots,n\}})^{-1}\in\JJ(d,\widetilde \gamma,\widetilde C)$. Inspection of the proof reveals that
	$\widetilde\gamma$ depends only on $\gamma$, $d$, and $\widetilde C(\gamma^\prime)$ depends only on an upper bound for $C(\gamma^\prime)$ from Definition~\ref{def:wavelet}
	and on $C_{\rm ell}>0$. Therefore, we have for all $0<\gamma^\prime<\widetilde \gamma$
	\begin{align*}
		|\overline{M}_{ij}| \leq \widetilde C(\gamma^\prime)\exp(-\gamma^\prime d(i,j))\quad\text{for all }i,j\in\N
	\end{align*}
	and hence $\overline{M}\in\JJ(d,\widetilde\gamma,\widetilde C)$. Lemma~\ref{lem:norm} concludes the proof.
\end{proof}

\subsection{LU-factorization}\label{section:lu}
We say that a matrix $M\in\R^{\N\times\N}$ has an $LU$-factorization if
$M=LU$ for matrices $L,U\in\R^{\N\times\N}$ such that
\begin{align*}
	L_{ij}=0\text{ and }U_{ji}=0\quad\text{for all } i,j\in\N,\,i\leq j.
\end{align*}
Given a  block structure in the sense that there exist numbers $n_1,n_2,\ldots\in\N$ with $n_1=1$ and $n_i<n_j$ for all $i\leq j$, we denote matrix blocks by
\begin{align*}
 M(i,j):= M|_{\{n_i,\ldots,n_{i+1}-1\}\times \{n_j,\ldots,n_{j+1}-1\}}\in\R^{(n_{i+1}-n_i)\times (n_{j+1}-n_j)}.
\end{align*}
By $M[k]\in \R^{(n_{k+1}-1) \times (n_{k+1}-1)}$, we denote the restriction of $M$ to the first $k\times k$ blocks.

\begin{lemma}\label{lem:invapprox}
 Let $M\in\R^{\N\times\N}$ such that $M\colon\ell_2\to \ell_2$ is bounded and elliptic~\eqref{eq:matelliptic}. Moreover, let $M\in \BB(d,b_0)$.
 Assume a block structure  $n_1,n_2,\ldots\in\N$.
 Then, given $\eps>0$, there exists a bandwidth $b\in\N$ such that for all $k\in\N$, there exists $R,R_k\in \BB(d,b)$ such that
 \begin{align*}
 \norm{M^{-1}-R}{2}+\sup_{k\in\N} \norm{M[k]^{-1}-R_k}{2}\leq \eps.
 \end{align*}
  If $M$ is additionally block-banded  in the sense $M(i,j)=0$ for all $|i-j|>b_0$, then, $R_k$ and $R$ will additionally be block-banded with bandwidth $b$.
  If $M$ is block-diagonal, also $R$ and $R_k$ will be block-diagonal.
 The bandwidth $b$ depends only on $b_0$, $C_{\rm ell}$, $\norm{M}{2}$, and $\eps$.
\end{lemma}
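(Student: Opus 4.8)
The plan is to prove the approximation of $M^{-1}$ and $M[k]^{-1}$ by banded matrices via a Neumann-type series argument, exploiting the fact that a banded, bounded, elliptic matrix admits a bounded inverse whose entries decay exponentially (this is exactly Lemma~\ref{lem:invbound} applied to $M\in\BB(d,b_0)\subseteq\JJ(d,\gamma,C)$ for any $\gamma$, since a banded matrix with bounded entries trivially lies in every Jaffard class compatible with $d$). First I would observe that, because $M\colon\ell_2\to\ell_2$ is bounded and elliptic, so is $M[k]$ uniformly in $k$: ellipticity $Mx\cdot x\geq C_{\rm ell}\norm{x}{\ell_2}^2$ restricts to the coordinate subspace $\{1,\ldots,n_{k+1}-1\}$, giving $\norm{M[k]^{-1}}{2}\leq C_{\rm ell}^{-1}$ and $\norm{M[k]}{2}\leq\norm{M}{2}$ for all $k$. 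Hence it suffices to produce, for a single abstract matrix $A$ that is banded with bandwidth $b_0$, bounded with norm $\leq\norm{M}{2}$, and elliptic with constant $\geq C_{\rm ell}$, a banded matrix $R=R(A)$ with $\norm{A^{-1}-R}{2}\leq\eps$ and bandwidth $b$ depending only on $b_0,C_{\rm ell},\norm{M}{2},\eps$; applying this with $A=M$ yields $R$, and with $A=M[k]$ yields $R_k$ (note $M[k]\in\BB(d,b_0)$, viewing it as an infinite matrix padded by zeros and using that $d$ restricted to the index block is the ambient metric).

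Next I would construct $R$. By Lemma~\ref{lem:invbound}, the entries of $\overline{A}_{ij}$ (the sup over finite truncations of $|A^{-1}|$) satisfy $|\overline{A}_{ij}|\leq\widetilde C(\gamma')\exp(-\gamma' d(i,j))$ with $\widetilde\gamma,\widetilde C$ controlled only by $C_{\rm ell}$, $\norm{M}{2}$ (an upper bound for the Jaffard constant of a $b_0$-banded matrix with entries bounded by $\norm{M}{2}$), $\gamma$, and $d$; in particular $|A^{-1}_{ij}|\leq\widetilde C(\gamma')\exp(-\gamma' d(i,j))$. Now define $R$ by truncating: $R_{ij}:=A^{-1}_{ij}$ if $d(i,j)\leq b$ and $R_{ij}:=0$ otherwise. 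Then $R\in\BB(d,b)$ and
\begin{align*}
\norm{A^{-1}-R}{1}=\sup_i\sum_{j:\,d(i,j)>b}|A^{-1}_{ij}|\leq\widetilde C(\gamma')\sup_i\sum_{j:\,d(i,j)>b}\exp(-\gamma' d(i,j)),
\end{align*}
and likewise for $\norm{A^{-1}-R}{\infty}$; by the summability property~\eqref{eq:Mdist} of $d$, splitting off $\exp(-\tfrac{\gamma'}{2}d(i,j))\leq\exp(-\tfrac{\gamma'}{2}b)$ on the tail shows these sums tend to $0$ as $b\to\infty$, uniformly in $i$. Using $\norm{A^{-1}-R}{2}^2\leq\norm{A^{-1}-R}{1}\norm{A^{-1}-R}{\infty}$ (the interpolation estimate already cited in Lemma~\ref{lem:norm}), I can choose $b$ depending only on $b_0,C_{\rm ell},\norm{M}{2},\eps$ (through $\widetilde C,\widetilde\gamma$ and the summability constant of $d$) so that $\norm{A^{-1}-R}{2}\leq\eps$. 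Applying this to $A=M$ and $A=M[k]$ with the \emph{same} $b$ gives the claimed bound, since the supremum over $k$ is then uniform.

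For the structural refinements I would argue as follows. If $M$ is block-banded, $M(i,j)=0$ for $|i-j|>b_0$, then so is each $M[k]$, and one needs the decay estimate for $A^{-1}$ to respect the block structure. This is handled by enlarging the metric on the block level: the estimate $|A^{-1}_{ij}|\leq\widetilde C\exp(-\gamma' d(i,j))$ already gives what is needed if we additionally know that entries with block-indices $|p-q|$ large are small, which follows from applying Lemma~\ref{lem:invbound} with the block-distance metric $d'(i,j):=|p(i)-q(j)|$ (block-banded $\Rightarrow$ $\BB(d',b_0)$); then truncating with respect to \emph{both} $d$ and $d'$ yields an $R$ that is simultaneously $\BB(d,b)$ and block-banded with bandwidth $b$. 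The block-diagonal case is the degenerate instance $b_0=0$ of this, where $M^{-1}$ is genuinely block-diagonal and one may simply take $R=M^{-1}$, $b=0$ (or any $b$), so no truncation in the block direction is needed. The main obstacle I anticipate is not any single step but the bookkeeping of \emph{which} constants the bandwidth $b$ is allowed to depend on: one must check carefully that the Jaffard parameters $\widetilde\gamma,\widetilde C$ extracted from Lemma~\ref{lem:invbound} for the family $\{M,M[1],M[2],\ldots\}$ are uniform — which they are, because each member is $b_0$-banded with entry-bound $\norm{M}{2}$ and ellipticity constant $C_{\rm ell}$ — and that the metric summability constant in~\eqref{eq:Mdist} is a property of $d$ alone, independent of $k$; once this uniformity is pinned down, the truncation argument is routine.
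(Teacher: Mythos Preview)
Your argument has a genuine gap: you invoke Lemma~\ref{lem:invbound} by claiming $\BB(d,b_0)\subseteq\JJ(d,\gamma,C)$, but membership in $\JJ$ requires by Definition~\ref{def:wavelet} that the metric $d$ satisfy the summability condition~\eqref{eq:Mdist}. Definition~\ref{def:banded} explicitly does \emph{not} impose~\eqref{eq:Mdist}, and the hypotheses of the lemma only say $M\in\BB(d,b_0)$. This is not a cosmetic omission: the lemma is later applied with the metric $d_2$, which the paper states does \emph{not} satisfy~\eqref{eq:Mdist} (this is precisely why the proof of Theorem~\ref{thm:qosum} cannot apply Theorem~\ref{thm:luqo2} directly). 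So your route through Lemma~\ref{lem:invbound} is blocked in exactly the case the lemma is written for. The same issue recurs in your block-banded step: the block-distance $d'(i,j)=|p(i)-p(j)|$ fails~\eqref{eq:Mdist} as soon as block sizes are unbounded (all indices in one block are at distance~$0$). Finally, even granting~\eqref{eq:Mdist}, your bandwidth $b$ would depend on the summability constant of $d$, contradicting the stated dependence on $b_0,C_{\rm ell},\norm{M}{2},\eps$ only; and in the block-diagonal case taking $R=M^{-1}$ does not give $R\in\BB(d,b)$, since the inverse of each diagonal block need not be $d$-banded.

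The paper avoids all of this with a direct Neumann series. With $\alpha:=C_{\rm ell}\norm{M}{2}^{-2}$ one checks $\norm{I-\alpha A}{2}\leq q:=(1-C_{\rm ell}^2/\norm{M}{2}^2)^{1/2}<1$ for $A\in\{M,M[k]\}$, so $A^{-1}=\alpha\sum_{j\geq 0}(I-\alpha A)^j$, and one sets $R:=\alpha\sum_{j=0}^N(I-\alpha A)^j$ with $N$ chosen from $q$ and $\eps$ alone. Lemma~\ref{lem:bandmult} then gives $R\in\BB(d,Nb_0)$ with no use of~\eqref{eq:Mdist}, and block-bandedness/block-diagonality are preserved automatically by the multiplication rule.
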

\begin{proof}
Let $A:=M[k]$ or $A:=M$. Since $A$ is elliptic with constant $C_{\rm ell}$, we obtain for $\alpha:=C_{\rm ell}\norm{M}{2}^{-2}$ and $x\in\ell_2$
 \begin{align*}
  \norm{x-\alpha Ax}{\ell_2}^2 &= \norm{x}{\ell_2}^2 - 2\alpha\dual{x}{Ax}_{\ell_2} + \alpha^2 \norm{Ax}{\ell_2}^2\\
  &\leq (1-2\alpha C_{\rm ell}+\alpha^2\norm{A}{2}^2)\norm{x}{\ell_2}^2\leq(1-C_{\rm ell}^2/\norm{M}{2}^2)\norm{x}{\ell_2}^2.
 \end{align*}
This shows $\norm{I-\alpha A}{2}\leq (1-C_{\rm ell}^2/\norm{M}{2}^2):= q<1$.
We obtain
 \begin{align*}
  A^{-1} = \alpha(\alpha A)^{-1} = \alpha(I-(I-\alpha A))^{-1} =\alpha\sum_{k=0}^\infty (I-\alpha A)^k.
 \end{align*}
We define $R$ ($R_k$) by $R:= \sum_{k=0}^N (I-\alpha A)^k$ for some $N\in\N$ such that $\norm{\sum_{k=N+1}^\infty (I-\alpha A)^k}{2}\leq \sum_{k=N+1}^\infty q^k\leq \eps$.
Since $I-\alpha A\in \BB(d,b_0)$, Lemma~\ref{lem:bandmult} shows that $R$ is banded as well. The bandwidth depends only on $b_0$, $q$, and $N$.
If $M$ is additionally block-banded, also $A$ and $(I-\alpha A)$ will be block-banded with bandwidth $b_0$. Hence $(I-\alpha A)^k$ will be block-banded with bandwidth $kb_0$. 
The same argumentation proves the statement for block-diagonal $M$.
This concludes the proof.
 \end{proof}

The following results prove that block-banded matrices $M$ hand down some structure to their $LU$-factors. This is used in Section~\ref{section:apps} to construct
 suitable hierarchical bases for FEM/BEM coupling.
 
\begin{lemma}\label{lem:blockLU1}
  Let $M\in\R^{\N\times\N}$ such that $M\colon\ell_2\to \ell_2$ is bounded and is elliptic~\eqref{eq:matelliptic}.
  Assume a block structure in the sense that there exist numbers  $n_1,n_2,\ldots\in\N$ with $n_1=1$ and $n_i<n_j$ for all $i\leq j$.
 Moreover, let $M$ be block-banded in the sense $M(i,j)=0$ for $|i-j|>b_0$ for some $b_0\in\N$.
 Then, the block-$LU$-factorization $M=LU$ for block-upper/block-lower triangular matrices $L,U\in\R^{\N\times\N}$ such that $L(i,i)=I$ for all $i\in\N$ exists, is block-banded with bandwidth $b_0$,
 and satisfies
\begin{align*}
 \norm{L}{2}+\norm{U}{2}+\norm{L^{-1}}{2}+\norm{U^{-1}}{2}<\infty.
 \end{align*}
 Moreover, the block-diagonal matrix $D\in\R^{\N\times\N}$, $D(i,i):= U(i,i)$ is bounded and elliptic~\eqref{eq:matelliptic} with bounded and elliptic inverse.
\end{lemma}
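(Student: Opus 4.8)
The plan is to obtain $L$ and $U$ from (infinite) block Gaussian elimination and to control all four norms through the Schur complements that arise, exploiting that these stay uniformly bounded and uniformly elliptic because $M$ is. I would begin with two elementary observations. First, the ellipticity~\eqref{eq:matelliptic} gives $Mx\cdot x\ge C_{\rm ell}\norm{x}{\ell_2}^2$, which forces $\norm{Mx}{\ell_2}\ge C_{\rm ell}\norm{x}{\ell_2}$; the same inequality for $M^\top$ then shows that $M\colon\ell_2\to\ell_2$ is a bijection with $\norm{M^{-1}}{2}\le C_{\rm ell}^{-1}$. Second (Schur complements inherit ellipticity): if $M=\left(\begin{smallmatrix}A&B\\ C&E\end{smallmatrix}\right)$ for a finite leading block $A=M[m]$, then $A$ is elliptic with constant $C_{\rm ell}$ as a restriction of the form $x\mapsto Mx\cdot x$, hence invertible, and the Schur complement $S:=E-CA^{-1}B$ satisfies, for any $y$, $Sy\cdot y=M(-A^{-1}By,y)\cdot(-A^{-1}By,y)\ge C_{\rm ell}\bigl(\norm{A^{-1}By}{\ell_2}^2+\norm{y}{\ell_2}^2\bigr)\ge C_{\rm ell}\norm{y}{\ell_2}^2$, while trivially $\norm{S}{2}\le\norm{M}{2}+\norm{M}{2}^2C_{\rm ell}^{-1}=:C_S$.

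I would then run block elimination: put $S^{(1)}:=M$ and let $S^{(j+1)}$ be the Schur complement of the $(j,j)$-block in $S^{(j)}$. By the quotient property of Schur complements, $S^{(j)}=M|_{\ge j}-M|_{\ge j,<j}\,M[j-1]^{-1}\,M|_{<j,\ge j}$, i.e.\ the Schur complement of $M[j-1]$ in $M$ restricted to the trailing blocks; hence $S^{(j)}$ is elliptic with constant $C_{\rm ell}$ and $\norm{S^{(j)}}{2}\le C_S$, and in particular its diagonal block $S^{(j)}(j,j)$ is elliptic with constant $C_{\rm ell}$, so invertible with $\norm{S^{(j)}(j,j)^{-1}}{2}\le C_{\rm ell}^{-1}$. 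Define $L(j,j):=I$, $L(m,j):=S^{(j)}(m,j)\,S^{(j)}(j,j)^{-1}$ for $m>j$, and $U(j,m):=S^{(j)}(j,m)$ for $m\ge j$. Since $M$ is block-banded with bandwidth $b_0$, the rank-correction subtracted in $S^{(j)}$ is supported in block-rows and block-columns $j,\dots,j+b_0-1$, so each $S^{(j)}$ is again block-banded with bandwidth $b_0$; consequently $L(m,j)=0$ for $m>j+b_0$ and $U(j,m)=0$ for $m>j+b_0$, i.e.\ $L$ is block-lower and $U$ block-upper, both with bandwidth $b_0$. Finally, because the elimination updates are local, the finite classical block-$LU$ factorizations $M[N]=L[N]U[N]$ (legitimate since all pivots are invertible) are exactly the principal $N\times N$ block truncations of $L,U$; letting $N\to\infty$ block-wise yields $M=LU$ with $L(i,i)=I$.

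The one genuinely delicate point is the boundedness of $L$ and $U$: iterating the elimination recursion only produces bounds that grow geometrically (the relevant factor $\norm{M}{2}/C_{\rm ell}$ is $\ge1$), so instead I would read the blocks straight off the Schur complements. This gives $\norm{L(m,j)}{2}\le\norm{S^{(j)}}{2}\,\norm{S^{(j)}(j,j)^{-1}}{2}\le C_SC_{\rm ell}^{-1}$ and $\norm{U(j,m)}{2}\le\norm{S^{(j)}}{2}\le C_S$, uniformly, with at most $b_0+1$ nonzero blocks in each block-row and block-column. Since the $\ell_2$-operator norm of a block matrix is bounded by the $\ell_2$-operator norm of the scalar matrix of its block norms, which in turn is at most $(\norm{\cdot}{1}\norm{\cdot}{\infty})^{1/2}$, this forces $\norm{L}{2},\norm{U}{2}<\infty$. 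Then $L^{-1}=UM^{-1}$ and $U^{-1}=M^{-1}L$ are bounded by the first observation.

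It remains to treat $D$. By construction $D(i,i)=U(i,i)=S^{(i)}(i,i)$, which is, block-by-block, elliptic with constant $C_{\rm ell}$ and bounded by $C_S$; hence $D$ is a bounded elliptic operator with $\norm{D^{-1}}{2}\le C_{\rm ell}^{-1}$, where $D^{-1}$ is block-diagonal with blocks $S^{(i)}(i,i)^{-1}$. Finally the inverse of any bounded elliptic matrix $N$ is itself elliptic: testing with $z=N^{-1}w$ gives $N^{-1}w\cdot w=z\cdot Nz=Nz\cdot z\ge C_{\rm ell}\norm{z}{\ell_2}^2\ge (C_{\rm ell}/\norm{N}{2}^2)\norm{w}{\ell_2}^2$, so $D^{-1}$ is elliptic with constant $C_{\rm ell}/C_S^2$. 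I expect the boundedness of $L$ and $U$ to be the crux; the bookkeeping confirming that the infinite elimination is well defined and consistent with finite truncations is routine but does use the block-bandedness of $M$.
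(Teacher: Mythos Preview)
Your proof is correct and takes a genuinely different route from the paper's. The paper does not work with Schur complements at all; instead it establishes the identity $U^{-1}(i,k)=M[k]^{-1}(i,k)$ (the last block-column of $M[k]^{-1}$ coincides with the corresponding column of $U^{-1}$), which gives uniform block bounds on $U^{-1}$ directly from $\norm{M[k]^{-1}}{2}\le C_{\rm ell}^{-1}$. From this it bounds the blocks of $L=MU^{-1}$ (using that $M$ is block-banded), invokes Lemma~\ref{lem:blockstab} for $\norm{L}{2}<\infty$, and then $\norm{U^{-1}}{2}\le\norm{M^{-1}}{2}\norm{L}{2}$. The remaining two bounds $\norm{U}{2},\norm{L^{-1}}{2}$ are obtained by redoing the argument for $M^T=\widetilde L\widetilde U$ and using the uniqueness relation $\widetilde L=U^TD^{-T}$, $\widetilde U=D^TL^T$.

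Your Schur-complement approach is more self-contained: you get uniform block bounds on $L$ and $U$ \emph{directly} (no transpose trick needed), and then $L^{-1}=UM^{-1}$, $U^{-1}=M^{-1}L$ fall out immediately. One small point worth making explicit is why $UM^{-1}$ really equals the formal inverse $L^{-1}$: since $L$ is block-lower triangular with unit diagonal, the matrix $L^{-1}$ exists formally, and for fixed scalar row index the products $L^{-1}(LA)$ and $(L^{-1}L)A$ involve only finitely many nonzero terms (by triangularity and bandedness), so associativity holds and $A=UM^{-1}$ coincides with $L^{-1}$. The paper's route, while less direct here, has the advantage that the identity $U^{-1}(i,k)=M[k]^{-1}(i,k)$ is \emph{reused} as the starting point of Lemma~\ref{lem:blockLU2} to build banded approximations of $U^{-1}$ from banded approximations of $M[k]^{-1}$; your Schur-complement description of $U$ does not feed into that next step as cleanly.
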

\begin{proof}
Since $M[k]$ has invertible principal sub matrices, it is well-known that the (block)-LU-factorization exists. 
The triangular structure of the factors implies that also $M$ has a block-$LU$-factorization. Since $M[k]$ and $L[k]$ are invertible by definition, 
$U[k]$ is invertible for all $k\in\N$. The block-triangular structure guarantees that $L[k]^{-1}=(L^{-1})[k]$, $U[k]^{-1}=(U^{-1})[k]$, and hence existence of $L^{-1},U^{-1}$ as matrices in $\R^{\N\times\N}$.
Moreover, it is well-known that $L$ and $U$ are block-banded with bandwidth $b_0$.
By definition, we have $M[k]^{-1} = U[k]^{-1} L[k]^{-1}$. Since $L$ is lower-triangular with normalized block-diagonal (only identities in the diagonal blocks), the same is true for $L[k]^{-1}$. Therefore, we obtain
\begin{align}\label{eq:luid}
 M[k]^{-1}(i,k) = \sum_{r=1}^kU[k]^{-1}(i,r) L[k]^{-1}(r,k) =U[k]^{-1}(i,k)=U^{-1}(i,k),
\end{align}
where the last identity follows from the fact that $U^{-1}$ is upper-block triangular.

Since $M[k]$ is bounded and elliptic~\eqref{eq:matelliptic}, also $M[k]^{-1}$ is bounded and elliptic. 
Therefore, we see that $U^{-1}(k,k)=U(k,k)^{-1}$ (since $U$ is block-triangular) is bounded and elliptic and thus conclude that $D^{-1}$ and also $D$ are bounded and elliptic. Moreover, we see that 
$\sup_{i,j}\norm{U^{-1}(i,j)}{2}\leq\sup_{k\in\N}\norm{M[k]^{-1}}{2}\leq C_{\rm ell}<\infty$. Hence, there holds
\begin{align*}
 \sup_{i,j\in\N}\norm{L(i,j)}{2}\leq \sup_{i,j\in\N}\sum_{|k-i|\leq b_0} \norm{M(i,k)U^{-1}(k,j)}{2}\leq 2b_0\norm{M}{2}\sup_{i,j}\norm{U^{-1}(i,j)}{2}<\infty.
\end{align*}
Since $L$ is block-banded with bandwidth $b_0$, Lemma~\ref{lem:blockstab} shows $\norm{L}{2}<\infty$. This implies $\norm{U^{-1}}{2}\leq \norm{M^{-1}}{2}\norm{L}{2}<\infty$.

Let $M^T =\widetilde L\widetilde U$ be the analogous block-LU-factorization for the transposed matrix (note that $M^T$ is still elliptic, bounded, and banded).
Since normalized $LU$-factorizations are unique, we see that
\begin{align}\label{eq:lut}
\widetilde L = U^TD^{-T}\quad \text{and}\quad \widetilde U= D^TL^T
\end{align}
Repeating the above arguments shows $\norm{\widetilde L}{2}+\norm{\widetilde U^{-1}}{2}<\infty$. With boundedness and ellipticity of $D$,~\eqref{eq:lut} shows $\norm{U}{2}+\norm{L^{-1}}{2}<\infty$.
Altogether, we prove the statement and conclude the proof.
 \end{proof}
 
\begin{lemma}\label{lem:blockLU2}
  Under the assumptions of Lemma~\ref{lem:blockLU1}, assume that additionally $M\in \BB(d,b_0)$.
  Given $\eps>0$, there exists $b\in\N$ and block-upper triangular $U_\eps^{-1}\in\BB(d,b)$ which is additionally block-banded in the sense $U_\eps^{-1}(i,j)=0$ for $|i-j|>b$ such that
  \begin{align*}
   \norm{U^{-1}-U_\eps^{-1}}{2}\leq \eps.
  \end{align*}
  The approximation $U_\eps^{-1}$ is invertible with bounded inverse such that $\sup_{\eps>0}(\norm{U_\eps}{2}+\norm{U_\eps^{-1}}{2})<\infty$.
  Moreover, there exists block-diagonal $D_\eps\in\BB(d,b)$ which is bounded and elliptic such that $\norm{D-D_\eps}{2}\leq \eps$.
\end{lemma}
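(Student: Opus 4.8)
The plan is to read off $U_\eps^{-1}$ block-column by block-column from the banded approximations of the finite sections $M[k]^{-1}$ furnished by Lemma~\ref{lem:invapprox}, and then to build $D_\eps$ from the block-diagonal of $U_\eps^{-1}$. \emph{Construction of $U_\eps^{-1}$.} First I apply Lemma~\ref{lem:invapprox} to $M$ (which is bounded, elliptic, block-banded, and in $\BB(d,b_0)$) with an auxiliary accuracy $\delta>0$ to be fixed later; it yields a uniform bandwidth $b_\delta$ and matrices $R_k\in\BB(d,b_\delta)$, block-banded with bandwidth $b_\delta$, such that $\sup_k\norm{M[k]^{-1}-R_k}{2}\le\delta$. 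I define $U_\eps^{-1}$ by its block-columns, $U_\eps^{-1}(i,k):=R_k(i,k)$ for $i\le k$ and $U_\eps^{-1}(i,k):=0$ for $i>k$; then $U_\eps^{-1}$ is block-upper triangular, block-banded with bandwidth $b_\delta$, and lies in $\BB(d,b_\delta)$. The crucial input is identity~\eqref{eq:luid}, which gives $U^{-1}(i,k)=M[k]^{-1}(i,k)$ for $i\le k$ (and $U^{-1}(i,k)=0$ for $i>k$ since $U^{-1}$ is block-upper triangular); hence every block of $U^{-1}-U_\eps^{-1}$ is either zero or a block of $M[k]^{-1}-R_k$ and so has operator norm $\le\delta$. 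Since $U^{-1}-U_\eps^{-1}$ is block-banded with bandwidth $b_\delta$, the block-stability estimate (Lemma~\ref{lem:blockstab}) upgrades this to $\norm{U^{-1}-U_\eps^{-1}}{2}\le C(b_\delta)\,\delta$ with $C(b_\delta)$ depending only on $b_\delta$; because the bandwidth in Lemma~\ref{lem:invapprox} grows only logarithmically in $1/\delta$ whereas $C(b_\delta)$ grows at most linearly in $b_\delta$, we have $C(b_\delta)\delta\to0$, so $\delta$ can be chosen with $\norm{U^{-1}-U_\eps^{-1}}{2}\le\eps$. For $\eps$ below a threshold depending only on $\norm{U}{2}$ a Neumann series then yields invertibility of $U_\eps^{-1}$ together with $\norm{U_\eps}{2}\le 2\norm{U}{2}$ and $\norm{U_\eps^{-1}}{2}\le\norm{U^{-1}}{2}+1$; for larger $\eps$ one retains the approximation built at the threshold, giving the stated uniform bound.

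\emph{Construction of $D_\eps$.} Let $\widetilde D_\eps$ be the block-diagonal part of $U_\eps^{-1}$; it is block-diagonal, lies in $\BB(d,b_\delta)$, and by~\eqref{eq:luid} its $k$-th block equals $R_k(k,k)$ while the $k$-th block of $D^{-1}$ equals $M[k]^{-1}(k,k)$, so $\norm{D^{-1}-\widetilde D_\eps}{2}\le\delta$. Since $D^{-1}$ is bounded and elliptic by Lemma~\ref{lem:blockLU1}, for $\delta$ small $\widetilde D_\eps$ is bounded and elliptic with constants controlled by those of $D^{-1}$; hence Lemma~\ref{lem:invapprox} applies to $\widetilde D_\eps$ (which is block-diagonal and lies in $\BB(d,\cdot)$) and produces a block-diagonal $D_\eps\in\BB(d,b')$ with $\norm{\widetilde D_\eps^{-1}-D_\eps}{2}\le\delta$. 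A Neumann estimate converts $\norm{D^{-1}-\widetilde D_\eps}{2}\le\delta$ into $\norm{D-\widetilde D_\eps^{-1}}{2}\lesssim\delta$, whence $\norm{D-D_\eps}{2}\lesssim\delta$, and $\delta$ can be chosen with $\norm{D-D_\eps}{2}\le\eps$; boundedness and ellipticity of $D_\eps$ then follow from those of $D$ once $\eps<C_{\rm ell}$. Finally one relabels $b:=\max\{b_\delta,b'\}$.

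\emph{Main obstacle.} The delicate point is that $D_\eps$ must approximate $D$ itself, not $D^{-1}$: the block $D(k,k)=U(k,k)=M(k,k)-\sum_{r<k}L(k,r)U(r,k)$ is in general a full matrix because of the fill-in in $L$ and $U$, and one cannot simply invert the banded $\widetilde D_\eps$ since the inverse of a banded matrix need not be banded. The construction above circumvents this by exploiting that $\widetilde D_\eps$ inherits \emph{ellipticity} from $D^{-1}$, which makes the Richardson-iteration truncation underlying Lemma~\ref{lem:invapprox} available for $\widetilde D_\eps$; this is essential, since the analogous step cannot be applied to $U_\eps^{-1}$ itself, which need not be elliptic. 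The rest is the routine bookkeeping of choosing $\delta$ (and the threshold on $\eps$) small enough that all Neumann series converge and $C(b_\delta)\delta\le\eps$.
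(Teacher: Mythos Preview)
Your construction of $D_\eps$ is correct and matches the paper, but the argument for $U_\eps^{-1}$ has a genuine gap. You assert that ``$U^{-1}-U_\eps^{-1}$ is block-banded with bandwidth $b_\delta$'' and then invoke Lemma~\ref{lem:blockstab}. This is false: only $U_\eps^{-1}$ is block-banded, while $U^{-1}$ is merely block-upper triangular with, in general, \emph{all} blocks $U^{-1}(i,k)=M[k]^{-1}(i,k)$ nonzero. For $i<k-b_\delta$ you have $(U^{-1}-U_\eps^{-1})(i,k)=M[k]^{-1}(i,k)$, which is small (of size $\le\delta$, since $R_k(i,k)=0$ there) but not zero. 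So $U^{-1}-U_\eps^{-1}$ is a full block-upper-triangular matrix whose blocks are uniformly small; Lemma~\ref{lem:blockstab} does not apply, and uniform smallness of all blocks does \emph{not} imply smallness of the operator norm (each block-row contains infinitely many blocks). Your remark about the bandwidth growing only logarithmically in $1/\delta$ is therefore beside the point: there is no finite bandwidth at all for the difference.

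The paper circumvents exactly this obstruction by a detour through $L$. Your matrix (called $T$ in the paper) gives only the blockwise estimate $\sup_{i,j}\norm{T(i,j)-U^{-1}(i,j)}{2}\lesssim\eps$. The key observation is that $L=MU^{-1}$ \emph{is} block-banded with the fixed bandwidth $b_0$ (Lemma~\ref{lem:blockLU1}), so one defines an approximation $S$ to $L$ via $S(i,j):=(MT)(i,j)$ on the band of $L$; since $M$ is block-banded with bandwidth $b_0$, each block of $L-S$ is a finite sum of $\lesssim b_0$ blocks of $U^{-1}-T$, and now both $L$ and $S$ are block-banded with bandwidth $b_0$, so Lemma~\ref{lem:blockstab} legitimately yields $\norm{L-S}{2}\lesssim\eps$. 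Finally one sets $U_\eps^{-1}:=RS$ with $R\approx M^{-1}$ from Lemma~\ref{lem:invapprox}, and the operator-norm bound $\norm{U_\eps^{-1}-U^{-1}}{2}=\norm{RS-M^{-1}L}{2}\lesssim\eps$ follows by the triangle inequality. The passage through the genuinely block-banded factor $L$ is precisely what converts the blockwise control of $T-U^{-1}$ into operator-norm control; your direct route skips this and cannot be repaired without it.
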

\begin{proof}
Lemma~\ref{lem:invapprox} shows that there exist $R,R_k\in \BB(d,b)$ which are block-banded with bandwidth $b=b(\eps)$ such that $\norm{M^{-1}-R}{2}+\norm{M[k]^{-1}-R_k}{2}\leq \eps$ for all $k\in\N$.
Since $M$ is bounded and elliptic, also $M^{-1}$ is bounded and elliptic~\eqref{eq:matelliptic}. Choosing $\eps>0$ sufficiently small, we ensure that also $R$ and $R_k$ are bounded an elliptic with uniform
constant. 

Inspired by~\eqref{eq:luid}, we define a first approximation to $U^{-1}$ by 
\begin{align}\label{eq:defueps}
T(i,j):=\begin{cases}
         0 &\text{for }j<i\text{ or }i<j-b,\\
         R_j(i,j) &\text{for }j-b\leq i\leq j.
\end{cases}
\end{align}
This ensures that $T\in \BB(d,b)$ and  that $T$ is block-banded with bandwidth $b$.
Additionally, we obtain
\begin{align}\label{eq:err1}
 \sup_{i,j\in\N}\norm{T(i,j)-U^{-1}(i,j)}{2}\lesssim \eps.
\end{align}

We define an approximation to $L$ (which is block-banded with bandwidth $b_0$) by 
\begin{align*}
 S(i,j):=\begin{cases}
 0 & \text{for }j<i-b_0\text{ or } j>i,\\
 I &\text{for }j=i,\\
 (M T)(i,j)&\text{for } i-b_0\leq j<i.
\end{cases}
 \end{align*}
The definition and~\eqref{eq:err1} imply 
\begin{align}\label{eq:err2}
 \norm{L(i,j)-S(i,j)}{2}&\leq \norm{\sum_{k=i-b_0}^{i+b_0} M(i,k)(U^{-1}(k,j)-T(k,j))}{2}\\
 &\leq \sum_{k=i-b_0}^{i+b_0}\norm{M}{2}\norm{U^{-1}(k,j)-T(k,j))}{2}\lesssim \norm{M}{2}b_0 \eps.
\end{align}
Since both $L$ and $S$ are block-banded with bandwidth $b_0$, Lemma~\ref{lem:blockstab} shows even
\begin{align}\label{eq:err3}
 \norm{L-S}{2}\lesssim \eps,
\end{align}
where the hidden constant is independent of $\eps$. Moreover, Lemma~\ref{lem:bandmult} shows that $L\in \BB(d,\widetilde b)$, for some $\widetilde b\in\N$ which depends only on $b_0$ and $b$.

Recall $R$ from above with $\norm{M^{-1}-R}{2}\leq \eps$, $R\in\BB(d,b)$ and $R$ is block-banded with bandwidth $b$.
This allows to define $U_\eps^{-1}$ by
\begin{align*}
 U_\eps^{-1}:= RS.
\end{align*}
We obtain from the definition and with~\eqref{eq:err3}
\begin{align}\label{eq:err4}
\begin{split}
 \norm{U_\eps^{-1}-U^{-1}}{2}&\leq \norm{R(S-L)}{2}+\norm{(R-M^{-1})L}{2}\lesssim\norm{R}{2}\eps +\norm{L}{2}\eps\\
 &\leq (C_{\rm ell}+\eps+1)\eps,
 \end{split}
\end{align}
where the hidden constant does not depend on $\eps$.
Moreover, Lemma~\ref{lem:bandmult} shows (since $S$ and $R$ are block-banded), that $U_\eps^{-1}\in\BB(d)$ with bandwidth depending on $\eps$.
Analogously, we see $U_\eps^{-1}$ is block-banded with bandwidth $b_0+b$. Since $U^{-1}$ is invertible with bounded inverse, choosing $\eps>0$ sufficiently small ensures
that $U_\eps^{-1}$ is invertible, with bounded inverse uniformly in $\eps$.

Let $\widetilde D$ denote the block-diagonal of $U_\eps^{-1}$.
Obviously, $\widetilde D\in\BB(d)$ and~\eqref{eq:err4} implies $\norm{\widetilde D-D^{-1}}{2}\lesssim \eps$. Lemma~\ref{lem:blockLU1} shows that $D^{-1}$ is bounded and elliptic, thus sufficiently small 
$\eps>0$ guarantees the same for $\widetilde D$. Hence, Lemma~\ref{lem:invapprox} ensures that there exists block-diagonal $D_\eps\in\BB(d)$ (with bandwidth depending only on $\eps>0$), such that
$\norm{\widetilde D^{-1}-D_\eps}{2}\leq \eps$. From this, we obtain
\begin{align*}
 \norm{D-D_\eps}{2}&\leq \norm{\widetilde D^{-1}-D_\eps}{2}+\norm{\widetilde D^{-1}-D}{2}\leq \eps +\norm{\widetilde D^{-1}}{2}\norm{D}{2}\norm{\widetilde D-D^{-1}}{2}\\
 &\lesssim (1 +\norm{\widetilde D^{-1}}{2}\norm{D}{2})\eps.
\end{align*}
For sufficiently small $\eps>0$, $\norm{\widetilde D^{-1}}{2}$ is bounded in terms of $\norm{D}{2}$. This ensures that the constant above does not depend on $\eps$ and thus concludes the proof.
\end{proof}

\begin{lemma}\label{lem:blockLU3}
Under the assumptions of Lemma~\ref{lem:blockLU1}--\ref{lem:blockLU2},
there exists $b\in\N$ and block-lower triangular $L_\eps^{-1}\in\BB(d,b)$ with $L_\eps^{-1}(i,i)=I$, which is additionally block-banded in the sense $L_\eps^{-1}(i,j)=0$ for $|i-j|>b$ such that
  \begin{align*}
   \norm{L^{-1}-L_\eps^{-1}}{2}\leq \eps.
  \end{align*}
  The approximation $L_\eps^{-1}$ is invertible such that $\sup_{\eps>0}(\norm{L_\eps}{2}+\norm{L_\eps^{-1}}{2})<\infty$.
\end{lemma}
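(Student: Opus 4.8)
## Proof Plan for Lemma~\ref{lem:blockLU3}

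The plan is to mirror the strategy used in Lemma~\ref{lem:blockLU2}, but now for the $L$-factor instead of the $U$-factor. The key observation is the transposition identity~\eqref{eq:lut}, namely $\widetilde L = U^T D^{-T}$ and $\widetilde U = D^T L^T$, where $M^T = \widetilde L \widetilde U$ is the normalized block-$LU$-factorization of the transposed matrix. Since $M^T$ is again bounded, elliptic, block-banded with bandwidth $b_0$, and—crucially—still lies in $\BB(d,b_0)$ (the metric $d$ is symmetric, so transposition preserves bandedness), Lemma~\ref{lem:blockLU2} applies verbatim to $M^T$. This yields, for any $\eps>0$, a block-upper triangular $\widetilde U_\eps^{-1} \in \BB(d,b)$, additionally block-banded with bandwidth $b$, such that $\norm{\widetilde U^{-1} - \widetilde U_\eps^{-1}}{2} \leq \eps$, with $\sup_{\eps>0}(\norm{\widetilde U_\eps}{2} + \norm{\widetilde U_\eps^{-1}}{2}) < \infty$.

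First I would record that $L^{-1} = (L^T)^{-T} = (\widetilde U^{-1} D^{-T})^{-T}$ — wait, more carefully: from $\widetilde U = D^T L^T$ we get $L^T = D^{-T} \widetilde U$, hence $L^{-T} = \widetilde U^{-1} D^T$, and therefore $L^{-1} = (\widetilde U^{-1} D^T)^T = D \widetilde U^{-T}$. Thus $L^{-1} = D\,(\widetilde U^{-1})^T$. Now define the candidate approximation
\begin{align*}
 L_\eps^{-1} := D_\eps \, (\widetilde U_\eps^{-1})^T,
\end{align*}
where $D_\eps \in \BB(d,b)$ is the block-diagonal approximation to $D$ furnished by Lemma~\ref{lem:blockLU2} (applied to $M^T$, but the block-diagonal $D$ of $M^T$ equals $D^T = D$ in the relevant sense — here I should be slightly careful and instead invoke Lemma~\ref{lem:blockLU2} for the original $M$ to get $D_\eps$, which is legitimate since both hypotheses hold). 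Since $(\widetilde U_\eps^{-1})^T$ is block-lower triangular and block-banded with bandwidth $b$, and $D_\eps$ is block-diagonal, the product $L_\eps^{-1}$ is block-lower triangular and, by Lemma~\ref{lem:bandmult}, lies in $\BB(d,b')$ for some $b'$ depending only on $b$; it is likewise block-banded with bandwidth at most $b$. The normalization $L_\eps^{-1}(i,i) = I$ requires a small adjustment: the diagonal block of $D_\eps (\widetilde U_\eps^{-1})^T$ is $D_\eps(i,i) \widetilde U_\eps^{-1}(i,i)^T$, which approximates $D(i,i) \widetilde U^{-1}(i,i)^T = D(i,i) U(i,i)^{-T} = I$ up to $O(\eps)$ in the $\norm{\cdot}{2}$-sense; I would simply replace the diagonal blocks of $L_\eps^{-1}$ by $I$, incurring only an additional $O(\eps)$ error and preserving all the structural properties.

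The error estimate then follows by the triangle inequality, exactly as in~\eqref{eq:err4}:
\begin{align*}
 \norm{L^{-1} - L_\eps^{-1}}{2} \leq \norm{D(\widetilde U^{-1} - \widetilde U_\eps^{-1})^T}{2} + \norm{(D - D_\eps)(\widetilde U_\eps^{-1})^T}{2} \lesssim \norm{D}{2}\eps + \norm{\widetilde U_\eps^{-1}}{2}\eps,
\end{align*}
and since $\norm{D}{2}$ is bounded by Lemma~\ref{lem:blockLU1} and $\norm{\widetilde U_\eps^{-1}}{2}$ is bounded uniformly in $\eps$ by Lemma~\ref{lem:blockLU2}, the hidden constant is $\eps$-independent; absorbing the diagonal-normalization error keeps this. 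Finally, invertibility of $L_\eps^{-1}$ with uniformly bounded inverse follows because $L^{-1}$ is invertible with bounded inverse (Lemma~\ref{lem:blockLU1} gives $\norm{L}{2} < \infty$), so for $\eps$ small enough a Neumann-series / perturbation argument gives $\norm{L_\eps}{2} \leq 2\norm{L}{2}$, uniformly in $\eps$. I expect the only mildly delicate point to be the bookkeeping around the diagonal normalization $L_\eps^{-1}(i,i) = I$ — ensuring that enforcing it by hand does not destroy block-bandedness (it does not, since we only overwrite diagonal blocks) nor the uniform bounds (it does not, since the correction is $O(\eps)$ uniformly). Everything else is a direct transcription of the proof of Lemma~\ref{lem:blockLU2} through the transposition identity~\eqref{eq:lut}.
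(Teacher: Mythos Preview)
Your proposal is correct and follows essentially the same route as the paper: apply Lemma~\ref{lem:blockLU2} to $M^T$ to obtain $\widetilde U_\eps^{-1}$, use the identity $L^{-1}=D\widetilde U^{-T}$ from~\eqref{eq:lut} to define $L_\eps^{-1}:=D_\eps\widetilde U_\eps^{-T}$ with the diagonal blocks overwritten by $I$, and then bound the error by the triangle inequality plus the diagonal-correction term. The paper likewise takes $D_\eps$ from Lemma~\ref{lem:blockLU2} applied to $M$ (not $M^T$), exactly as you settled on.
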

\begin{proof}
Recall that $M^T$ satisfies all the assumptions of Lemma~\ref{lem:blockLU1}--\ref{lem:blockLU2}.
Let $M^T=\widetilde L \widetilde U$. We apply Lemma~\ref{lem:blockLU1} to $M^T$ to obtain an approximation $\widetilde U_\eps^{-1}\in\BB(d,b)$, 
block-banded with bandwidth $b$, bounded with bounded inverse (uniformly in $\eps$) such that
\begin{align*}
 \norm{\widetilde U^{-1}-\widetilde U_\eps^{-1}}{2}\leq \eps.
\end{align*}
The identity~\eqref{eq:luid} shows $L^{-1}=D\widetilde U^{-T}$ and thus motivates the definition
\begin{align*}
 L_\eps^{-1}(i,j):=\begin{cases} (D_\eps\widetilde U_\eps^{-T})(i,j)& i\neq j,\\
                    I &i=j.
                   \end{cases}
\end{align*}
with $D_\eps\in\BB(d,b)$ from Lemma~\ref{lem:blockLU1} applied to $M$. Lemma~\ref{lem:bandmult} shows $L_\eps^{-1}\in \BB(d)$ and $L_\eps^{-1}$ is also block-banded with bandwidth $b$.
We obtain with the approximation estimates from Lemma~\ref{lem:blockLU1}
\begin{align*}
 \norm{L_\eps^{-1}-L^{-1}}{2}&\leq \norm{D_\eps\widetilde U_\eps^{-T} - D\widetilde U^{-T}}{2} + \sup_{i\in\N}\norm{(D_\eps\widetilde U_\eps^{-T})(i,i)-I}{2}
 \end{align*}
 
 The first term on the right-hand side is bounded by
 \begin{align*}
 \norm{D_\eps\widetilde U_\eps^{-T} - D\widetilde U^{-T}}{2}&\leq \norm{(D_\eps-D)\widetilde U_\eps^{-T}}{2}+\norm{D(\widetilde U_\eps^{-T}-\widetilde U^{-T})}{2}\\
 &\leq 
 \eps(\norm{\widetilde U^{-1}}{2}+\eps)+\norm{D}{2}\eps\lesssim \eps.
\end{align*}
The second term satisfies
\begin{align*}
  \sup_{i\in\N}\norm{(D_\eps\widetilde U_\eps^{-T})(i,i)-I}{2}\leq \norm{D_\eps\widetilde U_\eps^{-T}-L}{2}=\norm{D_\eps\widetilde U_\eps^{-T} - D\widetilde U^{-T}}{2}\lesssim \eps.
\end{align*}
Choosing $\eps>0$ sufficiently small ensures that $L_\eps^{-1}$ is invertible with bounded inverse uniformly in $\eps>0$. This concludes the proof.
\end{proof}

\begin{theorem}\label{thm:blockLU}
 Under the assumptions of Lemma~\ref{lem:blockLU1}--\ref{lem:blockLU3}, 
  there exists an approximate block-$LDU$-decomposition $\norm{M-LDU}{2}\leq \eps$ for block-upper/block-lower triangular factors $L$, $U$ such that $L(i,i)=U(i,i)=I$ for all $i\in\N$,
and a block diagonal factor $D$. 
The factors $L,D,U\colon \ell_2\to \ell_2$ are bounded with bounded inverses uniformly in $\eps$ and satisfy $L^{-1},D,U^{-1}\in \BB(d,b)$ for some bandwidth $b$.
Moreover, $L^{-1},U^{-1}$ are block-banded with bandwidth $b$, i.e., $L^{-1}(i,j)=U^{-1}(i,j)=0$ for all $|i-j|>b$. Additionally, $D$ is elliptic~\eqref{eq:matelliptic}.
The constant $b$ depends only on $M$, $b_0$, and $\eps$.
\end{theorem}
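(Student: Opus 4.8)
The plan is to assemble the statement from Lemmas~\ref{lem:blockLU1}--\ref{lem:blockLU3}: the exact block-$LU$-factors of $M$ are bounded and boundedly invertible, and each of them has, by those lemmas, a banded approximation (in the metric $d$) with the same triangular or diagonal structure. First I would apply Lemma~\ref{lem:blockLU1} to obtain the exact block-$LU$-decomposition $M=L_0U_0$, with $L_0(i,i)=I$, block-banded with bandwidth $b_0$, and $L_0,U_0,L_0^{-1},U_0^{-1}$ all bounded on $\ell_2$; let $D_0$ be the block-diagonal of $U_0$ (bounded and elliptic with bounded elliptic inverse, again by Lemma~\ref{lem:blockLU1}) and $V_0:=D_0^{-1}U_0$, so that $M=L_0D_0V_0$ with $L_0(i,i)=V_0(i,i)=I$ \emph{exactly}. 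It then remains to replace $L_0,D_0,V_0$ by factors of the same shape whose inverses lie in one common class $\BB(d,b)$, at the cost of an $O(\eps)$ error.

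For the $L$- and $D$-factors I would quote the lemmas directly. Lemma~\ref{lem:blockLU3} (for this $M$) gives block-lower triangular $L_\eps^{-1}\in\BB(d,b)$, block-banded with bandwidth $b$, $L_\eps^{-1}(i,i)=I$, with $\norm{L_0^{-1}-L_\eps^{-1}}{2}\le\eps$ and $\sup_\eps(\norm{L_\eps}{2}+\norm{L_\eps^{-1}}{2})<\infty$; Lemma~\ref{lem:blockLU2} gives block-diagonal $D_\eps\in\BB(d,b)$, bounded and elliptic, with $\norm{D_0-D_\eps}{2}\le\eps$, hence boundedly invertible uniformly in $\eps$ once $\eps$ is small. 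For the $U$-factor I use $V_0^{-1}=U_0^{-1}D_0$ and the banded approximation $U_\eps^{-1}\in\BB(d,b)$ of $U_0^{-1}$ from Lemma~\ref{lem:blockLU2}: the product $U_\eps^{-1}D_\eps$ again lies in $\BB(d,b)$ and is block-banded (Lemma~\ref{lem:bandmult}), and by the estimate $\norm{\widetilde D-D_0^{-1}}{2}\lesssim\eps$ from the proof of Lemma~\ref{lem:blockLU2} (with $\widetilde D$ the block-diagonal of $U_\eps^{-1}$) its diagonal blocks $U_\eps^{-1}(i,i)D_\eps(i,i)$ are $O(\eps)$-close to $D_0^{-1}(i,i)D_0(i,i)=I$. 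Redefining every diagonal block to equal $I$ therefore perturbs the matrix by $O(\eps)$ in $\norm{\cdot}{2}$ (the perturbation is block-diagonal, so its $\ell_2$-norm is the supremum of its block norms) and produces block-upper triangular $V_\eps^{-1}\in\BB(d,b)$, block-banded with bandwidth $b$, with $V_\eps^{-1}(i,i)=I$ and $\norm{V_0^{-1}-V_\eps^{-1}}{2}\lesssim\eps$; since $V_0$ is boundedly invertible, so is $V_\eps^{-1}$ for small $\eps$, uniformly. Set $L:=L_\eps$, $D:=D_\eps$, $U:=(V_\eps^{-1})^{-1}$. The normalizations $L(i,i)=U(i,i)=I$ hold because a block-triangular matrix and its inverse have diagonal blocks that are mutual inverses; $L^{-1},D,U^{-1}\in\BB(d,b)$ after taking $b$ to be the largest of the finitely many bandwidths that appeared; and tracing the bandwidth dependencies through Lemmas~\ref{lem:invapprox}--\ref{lem:blockLU3} shows that $b$ depends only on $M$, $b_0$, $\eps$.

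Finally, for the error bound I would pass to inverses: $M^{-1}=V_0^{-1}D_0^{-1}L_0^{-1}$ while $(LDU)^{-1}=V_\eps^{-1}D_\eps^{-1}L_\eps^{-1}$, so a three-term telescoping split, combined with $\norm{D_0^{-1}-D_\eps^{-1}}{2}\le\norm{D_0^{-1}}{2}\norm{D_\eps^{-1}}{2}\norm{D_0-D_\eps}{2}\lesssim\eps$ and the uniform $\ell_2$-bounds on all six factors, gives $\norm{M^{-1}-(LDU)^{-1}}{2}\lesssim\eps$. As $\norm{M}{2}$ and $\norm{LDU}{2}\le\norm{L}{2}\norm{D}{2}\norm{U}{2}$ are bounded uniformly in $\eps$, the identity $M-LDU=M\big(M^{-1}-(LDU)^{-1}\big)LDU$ yields $\norm{M-LDU}{2}\lesssim\eps$, and shrinking $\eps$ by the fixed constant at the outset gives the bound $\le\eps$ of the statement; ellipticity of $D=D_\eps$ and boundedness with uniformly bounded inverse of each of $L,D,U$ have been recorded along the way. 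I expect the one genuinely delicate point to be the $U$-factor: renormalizing its diagonal blocks to the identity without leaving $\BB(d,b)$ and with an error of order $\eps$, and then checking that the normalized banded approximate inverse is still boundedly invertible uniformly in $\eps$; everything else is bookkeeping of bandwidths and of the uniformity of the constants.
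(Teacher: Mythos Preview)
Your proposal is correct and follows essentially the same route as the paper: set $L^{-1}:=L_\eps^{-1}$, $D:=D_\eps$, and obtain the normalized $U^{-1}$ by forming $U_\eps^{-1}D_\eps$ and resetting its diagonal blocks to $I$; then bound $\norm{M^{-1}-U^{-1}D^{-1}L^{-1}}{2}$ and convert to $\norm{M-LDU}{2}$. The only cosmetic difference is the very last step: you bound $\norm{LDU}{2}$ directly via $\norm{L}{2}\norm{D}{2}\norm{U}{2}$ (using the uniform bounds already established), whereas the paper writes $\norm{LDU}{2}\le\norm{M}{2}+\norm{M-LDU}{2}$ and absorbs the second term for small $\eps$; both arguments are valid and yield the same conclusion.
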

\begin{proof}
To avoid confusion, we will denote the $LU$-factorization of $M$ from Lemma~\ref{lem:blockLU1} by $\widetilde L$ and $\widetilde U$, with diagonal matrix $\widetilde D$.
 With Lemma~\ref{lem:blockLU2}--\ref{lem:blockLU3}, we set $D:=D_\eps$ and $L^{-1}:= L_\eps^{-1}$. This ensures $L^{-1},D\in\BB(d)$ and that $L^{-1}$ is block-banded. Moreover, $D$ is bounded and elliptic.
This motivates the definition 
\begin{align*}
 U^{-1}(i,j):=\begin{cases} ( U_\eps^{-1}D_\eps)(i,j)&i\neq j,\\
               I&i=j.
              \end{cases}
\end{align*}
Lemma~\ref{lem:bandmult} shows that $U^{-1}\in\BB(d)$ with bandwidth depending on $\eps$ and moreover $U^{-1}$ is block-banded.
We obtain
\begin{align*}
 \norm{M^{-1}-U^{-1}D^{-1}L^{-1}}{2}&\leq\norm{M^{-1}-U_\eps^{-1}L_\eps^{-1}}{2} +\sup_{i\in\N}\norm{ ( U_\eps^{-1}D_\eps)(i,i)-I}{2}\\
 &\leq \norm{M^{-1}-U_\eps^{-1}L_\eps^{-1}}{2}+\norm{U_\eps^{-1}D_\eps -\widetilde U^{-1} \widetilde D}{2}.
\end{align*}
The first term on the right-hand side can be bounded by use of Lemma~\ref{lem:blockLU2}--\ref{lem:blockLU3} by
\begin{align*}
 \norm{M^{-1}-U_\eps^{-1}L_\eps^{-1}}{2}\leq \norm{\widetilde U^{-1}}{2}\norm{\widetilde L^{-1}-L_\eps^{-1}}{2}+ \norm{\widetilde U^{-1}-U_\eps^{-1}}{2}\norm{L_\eps^{-1}}{2}\lesssim \eps,
\end{align*}
where the hidden constant does not depend on $\eps>0$.
The second term can be bounded in a similar fashion by
\begin{align*}
 \norm{U_\eps^{-1}D_\eps -\widetilde U^{-1} \widetilde D}{2}\leq \norm{U_\eps^{-1}}{2}\norm{D_\eps - \widetilde D}{2}+\norm{U_\eps^{-1} -\widetilde U^{-1}}{2}\norm{\widetilde D}{2}\lesssim \eps
\end{align*}
with $\eps$-independent hidden constant.
Altogether we proved
\begin{align*}
  \norm{M^{-1}-U^{-1}D^{-1}L^{-1}}{2}\lesssim \eps.
\end{align*}
There holds
\begin{align*}
  \norm{M-LDU}{2}&\leq \norm{M}{2}\norm{LDU}{2} \norm{M^{-1}-U^{-1}D^{-1}L^{-1}}{2}\\
  &\leq(\norm{M}{2}^2+\norm{M}{2}\norm{LDU-M}{2}) \norm{M^{-1}-U^{-1}D^{-1}L^{-1}}{2}\\
  &  \lesssim   \norm{M}{2}^2\eps
  + \norm{M}{2}\norm{M-LDU}{2} \eps.
\end{align*}
Sufficiently small $\eps>0$ shows
\begin{align*}
  \norm{M-LDU}{2}\lesssim \eps,
\end{align*}
where the hidden constant does not depend on $\eps$. This concludes the proof.
\end{proof}

The following results establishes existence of a bounded $LU$-factorization for particular Jaffard class matrices.
\begin{theorem}\label{thm:LU}
Let $M\in\R^{\N\times \N}\in\JJ(d,\gamma,C)$ and additionally be elliptic~\eqref{eq:matelliptic}.
Then, $M$ has an $LU$-factorization such that $|L|,|U|,|L^{-1}|,|U^{-1}|\colon \ell_2\to\ell_2$ are bounded operators.
\end{theorem}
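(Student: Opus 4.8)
The goal is to upgrade the approximate block-$LDU$-decomposition of Theorem~\ref{thm:blockLU} to an exact $LU$-factorization of $M$. The plan is to first reduce to the block-banded setting: given $M\in\JJ(d,\gamma,C)$, pick a bandwidth $b$ and truncate $M$ to $M_b$ so that $\norm{M-M_b}{2}\leq \eps$ (possible by Lemma~\ref{lem:norm} applied to the tail, since the entries decay exponentially in $d$ and $d$ satisfies~\eqref{eq:Mdist}). For $\eps$ small enough relative to $C_{\rm ell}$, the matrix $M_b$ is still elliptic, bounded, and belongs to $\BB(d,b)$, so by Lemma~\ref{lem:blockLU1} it has an \emph{exact} block-$LU$-factorization $M_b=L_bU_b$ with $L_b,U_b$ block-banded with bandwidth $b$ and $L_b,U_b,L_b^{-1},U_b^{-1}$ bounded on $\ell_2$. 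The induced block structure can be chosen so that each block is a single index (the trivial block structure $n_i=i$), turning the \emph{block}-triangular factors into genuinely lower/upper-triangular matrices in the sense required for an $LU$-factorization.

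The second step is to control the decay of $L_b^{-1}$ and $U_b^{-1}$ in the metric $d$. Here I would invoke Lemma~\ref{lem:blockLU2}--\ref{lem:blockLU3} (or rather their proofs): those lemmas already show $L_b^{-1},U_b^{-1}$ can be approximated in $\norm{\cdot}{2}$ by matrices in $\BB(d,b')$ to any accuracy, but I want the \emph{exact} inverses to lie in a Jaffard class. For this I would use the identities $U_b^{-1}=D_b^{-1}\widetilde L_b^{-T}$ and $L_b^{-1}=D_b^{-1}\widetilde U_b^{-T}$ from~\eqref{eq:luid}/\eqref{eq:lut}, together with the fact that $M_b^{-1}\in\JJ(d,\widetilde\gamma,\widetilde C)$ by Lemma~\ref{lem:invbound}: the $(i,j)$-block of $L_b^{-1}$, for $i\geq j$, equals the corresponding block of $M_b^{-1}$ up to multiplication by block-banded bounded factors (via the Schur-complement recursion, each entry of $L_b$ and $U_b$ is a finite algebraic combination of entries of $M_b$ and of principal-submatrix inverses, all of which inherit exponential $d$-decay from Lemma~\ref{lem:invbound}). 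Carefully bookkeeping the finitely-many band-widths that accumulate, one concludes $L_b,U_b,L_b^{-1},U_b^{-1}\in\JJ(d,\gamma'',C'')$ for suitable parameters, hence $|L_b|,|U_b|,|L_b^{-1}|,|U_b^{-1}|$ are bounded on $\ell_2$ by Lemma~\ref{lem:norm}.

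The final step is to pass from $M_b$ back to $M$. Write $M=M_b+E$ with $\norm{E}{2}\leq\eps$. Then $M = L_b U_b + E = L_b(U_b + L_b^{-1}E) = L_b(I + L_b^{-1}E U_b^{-1})U_b$, and for $\eps$ small enough (so that $\norm{L_b^{-1}}{2}\norm{U_b^{-1}}{2}\eps<1$) the middle factor $I+L_b^{-1}EU_b^{-1}$ is invertible and has its own $LU$-factorization — but that is circular. Instead I would keep $\eps$ free and iterate: rather than truncating once, apply the construction to $M$ directly using the Schur-complement recursion, which is well-defined for $M$ itself since all principal submatrices $M[k]$ are invertible (ellipticity). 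The point of the truncation argument above is only to \emph{locate} the bandwidth/decay constants; once those are pinned down uniformly, one runs the exact recursion on $M$ and checks that the resulting $L,U$ satisfy the same Jaffard-class bounds by a perturbation/limiting argument as $\eps\to 0$. The main obstacle is precisely this last point: showing that the exact $LU$-factors of $M$ (not of a truncation) inherit exponential $d$-decay. The cleanest route is probably to show the approximate factors $L_\eps,U_\eps$ from Theorem~\ref{thm:blockLU} form a Cauchy sequence in a weighted norm that controls $\JJ(d,\gamma',\cdot)$ membership as $\eps\to 0$, using the uniform-in-$\eps$ bounds $\sup_\eps(\norm{L_\eps}{2}+\norm{L_\eps^{-1}}{2}+\norm{U_\eps}{2}+\norm{U_\eps^{-1}}{2})<\infty$ and uniform bandwidths, and then identify the limit with the genuine $LU$-factorization of $M$ by uniqueness of normalized $LU$-factorizations.
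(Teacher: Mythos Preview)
Your approach has a genuine gap that you yourself identify: the passage from the approximate factors $L_\eps,U_\eps$ of Theorem~\ref{thm:blockLU} (or of the truncated $M_b$) back to the exact $LU$-factors of $M$ is never carried out. The Cauchy-sequence argument you sketch would require controlling the factors in a weighted norm that detects Jaffard-class membership, and you would need to show that the exact normalized $LU$-factorization depends continuously on $M$ in such a norm --- none of this is automatic, and the circularity you flag (``but that is circular'') is real.

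The paper's proof takes a completely different and much shorter route. It does not go through the approximate block-$LDU$ machinery at all. Instead it combines Lemma~\ref{lem:invbound} with an external result (cited as~\cite[Theorem~2]{LU}) to obtain directly that $|L^{-1}|$ and $|U^{-1}|$ are bounded on $\ell_2$. The mechanism behind this is visible in identity~\eqref{eq:luid}: the $k$-th column of $U^{-1}$ coincides with the last column of $M[k]^{-1}$, so entrywise $|U^{-1}_{ij}|\leq \overline{M}_{ij}$ with $\overline{M}$ the matrix from Lemma~\ref{lem:invbound}. Since $\overline{M}\in\JJ$ and hence $\norm{\overline{M}}{2}<\infty$ by Lemma~\ref{lem:norm}, boundedness of $|U^{-1}|$ follows immediately; the same argument applied to $M^T$ handles $|L^{-1}|$. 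The bounds on $|L|$ and $|U|$ are then one line each: $|L|=|MU^{-1}|$ gives $\norm{|L|}{2}\leq\norm{|M|}{2}\norm{|U^{-1}|}{2}$, and symmetrically for $|U|$.

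In short, you are trying to bootstrap an exact result from the approximate one, which is the wrong direction here; the paper instead exploits that the entries of the exact $L^{-1},U^{-1}$ are already dominated entrywise by the Jaffard-class matrix $\overline{M}$ supplied by Lemma~\ref{lem:invbound}.
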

\begin{proof}
Lemma~\ref{lem:invbound} together with~\cite[Theorem~2]{LU} show that $|L^{-1}|,|U^{-1}|\colon \ell_2\to\ell_2$ are bounded operators.
From this and Lemma~\ref{lem:norm}, we infer the estimates $\norm{|L|}{2}$ $=$ $\norm{|MU^{-1}|}{2}$ $\leq$ $\norm{|M|}{2}\norm{|U^{-1}|}{2}<\infty$ as well as 
$\norm{|U|}{2}=\norm{|L^{-1}M|}{2}\leq \norm{|L^{-1}|}{2}\norm{|M|}{2}<\infty$ and conclude the proof.
\end{proof}

The following three theorems connect existence of bounded $LU$-factors with general quasi-orthogonality.
\begin{theorem}\label{thm:luqo}
Let there exist a Riesz basis $(w_n)_{n\in\N}$ of $\XX$ and a constant $C>0$ such that all $x=\sum_{n\in\N} \lambda_n w_n\in \XX$ satisfy
\begin{align}\label{eq:schauder}
C^{-1}\norm{x}{2}^2\leq \sum_{n\in\N} \lambda_n^2\leq C\norm{x}{2}^2
\end{align}
and there holds $\XX_\ell={\rm span}\set{w_n}{n\in\{1,\ldots,N_\ell\}}$ for some constants $N_\ell\in\N$ with $N_{\ell}< N_{\ell+1}$. 
If $M_{ij}:=a(w_j,w_i)$ and $M\in\JJ$, then there holds general quasi-orthogonality~\eqref{eq:qosum}.
The constant $C_{\rm qo}$ depends only on the basis $(w_n)$, $a(\cdot,\cdot)$, $C$, $c_0$, the Jaffard class $\JJ$, and $\XX$.
\end{theorem}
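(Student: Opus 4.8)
Here is how I would go about proving Theorem~\ref{thm:luqo}.

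The plan is to transport the whole problem to the sequence space $\ell_2$ via the Riesz basis $(w_n)$, to factorize the associated infinite system matrix with the help of Theorem~\ref{thm:LU}, and then to exploit the following observation: after the change of unknown $z=Ux$, the non-symmetric Galerkin method degenerates into plain truncation of $\ell_2$-coordinates, for which the Pythagorean orthogonality~\eqref{eq:orth} holds verbatim. This in fact yields the strong form~\eqref{eq:qosum}.

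First I would fix the dictionary between $\XX$ and $\ell_2$. The coordinate map $J\colon\XX\to\ell_2$, $J\big(\sum_n\lambda_n w_n\big)=(\lambda_n)_n$, is an isomorphism with $\norm{Jx}{\ell_2}^2\simeq\norm{x}{\XX}^2$ by~\eqref{eq:schauder}. Hence $G:=(f(w_n))_n\in\ell_2$ (it represents the bounded functional $f\circ J^{-1}$), and the matrix $M$ with $M_{ij}=a(w_j,w_i)$ satisfies $M(Jx)\cdot(Jx)=a(x,x)$ for all $x\in\XX$; together with boundedness and ellipticity~\eqref{eq:aelliptic} of $a$ and with~\eqref{eq:schauder}, this shows that $M\colon\ell_2\to\ell_2$ is bounded and elliptic~\eqref{eq:matelliptic}. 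Setting $x:=Ju$ and, for the Galerkin approximations, $x^{(\ell)}:=Ju_\ell$ (which is supported in the first $N_\ell$ coordinates since $u_\ell\in\XX_\ell$), the problems~\eqref{eq:solutions} become $Mx=G$ and $M^{(\ell)}x^{(\ell)}=G^{(\ell)}$, where $M^{(\ell)}$, $G^{(\ell)}$ denote the restrictions to the first $N_\ell$ indices; note $M^{(\ell)}$ is elliptic, hence invertible.

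Next I would invoke Theorem~\ref{thm:LU}: since $M\in\JJ$ is elliptic, it admits an $LU$-factorization $M=LU$ with $L,U,L^{-1},U^{-1}\colon\ell_2\to\ell_2$ bounded. As in the proof of Lemma~\ref{lem:blockLU1}, the triangular structure gives $M^{(\ell)}=L^{(\ell)}U^{(\ell)}$ together with $(L^{(\ell)})^{-1}=(L^{-1})^{(\ell)}$ and $(U^{(\ell)})^{-1}=(U^{-1})^{(\ell)}$, so $x^{(\ell)}=(U^{(\ell)})^{-1}(L^{(\ell)})^{-1}G^{(\ell)}$. Let $P_\ell\colon\ell_2\to\ell_2$ be the coordinate truncation onto $\{1,\ldots,N_\ell\}$ and put $z:=L^{-1}G\in\ell_2$. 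Because $L^{-1}$ is lower triangular, the first $N_\ell$ entries of $z$ coincide with $(L^{(\ell)})^{-1}G^{(\ell)}$; because $U^{-1}$ is upper triangular, its action on a vector supported in the first $N_\ell$ coordinates has no entries beyond index $N_\ell$. Combining these two facts yields
\begin{align*}
 x^{(\ell)}&=U^{-1}P_\ell z,\qquad x=U^{-1}z,\\
 x-x^{(\ell)}&=U^{-1}(I-P_\ell)z,\qquad x^{(k+1)}-x^{(k)}=U^{-1}(P_{k+1}-P_k)z,
\end{align*}
the last identity following by subtracting two instances of the previous one. This is the heart of the matter: the error $x-x^{(\ell)}$ and all increments $x^{(k+1)}-x^{(k)}$ are the images, under the one fixed bounded isomorphism $U^{-1}$, of the pairwise orthogonal pieces of the single vector $(I-P_\ell)z=\sum_{k\ge\ell}(P_{k+1}-P_k)z$.

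To conclude, I would run a two-sided estimate. The vectors $(P_{k+1}-P_k)z$, $k\ge\ell$, have pairwise disjoint supports and sum to $(I-P_\ell)z$, so $\sum_{k\ge\ell}\norm{(P_{k+1}-P_k)z}{\ell_2}^2=\norm{(I-P_\ell)z}{\ell_2}^2$. Applying $\norm{U^{-1}v}{\ell_2}\le\norm{U^{-1}}{2}\norm{v}{\ell_2}$ and $\norm{v}{\ell_2}\le\norm{U}{2}\norm{U^{-1}v}{\ell_2}$ to each summand, and abbreviating $\kappa:=\norm{U}{2}\norm{U^{-1}}{2}\ge 1$, gives
\begin{align*}
 \kappa^{-2}\,\norm{x-x^{(\ell)}}{\ell_2}^2\ \le\ \sum_{k=\ell}^\infty\norm{x^{(k+1)}-x^{(k)}}{\ell_2}^2\ \le\ \kappa^{2}\,\norm{x-x^{(\ell)}}{\ell_2}^2.
\end{align*}
Transporting this back through $J^{-1}$ and using the norm equivalence~\eqref{eq:schauder} on both sides produces~\eqref{eq:qosum}, with $C_{\rm qo}$ depending only on the Riesz constant $C$ and on $\kappa$; and $\kappa$ is, via Theorem~\ref{thm:LU} and Lemma~\ref{lem:invbound}, controlled by $a(\cdot,\cdot)$, $c_0$, the Jaffard class $\JJ$, and $\XX$. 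The one genuinely delicate point is the triangular-truncation identity for $x^{(\ell)}$: it holds precisely because $L$ and $U$ (hence $L^{-1},U^{-1}$) are triangular with respect to the \emph{same} ordering of the basis that defines the nested spaces $\XX_\ell$, so that leading principal submatrices of $M$ factorize compatibly with the Galerkin restriction. Everything else is the routine bookkeeping sketched above.
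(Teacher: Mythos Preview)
Your proof is correct and follows essentially the same route as the paper's own argument. Both proofs reduce to the same key observation: after passing to $\ell_2$ via the Riesz basis and invoking Theorem~\ref{thm:LU}, the Galerkin approximation in the transformed variable $U\lambda$ (equivalently, your $z=L^{-1}G=Ux$) is nothing but coordinate truncation, so that the increments $u_{k+1}-u_k$ correspond to pairwise $\ell_2$-orthogonal blocks of $Ux$. Your formulation via $x^{(\ell)}=U^{-1}P_\ell z$ is a clean way to package the triangular-restriction identities $(U\lambda)_i=(U\lambda(\ell))_i$ for $i\le N_\ell$ that the paper verifies directly, but the substance is identical.
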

\begin{proof}
The $N_\ell$ induce a block structure.
By~\eqref{eq:schauder}, the matrix $M$ is bounded and elliptic~\eqref{eq:matelliptic}.
Since $M\in\JJ$, Theorem~\ref{thm:LU} shows that there exists a bounded $LU$-factorization $M=LU$. Let $u=\sum_{n=1}^\infty \lambda_n w_n$ and $u_\ell=\sum_{n=1}^\infty \lambda(\ell)_n w_n$.
With $\lambda:=(\lambda_1,\lambda_2,\ldots)$ and $\lambda(\ell):=(\lambda(\ell)_1,\lambda_2(\ell),\ldots,\lambda(\ell)_{N_\ell},0,\ldots)$, $F:=(\dual{f}{w_1},\dual{f}{w_2},\ldots)\in\R^\N$,
there holds
\begin{align}\label{eq:l2sol}
M\lambda=F\quad\text{\rm and}\quad M[\ell]\lambda(\ell)=F[\ell]\quad\text{\rm for all }\ell\in\N,
\end{align}
where $F[\ell]:=(F_1,\ldots,F_{N_\ell},0,\ldots)$. Moreover, there holds $M[\ell]=L[\ell]U[\ell]$ for any $LU$-factorization.
Due to the triangular structure of $L$, there holds for all $1\leq i\leq N_\ell$ that
\begin{align*}
(L[\ell]U[\ell]\lambda(\ell))_i=(M[\ell]\lambda(\ell))_i=F[\ell]_i=F_i=(M\lambda)_i=(LU\lambda)_i=(L[\ell]U\lambda)_i.
\end{align*}
Since $L$ and hence also $L[\ell]$ is regular, this shows that $(U\lambda)_i=(U[\ell]\lambda(\ell))_i$ for all $1\leq i\leq N_\ell$.
Moreover, there holds $U[\ell]\lambda(\ell)=U\lambda(\ell)$ due to the upper triangular structure of $U$. Altogether, this proves
\begin{align*}
	(U\lambda)_i=(U\lambda(\ell))_i\quad\text{for all }1\leq i\leq N_\ell\quad\text{and}\quad (U\lambda(\ell))_i=0\quad\text{for all }i>N_\ell.
\end{align*}
Hence, we have, by use of the boundedness of $U$ and $U^{-1}$ and~\eqref{eq:schauder}, that
\begin{align*}
\norm{u_{k+1}-u_k}{\XX}\simeq \norm{\lambda(k+1)-\lambda(k)}{\ell_2}\simeq \norm{U\lambda(k+1)-U\lambda(k)}{\ell_2}=
\norm{(U\lambda)|_{\{N_k,\ldots,N_{k+1}-1\}}}{\ell_2}.
\end{align*}
This shows
\begin{align*}
\sum_{k=\ell}^\infty \norm{u_{k+1}-u_k}{\XX}^2&\simeq \sum_{k=\ell}^\infty \norm{(U\lambda)|_{\{N_k,\ldots,N_{k+1}-1\}}}{\ell_2}^2= \norm{(U\lambda)|_{\{N_\ell,\ldots\}}}{\ell_2}^2\\
&=\norm{U\lambda - U\lambda(\ell)}{\ell_2}^2\simeq \norm{\lambda - \lambda(\ell)}{\ell_2}^2\simeq \norm{u-u_\ell}{\XX}^2.
\end{align*}
Hence, we conclude the proof.
\end{proof}

\begin{theorem}\label{thm:luqo2}
	With the spaces and basis functions from Theorem~\ref{thm:luqo} assume that for some $\eps>0$, there exists $M^\eps\in\R^{\N\times\N}$ such that $M_{ij}:=a(w_j,w_i)$,
	$M\in \R^{\N\times\N}$ satisfies $\norm{M-M^\eps}{2}\leq \eps$.
	If $M^\eps$ is elliptic~\eqref{eq:matelliptic} and $M^\eps\in\JJ$, then there holds general quasi-orthogonality~\eqref{eq:qosum2}.
	The constant $C_{\rm qo}>0$ depends only on the basis $(w_n)$, $a$, $C$, the Jaffard class $\JJ$, and $\XX$.
\end{theorem}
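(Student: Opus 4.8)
The plan is to follow the proof of Theorem~\ref{thm:luqo} almost verbatim, replacing the exact $LU$-factorization of $M$ by the exact $LU$-factorization of the nearby matrix $M^\eps$ and then tracking the perturbation carefully. Since $M^\eps\in\JJ$ is elliptic~\eqref{eq:matelliptic}, Theorem~\ref{thm:LU} supplies a factorization $M^\eps=LU$ with $L,U,L^{-1},U^{-1}\colon\ell_2\to\ell_2$ bounded, and the triangular structure gives $M^\eps[\ell]=L[\ell]U[\ell]$ with $L[\ell]$ invertible and $\norm{L[\ell]^{-1}}{2}\le\norm{L^{-1}}{2}$. As in Theorem~\ref{thm:luqo} I would write $u=\sum_n\lambda_nw_n$, $u_\ell=\sum_{n\le N_\ell}\lambda(\ell)_nw_n$, $F_i=\dual{f}{w_i}$, so that $M\lambda=F$ and $M[\ell]\lambda(\ell)=F[\ell]$ (note $M$ is bounded on $\ell_2$ since $M^\eps$ is and $\norm{M-M^\eps}{2}\le\eps$); by~\eqref{eq:schauder} and ellipticity of $a$ one also has $\norm{u-u_k}{\XX}\simeq\norm{\lambda-\lambda(k)}{\ell_2}$ for all $k$.

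The core step is the perturbed version of the key identity from Theorem~\ref{thm:luqo}. Setting $v:=U\lambda$ and $v(k):=U\lambda(k)$, the same manipulation as there, but with $M^\eps=LU$ in place of $M=LU$, leaves the residual $(M^\eps-M)\lambda$ on the full equation and $(M^\eps-M)\lambda(k)$ on the truncated one; subtracting, restricting to the first $N_k$ coordinates, and multiplying by $L[k]^{-1}$ yields
\begin{align*}
 \norm{v(k)-P_kv}{\ell_2}\le\norm{L^{-1}}{2}\,\norm{M-M^\eps}{2}\,\norm{\lambda-\lambda(k)}{\ell_2}\lesssim\eps\,\norm{u-u_k}{\XX},
\end{align*}
where $P_k$ restricts to the first $N_k$ coordinates. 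Since $U$ is upper triangular and $\lambda(k)$ is supported in the first $N_k$ coordinates, $v(k)$ is supported there too; hence $v(k)=P_kv+e_k$ with $\norm{e_k}{\ell_2}\lesssim\eps\norm{u-u_k}{\XX}$. I expect this bookkeeping to be the main obstacle: getting the bound $\eps\norm{u-u_k}{\XX}$ for $e_k$, rather than the cruder $\eps\norm{u}{\XX}$, is exactly what makes the errors summable against the slack terms in~\eqref{eq:qosum2}, and it requires combining the full and truncated residuals \emph{before} estimating.

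It then remains to telescope. Boundedness of $U,U^{-1}$ together with~\eqref{eq:schauder} give $\norm{u_{k+1}-u_k}{\XX}\simeq\norm{v(k+1)-v(k)}{\ell_2}$, and $v(k+1)-v(k)=(P_{k+1}-P_k)v+(e_{k+1}-e_k)$. Summing over $k=\ell,\dots,\ell+N$ and using that the ranges of the $(P_{k+1}-P_k)$ are pairwise orthogonal,
\begin{align*}
 \sum_{k=\ell}^{\ell+N}\norm{u_{k+1}-u_k}{\XX}^2\lesssim\norm{(I-P_\ell)v}{\ell_2}^2+\sum_{k=\ell}^{\ell+N+1}\norm{e_k}{\ell_2}^2.
\end{align*}
For the first term, $(I-P_\ell)v=(I-P_\ell)(v-v(\ell))$ because $v(\ell)$ lives in the first $N_\ell$ coordinates, so it is $\le\norm{U}{2}\norm{\lambda-\lambda(\ell)}{\ell_2}\lesssim\norm{u-u_\ell}{\XX}$. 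For the second term, the bound on $e_k$ gives $\lesssim\eps^2\sum_{k=\ell}^{\ell+N+1}\norm{u-u_k}{\XX}^2$; the extra index $\ell+N+1$ is absorbed via nestedness of the $\XX_k$ and C\'ea's lemma ($\norm{u-u_{\ell+N+1}}{\XX}\lesssim\norm{u-u_{\ell+N}}{\XX}$), and then $\eps^2\le\eps$ for $\eps\le1$. Choosing $C_{\rm qo}$ to dominate all the hidden constants and rearranging gives exactly~\eqref{eq:qosum2}. Note that, unlike in Theorem~\ref{thm:luqo}, only the upper bound is needed here, which is why the weaker form~\eqref{eq:qosum2}, rather than the two-sided~\eqref{eq:qosum}, is the natural conclusion once $M$ is only known up to an $\eps$-perturbation.
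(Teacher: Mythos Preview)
Your proof is correct. The approach differs slightly from the paper's, though the underlying idea is the same. The paper proceeds more modularly: it introduces the auxiliary Galerkin approximations $\lambda^\eps(k)$ for the perturbed matrix $M^\eps$ (with right-hand side $f^\eps:=M^\eps\lambda$, so that $\lambda^\eps=\lambda$), applies Theorem~\ref{thm:luqo} as a black box to obtain the two-sided quasi-orthogonality~\eqref{eq:qosum} for the sequence $(\lambda^\eps(k))_k$, and then proves the single perturbation estimate $\norm{\lambda^\eps(k)-\lambda(k)}{\ell_2}\lesssim\eps\norm{\lambda-\lambda(k)}{\ell_2}$ via ellipticity of $a^\eps$ and the two Galerkin orthogonalities. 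You instead unpack the $LU$ argument of Theorem~\ref{thm:luqo} inline and track the perturbation directly at the level of $U\lambda(k)$ versus $P_kU\lambda$; the key residual $(M^\eps-M)(\lambda-\lambda(k))$ is exactly the same in both arguments. The paper's route is a bit shorter and reuses Theorem~\ref{thm:luqo} verbatim, while yours avoids the auxiliary sequence $\lambda^\eps(k)$ at the price of redoing the telescoping by hand; neither gains anything substantive over the other.
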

\begin{proof}
Note that $M^\eps\in\JJ$ implies $\norm{M^\eps}{2}<\infty$ (Lemma~\ref{lem:norm}).
	With the notation from the proof of Theorem~\ref{thm:luqo}, we apply Theorem~\ref{thm:luqo} to the bilinear form $a^\eps\colon \ell_2\times\ell_2\to\R$, $a^\eps(x,y):=\dual{M^\eps x}{y}_{\ell_2}$
	and $f^\eps:=M^\eps \lambda\in\ell_2$. Ellipticity of and boundedness of $M^\eps$ implies boundedness and ellipticity~\eqref{eq:aelliptic} of $a^\eps(\cdot,\cdot)$.
	We use the $\ell_2$ unit vectors as the Riesz basis to obtain with Theorem~\ref{thm:luqo}
	\begin{align}\label{eq:lambdaqo}
		\sum_{k=\ell}^\infty \norm{\lambda^\eps(k+1)-\lambda^\eps(k)}{\ell_2}^2 \simeq \norm{\lambda - \lambda^\eps(\ell)}{\ell_2}^2.
	\end{align}
	Here, we used that $\lambda^\eps=\lambda$ by definition of $f^\eps$.
	
	We identify vectors in $\R^n$ with vectors in $\R^\N$ by adding zeros. Then, there holds
	\begin{align*}
		\norm{\lambda^\eps(k)-\lambda(k)}{\ell_2}^2&\lesssim a^\eps(\lambda^\eps(k)-\lambda(k),\lambda^\eps(k)-\lambda(k))\\
		&= a^\eps(\lambda-\lambda(k),\lambda^\eps(k)-\lambda(k))\\
		&= \dual{(M^\eps-M)(\lambda-\lambda(k))}{\lambda^\eps(k)-\lambda(k)}_{\ell_2}\\
		&\leq \norm{M^\eps-M}{2}\norm{\lambda-\lambda(k)}{\ell_2}\norm{\lambda^\eps(k)-\lambda(k)}{\ell_2}.
	\end{align*}
	Hence, we have
	\begin{align*}
		\big|\norm{\lambda^\eps(k+1)-\lambda^\eps(k)}{\ell_2}-\norm{\lambda(k+1)-\lambda(k)}{\ell_2}\big|\lesssim\eps(\norm{\lambda-\lambda(k)}{\ell_2}+\norm{\lambda-\lambda(k+1)}{\ell_2}).
	\end{align*}
	With~\eqref{eq:lambdaqo}, this concludes the proof.
\end{proof}
\begin{theorem}\label{thm:luqo3}
  With the spaces and basis function from Theorem~\ref{thm:luqo} assume that there exists another Riesz basis $(v_n)_{n\in\N}$ which satisfies the same conditions as $(w_n)$ in Theorem~\ref{thm:luqo}.
  Assume that for some $\eps>0$, there exists $M^\eps\in\R^{\N\times\N}$ such that $M_{ij}:=a(v_j,w_i)$, $M\in \R^{\N\times\N}$ satisfies $\norm{M-M^\eps}{2}\leq \eps$.
	If $M$ and $M^\eps$ are elliptic~\eqref{eq:matelliptic} and $M^\eps\in\JJ$, then there holds general quasi-orthogonality~\eqref{eq:qosum2}.
	The constant $C_{\rm qo}>0$ depends only on the basis $(w_n)$, $a$, $C_{\rm ell}$, $C$, the Jaffard class $\JJ$, and $\XX$.
\end{theorem}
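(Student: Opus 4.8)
The plan is to transport the entire problem to the sequence space $\ell_2$ by means of the \emph{trial} basis $(v_n)_{n\in\N}$ and then invoke Theorem~\ref{thm:luqo2}. First I would expand $u=\sum_{n\in\N}\mu_n v_n$ and $u_\ell=\sum_{n=1}^{N_\ell}\mu(\ell)_n v_n$, which is legitimate since $(v_n)$ is a Riesz basis with $\XX_\ell={\rm span}\{v_n:n\le N_\ell\}$ for the \emph{same} integers $N_\ell$ as for $(w_n)$ (they must coincide because $N_\ell=\dim\XX_\ell$). Testing the equations~\eqref{eq:solutions} against the functions $w_i$ and using that $\{w_i:i\le N_\ell\}$ spans $\XX_\ell$ turns the Galerkin equations into the matrix identities $M\mu=F$ and $M[\ell]\mu(\ell)=F[\ell]$ with $M_{ij}=a(v_j,w_i)$ and $F_i=\dual{f}{w_i}$. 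Since $(v_n)$ satisfies~\eqref{eq:schauder}, there holds $\norm{u_{k+1}-u_k}{\XX}\simeq\norm{\mu(k+1)-\mu(k)}{\ell_2}$ and $\norm{u-u_\ell}{\XX}\simeq\norm{\mu-\mu(\ell)}{\ell_2}$, so it suffices to prove~\eqref{eq:qosum2} for the coefficient vectors $\mu(k)$.

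Next I would read $M\mu=F$, $M[\ell]\mu(\ell)=F[\ell]$ as an abstract variational problem on the Hilbert space $\ell_2$ with the bilinear form $\widetilde a(x,y):=\dual{Mx}{y}_{\ell_2}$, the functional $\widetilde f:=\dual{F}{\cdot}_{\ell_2}$ (note $F\in\ell_2$, e.g.\ because $F=M\mu$ with $M\colon\ell_2\to\ell_2$ bounded), the nested subspaces $\R^{N_\ell}\hookrightarrow\ell_2$, and the canonical unit-vector Riesz basis, which trivially fulfils the requirements of Theorem~\ref{thm:luqo} with block structure $(N_\ell)$. Boundedness of $M$ (a consequence of $M^\eps\in\JJ$ via Lemma~\ref{lem:norm} and $\norm{M-M^\eps}{2}\le\eps$) together with ellipticity~\eqref{eq:matelliptic} of $M$ make $\widetilde a$ bounded and elliptic~\eqref{eq:aelliptic}, and the exact and Galerkin solutions of this abstract problem are precisely $\mu$ and $\mu(\ell)$. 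The system matrix of $\widetilde a$ in the unit-vector basis is $M$ itself, its perturbation $M^\eps$ is elliptic and lies in $\JJ$, and $\norm{M-M^\eps}{2}\le\eps$ — exactly the hypotheses of Theorem~\ref{thm:luqo2}. Applying it gives~\eqref{eq:qosum2} for $\norm{\mu(k+1)-\mu(k)}{\ell_2}$ and $\norm{\mu-\mu(k)}{\ell_2}$, and translating back through the norm equivalence of $(v_n)$, while absorbing the constant $C$ of~\eqref{eq:schauder} into a new $C_{\rm qo}$, yields~\eqref{eq:qosum2} in the required $\XX$-norm form, with $C_{\rm qo}$ depending only on $(w_n)$, $a$, $C_{\rm ell}$, $C$, $\JJ$, and $\XX$.

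Given Theorems~\ref{thm:luqo}--\ref{thm:luqo2} (and ultimately the bounded $LU$-factorization of Theorem~\ref{thm:LU}), I do not expect a genuinely hard step here; the remaining points are bookkeeping. One should check that the two Riesz bases share the index sequence $(N_\ell)$ — so that the filtration of test functions and the expansion in trial functions are governed by one common block structure — and verify the identity $M[\ell]\mu(\ell)=F[\ell]$, which encodes that the first $N_\ell$ test equations already characterize $u_\ell$. It is also worth noting where the hypothesis that $M$ \emph{itself} be elliptic (not only $M^\eps$) enters: precisely to make the auxiliary variational problem on $\ell_2$ well-posed so that Theorem~\ref{thm:luqo2} applies. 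If one prefers to bypass Theorem~\ref{thm:luqo2}, the proof can be carried out directly: introduce the perturbed problem $M^\eps x=M^\eps\mu$ (whose exact solution is $\mu$), apply Theorem~\ref{thm:luqo} — legitimate since $M^\eps$ is elliptic and in $\JJ$ — to obtain $\sum_{k\ge\ell}\norm{\mu^\eps(k+1)-\mu^\eps(k)}{\ell_2}^2\simeq\norm{\mu-\mu^\eps(\ell)}{\ell_2}^2$, then bound $\norm{\mu(k)-\mu^\eps(k)}{\ell_2}\lesssim\eps\,\norm{\mu-\mu(k)}{\ell_2}$ by a first-Strang-lemma estimate using ellipticity of $M^\eps$ and the Galerkin identity $\dual{M(\mu-\mu(k))}{x}_{\ell_2}=0$ for $x$ supported on the first $N_k$ coordinates, and finally combine these via the triangle inequality together with the quasi-monotonicity $\norm{\mu-\mu(k)}{\ell_2}\lesssim\norm{\mu-\mu(\ell)}{\ell_2}$ for $k\ge\ell$ (itself a by-product of the displayed equivalence) and the trivial bound $\eps^2\le\eps$, exactly as in the proof of Theorem~\ref{thm:luqo2}.
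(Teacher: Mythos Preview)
Your proposal is correct and follows essentially the same approach as the paper: transport the problem to $\ell_2$ via the bilinear form $\widetilde a(x,y)=\dual{Mx}{y}_{\ell_2}$ with the unit-vector basis, invoke Theorem~\ref{thm:luqo2}, and translate back using the Riesz property of $(v_n)$. Your additional remarks on why the ellipticity of $M$ (not just $M^\eps$) is needed and the alternative direct route via Theorem~\ref{thm:luqo} are accurate but not part of the paper's proof.
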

\begin{proof}
	With the notation from the proof of Theorem~\ref{thm:luqo}, we apply Theorem~\ref{thm:luqo2} to the bilinear form  $\widetilde a\colon \ell_2\times\ell_2\to \R$
	defined by $\widetilde a(x,y):=\dual{Mx}{y}_{\ell_2}$. Let $M^\eps$ as in the statement and choose the $\ell_2$-unit vectors as the Riesz bases.
	Note that the Riesz bases condition~\eqref{eq:schauder} ensures that $M$ and $M^\eps$ are bounded operators in $\ell_2$ and thus Theorem~\ref{thm:luqo2} is applicable.
	Thus, we obtain for all $\ell,N\in\N$
	\begin{align}\label{eq:lambdaqo2}
		\sum_{k=\ell}^{\ell+N}\norm{\lambda(k+1)-\lambda(k)}{\ell_2}^2-C_{\rm qo}^\prime\eps\norm{\lambda-\lambda(k)}{\ell_2}^2 \leq C_{\rm qo}^\prime \norm{\lambda - \lambda(\ell)}{\ell_2}^2.
	\end{align}
	By definition, the vectors $\lambda$ and $\lambda(k)$ satisfy the equations~\eqref{eq:l2sol}.
	Definition of $M$ implies
	\begin{align*}
	 a(\sum_{j=1}^\infty \lambda_j v_j,w_i)=\dual{f}{w_i}\quad\text{for all }i\in\N.
	\end{align*}
	Hence, we know $\sum_{j=1}^\infty \lambda_j v_j=u$. Since $(v_n)$ and $(w_n)$ span the same subspaces $\XX_\ell$, we get analogously
	\begin{align*}
	  a(\sum_{j=1}^{N_\ell} \lambda(\ell)_j v_j,w_i)=\dual{f}{w_i}\quad\text{for all }1\leq i\leq N_\ell.
	\end{align*}
	This shows $\sum_{j=1}^{N_\ell} \lambda(\ell)_j v_j=u_\ell$. 
	Thus, by use of~\eqref{eq:schauder} for $(v_n)$, we rewrite~\eqref{eq:lambdaqo2} and conclude
	\begin{align*}
	 \sum_{k=\ell}^{\ell+N}\norm{u_{k+1}-u_k}{\XX}^2&-C^2C_{\rm qo}^\prime\eps\norm{u-u_k}{\XX}^2\\
	 &\leq \sum_{k=\ell}^{\ell+N}C\norm{\lambda(k+1)-\lambda(k)}{\ell_2}^2-CC_{\rm qo}^\prime\eps\norm{\lambda-\lambda(k)}{\ell_2}^2\\
	 &\leq CC_{\rm qo}^\prime\norm{\lambda - \lambda(\ell)}{\ell_2}^2\leq C^2C_{\rm qo}^\prime\norm{u-u_\ell}{\XX}^2.
	\end{align*}
	We conclude the proof with $C_{\rm qo}:= C^2C_{\rm qo}^\prime$ which is independent of $\eps>0$.
\end{proof}

\section{Metrics on hierarchical function spaces}\label{section:metric}
To connect the theory of Jaffard class/banded matrices from Section~\ref{section:exp} to the applications in Section~\ref{section:apps}, we require several metrics
on suitable functions spaces.

\medskip

Assume a set of functions with simply connected supports $B=\bigcup_{\ell\in\N} B_\ell$ on $\Omega$ with the following properties:
\begin{enumerate}
\item[(i)] $B_\ell\cap B_k=\emptyset$ and $\bigcup_{v\in B_\ell}{\rm supp}(v) =\overline{\Omega}$ for all $\ell,k\in\N$,
 \item[(ii)]$\#B_\ell\leq C_{\rm base}C_{\rm mesh}^{2\ell}$ and $\#\set{v\in B_\ell}{v|_{\Omega\setminus\Gamma}=0}\leq C_{\rm base}C_{\rm mesh}^{\ell}$
 for all $\ell\in\N$,
 \item[(iii)] $C_{\rm base}^{-1} C_{\rm mesh}^{-\ell}\leq {\rm diam}({\rm supp}(v))\leq C_{\rm base} C_{\rm mesh}^{-\ell}$ 
 for all $v\in B_\ell$, all $\ell\in \N$,
 \item[(iv)] ${\rm diam}({\rm supp}(v))\leq C_{\rm base} |{\rm supp}(v)|^{1/2}$  for all $v\in B_\ell$ with $v|_{\Omega\setminus\Gamma}\neq 0$ and all $\ell\in \N$.
\end{enumerate}
Assume an arbitrary but one-to-one numbering of all basis functions in $B$, i.e., $B=\{w_1,w_2,\ldots\}$. We define $L(w_i):= \ell$ if $w_i\in B_\ell$.
\begin{definition}\label{def:metric}
 Define the following functions:
 \begin{itemize}
 \item  $\delta\colon B\times B\to \{0,1\}$ is defined by $\delta(v,w)=1$ if $v\neq w$ and $\delta(v,w)=0$ if $v=w$.
 \item  $\delta_k\colon B\times B\to \N$ is defined by 
   \begin{align*}
   \delta_k(v,w):=\min\set{n\in\N}{&\exists T_1,\ldots,T_{n}\in\widehat\TT_{k},\,{\rm mid}(v)\cap T_1\neq \emptyset,\\
   &\,{\rm mid}(w)\cap T_n\neq \emptyset,\,T_i\cap T_{i+1}\neq \emptyset,\,i=1,\ldots,n-1},
  \end{align*}
  where ${\rm mid}(\cdot)$ denotes the barycenter of the support of the function.
  \item $d_1\colon B\times B\to \N$ is defined by
  \begin{align*}
   d_1(v,w):=\delta_{\min\{L(v),L(w)\}}(v,w).
  \end{align*}
\item Given $\beta>0$, $d_2\colon B\times B\to [0,\infty)$ is defined by
\begin{align*}
 d_2(v,w):= \delta(v,w)+\beta|L(v)-L(w)| + \log(\delta(v,w)+d_1(v,w)).
\end{align*}
\item Given $\gamma>0$, $d_3\colon B\times B\to [0,\infty)$ is defined by
\begin{align*}
 d_3(v,w):= \begin{cases}
             \gamma^{\max\{L(v),L(w)\}} & L(v)\neq L(w),\\
             \delta(v,w)+d_1(v,w)-1& L(v) = L(w).
            \end{cases}
\end{align*}
 \end{itemize}

\end{definition}

In the following, we prove certain properties for the functions defined above.

\begin{lemma}\label{lem:dkmetric}
 Let $k,n\in\N$  and let $u,v,w\in B$. Then, there holds
\begin{align}\label{eq:dequiv}
 \delta_k(v,w)\leq\begin{cases}
  C_{\TT_0}C_{\rm base}^2C_{\rm mesh}^{k-n}\delta_n(v,w)&\text{for }k>n,\\
  \delta_n(v,w)&\text{else,}
                  \end{cases}
\end{align}
where $C_{\TT_0}>0$ depends only on $\TT_0$, 
 as well as
\begin{align*}
 \delta_k(u,w)\leq \delta_k(u,v)+\delta_k(v,w).
\end{align*}
\end{lemma}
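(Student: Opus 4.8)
The statement has two parts: a comparison inequality \eqref{eq:dequiv} between $\delta_k$ and $\delta_n$ for different mesh levels, and a triangle inequality for $\delta_k$ at a fixed level $k$. I would prove the triangle inequality first, since it is purely combinatorial. By definition, $\delta_k(u,w)$ is the minimal length of a chain of elements $T_1,\dots,T_n\in\widehat\TT_k$ connecting a triangle meeting ${\rm mid}(u)$ to one meeting ${\rm mid}(w)$, with consecutive triangles sharing a point. Take a minimizing chain from $u$ to $v$ (length $\delta_k(u,v)$, ending at a triangle $T$ with ${\rm mid}(v)\cap T\neq\emptyset$) and a minimizing chain from $v$ to $w$ (length $\delta_k(v,w)$, starting at a triangle $T'$ with ${\rm mid}(v)\cap T'\neq\emptyset$). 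Since both $T$ and $T'$ contain the point ${\rm mid}(v)$, they intersect, so concatenating the two chains yields an admissible chain from $u$ to $w$ of length $\delta_k(u,v)+\delta_k(v,w)$; minimality gives the claim. (A small point: if ${\rm mid}(v)$ lies on a shared edge/vertex, several triangles qualify, but that only helps.)

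**The comparison inequality.** The case $k\leq n$ is immediate: $\widehat\TT_n$ is a refinement of $\widehat\TT_k$, so any chain in $\widehat\TT_n$ of length $\delta_n(v,w)$ projects (by replacing each fine triangle with the unique coarse triangle containing it) to a chain in $\widehat\TT_k$ of length at most $\delta_n(v,w)$, whence $\delta_k(v,w)\leq\delta_n(v,w)$. The substantive case is $k>n$. Here I would argue geometrically rather than combinatorially: a chain of $N:=\delta_n(v,w)$ triangles in $\widehat\TT_n$ has total diameter at most $N\cdot C_{\rm base}C_{\rm mesh}^{-n}$ (using property (iii)/Definition~\ref{def:widehat} for the uniform meshes), so ${\rm mid}(v)$ and ${\rm mid}(w)$ can be joined by a path inside $\bigcup T_i$ of length $\lesssim N\,C_{\rm mesh}^{-n}$. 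Covering that path by triangles of $\widehat\TT_k$, each of diameter $\gtrsim C_{\rm base}^{-1}C_{\rm mesh}^{-k}$, and using shape regularity of $\TT_0$ (hence of all $\widehat\TT_k$) to control the number of level-$k$ triangles meeting a set of bounded diameter, I get a connecting chain in $\widehat\TT_k$ of length $\lesssim N\,C_{\rm mesh}^{-n}/C_{\rm mesh}^{-k} = C_{\rm mesh}^{k-n}N$, with the implied constant of the form $C_{\TT_0}C_{\rm base}^2$. This is exactly \eqref{eq:dequiv}.

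**Main obstacle.** The routine parts are the chain concatenation and the bookkeeping of powers of $C_{\rm mesh}$. The one place that needs care is making the ``cover the path by level-$k$ triangles and count them'' step rigorous with the stated constant dependence: one must turn the $\widehat\TT_n$-chain into an actual continuous path (piecewise through barycenters or shared vertices of consecutive $T_i$), bound its length, and then invoke a uniform-mesh covering lemma — essentially that in a shape-regular uniform triangulation of meshwidth $h$, a connected set of diameter $\leq D$ meets $\lesssim (D/h + 1)$ triangles in a chain — and verify the constant depends only on $\TT_0$ (through shape regularity) and on $C_{\rm base}$ (through the two-sided diameter bounds). Once the covering count is in hand, assembling the global bound is straightforward. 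I would present the $k>n$ case via this geometric covering argument and dispatch the $k\leq n$ case and the triangle inequality in a sentence each.
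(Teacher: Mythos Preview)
Your proposal is correct and follows essentially the same route as the paper: the triangle inequality by chain concatenation, the case $k\le n$ by passing to ancestor triangles, and the case $k>n$ by replacing each coarse element in a minimizing $\widehat\TT_n$-chain with $\mathcal{O}(C_{\rm mesh}^{k-n})$ connected fine elements. The only cosmetic difference is that the paper first notes that in a \emph{minimal} chain consecutive $T_i$ meet only at corners (else one could shorten), and then bounds the number of $\widehat\TT_k$-elements needed to connect two corners of a single $T_i\in\widehat\TT_n$ by shape regularity; your path-covering formulation achieves the same bound without that observation.
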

\begin{proof}
 The triangle inequality follows directly from the definition since for $u,v,w\in B$,
 each pair of chains of elements $\TT_1,\ldots,\TT_n$ connecting ${\rm mid}(v)$ and ${\rm mid}(w)$ and $\widetilde \TT_1,\ldots,\widetilde\TT_m$ connecting ${\rm mid}(w)$ and ${\rm mid}(u)$
 can be combined to a chain of length $n+m$ connecting ${\rm mid}(u)$ and ${\rm mid}(v)$.
 The estimate~\eqref{eq:dequiv} can be seen as follows:
 First, assume $k\leq n$ and  let $T_1,\ldots,T_r\in \widehat\TT_n$ denote the minimizer from the definition of $\delta_n(v,w)$ with $r=\delta_n(v,w)$.
 By replacing each $T_i$ with its father in $\widehat\TT_k$, we obviously obtain $\delta_k(v,w)\leq \delta_n(v,w)$.
 Second, assume $k>n$: Obviously, 
 the $T_i$ touch each other at most at corners (otherwise we could delete an element and shorten
 the sequence). Shape regularity ensures that two corners of $T_i\in\widehat\TT_n$ can be connected with less than $\mathcal{O}(C_{\rm base}^2C_{\rm mesh}^{k-n})$ elements of $\widehat\TT_k$.
 This shows that $\delta_k(v,w)\lesssim C_{\rm base}^2C_{\rm mesh}^{k-n}r=C_{\rm base}^2C_{\rm mesh}^{k-n}\delta_n(v,w)$, where the hidden constant depends only on shape regularity of $\TT_0$.
 This concludes the proof.
%
%
\end{proof}

\begin{lemma}\label{lem:d2metric}
 For sufficiently large $\beta>0$, the function $d_2$ is a metric.
\end{lemma}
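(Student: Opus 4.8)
## Proof plan for Lemma~\ref{lem:d2metric}

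The plan is to verify the three metric axioms for $d_2(v,w) = \delta(v,w) + \beta|L(v)-L(w)| + \log(\delta(v,w)+d_1(v,w))$ separately, with the choice of $\beta$ forced only by the triangle inequality.

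\emph{Non-negativity, symmetry, and identity of indiscernibles.} Each summand is symmetric in $v,w$, so $d_2$ is symmetric. If $v=w$, then $\delta(v,w)=0$ and $d_1(v,w)=\delta_{L(v)}(v,w)=0$ (the empty chain, or rather the chain of length zero since $\mathrm{mid}(v)$ already lies in a common element), so $\log(0+0)$ must be read as $0$; hence $d_2(v,v)=0$. Conversely, if $v\neq w$ then $\delta(v,w)=1$, so already $d_2(v,w)\geq 1 + \log 1 = 1 > 0$. This also shows $d_2$ is finite-valued and nonnegative.

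\emph{Triangle inequality.} Fix $u,v,w\in B$; I want $d_2(u,w)\leq d_2(u,v)+d_2(v,w)$. The term $\beta|L(\cdot)-L(\cdot)|$ satisfies the triangle inequality exactly, and $\delta(\cdot,\cdot)$ is the discrete metric, which also does. The only problematic term is $\log(\delta+d_1)$, because $d_1$ itself is \emph{not} subadditive: it is $\delta_{\min\{L(v),L(w)\}}$, and the level at which the chain is measured changes with the pair of arguments. The key estimate is Lemma~\ref{lem:dkmetric}: combining the triangle inequality $\delta_k(u,w)\leq \delta_k(u,v)+\delta_k(v,w)$ with the level-comparison bound~\eqref{eq:dequiv}, I get, writing $m:=\min\{L(u),L(w)\}$ and passing each of $\delta_m(u,v),\delta_m(v,w)$ down from the finer levels $\min\{L(u),L(v)\}$ and $\min\{L(v),L(w)\}$ respectively,
\begin{align*}
d_1(u,w)=\delta_m(u,w)\leq \delta_m(u,v)+\delta_m(v,w)\leq C\,C_{\rm mesh}^{|L(u)-L(v)|}d_1(u,v)+C\,C_{\rm mesh}^{|L(v)-L(w)|}d_1(v,w),
\end{align*}
with $C=C_{\TT_0}C_{\rm base}^2$. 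Using $\log(a+b)\leq \log a+\log b$ whenever $a,b\geq 2$ (and handling the small cases $\delta+d_1\in\{0,1\}$ separately via the $\delta$ term, which contributes an additive $1$ exactly when $v\neq w$), one estimates
\begin{align*}
\log(\delta(u,w)+d_1(u,w))\leq \log(\delta(u,v)+d_1(u,v))+\log(\delta(v,w)+d_1(v,w))+\log C + (\log C_{\rm mesh})\big(|L(u)-L(v)|+|L(v)-L(w)|\big).
\end{align*}
The two spurious terms on the right, $\log C$ and $(\log C_{\rm mesh})(|L(u)-L(v)|+|L(v)-L(w)|)$, are absorbed: the level differences are absorbed by taking $\beta\geq \log C_{\rm mesh}$ in the $\beta|L(\cdot)-L(\cdot)|$ contributions coming from $d_2(u,v)$ and $d_2(v,w)$, and the constant $\log C$ is absorbed because $u\neq v$ or $v\neq w$ (at least one holds when $u\neq w$), so the right-hand side already carries an extra additive $\delta$-term of size at least $1$; one just needs $\beta$ (equivalently the implied slack) large enough that $1\geq \log C$ is not required but rather $\log C$ is split off against the available $\delta$ slack — more precisely, choose $\beta \geq \log C_{\rm mesh}$ and note $\log C$ is a fixed constant while $d_2(u,v)+d_2(v,w)\geq \delta(u,v)+\delta(v,w)\geq 1$, absorbing it after possibly enlarging the multiplicative comparison constant into $\beta$ as well, i.e.\ take $\beta\geq \log(C\,C_{\rm mesh})$. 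This completes the triangle inequality.

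\emph{Main obstacle.} The crux is entirely the $\log d_1$ term and its interaction with changing reference levels; the computation above is where Lemma~\ref{lem:dkmetric}'s geometric blow-up factor $C_{\rm mesh}^{|L(v)-L(w)|}$ enters, and it is precisely to tame this blow-up that $d_2$ carries the linear term $\beta|L(v)-L(w)|$ and the logarithm (rather than $d_1$ itself) — the logarithm turns the multiplicative mesh factor into an additive level-difference term of the right form. I expect the only real care needed is the bookkeeping of the degenerate cases $d_1\in\{0,1\}$ and the precise sufficient lower bound on $\beta$, namely $\beta\geq \log(C_{\TT_0}C_{\rm base}^2 C_{\rm mesh})$, which is what "sufficiently large $\beta$" means in the statement.
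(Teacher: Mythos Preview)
Your argument has a genuine gap in the absorption step for the triangle inequality. You propose to handle the three summands of $d_2$ separately: the discrete metric $\delta$, the level term $\beta|L(\cdot)-L(\cdot)|$, and the logarithmic term. For the last one you obtain an overflow of $(\log C_{\rm mesh})\big(|L(u)-L(v)|+|L(v)-L(w)|\big)$, and you then say this is absorbed by the $\beta$-terms on the right-hand side. But those same $\beta$-terms have already been spent bounding $\beta|L(u)-L(w)|$ on the left. You cannot use them twice: when $L(v)$ lies between $L(u)$ and $L(w)$ (e.g.\ $L(u)=0$, $L(v)=5$, $L(w)=10$), the inequality $|L(u)-L(w)|\leq|L(u)-L(v)|+|L(v)-L(w)|$ holds with \emph{equality}, so there is no slack left, and your inequality fails regardless of how large $\beta$ is.

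The paper avoids this by using the sharper exponent that Lemma~\ref{lem:dkmetric} actually delivers. With $a=\min\{L(u),L(v)\}$, $b=\min\{L(v),L(w)\}$, $c=\min\{L(u),L(w)\}$, the blow-up factor for $\delta_c(u,v)$ versus $d_1(u,v)=\delta_a(u,v)$ is $(C_{\rm mesh}C)^{\max\{c-a,0\}}$, not $(C_{\rm mesh}C)^{|L(u)-L(v)|}$. The crucial observation is that $c-a\leq 0$ if and only if $c-b\leq 0$, so the total overflow exponent is $\max\{2c-a-b,0\}$. One then checks directly that
\[
|L(u)-L(w)|+\max\{2c-a-b,0\}\;\leq\;|L(u)-L(v)|+|L(v)-L(w)|,
\]
so the left-hand level term and the log overflow are bounded \emph{jointly} by the right-hand level terms once $\beta\geq\log(C_{\rm mesh}C)$. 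In your failing example the sharp overflow exponent is $0$, not $10$, and the argument closes. Your bound $|L(u)-L(v)|$ on the exponent is correct but too generous; replacing it by $\max\{c-a,0\}$ is exactly what is needed.
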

\begin{proof}
 Symmetry of $d_2$ is obvious. Moreover, $d_2=0$ is equivalent to $v=w$. It remains to prove the triangle inequality. 
 To that end, we observe with Lemma~\ref{lem:dkmetric}, $u,v,w\in B$, and $a:=\min\{L(u),L(v)\}$, $b:=\min\{L(v),L(w)\}$, $c:=\min\{L(u),L(w)\}$ that
 \begin{align*}
  d_1(u,w)&=\delta_c(u,w)\\
  &\leq \delta_c(u,v) +\delta_c(v,w)\leq (C_{\rm mesh}C)^{\max\{c-a,0\}}d_1(u,v) + (C_{\rm mesh}C)^{\max\{c-b,0\}}d_1(v,w),
\end{align*}
where $C:=\max\{1,C_{\TT_0}C_{\rm base}^2\}$.
With Lemma~\ref{lem:log} and $CC_{\rm mesh}\geq 1$, we have (assuming $u\neq v$, $v\neq w$, $u\neq w$)
\begin{align*}
 & \log(\delta(u,w)+d_1(u,w))\\
 &\leq \log(\delta(u,v) +(C_{\rm mesh}C)^{\max\{c-a,0\}}\delta_1(u,v))+  \log(\delta(v,w) +(C_{\rm mesh}C)^{\max\{c-b,0\}}\delta_1(v,w))\\ 
 &\leq (\max\{c-a,0\} + \max\{c-b,0\})\log(C_{\rm mesh}C) \\
 &\qquad + \log(\delta(u,v) +\delta_1(u,v))+\log(\delta(v,w) +\delta_1(v,w)).
\end{align*}
Note that each of the conditions $c-a\leq 0$ or $c-b\leq 0$ implies that 
\begin{align}\label{eq:bool}
L(u)\leq \min\{L(v),L(w)\}\quad\text{or}\quad L(w)\leq \min\{L(u),L(v)\}.
\end{align} 
The condition~\eqref{eq:bool}, however, implies $c-a\leq 0$ and $c-b\leq 0$.
Hence, we have $c-a\leq 0$ if and only if $c-b\leq 0$ and thus 
\begin{align*}
\max\{c-a,0\} + \max\{c-b,0\}= \max\{2c-a-b,0\}.
\end{align*}
For $\beta\geq \log(C_{\rm mesh}C)$ and with $|L(u)-L(w)|=L(u)+L(w)-2c$, we therefore obtain
\begin{align*}
 d_2(u,w)&\leq \delta(u,v)+\delta(v,w) + \beta(\max\{L(u)+L(w)-a-b,|L(u)-L(w)|\}) \\
 &\qquad+ \log(\delta(u,v) +\delta_1(u,v))+\log(\delta(v,w) +\delta_1(v,w)).
\end{align*}
With $L(u)-a\leq |L(u)-L(v)|$ and $L(w)-b\leq |L(v)-L(w)|$ as well as $|L(u)-L(w)|\leq |L(u)-L(v)|+|L(v)-L(w)|$, this yields the triangle inequality $d_2(u,w)\leq d_2(u,v)+d_2(v,w)$ under the assumption $u\neq v$, $v\neq w$, $u\neq w$.
If this assumption is violated, the triangle inequality follows trivially, since one of the three terms is zero. This concludes the proof.
\end{proof}

The following result is an auxiliary lemma which estimates the number of basis functions in a given annulus. This used in Section~\ref{section:banded} below to show that the FEM/BEM coupling matrices
are close to banded ones.
\begin{lemma}\label{lem:annulus}
Given $w\in B$ and $n,b,r\in\N$, define the sets
 \begin{align*}
  R_1&:=\set{v\in B_\ell}{v|_{\Omega\setminus\Gamma}=0,\,r-b\leq \delta_n(v,w)\leq r},\\
  R_2&:=\set{v\in B_\ell}{v|_{\Omega\setminus\Gamma}\neq 0,\,r-b\leq \delta_n(v,w)\leq r}.
 \end{align*}
Then, there holds $\#R_d \leq C_{\rm geo}b (r+1)^{d-1} C_{\rm mesh}^{d\max\{\ell-n,0\}}$ for $d=1,2$, where $C_{\rm geo}$ depends only on $C_{\rm mesh}$ and $C_{\rm base}$. 
\end{lemma}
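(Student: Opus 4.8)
The plan is to count basis functions by relating the "chain-distance" $\delta_n(v,w)$ to Euclidean distance, and then to estimate the number of admissible supports geometrically. First I would use Lemma~\ref{lem:dkmetric} (the estimate~\eqref{eq:dequiv}) to compare $\delta_n$ against $\delta_\ell$: since all $v\in R_d$ lie in $B_\ell$, their supports have diameter $\simeq C_{\rm mesh}^{-\ell}$ by property~(iii), so the natural "resolution scale" for such $v$ is level $\ell$. If $\ell\le n$, the mesh $\widehat\TT_n$ is finer than $\widehat\TT_\ell$ and a chain of $r$ elements of $\widehat\TT_n$ connecting ${\rm mid}(v)$ to ${\rm mid}(w)$ has Euclidean length $\lesssim r\,C_{\rm mesh}^{-n}\le r\,C_{\rm mesh}^{-\ell}$; if $\ell>n$, one instead passes through $\widehat\TT_\ell$ and picks up the factor $C_{\rm mesh}^{\ell-n}$ from~\eqref{eq:dequiv}. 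In all cases I would arrive at the bound that every $v\in R_d$ has ${\rm mid}(v)$ contained in a Euclidean annulus (or ball) centered near ${\rm mid}(w)$ of outer radius $\lesssim (r+1)C_{\rm mesh}^{-n}C_{\rm mesh}^{\max\{\ell-n,0\}}$ and of width $\lesssim (b+1)C_{\rm mesh}^{-n}C_{\rm mesh}^{\max\{\ell-n,0\}}$.

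Next I would do the actual counting, separately for $d=1$ and $d=2$, because the two cases have different volume behavior. For $d=2$ (the functions $v$ with $v|_{\Omega\setminus\Gamma}\ne0$, i.e.\ the "FEM" basis functions), property~(iv) gives $|{\rm supp}(v)|\gtrsim {\rm diam}({\rm supp}(v))^2\simeq C_{\rm mesh}^{-2\ell}$, and the supports are (essentially) disjoint or boundedly overlapping within a level, so the number of them whose barycenters lie in a planar annulus of outer radius $\rho$ and width $w$ is at most a constant times $\rho\,w\,/\,C_{\rm mesh}^{-2\ell} = \rho w\,C_{\rm mesh}^{2\ell}$. Substituting $\rho\simeq (r+1)C_{\rm mesh}^{-n+\max\{\ell-n,0\}}$ and $w\simeq b\,C_{\rm mesh}^{-n+\max\{\ell-n,0\}}$ and using $2\ell - 2n + 2\max\{\ell-n,0\} = 2\max\{\ell-n,0\} - 2\min\{\ell-n,0\} = 2|\ell-n|$... more carefully, $\ell - n + \max\{\ell-n,0\}$: when $\ell\ge n$ this is $2(\ell-n)$, when $\ell<n$ this is $\ell-n<0$, so in either case $C_{\rm mesh}^{2\ell}\cdot C_{\rm mesh}^{-2n+2\max\{\ell-n,0\}} = C_{\rm mesh}^{2\max\{\ell-n,0\}}$, giving $\#R_2\lesssim b(r+1)C_{\rm mesh}^{2\max\{\ell-n,0\}}$, which is the claimed bound with $d=2$. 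For $d=1$ (the "BEM" basis functions, supported on $\Gamma$), the relevant supports are one-dimensional arcs of length $\simeq C_{\rm mesh}^{-\ell}$ by~(iii), and property~(ii) bounds how many there are per level; counting barycenters inside an arc of length $\lesssim \rho$ on $\Gamma$ gives $\#R_1\lesssim \rho\,C_{\rm mesh}^{\ell}\lesssim (r+1)C_{\rm mesh}^{-n+\max\{\ell-n,0\}}C_{\rm mesh}^{\ell} = (r+1)C_{\rm mesh}^{\max\{\ell-n,0\}}$, which is the claimed bound with $d=1$ (the factor $b$ and $(r+1)^{d-1}=(r+1)^0=1$ cause no trouble; one keeps the crude $b$ factor only to have a uniform statement).

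The main obstacle I expect is making the passage from $\delta_n$-chains to honest Euclidean annuli fully rigorous while keeping all constants dependent only on $C_{\rm mesh}$ and $C_{\rm base}$ (and shape regularity of $\TT_0$, which is built into $C_{\TT_0}$). Two points need care: first, the constant $C_{\rm mesh}$ enters~\eqref{eq:dequiv} together with $C_{\TT_0}C_{\rm base}^2$, so I would want to absorb everything into a single $C_{\rm geo}$ and be careful that the exponent is exactly $d\max\{\ell-n,0\}$ and not, say, $d\max\{\ell-n,0\}+O(1)$ — this requires using the \emph{lower} bound in property~(iii) to ensure the chain cannot be "too short" relative to the Euclidean distance, i.e.\ a reverse inequality $\delta_n(v,w)\gtrsim \text{dist}({\rm mid}(v),{\rm mid}(w))\,C_{\rm mesh}^n$ up to additive $O(1)$, which is again just shape regularity of $\widehat\TT_n$. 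Second, in the $d=2$ count one must justify that barycenters of level-$\ell$ FEM basis functions are $\gtrsim C_{\rm mesh}^{-\ell}$-separated (or at least have bounded overlap multiplicity), which follows from~(iii)–(iv) plus the fact that $\bigcup_{v\in B_\ell}{\rm supp}(v)=\overline\Omega$ from~(i) together with a volume-packing argument. Once these geometric facts are in place, the counting is routine and the stated estimate follows with $C_{\rm geo}$ depending only on $C_{\rm mesh}$, $C_{\rm base}$.
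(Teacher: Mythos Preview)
Your overall strategy---translate $\delta_n$ into Euclidean distance, locate the supports in an annulus, and count by dividing annulus measure by support measure---is exactly the paper's approach. However, two execution errors break your chain of inequalities.

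First, the detour through $\delta_\ell$ via~\eqref{eq:dequiv} is unnecessary and, as you have written it, produces the wrong radii. Shape regularity of $\widehat\TT_n$ gives directly $\delta_n(v,w)\simeq{\rm dist}({\rm mid}(v),{\rm mid}(w))\,C_{\rm mesh}^{n}+1$, independently of $\ell$; hence the correct outer and inner radii are $r_2\simeq rC_{\rm mesh}^{-n}+C_{\rm base}C_{\rm mesh}^{-\ell}$ and $r_1\simeq(r-b-1)C_{\rm mesh}^{-n}-C_{\rm base}C_{\rm mesh}^{-\ell}$, with \emph{no} extra factor $C_{\rm mesh}^{\max\{\ell-n,0\}}$. (If you trace your own detour carefully, the factor $C_{\rm mesh}^{\ell-n}$ from~\eqref{eq:dequiv} and the element size $C_{\rm mesh}^{-\ell}$ at level $\ell$ cancel to $C_{\rm mesh}^{-n}$; you have effectively multiplied by the wrong element size.) Second, your arithmetic claim that the exponent $2\ell-2n+2\max\{\ell-n,0\}$ equals $2\max\{\ell-n,0\}$ is false for $\ell>n$ (it equals $4(\ell-n)$ there), so with your stated radii you would end up with $C_{\rm mesh}^{4\max\{\ell-n,0\}}$, not the claimed bound. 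The correct computation, with the radii above, is $(r_2^2-r_1^2)/C_{\rm mesh}^{-2\ell}\lesssim r(b+1)C_{\rm mesh}^{2(\ell-n)}+rC_{\rm mesh}^{\ell-n}\lesssim r(b+1)C_{\rm mesh}^{2\max\{\ell-n,0\}}$.

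For $d=1$ you must count via the \emph{width} $r_2-r_1$ of the one-dimensional annulus (a pair of arcs on $\Gamma$), not the outer radius $\rho$; that is what yields the factor $b$ rather than $r+1$ in the statement. With the correct radii this gives $(r_2-r_1)/C_{\rm mesh}^{-\ell}\lesssim bC_{\rm mesh}^{\ell-n}+1\lesssim bC_{\rm mesh}^{\max\{\ell-n,0\}}$.
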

\begin{proof}
 First, consider $d=2$.
 Uniform shape regularity of $(\widehat\TT_\ell)_{\ell\in\N}$ implies that 
 \begin{align*}
 \delta_n(v,w)\simeq {\rm dist}({\rm mid}(v),{\rm mid}(w))C_{\rm mesh}^{n}+1.
 \end{align*}
 For this, and since $v\in B_\ell$, we conclude that the supports of functions in $R_2$ is contained in an annulus around the barycenter of ${\rm supp}(w)$ 
 with radii 
 \begin{align*}
 r_1&\simeq (r-b-1)C_{\rm mesh}^{-n}-C_{\rm base}C_{\rm mesh}^{-\ell},\\
 r_2&\simeq rC_{\rm mesh}^{-n}+C_{\rm base}C_{\rm mesh}^{-\ell}.
 \end{align*}
 Moreover, conditions~(ii)--(iv) on $B$ show that the supports of $v\in B_\ell$ cover $\Omega$ with finite overlap depending only on $C_{\rm base}$. This ensures that
 \begin{align*}
 \#R_2 &\lesssim \frac{\text{area of annulus}}{\text{minimal area of support}}\\
 &\lesssim (r_2^2-r_1^2)/C_{\rm mesh}^{-2\ell}\lesssim r(b+1)C_{\rm mesh}^{2\ell-2n}+r C_{\rm mesh}^{\ell-n}\\
 &\lesssim r(b+1)C_{\rm mesh}^{2\max\{\ell-n,0\}}.
\end{align*}
This concludes the proof for $d=2$. For $d=1$, we use that $|{\rm supp}(v)|\simeq C_{\rm mesh}^{-\ell}$ and that the area of the 1D annulus is just $2(r_2-r_1)$.
 \end{proof}

\begin{lemma}\label{lem:d3metric}
 For sufficiently large $\gamma>0$, the  function $d_3$ is a metric. In this case, $d_3$ satisfies~\eqref{eq:Mdist} (assuming that $B$ is bijectively identified with $\N$ and $d_3(i,j):=d_3(v_i,v_j)$).
\end{lemma}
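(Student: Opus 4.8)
The plan is to first establish the metric properties of $d_3$ and then verify the summability condition~\eqref{eq:Mdist}. For the metric properties, symmetry is immediate from the definition, and $d_3(v,w)=0$ forces $L(v)=L(w)$ and $\delta(v,w)+d_1(v,w)=1$, i.e.\ $v=w$ (note $d_1(v,v)=\delta_{L(v)}(v,v)=1$, so the "$-1$" correction makes the diagonal vanish). The only real work is the triangle inequality $d_3(u,w)\le d_3(u,v)+d_3(v,w)$. I would split into cases according to how the levels $L(u),L(v),L(w)$ compare. When all three levels are equal, the inequality reduces to the triangle inequality for $\delta(u,w)+d_1(u,w)-1$, which follows from the triangle inequality for $\delta_k$ proved in Lemma~\ref{lem:dkmetric} (the $-1$ is harmless since $\delta(u,v)+d_1(u,v)\ge 2$ whenever $u\neq v$, so subtracting one unit on the left is compensated). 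When the levels are not all equal, at least one of the three pairwise terms is of the form $\gamma^{\max\{\cdot,\cdot\}}$; the key observation is that for $\gamma$ large enough, a single such exponential term dominates any sum of "same-level" graph-distance terms (which are bounded by $\mathcal O(C_{\rm mesh}^{\#B_\ell})$-type quantities, but more importantly grow only polynomially/sub-exponentially compared to $\gamma^\ell$) and also dominates $\gamma^{\max}$ terms at lower or equal levels since $\gamma^a+\gamma^b\le \gamma^{\max\{a,b\}+1}\le\gamma^{c}$ once the relevant levels are separated. The bookkeeping here — enumerating which of $L(u),L(v),L(w)$ is largest and checking $d_3(u,w)$ against the sum in each configuration — is the main obstacle, but it is elementary once one fixes $\gamma$ larger than, say, $C_{\rm mesh}$ times the geometric constants and uses that $\max\{L(u),L(w)\}\le\max\{\max\{L(u),L(v)\},\max\{L(v),L(w)\}\}$.

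For the summability~\eqref{eq:Mdist}, fix $\eps>0$ and $v_i=w$; I would split the sum over $j$ (equivalently over $v\in B$) into the contribution from functions on the same level as $w$ and the contribution from functions on different levels. For the same-level part, $\exp(-\eps d_3(v,w)) = \exp(-\eps(\delta(v,w)+d_1(v,w)-1))$, and grouping the $v\in B_{L(w)}$ by the value $r=d_1(v,w)=\delta_{L(w)}(v,w)$, Lemma~\ref{lem:annulus} (with $n=\ell=L(w)$, $b=0$) bounds the number of such $v$ by $C_{\rm geo}(r+1)$, so the same-level sum is bounded by $\sum_{r\ge 1} C(r+1)e^{-\eps(r-1)}<\infty$, uniformly in $w$. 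For the different-level part, every term is at most $\exp(-\eps\gamma^{\max\{L(v),L(w)\}})$; grouping by $m:=L(v)$ and using property~(ii) of $B$, namely $\#B_m\le C_{\rm base}C_{\rm mesh}^{2m}$, the contribution is bounded by $\sum_{m\in\N} C_{\rm base}C_{\rm mesh}^{2m}\exp(-\eps\gamma^{\max\{m,L(w)\}})$, and since $\gamma^{\max\{m,L(w)\}}\ge\gamma^m$ grows doubly-exponentially in $m$ while $C_{\rm mesh}^{2m}$ grows only exponentially, this series converges, again uniformly in $w$. Adding the two pieces gives~\eqref{eq:Mdist}.

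I expect the level-comparison case analysis in the triangle inequality to be the genuinely tricky step: one must be careful that the exponential term $\gamma^{\max\{L(v),L(w)\}}$ attached to a pair straddling two levels really does absorb a potentially large same-level graph distance on the third pair, which is why the choice of $\gamma$ must be coupled to $C_{\rm mesh}$ and the shape-regularity constants from Lemma~\ref{lem:dkmetric}. Everything else — the verification of $d_3(v,w)=0\iff v=w$ and the two-part summability estimate — is routine given Lemmas~\ref{lem:dkmetric} and~\ref{lem:annulus} and the counting properties~(ii) of $B$.
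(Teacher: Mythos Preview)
Your proposal is correct and follows essentially the same route as the paper: case analysis on level equality for the triangle inequality (the paper handles the delicate case $L(v)\neq L(u)=L(w)$ by bounding $\delta_{L(u)}(u,w)\le (C_{\rm mesh}C)^{L(u)}\#\widehat\TT_0$ via Lemma~\ref{lem:dkmetric} and absorbing this into $\gamma^{\max\{L(u),L(v)\}}$ for large $\gamma$), and the same-level/different-level split for~\eqref{eq:Mdist} via Lemma~\ref{lem:annulus} and property~(ii). One wording fix: the same-level graph distance $\delta_\ell(u,w)$ grows \emph{exponentially} in $\ell$ (like $(C_{\rm mesh}C)^\ell$), not ``polynomially/sub-exponentially''---the point is simply that $\gamma^\ell$ with $\gamma$ chosen large enough still dominates it; and in the all-distinct-levels subcase you do not need $\gamma^a+\gamma^b\le\gamma^{\max\{a,b\}+1}$, since one of the two right-hand summands already equals $\gamma^{\max\{L(u),L(v),L(w)\}}\ge d_3(u,w)$.
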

\begin{proof}
 Symmetry of $d_3$ is obvious. Moreover, $d_3=0$ is equivalent to $v=w$ since $d_1(v,v)=1$. It remains to prove the triangle inequality.
 For $L(v)=L(w)=L(u)$, Lemma~\ref{lem:dkmetric} shows
 \begin{align*}
  d_3(u,w)\leq d_3(u,v)+d_3(v,w).
 \end{align*}
If $L(v)\neq L(u)=L(w)$, we use that according to Lemma~\ref{lem:dkmetric}, 
\begin{align*}
d_3(u,w)\leq \delta_0(u,w)(C_{\rm mesh}C)^{L(u)}\leq \#\widehat\TT_0 (C_{\rm mesh}C)^{L(u)}\leq {\gamma^{\max\{L(u),L(v)\}}}
\end{align*}
for sufficiently large $\gamma$ {$C=\max\{1,C_{\TT_0}C_{\rm base}^2\}$}.
If $L(u)\neq L(w)$, we have
\begin{align*}
 d_3(u,w)={\gamma^{\max\{L(u),L(w)\}}}\leq d_3(u,v)+d_3(v,w)
\end{align*}
by distinguishing cases. This proves the triangle inequality for $d_3$.

To see~\eqref{eq:Mdist} assume a one-to-one numbering of $B=\{v_1,v_2,\ldots\}$ and let $d_3(i,j):=d_3(v_i,v_j)$. Let $i\in\N$, $\eps>0$, and compute
\begin{align*}
 \sum_{j=1}^\infty \exp(-\eps d_3(i,j)) &= \sum_{v\in B}\exp(-\eps d_3(v_i,v)) \\
 &\lesssim  \sum_{v\in B_{L(v_i)}}\exp(-\eps\delta_{L(v_i)}(v_i,v)) +  \sum_{v\in B\setminus B_{L(v_i)}}\exp(-\eps\gamma^{{\max\{L(v_i),L(v)\}}}).
\end{align*}
The last term is bounded by
\begin{align*}
 \sum_{v\in B\setminus B_{L(v_i)}}\exp(-\eps\gamma^{{\max\{L(v_i),L(v)\}}})\leq\sum_{\ell\in\N}\sum_{v\in B_\ell} \exp(-\eps \gamma^\ell)
 \lesssim \sum_{\ell\in\N} C_{\rm mesh}^{2\ell} \exp(-\eps \gamma^\ell)<\infty,
\end{align*}
where we used that $\#B_\ell\simeq C_{\rm mesh}^{2\ell}$ and that $\exp(-\eps\gamma^\ell)\to 0$ faster than $C_{\rm mesh}^{-2\ell}\to 0$.
The other term is bounded by
\begin{align*}
  \sum_{v\in B_{L(v_i)}}\exp(-\eps\delta_{L(v_i)}(v_i,v))\leq\sum_{r=1}^\infty \sum_{v\in B_{L(v_i)}\atop \delta_{L(v_i)}(v_i,v)=r} \exp(-\eps r)\lesssim 
  \sum_{r=1}^\infty r \exp(-\eps r)<\infty,
\end{align*}
where we used that due to Lemma~\ref{lem:annulus}, the inner sum contains $\mathcal{O}(r)$ summands.
Since all the bounds do not depend on $i\in\N$, this concludes the proof.
\end{proof}

\section{Scott-Zhang projection on Steroids}\label{section:szb}
In the following, we develop two versions of the classical Scott-Zhang projection.
The first one (Lemma~\ref{lem:gensz}) is just designed slightly differently to obtain a moment condition on the residual.
The second one (Theorem~\ref{thm:szb}) is constructed such that projections on different levels commute. As an interesting side note,
this particularly implies that there exists an equivalent Hilbert norm for with these projections are orthogonal projections and thus self-adjoint.

We use the standard definitions of Sobolev spaces $H^k(\omega)$ and note that for $s>0$, $H^s(\omega)$ is defined via real interpolation.

\begin{definition}\label{def:s2}
	Given a triangulation $\TT$, define the hat functions $v_z\in \SS^1(\TT)$ associated with a certain node $z$ of $\TT$. 
	For an edge $E$ of $\TT$ with endpoints $z_1$ and $z_2$, define the edge bubble $v_E:= \alpha_E v_{z_1}v_{z_2}$ with $\alpha_E>0$ such that $\norm{v_E}{L^\infty(\Omega)}=1$.
	For an element $T\in\TT$ with nodes $z_1$, $z_2$, and $z_3$, define the element bubble $v_T:= \alpha_T v_{z_1}v_{z_2}v_{z_3}$ with $\alpha_T>0$ such that $\norm{v_T}{L^\infty(\Omega)}=1$.
	Let $\SS^{2+}_B(\TT)$ denote the set of all hat-, edge bubble-, and element bubble-functions defined on $\TT$ and let $\SS^{2+}(\TT)$ define the linear span of $\SS^{2+}_B(\TT)$.
	For a refinement $\widehat\TT$ of $\TT$, we denote by $\SS^{2+}_B(\widehat\TT\setminus\TT)$ all hat functions $v_z$ associated with new nodes $z\in\NN(\widehat\TT)\setminus\NN(\TT)$, 
	all edge bubble functions $v_E$ associated with new edges $E\in\EE(\widehat\TT)\setminus \EE(\TT)$, and all element bubble functions $v_T$ associated with new elements $T\in\widehat\TT\setminus\TT$.
\end{definition}

In the following, we define a particular basis of $\SS^{2+}(\TT)$ with a certain moment condition.
\begin{definition}
	Given a triangulation $\TT$, consider the following basis of $\SS^{2+}(\TT)$: Let $\BB_B(\TT)\subseteq  \SS^{2+}_B(\TT)$ 
	denote all the element bubble functions $v_T$ and let $\EE_B(\TT)\subseteq  \SS^{2+}_B(\TT)$ denote all the edge bubble functions $v_E$. Given $v_E\in \EE_B(\TT)$, define
	\begin{align*}
		v_{E,0}:= v_E-\sum_{T\in\TT\atop T\subseteq {\rm supp}(v_{E,0})} \alpha_{T,E}v_T
	\end{align*}
	with $\alpha_{T,E}\in\R$ such that $\int_T v_{E,0}\,dx=0$ for all $T\in\TT$.
	Given $v_z\in \SS^{2+}_B(\TT)\setminus(\BB_B(\TT)\cup\EE_B(\TT))$ (a hat function), define
	\begin{align*}
		v_{z,0}:= v_z-\sum_{T\in\TT\atop T\subseteq {\rm supp}(v_{z,0})} \alpha_{T,z}v_T-\sum_{E\in\EE(\TT)\atop E\subseteq {\rm supp}(v_{z,0})} \beta_{E,z}v_{E,0}
	\end{align*}
	with $\alpha_{T,z},\beta_{E,z}\in\R$
	such that $\int_T v_{z,0}\,dx=0$ for all $T\in\TT$ and $\int_E v_{z,0}\,dx=0$ for all $E\in\EE(\TT)$.
	Note that the number of terms as well as the magnitude of the coefficients $\alpha_{T,E},\alpha_{T,z},\beta_{E,z}$ in the above definitions 
	is bounded in terms of the shape regularity of $\TT$ and hence in terms of $\TT_0$.
	Then, with $\NN_{sz}(\TT):=\set{v_{z,0}}{v_z\in\SS^{2+}_B(\TT)\setminus (\BB_B(\TT)\cup\EE_B(\TT)) }$ and
	$\EE_{sz}(\TT):=\set{v_{E,0}}{v_E\in \EE_B(\TT)}$, the set
	$\SS_{sz}(\TT):=\NN_{sz}(\TT)\cup \EE_{sz}(\TT)\cup \BB_B(\TT)$ is a basis of $\SS^{2+}(\TT)$.
\end{definition}
To define the Scott-Zhang projection, we define the dual basis functions.
\begin{definition}\label{def:sz}	
	For each $T\in\TT$, let $w^\star_T\in\SS^{2+}(T)$ denote the dual basis functions of $w|_T$ for all $w\in \SS_{sz}(\TT)$ with $w|_T$ is non-zero, i.e., all $v\in \SS^{2+}(\TT)$ satisfy
	\begin{align*}
	 \int_T w^\star_T v\,dx=\begin{cases} 1 & \text{for }v|_T = w,\\
	                         0 &\text{else.}
	                        \end{cases}
	\end{align*}	
	Analogously, for all $E\in\EE(\TT)$, let $w^\star_E\in\SS^{2}(E)$ denote the dual basis functions of $w|_E$ for all $w\in \SS_{sz}(\TT)$ with $w|_E$ is non-zero, i.e., all
	$v\in \SS^{2+}(\TT)$ satisfy
	\begin{align*}
	 \int_E w^\star_E v\,dx=\begin{cases} 1 & \text{for }v|_E = w,\\
	                         0 &\text{else.}
	                        \end{cases}
	\end{align*}
	Moreover, for each $w\in \NN_{sz}(\TT)$ choose some $E_w\in\EE(\TT)$ with $w|_{E_w}\neq 0$ such that if $w|_\Gamma\neq 0$ also $E_v\subseteq \Gamma$. For $w\in \EE_{sz}(\TT)$ choose the one $E_w\in\EE(\TT)$ with $w|_{E_w}\neq 0$, 
	and for each $w\in \BB_B(\TT)$, let $T_w\in\TT$ denote the element on which $w$ is supported.
	With this, define the modified Scott-Zhang operator as
	\begin{align*}
		J_\TT v &:= \sum_{w\in \BB_B(\TT)}w\dual{w^\star_{T_w}}{v}_{T_w}+ \sum_{w\in \NN_{sz}(\TT)\cup \EE_{sz}(\TT)}w\dual{w^\star_{E_w}}{v}_{E_w}. 
	\end{align*}
	With $\SS_{sz}(T)$, $\BB_B(T)$, $\NN_B(T)$, and $\EE_B(T)$, we denote the respective subsets of functions which are non zero on $T$. Note that the cardinality of those sets is bounded in terms of $\TT_0$.
\end{definition}

\begin{definition}\label{def:sobslob}
 Define the Sobolev-Slobodeckij semi norm 
 \begin{align*}
 |v|_{H^s(\omega)}^2:= \int_\omega\int_\omega |v(x)-v(y)|/|x-y|^{2+2s}\,dx\,dy\quad\text{for } 0<s<1.
 \end{align*}
 For $s=\nu+r\in\R$ with $\nu\in\N$ and $s\in (0,1)$, define $|\cdot|_{H^s(\omega)}:=|\nabla^\nu(\cdot)|_{H^r(\Omega)}$, where $\nabla^\nu$ 
 denotes the tensor of all partial derivatives of order  $\nu$. 
 As shown {in~\cite{heu}}, $\norm{\cdot}{H^\nu(\omega)}+|\cdot|_{H^r(\omega)}$  is equivalent to the $H^s$-norm obtain  via (real) interpolation.
 The norm equivalence constants depend only on the shape of $\omega$.
\end{definition}

\begin{lemma}\label{lem:gensz}
	The Scott-Zhang operator $J_\TT$ from Definition~\ref{def:sz} is a projection which preserves homogeneous Dirichlet values, i.e., $v|_\Gamma=0$ implies $J_\TT v|_\Gamma=0$.
	There holds for all $1/2<s< 3/2$ and all  $v\in H^s(\Omega)$
	\begin{align}
		\norm{J_\TT v}{H^s(T)}&\leq C_{\rm sz} \norm{v}{H^s(\cup\omega(T,\TT))},\label{eq:szhs}\\
		\norm{J_\TT v}{H^s(\Omega)}&\leq C_{\rm sz} \norm{v}{H^s(\Omega)},\label{eq:szOhs}
	\end{align}
	as well as for all $v\in H^s(\Omega)$ and all $0\leq r\leq s$, $r<3/2$, $1/2<s\leq 2$
	\begin{align}
		\norm{(1-J_\TT) v}{H^r(T)}&\leq C_{\rm sz} {\rm diam}(T)^{s-r} |v|_{H^s(\cup\omega(T,\TT))},\label{eq:sza}\\
		\norm{(1-J_\TT) v}{H^r(\Omega)}&\leq C_{\rm sz} \norm{h_\TT^{s-r}\nabla^s v}{L^2(\Omega)}\quad\text{for }0\leq r\leq 1\text{ and }s\in\{1,2\}.\label{eq:szaO}
	\end{align}
	The constant $C_{\rm sz}>0$ depends only on the shape regularity of $\TT$, the fact that $\TT$ is generated from $\TT_0$ by newest vertex bisection, on a lower bound on $s>1/2$ and on an upper bound of $r<3/2$. The function $(J_\TT v)|_T$ depends   only on $v|_{\cup\omega(T,\TT)}$. Moreover, $\int_T v\,dx =0$ for some $T\in\TT$ implies $\int_{T}J_\TT v\,dx=0$ and $\int_E v\,dx =0$ for some $E\in\EE(\TT)$ implies $\int_E J_\TT v\,dx=0$. This particularly implies
	\begin{align}\label{eq:szzero}
		\int_E (1-J_\TT) v\,dx =0 = \int_T(1-J_\TT)v\,dx\quad\text{for all }E\in\EE(\TT)\text{ and all } T\in\TT.
	\end{align}
\end{lemma}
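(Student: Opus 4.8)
The strategy is to verify that $J_\TT$ is a legitimate variant of the Scott--Zhang quasi-interpolation operator and that all the stated properties follow from the standard local arguments, adapted to the enriched space $\SS^{2+}(\TT)$ and the modified dual functionals. First I would establish the projection property: for $v\in\SS^{2+}(\TT)$, write $v=\sum_{w\in\SS_{sz}(\TT)}c_w w$ and observe that, because each $w^\star_{E_w}$ (resp.\ $w^\star_{T_w}$) is biorthogonal to the restrictions $\{w'|_{E_w}\}$ (resp.\ $\{w'|_{T_w}\}$) of the basis functions that do not vanish there, the duality pairing $\dual{w^\star_{E_w}}{v}_{E_w}$ picks out exactly $c_w$; hence $J_\TT v=v$. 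The Dirichlet-preservation is built into the construction: for a hat function $w\in\NN_{sz}(\TT)$ whose support meets $\Gamma$ we chose $E_w\subseteq\Gamma$, so if $v|_\Gamma=0$ then $\dual{w^\star_{E_w}}{v}_{E_w}=0$ and that nodal contribution drops out; element- and edge-bubbles supported near $\Gamma$ already vanish on $\Gamma$. The moment conditions $\int_T(1-J_\TT)v=0$ and $\int_E(1-J_\TT)v=0$ are immediate from the defining property that the modified basis functions $v_{z,0},v_{E,0}$ have vanishing integrals over elements and edges and $J_\TT v$ is a linear combination of exactly these (together with element bubbles, which integrate to a fixed nonzero value but are handled by the $\int_T$ biorthogonality); more carefully, since $J_\TT$ is a projection onto $\SS^{2+}(\TT)$ and the functionals $v\mapsto\int_T v$, $v\mapsto\int_E v$ factor through the chosen dual functionals, one gets $\int_T J_\TT v=\int_T v$ and likewise for edges.

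Next I would prove the stability and approximation bounds \eqref{eq:szhs}--\eqref{eq:szaO} by the usual scaling/Bramble--Hilbert machinery, exactly as for the classical Scott--Zhang operator. The locality statement ``$(J_\TT v)|_T$ depends only on $v|_{\cup\omega(T,\TT)}$'' follows because each basis function $w$ with $w|_T\neq 0$ is associated to an edge $E_w$ or element $T_w$ contained in $\cup\omega(T,\TT)$, and $w^\star_{E_w}$ is supported there. For \eqref{eq:szhs}: on the reference element, $\|J_\TT v\|_{H^s(T)}$ is controlled by a finite number of the coefficients $|\dual{w^\star_{E_w}}{v}_{E_w}|$, each of which is bounded (after scaling, using a trace/inverse estimate on the lower-dimensional $E_w$, valid precisely because $s>1/2$ so that traces onto edges make sense) by $\|v\|_{H^s(\cup\omega(T,\TT))}$; summing over the $\OO(1)$ many elements and using shape regularity and the equivalence of the Sobolev--Slobodeckij norm from Definition~\ref{def:sobslob} gives \eqref{eq:szOhs} after a coloring/finite-overlap argument. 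For the approximation estimate \eqref{eq:sza}, use the projection property: since $J_\TT$ reproduces polynomials (constants suffice for $s\le 1$, linears for $s\le 2$, because the enriched space contains $\SS^1(\TT)$), write $(1-J_\TT)v=(1-J_\TT)(v-p)$ for a suitable local polynomial $p$ on $\cup\omega(T,\TT)$, apply \eqref{eq:szhs} (with $r$ in place of $s$ after an inverse estimate, or directly), and invoke Bramble--Hilbert to replace $\|v-p\|$ by $\diam(T)^{s-r}|v|_{H^s}$. The global estimate \eqref{eq:szaO} follows by squaring \eqref{eq:sza} over all $T$ and using the bounded overlap of the patches $\cup\omega(T,\TT)$.

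The main obstacle, and the part requiring genuine care rather than citation, is the interplay between the three types of basis functions in the biorthogonalization and the resulting constants: one must check that the modified functions $v_{z,0}$ and $v_{E,0}$ (obtained by subtracting bubble corrections to enforce the moment conditions) remain uniformly well-conditioned, i.e.\ that the coefficients $\alpha_{T,E},\alpha_{T,z},\beta_{E,z}$ and the dual functions $w^\star_{E_w}$ have norms bounded solely in terms of shape regularity of $\TT_0$ (this is asserted in the definition and I would justify it by a compactness/scaling argument on the reference patch, noting that the Gram matrix of $\SS^{2+}_B(T)$ is invertible with shape-regularity-dependent bounds). A secondary technical point is the sharp Sobolev range: the restriction $1/2<s$ is needed so that point/edge evaluations through $w^\star_{E_w}$ are bounded on $H^s$, while $r<3/2$ is needed so that $J_\TT v\in H^r$ makes sense (the enriched functions are only piecewise polynomial, hence globally only $H^{3/2-\delta}$), and $s\le 2$ bounds the polynomial reproduction degree available; these must be tracked consistently through each estimate. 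Everything else is the standard Scott--Zhang argument and can be compressed to a reference to~\cite{afp,fembem} for the details.
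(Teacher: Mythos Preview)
Your overall strategy matches the paper's: projection and Dirichlet preservation via biorthogonality, locality from the support structure, and the approximation bounds by scaling to a reference patch plus a Bramble--Hilbert/Deny--Lions argument. The moment-condition argument is also essentially right; the paper makes it concrete by observing that the dual function $v_T^\star$ of an element bubble (and $v_E^\star$ of an edge bubble) is necessarily \emph{constant}, because all other members of $\SS_{sz}(\TT)$ are orthogonal to $1$ on $T$ (resp.\ $E$). That is the precise mechanism behind ``the functionals factor through the chosen dual functionals''.

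There is, however, a genuine gap in your passage from the local estimate \eqref{eq:szhs} to the global estimate \eqref{eq:szOhs} for non-integer $s$. A ``coloring/finite-overlap argument'' only works when the norm is additive over elements, i.e.\ for integer-order Sobolev norms. For the Sobolev--Slobodeckij seminorm the double integral has cross terms $\int_T\int_{T'}$ that do not localize, so $\sum_T\|J_\TT v\|_{H^s(T)}^2$ does not control $\|J_\TT v\|_{H^s(\Omega)}^2$. The paper circumvents this by not summing \eqref{eq:szhs} at all: it first proves the \emph{error} bound $\|v-J_\TT v\|_{H^s(\Omega)}\lesssim|v|_{H^s(\Omega)}$ using a localization inequality of Faermann type (\cite[Lemma~3.2]{faermann2}),
\[
|u|_{H^s(\Omega)}^2\ \lesssim\ \sum_{T\in\TT}\Big(|u|_{H^s(\cup\omega(T,\TT))}^2+{\rm diam}(T)^{-2s}\|u\|_{L^2(T)}^2\Big),
\]
applied to $u=v-J_\TT v$; the weighted $L^2$ term is then killed by the local approximation estimate \eqref{eq:sza} (which in turn relies on the moment condition \eqref{eq:szzero} via a Poincar\'e inequality). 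Stability \eqref{eq:szOhs} follows only afterwards by the triangle inequality. The same device is used again for $1<s<3/2$ applied to $\nabla(v-J_\TT v)$. Your sketch needs this extra ingredient; without it the fractional-order global bounds do not follow. Similarly, for \eqref{eq:szaO} with $0<r<1$ the paper sums only at the integer endpoints $r\in\{0,1\}$ and then interpolates, rather than summing \eqref{eq:sza} directly.
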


\begin{proof}
	For the projection property, let $v\in \SS^{2+}(\TT)$ with $v=\sum_{w\in\SS_{sz}(\TT)}\alpha_w w$ for some coefficients $\alpha_w\in\R$. Then, there holds
	\begin{align*}
		J_\TT v &= \sum_{w\in \BB_B(\TT)}\sum_{\widetilde w\in \SS_{sz}(\TT)}\alpha_{\widetilde w}w\dual{w^\star_{T_w}}{\widetilde w}_{T_w} 
		+  \sum_{w\in \NN_{sz}(\TT)\cup \EE_{sz}(\TT)}\sum_{\widetilde w\in \SS_{sz}(\TT)}\alpha_{\widetilde w}w\dual{w^\star_{E_w}}{\widetilde w}_{E_w}\\
		& = 
		\sum_{w\in \SS_{sz}(\TT)}\alpha_{w}w = v.
	\end{align*}
	Obviously, $(J_\TT v)|_T$ depends only on $v|_{T_w}$ and $v|_{E_w}$ for all $w\in\SS_{sz}(\TT)$ such that $w|_T$ is non zero. 
	Since all the basis functions have support within one patch of $\TT$, this shows that $(J_\TT v)|_T$ depends only on $v|_{\cup\omega(T,\TT)}$. 
	Particularly, there holds $J_\TT v =v$ on $T$, if $v\in \SS^{2+}(\omega(T,\TT))$. Since $E_w\subseteq \Gamma$ for all $w\in \NN_{sz}(\TT)\cup \EE_{sz}(\TT)$ with $w|_\Gamma\neq 0$,
	we have $J_\TT v|_\Gamma =0$ if $v|_\Gamma=0$.
	
	To see the moment condition~\eqref{eq:szzero}, note that the dual basis function $v_T^\star$ of an element bubble $v_T\in\BB_B(T)$ is uniquely determined.
	The function $1$ satisfies $\dual{w}{1}_{T}=0$ for all $w\in \SS_{sz}(\TT)\setminus \BB_B(\TT)$ by definition. 
	Moreover, there clearly holds $\dual{v_T}{1}_T\neq 0$. Thus, $v_T^\star\in \SS^{2+}(T)$ is a constant function.
	Hence, by definition of $J_\TT$, we have
	\begin{align*}
		\int_T v\,dx=0\quad\implies\quad (J_\TT v)|_T &:= \sum_{w\in \NN_{sz}(T)\cup\EE_{sz}(T)}w\dual{w^\star_{T_w}}{v}_{E_w}. 
	\end{align*}
	Since all $w\in \NN_{sz}(\TT)\cup\EE_{sz}(\TT)$ have element wise zero integral mean, we obtain that $\int_T v\,dx =0$ implies $\int_T J_\TT v\,dx =0$. Analogously, we find that the dual basis function of $w|_{E_w}$ for $w\in \EE_{sz}(\TT)$ must be constant. Thus, if $\int_E v\,dx =0$, there holds
	\begin{align*}
		(J_\TT v)|_E =\sum_{w\in \NN_{sz}(\TT)}w|_E\dual{w^\star_{T_w}}{v}_{E_w},
	\end{align*}
	where we used that exactly one $w\in\EE_{sz}(\TT)$ satisfies $E_w=E$ and the fact that $w\in\BB_B(\TT)$ satisfy $w|_E=0$. Since all $w\in \NN_{sz}(\TT)$ have zero integral mean on $E$, we obtain $\int_E J_\TT v\,dx =0$.
	This particularly implies that for $v\in H^1(\Omega)$ and $v_0\in\SS^{2+}(\TT)$ such that $v-v_0$ has zero integral mean on all edges and elements, we obtain
	\begin{align*}
		\int_\omega (1-J_\TT)v\,dx = \int_\omega (1-J_\TT)(v-v_0)\,dx =0\quad\text{for all } \omega\in \TT\cup\EE(\TT)
	\end{align*}
	and hence prove~\eqref{eq:szzero}.
	
	It remains to prove the estimates~\eqref{eq:szhs}--\eqref{eq:szaO}. To that end, note that for each $T\in\TT$, there exists one of finitely many reference patches $\widehat\omega$
	such that $\omega(T,\TT) = \phi(\widehat\omega)$ and $T= \phi(\widehat T)$ for some affine function $\phi\colon \widehat\omega\to \omega(T,\TT)$ with Jacobian $D\phi = \alpha I$ for some $\alpha>0$ and identity matrix $I$.
	Define $\widehat J v= (J_\TT (v\circ\phi^{-1}))\circ \phi$. There holds for $w\in \SS_{sz}(\TT)$
	\begin{align*}
		\dual{w^\star_{T_w}}{v\circ\phi^{-1}}_{T_w} = \dual{|T_w|w^\star_{T_w}\circ\phi}{v}_{\phi^{-1}(T_w)}\quad\text{for all }w\in\BB_B(T),\\
		\dual{w^\star_{E_w}}{v\circ\phi^{-1}}_{E_w} = \dual{|E_w|w^\star_{E_w}\circ\phi}{v}_{\phi^{-1}(E_w)}\quad\text{for all }w\in\NN_{sz}(T)\cup\EE_{sz}(T).
	\end{align*}
	Since composition $\widehat{(\cdot)}\colon w\mapsto w\circ \phi$ maps $\SS_{sz}(T)$ to $\SS_{sz}(\widehat T)$ in a one-to-one manner, it is obvious from the above that 
	$|T_w|w^\star_{T_w}\circ\phi= \widehat{w}^\star_{\widehat T_w}$ with $\widehat T_w = \phi^{-1}(T_w)\in \widehat\omega$
	and $|E_w|w^\star_{E_w}\circ\phi= \widehat{w}^\star_{\widehat E_w}$ with $\widehat E_w = \phi^{-1}(E_w)\in \EE(\widehat\omega)$. Thus, $\widehat J$ is one of finitely many (analogously defined) Scott-Zhang
	operators.
	
	A standard scaling argument shows for $T\in\TT$ and $s>1/2$ 
	\begin{align}\label{eq:semicont}
	\begin{split}
		|J_\TT v|_{H^s(T)}^2&\simeq |T|{\rm diam}(T)^{-2s}|\widehat J (v\circ\phi)|_{H^s(\widehat T)}^2\\
		&\lesssim |T|{\rm diam}(T)^{-2s}|v\circ\phi|_{H^s(\cup\widehat\omega)}^2\simeq |v|_{H^s(\cup\omega(T,\TT))}^2,
	\end{split}
	\end{align}
	where we used that $\widehat J$ has finite dimensional range as well as that $|v|_{H^s(\cup\omega(T,\TT))}=0$ implies that $v$ is constant and hence 
	$J_\TT v = v$ on $T$, which in turn implies $|\widehat J (v\circ\phi)|_{H^s(\widehat T)}=0$.
	
	To see~\eqref{eq:sza} recall that $(1-J_\TT)v$ has element wise zero integral mean.
	If $r\leq 1$ consider $t:=\max\{r,\min\{s,1\}\}>1/2$. We have with~\eqref{eq:semicont} and a Poincar\'e inequality
	\begin{align*}
		\norm{&(1-J_\TT)v}{H^r(T)}\\
		&\lesssim {\rm diam}(T)^{t-r}|(1-J_\TT)v)|_{H^t(T)}=  {\rm diam}(T)^{t-r}\inf_{v_0\in\SS^{2+}(\TT)}|(1-J_\TT)(v-v_0)|_{H^t(T)}\\
		&\lesssim 
		{\rm diam}(T)^{t-r}\inf_{v_0\in\SS^{2+}(\TT)}|v-v_0|_{H^t(\cup\omega(T,\TT))}\lesssim {\rm diam}(T)^{s-r}|v|_{H^s(\cup\omega(T,\TT))}.
	\end{align*}
	If $r>1$, we obtain again with~\eqref{eq:semicont}
	\begin{align*}
		\norm{&(1-J_\TT)v}{H^r(T)}\\
		&\lesssim |(1-J_\TT)v)|_{H^1(T)}+|(1-J_\TT)v)|_{H^r(T)}\\
		&\lesssim 
		\inf_{v_0\in\SS^{2+}(\TT)}\big(|v-v_0|_{H^1(\cup\omega(T,\TT))}+|v-v_0|_{H^r(\cup\omega(T,\TT))}\big)\\
		&\leq \inf_{v_0\in\SS^{2+}(\TT)}\norm{v-v_0}{H^r(\cup\omega(T,\TT))}\lesssim {\rm diam}(T)^{s-r}|v|_{H^s(\cup\omega(T,\TT))}.
	\end{align*}
	This proves~\eqref{eq:sza}.
	
	By choosing $r=s$,~\eqref{eq:sza} also proves~\eqref{eq:szhs}. For $s=1$, we obtain~\eqref{eq:szOhs} by summation.
	For $1/2<s<1$, we use the equivalence of interpolation norms with Sobolev-Slobodeckij norms as discussed in Definition~\ref{def:sobslob}.
	Then, the result~\cite[Lemma~3.2]{faermann2} shows together with~\eqref{eq:sza}
	\begin{align*}
	 \norm{v-J_\TT v}{H^s(\Omega)}^2&\lesssim \sum_{T\in\TT}\Big(|v-J_\TT v|_{H^s(\cup\omega(T,\TT))}^2 + (1+{\rm diam}(T)^{-2s})\norm{v-J_\TT v}{L^2(T)}^2\Big)\\
	 &\lesssim \sum_{T\in\TT}|v|_{H^s(\cup\omega(T,\TT))}^2\leq |v|_{H^s(\Omega)}^2.
	\end{align*}
	This implies immediately $\norm{J_\TT v}{H^s(\Omega)}\lesssim \norm{v}{H^s(\Omega)}$ and thus~\eqref{eq:szOhs} for all $1/2<s<1$. For $1<s<3/2$, a similar argument shows
	\begin{align*}
	 \norm{v-J_\TT v}{H^s(\Omega)}^2&\lesssim \sum_{T\in\TT}\Big(|\nabla(v-J_\TT v)|_{H^{s-1}(\cup\omega(T,\TT))}^2 + (1+{\rm diam}(T)^{-2s})\norm{v-J_\TT v}{H^1(T)}^2\Big)\\
	 &\lesssim \sum_{T\in\TT}|v|_{H^s(\cup\omega(T,\TT))}^2\leq |v|_{H^s(\Omega)}^2
	\end{align*}
	and thus concludes the proof of~\eqref{eq:szOhs}.
	
	 Moreover, from~\eqref{eq:sza}, we conclude~\eqref{eq:szaO} for $r=0$ and $r=1$ by summation.
	Hence, interpolation proves~\eqref{eq:szaO} for all $0\leq r \leq 1$.

	This concludes the proof.
\end{proof}

The following theorem establishes the extended Scott-Zhang projections such that operators on different levels commute. The author is confident, that this definition is not
restricted to uniform refinements as shown below and a more careful analysis would work for general triangulations $\TT\in\T$. This could be useful for other applications, as for example
the Aubin-Nitsche trick relies on the fact that $L^2$-projections on different levels commute.
\begin{theorem}\label{thm:szb}
	Recall $\widehat\TT_\ell$ and $C_{\rm mesh}$ from Definition~\ref{def:widehat}.
	With $J_\ell:= J_{\widehat\TT_\ell}$, define
	\begin{align*}
		S_\ell v := \lim_{N\to\infty} (J_\ell J_{\ell+1}\ldots J_{\ell+N})v\in S^{2+}(\widehat \TT_\ell)
	\end{align*}
	for all $v\in H^1(\Omega)$.
	Then, the operator $S_\ell\colon H^1(\Omega)\to \SS^{2+}(\widehat \TT_\ell)$ is well-defined and satisfies for all $1/2<\sigma< 3/2$, $\mu>1/2$, and $0\leq \nu\leq 1$, and $\nu+1/4\leq \mu\leq 2$
	\begin{align}
		\norm{(1-S_\ell) v}{H^\nu(\Omega)}&\leq C_{\rm S} C_{\rm mesh}^{-\ell(\mu-\nu)}\norm{v}{H^\mu(\Omega)}\quad\text{for all }v\in H^\mu(\Omega),\label{eq:sapprox}\\
		\norm{S_\ell v}{H^\sigma(\Omega)}&\leq C_{\rm S} \norm{ v}{H^\sigma(\Omega)}\quad\text{for all }v\in H^\sigma(\Omega). \label{eq:scont}
	\end{align}
	 Moreover, there holds $S_\ell S_k=S_{\min\{\ell,k\}}$ for all $\ell,k\in\N_0$ as well as
	 \begin{align}\label{eq:Sid}
	  S_\ell v = J_\ell J_{\ell+1}\cdots J_{\ell+k} v\quad\text{for all }v\in \SS^{2+}(\widehat\TT_{\ell+k})
	 \end{align}
	for all $\ell,k\in\N$.
	$S_\ell$ preserves locality in the sense that $(S_\ell u)|_T$
	for some $T\in\TT_\ell$ depends only on $u$ on $\bigcup\omega^r(T,\TT_\ell)$ for some $r\in\N$. 
	Finally, $v|_\Gamma =0$ implies $(S_\ell v)|_\Gamma =0$.
	The constant $C_{\rm S}>0$ depends only on $C_{\rm sz}$ and $C_{\rm mesh}$, whereas the constant $r$ depends only on $\TT_0$.
\end{theorem}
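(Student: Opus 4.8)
The plan is to establish the commuting property first, since everything else follows from it. The key observation is that $J_{\ell}$ is a projection onto $\SS^{2+}(\widehat\TT_\ell)$ that depends only on local data, and that $\SS^{2+}(\widehat\TT_\ell)\subseteq \SS^{2+}(\widehat\TT_{\ell+1})$ because $\widehat\TT_{\ell+1}$ is a uniform refinement of $\widehat\TT_\ell$. First I would prove the crucial algebraic identity
\begin{align}\label{eq:commaux}
 J_\ell J_{\ell+1} v = J_\ell v\quad\text{for all }v\in\SS^{2+}(\widehat\TT_{\ell+1}).
\end{align}
Wait — this is false in general for the unmodified Scott-Zhang operator; the whole point of the moment conditions built into $\SS_{sz}(\TT)$ in Definition~\ref{def:sz} and Lemma~\ref{lem:gensz} is to make it true. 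Indeed, $J_\ell$ acts via duality pairings $\dual{w^\star_{E_w}}{v}_{E_w}$ against edge/element functionals, and by~\eqref{eq:szzero} applied at level $\ell+1$, the function $v-J_{\ell+1}v$ has vanishing integral mean on every element and every edge of $\widehat\TT_{\ell+1}$, hence on every edge/element of the coarser mesh $\widehat\TT_\ell$ as well. Since the dual functionals $w^\star_{E_w},w^\star_{T_w}$ appearing in $J_\ell$ are constants on the relevant edge/element (as shown in the proof of Lemma~\ref{lem:gensz}), we get $\dual{w^\star_{E_w}}{v-J_{\ell+1}v}=0$ for every $w\in\SS_{sz}(\widehat\TT_\ell)$, which is exactly~\eqref{eq:commaux}. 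Iterating~\eqref{eq:commaux} gives $J_\ell J_{\ell+1}\cdots J_{\ell+N}v = J_\ell v$ for any $v\in\SS^{2+}(\widehat\TT_{\ell+N})$, so for such $v$ the limit defining $S_\ell$ is attained after one step and equals $J_\ell v$; this is~\eqref{eq:Sid}.

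For general $v\in H^1(\Omega)$, well-definedness of the limit follows by a density/Cauchy argument: given $\delta>0$, pick $v_0\in\SS^{2+}(\widehat\TT_{\ell+m})$ for some large $m$ with $\norm{v-v_0}{H^1(\Omega)}\leq\delta$ (this is possible since the union of the $\SS^{2+}(\widehat\TT_k)$ is dense in $H^1$, using~\eqref{eq:szaO} and the uniform decay of the mesh size from Definition~\ref{def:widehat}). For $N\geq m$ the composition $J_\ell\cdots J_{\ell+N}$ is a uniformly bounded operator on $H^1(\Omega)$ by~\eqref{eq:szOhs} (boundedness constant $\leq C_{\rm sz}^{N+1}$ — this is the subtle point, see below), and it fixes $v_0$ by~\eqref{eq:commaux}; so $\norm{(J_\ell\cdots J_{\ell+N})v-(J_\ell\cdots J_{\ell+N'})v}{H^1}=\norm{(J_\ell\cdots J_{\ell+N})(v-v_0)-(J_\ell\cdots J_{\ell+N'})(v-v_0)}{H^1}$, and this is where one must be careful: a naive bound gives $C_{\rm sz}^{N}\delta$, which blows up. The fix is that by~\eqref{eq:szaO} each extra factor $J_{\ell+k}$ with $k\geq m$ acts on a function already in $\SS^{2+}(\widehat\TT_{\ell+m})$, hence fixes it, so in fact $(J_\ell\cdots J_{\ell+N})v = (J_\ell\cdots J_{\ell+m})J_{\ell+m+1}\cdots J_{\ell+N}v$, and we only need a bound of the form $\norm{(J_\ell\cdots J_{\ell+m})w}{H^1}\le C_{\rm sz}^{m+1}\norm{w}{H^1}$ together with $\norm{J_{\ell+m+1}\cdots J_{\ell+N}v - v_0}{H^1}\to 0$; the latter is controlled by interpolating~\eqref{eq:szaO} (which gives contraction $C_{\rm mesh}^{-(\ell+k)(\mu-\nu)}$ in lower norms) against~\eqref{eq:szOhs}, and the geometric decay $C_{\rm mesh}^{-\ell}$ beats the growth $C_{\rm sz}^{\ell}$ precisely because Definition~\ref{def:widehat} chose $k_{\rm mesh}$ so that $C_{\rm mesh}\geq(C_{\rm sz}+1)^4$. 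This telescoping estimate — balancing the exponential blow-up of the stability constant against the exponential decay from approximation — is the heart of the argument and the main obstacle; it simultaneously yields~\eqref{eq:sapprox} and~\eqref{eq:scont}. Concretely, to prove~\eqref{eq:sapprox} one writes $(1-S_\ell)v = (1-J_\ell)v + J_\ell(1-S_{\ell+1})v$ (using $S_\ell = J_\ell S_{\ell+1}$, itself a consequence of~\eqref{eq:commaux} in the limit), and iterates, summing a geometric series of terms $C_{\rm sz}^{k}C_{\rm mesh}^{-(\ell+k)(\mu-\nu)}$; the constraints $\nu+1/4\le\mu$ and $\mu\le 2$ are exactly what keep every individual Scott-Zhang estimate~\eqref{eq:sza}/\eqref{eq:szaO} applicable (note $\mu-\nu\ge 1/4>0$ guarantees strict decay per level even after the $C_{\rm sz}$ loss, once $k_{\rm mesh}$ is large).

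The semigroup property $S_\ell S_k = S_{\min\{\ell,k\}}$ follows from~\eqref{eq:Sid} and~\eqref{eq:commaux}: for $k\geq\ell$, $S_k v\in\SS^{2+}(\widehat\TT_k)$, and applying $S_\ell$ to something in $\SS^{2+}(\widehat\TT_k)$ gives $J_\ell J_{\ell+1}\cdots J_{k-1} S_k v$; but $S_k v$ is already $J_k$-invariant and hence, by~\eqref{eq:commaux} run backwards, this collapses to $S_\ell v$ (using that $S_\ell$ and $J_\ell\cdots J_{k-1}$ agree on $\SS^{2+}(\widehat\TT_k)$ by~\eqref{eq:Sid}, and the $S_k$ just re-projects back down — one checks $J_\ell\cdots J_{k-1}J_k\cdots J_{k+N}v$ is a tail of the defining sequence for $S_\ell v$). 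For $k\leq\ell$ the claim is $S_\ell S_k = S_\ell$, which holds since $S_k v\in H^1$ and $S_\ell$ applied to anything reproduces via the same limit; more directly, $S_\ell J_k = S_\ell$ for $k<\ell$ is immediate because $J_\ell\cdots J_{\ell+N}J_k$, after the first factor $J_\ell$ hits $\SS^{2+}(\widehat\TT_k)\subseteq\SS^{2+}(\widehat\TT_\ell)$-valued data, ignores $J_k$. Locality of $S_\ell$ is inherited: each $J_{\ell+j}$ has the patch-locality from Lemma~\ref{lem:gensz}, and since $\widehat\TT_{\ell+j}$ refines $\widehat\TT_\ell$ with mesh ratio $C_{\rm mesh}^{-j}$, the accumulated dependence region of $J_\ell\cdots J_{\ell+N}$ restricted to $T\in\widehat\TT_\ell$ is contained in a fixed finite patch $\bigcup\omega^r(T,\widehat\TT_\ell)$ with $r$ depending only on $\TT_0$ — the key being that each successive $J_{\ell+j}$ only widens the support by $O(1)$ elements of $\widehat\TT_{\ell+j}$, i.e. by an $O(C_{\rm mesh}^{-j})$ geometric amount, so the total widening (measured in coarse-mesh patches) is bounded by a convergent geometric sum. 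The Dirichlet preservation $v|_\Gamma=0\Rightarrow(S_\ell v)|_\Gamma=0$ follows since each $J_{\ell+j}$ preserves homogeneous Dirichlet values (Lemma~\ref{lem:gensz}) and this property is stable under the $H^1$-limit (trace is continuous). This finishes the proof.
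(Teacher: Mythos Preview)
Your proposal contains a genuine error that undermines the logical structure. The identity you label \eqref{eq:commaux}, in the form you actually argue for --- namely $J_\ell(1-J_{\ell+1})v=0$ for \emph{general} $v$, i.e.\ $J_\ell J_{\ell+1}=J_\ell$ as operators --- is false. Your justification is that ``the dual functionals $w^\star_{E_w},w^\star_{T_w}$ appearing in $J_\ell$ are constants''. But the proof of Lemma~\ref{lem:gensz} only shows this for the element-bubble duals and the edge-bubble duals; the duals $w^\star_{E_w}$ for modified hat functions $w\in\NN_{sz}(\widehat\TT_\ell)$ are \emph{not} constant (on an edge there are two hat functions and one edge bubble, so the hat duals must be nonconstant to distinguish them). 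Hence the hat-function coefficients $\langle w^\star_{E_w},(1-J_{\ell+1})v\rangle_{E_w}$ need not vanish even though $(1-J_{\ell+1})v$ has edge-wise mean zero. Note also that if $J_\ell J_{\ell+1}=J_\ell$ held, then $S_\ell=J_\ell$ outright and the whole construction would be superfluous; the paper introduces $S_\ell$ precisely because the $J_\ell$ do \emph{not} commute in this sense. Relatedly, your reading of \eqref{eq:Sid} as ``$S_\ell v=J_\ell v$'' is wrong: it says $S_\ell v=J_\ell J_{\ell+1}\cdots J_{\ell+k}v$ for $v\in\SS^{2+}(\widehat\TT_{\ell+k})$, which follows trivially from nestedness ($J_{\ell+m}v=v$ for $m\ge k$) once the limit exists, without any commutation identity.

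You do correctly locate the heart of the argument --- the telescoping bound $\sum_{k} C_{\rm sz}^{k}C_{\rm mesh}^{-(\ell+k)(\mu-\nu)}$ converging because $C_{\rm mesh}^{1/4}>C_{\rm sz}$ --- but you deploy it circularly, writing $(1-S_\ell)v=(1-J_\ell)v+J_\ell(1-S_{\ell+1})v$ before $S_\ell$ is known to exist. The paper's order is the reverse: first telescope the \emph{finite} compositions,
\[
(1-J_\ell\cdots J_{\ell+N})u=(1-J_\ell)u+\sum_{k=0}^{N-1}(J_\ell\cdots J_{\ell+k})(1-J_{\ell+k+1})u,
\]
bound this uniformly in $N$ in $H^\nu$ (for $u\in H^\mu$, $\mu\ge\nu+1/4$), and use a second application of the same estimate to show $\{(J_\ell\cdots J_{\ell+N})u\}_N$ is Cauchy in $H^1$ for $u\in H^{5/4}$. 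Only then is $S_\ell$ defined on $H^{5/4}$, the commuting property $S_kS_\ell=S_{\min\{k,\ell\}}$ derived from $J_nS_\ell=S_\ell$ for $n\ge\ell$ and passing to the limit, and finally \eqref{eq:sapprox}--\eqref{eq:scont} obtained (the latter via an inverse estimate on $S_\ell-J_\ell$, not directly from the telescoping). Your locality and Dirichlet-preservation sketches are fine and match the paper.
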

\begin{proof}
	From~\eqref{eq:szaO}, we obtain for $r\leq s$ and $0\leq r\leq 1$ and $s\in\{1,2\}$
	\begin{align}\label{eq:szaOO}
		\norm{(1-J_\ell)v}{H^r(\Omega)}\lesssim C_{\rm mesh}^{s-r}\norm{v}{H^{s}(\Omega)}\quad\text{for all }v\in H^s(\Omega).
	\end{align}
	Moreover, from~\eqref{eq:szOhs}, we even get $\norm{(1-J_\ell)v}{H^r(\Omega)}\lesssim \norm{v}{H^{r}(\Omega)}$ for all $1/2<r<3/2$.
	Interpolation arguments prove that~\eqref{eq:szaOO} holds for all $s$ with $r\leq s \leq 2$.
	Let $u\in H^{\mu}(\Omega)$. Since $J_\ell\colon H^\nu(\Omega)\to H^\nu(\Omega)$ is continuous for $1/2< \nu<3/2$ (see~\eqref{eq:szOhs}), 
	there holds for $1/2< \nu\leq 1$ and $\nu+1/4\leq \mu \leq 2$ with $\mu>1/2$ and by use of~\eqref{eq:szaOO} that
	\begin{align}\label{eq:sz1}
		\begin{split}
			\norm{(1-&(J_\ell J_{\ell+1}\ldots J_{\ell+N}))u}{H^{\nu}(\Omega)}\\
			&\leq \norm{(1-J_{\ell})u}{H^{\nu}(\Omega)}+\sum_{k=0}^{N-1}\norm{( (J_{\ell}\cdots J_{\ell+k})-(J_{\ell}\cdots J_{\ell+k+1}))u}{H^{\nu}(\Omega)}\\
			&\leq \norm{(1-J_{\ell})u}{H^{\nu}(\Omega)}+\sum_{k=0}^{N-1}\norm{(J_{\ell}\cdots J_{\ell+k})}{H^{\nu}(\Omega)\to H^{\nu}(\Omega)}\norm{(1-J_{\ell+k+1})u}{H^{\nu}(\Omega)}\\
			&\lesssim \sum_{k=0}^{N}C_{\rm sz}^{k}C_{\rm mesh}^{-(\mu-\nu)(\ell+k)}\norm{u}{H^\mu(\Omega)}\lesssim C_{\rm mesh}^{-(\mu-\nu)\ell}\norm{u}{H^\mu(\Omega)},
		\end{split}
	\end{align}
	where we used $C_{\rm mesh}^{(\mu-\nu)}\geq C_{\rm mesh}^{1/4}>C_{\rm sz}$ from Definition~\ref{def:widehat}.
	Hence,~\eqref{eq:sz1} for $ (\nu_1,\mu_1)= (3/4,1)$ and $(\nu_2,\mu_2)=(1,5/4)$ implies for $M\leq N$
	\begin{align*}
		\norm{((J_\ell J_{\ell+1}\ldots J_{\ell+M})&-(J_\ell J_{\ell+1}\ldots J_{\ell+N}))u}{H^{1}(\Omega)}\\
		&=\norm{(J_\ell\ldots J_{\ell+M})((1-(J_{\ell+M+1} J_{\ell+1}\ldots J_{\ell+N}))u}{H^{1}(\Omega)}\\
		&\leq \norm{((1-(J_{\ell+M+1} J_{\ell+1}\ldots J_{\ell+N}))u}{H^{1}(\Omega)}\\
		&\qquad+\norm{(1-(J_\ell\ldots J_{\ell+M}))((1-(J_{\ell+M+1} J_{\ell+1}\ldots J_{\ell+N}))u}{H^{1}(\Omega)}\\
		&\lesssim \norm{((1-(J_{\ell+M+1} J_{\ell+1}\ldots J_{\ell+N}))u}{H^{5/4}(\Omega)}\\
		&\lesssim C_{\rm mesh}^{-(\ell+M)/4}\norm{u}{H^{5/4}(\Omega)}.
	\end{align*}
	This shows, that $(J_\ell J_{\ell+1}\ldots J_{\ell+N})u$ is a Cauchy-sequence in $\SS^{2+}{\widehat\TT_\ell}$ with respect to the $H^{3/4}(\Omega)$-norm as $N\to\infty$.
	Thus, for $u\in H^{5/4}(\Omega)$, the limit $S_\ell u\in \SS^{2+}(\widehat\TT_\ell)$ exists.  
	
	We prove $S_k S_\ell u = S_k u$ for all $k\leq \ell$ and all $u\in H^{5/4}(\Omega)$. Note that $J_n S_\ell = S_\ell$ for all $n\geq \ell$ 
	due to the projection property of $J_n$. Since $(J_k\cdots J_{k+N})v$ converges in $H^{3/4}(\Omega)$ for all $v\in H^{5/4}(\Omega)$
	and $\SS^{2+}(\widehat\TT_\ell)\subseteq H^{5/4}(\Omega)$, we have
	\begin{align*}
		\norm{(S_k S_\ell-S_k)u}{H^{3/4}(\Omega)} &=\lim_{N\to\infty}\norm{((J_kJ_{k+1}\ldots J_{k+N}) S_\ell-S_k)u}{H^{3/4}(\Omega)}\\ 
		&=\norm{((J_kJ_{k+1}\ldots J_{\ell-1}) S_\ell-S_k)u}{H^{3/4}(\Omega)}\\
		&= \lim_{N\to\infty}\norm{((J_kJ_{k+1}\ldots J_{\ell+N}) -(J_kJ_{k+1}\ldots J_{\ell+N}))u}{H^{3/4}(\Omega)}=0.
	\end{align*}
	This shows $S_k S_\ell u = S_k u$ for $k\leq \ell$ and all $u\in H^{5/4}(\Omega)$. Particularly, we have $S_\ell S_\ell u=S_\ell u$.
	For $u\in \SS^{2+}(\widehat\TT_\ell)\subseteq H^{5/4}(\Omega)$, we obtain
	\begin{align*}
	 \norm{u-S_\ell u}{H^{3/4}(\Omega)} = \lim_{N\to\infty}\norm{u-(J_\ell J_{\ell+1}\ldots J_{\ell+N})u}{H^{3/4}(\Omega)}=0
	\end{align*}
	and hence $S_\ell (\SS^{2+}(\widehat\TT_\ell))=\SS^{2+}(\widehat\TT_\ell)$. This particularly implies $S_\ell S_k u=S_k u$ by use of
	the nestedness of the spaces. Analogously, we see~\eqref{eq:Sid}. By continuity of the trace operator in $H^{3/4}$ and with Lemma~\ref{lem:gensz}, we obtain from the above that $u\in H^{5/4}(\Omega)$ with $u|_\Gamma =0$ 
	implies $(S_\ell u)|_\Gamma =0$.	
	
	For the proof of~\eqref{eq:sapprox}, we derive from~\eqref{eq:sz1} with $1/2< \nu\leq 1 $ and $\nu+1/4\leq \mu\leq 2$ with $\mu>1/2$ for all $u\in H^2(\Omega)$
	\begin{align}\label{eq:sapprox1}
	\begin{split} 
		\norm{(1-S_\ell) u}{H^\nu(\Omega)}&=\lim_{N\to\infty}\norm{(1-(J_\ell J_{\ell+1}\ldots J_{\ell+N}))u}{H^\nu(\Omega)}\\
		&\lesssim C^{-\ell(\mu-\nu)}\norm{u}{H^\mu(\Omega)}.
		\end{split}
	\end{align}
	Density proves the statement for all $u\in H^{\mu}(\Omega)$ and particularly defines $S_\ell \colon H^\mu(\Omega) \to \SS^{2+}(\widehat\TT_\ell)$
	for all $\mu>3/4$ by continuous extension. The remaining case $0\leq \nu\leq 1/2$ and $\nu+1/4\leq \mu$ of~\eqref{eq:sapprox} can be proved as follows: First, note that
	\begin{align*}
	 (1-S_\ell)u= \lim_{N\to \infty} \Big( (1-J_{\ell+N})u + (1- J_{\ell+N-1})J_{\ell+N}u +\ldots +(1- J_\ell)(J_{\ell+1}\cdots J_{\ell+N})u\Big)
	\end{align*}
	in $H^{3/4}(\Omega)$. Moreover,~\eqref{eq:szzero} shows that $(1-J_k)u$ has element wise integral mean zero on $\widehat\TT_k$. Together, this shows
	that $(1-S_\ell)u$ has element wise integral mean zero on $\widehat\TT_\ell$. This, and a Poincar\'e inequality show
	\begin{align}\label{eq:sapprox2}
	 \norm{(1-S_\ell) u}{H^\nu(\Omega)}&\lesssim C^{-\ell(\min\{\mu,1\}-\nu)}\norm{u}{H^{\min\{\mu,1\}}(\Omega)}.
	\end{align}
	The combination of~\eqref{eq:sapprox1}--\eqref{eq:sapprox2} with $(1-S_\ell)=(1-S_\ell)(1-S_\ell)$ shows~\eqref{eq:sapprox}.
	
	This, and inverse estimates imply immediately for $1/2<\mu<3/2$
	\begin{align*}
		\norm{S_\ell u}{H^\mu(\Omega)}&\lesssim \norm{(S_\ell -J_\ell) u}{H^\mu(\Omega)}+\norm{ u}{H^\mu(\Omega)}
		\\
		&\lesssim C_{\rm mesh}^{\ell\mu}\norm{(S_\ell -J_\ell) u}{L^2(\Omega)}+\norm{ u}{H^\mu(\Omega)}\\
		&\lesssim C_{\rm mesh}^{\ell\mu}\big(\norm{(1-S_\ell) u}{L^2(\Omega)}+\norm{(1-J_\ell) u}{L^2(\Omega)}\big)+\norm{ u}{H^\mu(\Omega)}\lesssim \norm{ u}{H^\mu(\Omega)},
	\end{align*}
	where we used~\eqref{eq:szaOO}.
	The above estimate allows to continuously extend $S_\ell\colon H^\mu(\Omega)\to \SS^{2+}(\widehat\TT_\ell)\subseteq H^\mu(\Omega)$. The properties proved above for $u\in H^{5/4}(\Omega)$ follow
	for general $u\in H^\mu(\Omega)$ by density arguments.
	
	Finally, by the locality properties of $J_\ell$, there holds that given $T\in\TT_\ell$, $((J_\ell\ldots J_{\ell+N})u)|_T$ depends only on $u$ in the domain
	\begin{align*}
		\omega:=\bigcup\omega(\omega(\ldots\omega(\omega(T,\TT_\ell),\TT_{\ell+1}),\ldots,\TT_{\ell+N-1}),\TT_{\ell+N}).
	\end{align*}
	Due to the geometrically decreasing element size, it follows that $\omega$ is contained in some generalized patch $\bigcup\omega^r(T,\TT_\ell)$ for some $r\in\N$ which depends only on $C_{\rm sz}$.
\end{proof}

\section{Riesz bases}\label{section:basis}
This section constructs suitable Riesz bases of $H^1(\Omega)$, $H^{1/2}(\partial\Omega)$, and $H^{-1/2}(\partial\Omega)$ for Section~\ref{section:apps}.
To that end, we first have to prove that $(H^{3/4}(\Omega),H^1(\Omega),H^{5/4}(\Omega))$ form a Gelfand triple with $H^1(\Omega)$ as its pivot space.

\begin{lemma}\label{lem:ellreg}
Let $v\in H^1(\Omega)$ and define
\begin{align*}
 H^1(v,w):=\dual{\nabla v}{\nabla w}_\Omega +\dual{v}{w}_\Omega\quad\text{for all }v,w\in H^1(\Omega).
\end{align*}
Then, for $0\leq s<1/2$, there holds
\begin{align*}
 \norm{H^1(v,\cdot)}{\widetilde H^{-1-s}(\Omega)}\geq C_s \norm{v}{H^{1-s}(\Omega)}
\end{align*}
for all $v\in H^{1-s}(\Omega)$,
where $C_s>0$ depends only on $\Omega$ and $s$.
\end{lemma}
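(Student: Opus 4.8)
The plan is to recognize the map $v\mapsto H^1(v,\cdot)$ as the Banach adjoint of an isomorphism furnished by elliptic regularity for the Neumann Laplacian on the polygon $\Omega$, so that the asserted lower bound becomes the (automatic) boundedness of the inverse of that isomorphism. A preliminary observation: for $0\le s<1/2$ there is no distinction between $H^s(\Omega)$ and $\widetilde H^s(\Omega)$, so $\langle\nabla v,\nabla w\rangle$ is a legitimate duality pairing of $H^{-s}(\Omega)$ with $H^s(\Omega)$; combined with the $L^2$-pairing $\langle v,w\rangle$ this shows that $H^1(\cdot,\cdot)$ extends to a bounded bilinear form on $H^{1-s}(\Omega)\times H^{1+s}(\Omega)$. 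Hence, writing $\widetilde H^{s-1}(\Omega)=(H^{1-s}(\Omega))^*$ as in the paper's convention,
\begin{align*}
 T_s\colon H^{1+s}(\Omega)\to\widetilde H^{s-1}(\Omega),\qquad T_sw:=H^1(w,\cdot)\big|_{H^{1-s}(\Omega)},
\end{align*}
is well defined and bounded, and so is $v\mapsto H^1(v,\cdot)|_{H^{1+s}(\Omega)}$ as a map $H^{1-s}(\Omega)\to\widetilde H^{-1-s}(\Omega)$. The endpoint $s=0$ is immediate: since $H^1(\cdot,\cdot)$ is symmetric and $H^1(w,w)=\norm{w}{H^1(\Omega)}^2$, Cauchy--Schwarz yields $\norm{H^1(v,\cdot)}{\widetilde H^{-1}(\Omega)}=\norm{v}{H^1(\Omega)}$, i.e.\ $C_0=1$; equivalently, $T_0$ is an isometric isomorphism (Lax--Milgram).

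For $0<s<1/2$ the crucial claim is that $T_s$ is an isomorphism with $\norm{T_s^{-1}}{}\le C(\Omega,s)$. Injectivity follows by testing with $\psi=w$. Surjectivity together with the bound is precisely the fractional elliptic shift estimate: for $\phi\in\widetilde H^{s-1}(\Omega)$, the Lax--Milgram solution $w\in H^1(\Omega)$ of $H^1(w,\psi)=\langle\phi,\psi\rangle$ (this identity holds for $\psi\in H^1(\Omega)$ and extends to $\psi\in H^{1-s}(\Omega)$ by density, both sides being continuous on $H^{1-s}(\Omega)$) in fact lies in $H^{1+s}(\Omega)$ and satisfies $\norm{w}{H^{1+s}(\Omega)}\lesssim\norm{\phi}{\widetilde H^{s-1}(\Omega)}$. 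This is the classical regularity theorem for the inhomogeneous Neumann problem on a polygonal domain; it holds in the range $s<\pi/\omega_{\max}$, where $\omega_{\max}<2\pi$ denotes the largest interior angle of $\Gamma$, and hence for all $s<1/2$ (see, e.g., Grisvard's monograph on elliptic problems in nonsmooth domains, or the Lipschitz-domain regularity theory of Jerison and Kenig). This is the only nontrivial ingredient, and it is exactly where the hypothesis $s<1/2$ enters (it also makes the identification $\widetilde H^{s-1}(\Omega)=(H^{1-s}(\Omega))^*$ unambiguous).

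Granting the isomorphism property of $T_s$, I would finish by duality: by reflexivity of the Sobolev spaces, the Banach adjoint $T_s'\colon H^{1-s}(\Omega)\to\widetilde H^{-1-s}(\Omega)$ (using $(\widetilde H^{s-1}(\Omega))^*=H^{1-s}(\Omega)$ and $(H^{1+s}(\Omega))^*=\widetilde H^{-1-s}(\Omega)$) is again an isomorphism with $\norm{(T_s')^{-1}}{}=\norm{T_s^{-1}}{}\le C(\Omega,s)$. The symmetry of $H^1(\cdot,\cdot)$ gives $(T_s'v)(w)=(T_sw)(v)=H^1(w,v)=H^1(v,w)$, i.e.\ $T_s'v=H^1(v,\cdot)|_{H^{1+s}(\Omega)}$, whence
\begin{align*}
 \norm{H^1(v,\cdot)}{\widetilde H^{-1-s}(\Omega)}&=\norm{T_s'v}{\widetilde H^{-1-s}(\Omega)}\\
 &\ge\norm{(T_s')^{-1}}{}^{-1}\norm{v}{H^{1-s}(\Omega)}\ge C(\Omega,s)^{-1}\norm{v}{H^{1-s}(\Omega)}.
\end{align*}
This is the claimed estimate with $C_s:=C(\Omega,s)^{-1}$, depending only on $\Omega$ and $s$. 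The main obstacle is the fractional Neumann shift estimate quoted above; everything else is soft functional analysis, and the $s=0$ case is subsumed in the argument (with $C(\Omega,0)=1$).
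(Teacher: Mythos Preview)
Your proof is correct and follows essentially the same approach as the paper: both rest on the fractional shift estimate for the Neumann problem on a polygon (the paper cites this as \cite[Theorem~4]{ellreg2}) and then dualize. The only difference is presentational---you package the duality step as the Banach adjoint of the isomorphism $T_s$, while the paper argues directly by picking a norming functional $L\in\widetilde H^{-1+s}(\Omega)$ for $v$, solving $H^1(\cdot,w)=L$ to obtain the test function $w\in H^{1+s}(\Omega)$, and reading off the lower bound from $H^1(v,w)=L(v)=\|v\|_{H^{1-s}(\Omega)}$.
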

\begin{proof}
Let $0\leq s<1/2$. 
Inspection of the proof of~\cite[Theorem~4]{ellreg2} shows that for all $L\in \widetilde H^{-1+s}(\Omega)$, the unique solution $w\in H^1(\Omega)$ of
\begin{align*}
 H^1(v,w)=\dual{L}{v}_\Omega\quad\text{for all }v\in H^1(\Omega)
\end{align*}
satisfies $w\in H^{1+s}(\Omega)$ such that
\begin{align*}
	\norm{w}{H^{1+s}(\Omega)}\lesssim \norm{L}{\widetilde H^{-1+s}(\Omega)}.
\end{align*}
Given $v\in H^{1}(\Omega)$ define $L\in \widetilde H^{-1+s}(\Omega)$ such that $\norm{L}{\widetilde H^{-1+s}(\Omega)}=1$ and
	$L(v)=\norm{v}{H^{1-s}(\Omega)}$. The above shows
\begin{align*}
	H^1(v,w)=\dual{L}{v}_\Omega = \norm{v}{H^{1-s}(\Omega)}\gtrsim \norm{v}{H^{1-s}(\Omega)}\norm{w}{H^{1+s}(\Omega)}.
\end{align*}
Hence, we obtain $\norm{H^1(v,\cdot)}{\widetilde H^{-1-s}(\Omega)}\gtrsim \norm{v}{H^{1-s}(\Omega)}$ for all $v\in H^1(\Omega)$.
Density concludes the proof.
\end{proof}

\begin{lemma}\label{lem:gelfand}
For $0<s<1/2$, the interpolation spaces $H^{1-s}(\Omega)$ and $H^{1+s}(\Omega)$ form a Gelfand triple in the sense $H^{1+s}(\Omega)\subset H^1(\Omega)\subset H^{1-s}(\Omega)$.
\end{lemma}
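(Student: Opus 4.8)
The assertion has two components: the chain of continuous and dense embeddings $H^{1+s}(\Omega)\subset H^1(\Omega)\subset H^{1-s}(\Omega)$, and the identification of $H^{1-s}(\Omega)$ with the dual of $H^{1+s}(\Omega)$ relative to the pivot space $H^1(\Omega)$, the pivot pairing being the bilinear form $H^1(\cdot,\cdot)$ from Lemma~\ref{lem:ellreg}. The embeddings are immediate from the monotonicity of the Sobolev scale, and density holds since $C^\infty(\overline\Omega)$ is dense in each $H^t(\Omega)$ on the polygonal domain (equivalently, $H^{1+s}(\Omega)=[H^1(\Omega),H^2(\Omega)]_s$ lies between the dense subspace $H^2(\Omega)$ and $H^1(\Omega)$, and similarly $H^1(\Omega)$ is dense in $H^{1-s}(\Omega)$). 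Hence it remains to show that the map
\begin{align*}
 \Phi\colon H^{1-s}(\Omega)\longrightarrow \widetilde H^{-1-s}(\Omega),\qquad \Phi(v):=H^1(v,\cdot),
\end{align*}
is a topological isomorphism, where $\widetilde H^{-1-s}(\Omega)=(H^{1+s}(\Omega))^*$ by the definition of the dual Sobolev scale. Since $\Phi(v)$ is, for $v\in H^1(\Omega)$, nothing but the $H^1(\Omega)$-inner product with $v$, such an isomorphism extends the canonical embedding $H^1(\Omega)\hookrightarrow(H^{1+s}(\Omega))^*$ induced by the pivot, and the Gelfand triple structure follows.

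The plan is to check boundedness, a lower bound, and surjectivity of $\Phi$ in turn. Boundedness is routine: in $H^1(v,w)=\dual{\nabla v}{\nabla w}_\Omega+\dual{v}{w}_\Omega$ the second summand is bounded by $\norm{v}{L^2(\Omega)}\norm{w}{L^2(\Omega)}$, while $\nabla$ maps $H^{1-s}(\Omega)$ boundedly into $\widetilde H^{-s}(\Omega)^2$ and $H^{1+s}(\Omega)$ boundedly into $H^s(\Omega)^2$, so that the natural duality between $\widetilde H^{-s}(\Omega)$ and $H^s(\Omega)$ gives $|\dual{\nabla v}{\nabla w}_\Omega|\lesssim\norm{v}{H^{1-s}(\Omega)}\norm{w}{H^{1+s}(\Omega)}$; here $0<s<1/2$ enters through $\widetilde H^s(\Omega)=H^s(\Omega)$. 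Thus $\norm{\Phi(v)}{\widetilde H^{-1-s}(\Omega)}\lesssim\norm{v}{H^{1-s}(\Omega)}$. The reverse estimate $\norm{\Phi(v)}{\widetilde H^{-1-s}(\Omega)}\geq C_s\norm{v}{H^{1-s}(\Omega)}$ is precisely Lemma~\ref{lem:ellreg} — the one substantial input, which itself rests on the elliptic shift theorem on the polygon $\Omega$. Consequently $\Phi$ is injective and has closed range.

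To conclude that $\Phi$ is onto $\widetilde H^{-1-s}(\Omega)$ it suffices, the range being closed, to show it is dense. If $w\in H^{1+s}(\Omega)$ annihilates the range, then $\Phi(v)(w)=H^1(v,w)=0$ for all $v\in H^{1-s}(\Omega)$; inserting the admissible test function $v:=w$ (legitimate since $H^{1+s}(\Omega)\subset H^1(\Omega)\subset H^{1-s}(\Omega)$) yields $\norm{w}{H^1(\Omega)}^2=H^1(w,w)=0$, hence $w=0$. So $\Phi$ is a bounded, bounded-below, and surjective linear map between Banach spaces, i.e.\ a topological isomorphism, which together with the first paragraph completes the proof.

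I expect no deep obstacle here: the argument is a standard closed-range assembly once Lemma~\ref{lem:ellreg} is available. The only mildly technical points are the norm equivalence $\widetilde H^s(\Omega)=H^s(\Omega)$ for $|s|<1/2$ used in the boundedness step and the density of smooth functions in fractional Sobolev spaces on the polygonal domain; the genuine analytic content, namely the elliptic regularity estimate on the non-smooth domain, has been isolated in Lemma~\ref{lem:ellreg}.
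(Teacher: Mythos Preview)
Your proof is correct and follows essentially the same route as the paper: both establish boundedness of $H^1(\cdot,\cdot)$ on $H^{1-s}\times H^{1+s}$ via mapping properties of $\nabla$, and both invoke Lemma~\ref{lem:ellreg} for the lower bound. Your surjectivity step (closed range plus the annihilator argument with $v=w$) is actually more explicit than the paper's terse ``a duality argument shows\ldots'', and where you quote the identification $\widetilde H^s(\Omega)=H^s(\Omega)$ for $|s|<1/2$ the paper instead interpolates between $\nabla\colon L^2(\Omega)\to\widetilde H^{-1}(\Omega)$ and $\nabla\colon H^1(\Omega)\to L^2(\Omega)$; both are valid.
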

\begin{proof}
We need to prove that $H^{1-s}(\Omega)$ is the dual space of $H^{1+s}(\Omega)$ with respect to the $H^1$-scalar product (and vice versa).

Obviously, $\nabla\colon H^s(\Omega)\to H^{{s-1}}(\Omega)$ for $1\leq s\leq 2$ is a bounded operator. We prove that $\nabla\colon L^2(\Omega)\to \widetilde H^{-1}(\Omega)$ is also bounded. 
To see this, consider a function $v \in L^2(\Omega)\cap H^1_0(\Omega)$
and
\begin{align*}
\norm{\nabla v}{\widetilde H^{-1}(\Omega)}&:= \sup_{w\in (H^1(\Omega))^2}\frac{\dual{\nabla v}{w}_\Omega}{\norm{w}{H^1(\Omega)}}=
\sup_{w\in (H^1(\Omega))^2}\frac{-\dual{v}{{\rm div} (w)}_\Omega}{\norm{w}{H^1(\Omega)}}\leq
\norm{v}{L^2(\Omega)}.
\end{align*}
Density of $H^1_0(\Omega)$ in $L^2(\Omega)$ and continuous extension prove boundedness $\nabla\colon L^2(\Omega)\to \widetilde H^{-1}(\Omega)$. Real interpolation concludes that $\nabla\colon H^s(\Omega)\to \widetilde H^{s-1}(\Omega)$ is bounded for all $0\leq s \leq 2 $.

Recall the $H^1$-scalar product $H^1(\cdot,\cdot)$ from Lemma~\ref{lem:ellreg}. 
The above together with duality of $H^s(\Omega)^\star = \widetilde H^{-s}(\Omega)$
shows that $H^1(\cdot,\cdot)\colon H^{1-s}(\Omega)\times H^{1+s}(\Omega)\to \R$ is bounded.
 Let $H^{s,\star}(\Omega)$ denote the dual space of $H^s(\Omega)$ with respect to $H^1(\cdot,\cdot)$. Then, the boundedness of $H^1(\cdot,\cdot)$ shows immediately  
 \begin{align*}
 H^{1+s}(\Omega)\subseteq H^{1-s,\star}(\Omega)\quad\text{and}\quad H^{1-s}(\Omega)\subseteq H^{1+s,\star}(\Omega).
 \end{align*}
 To obtain the remaining inclusions, let $v\in H^{1-s}(\Omega)$. Lemma~\ref{lem:ellreg} proves
 \begin{align*}
 \norm{v}{H^{1+s,\star}(\Omega)}=\norm{H^1(v,\cdot)}{\widetilde H^{-1-s}(\Omega)}\gtrsim \norm{v}{H^{1-s}(\Omega)}.
 \end{align*}
This confirms $H^{1+s,\star}(\Omega)= H^{1-s}(\Omega)$.
A duality argument shows $H^{1+s}(\Omega)= H^{1-s,\star}(\Omega)$ and thus concludes the proof.
\end{proof}

{
\begin{lemma}\label{lem:graded0}
 Let $\TT\in\T_{\rm grad}$ and $\ell\in\N$. Let $ T\in \TT\setminus \widehat \TT_\ell$ such that $\TT|_{T}$ is a strict local refinement of $\widehat \TT_\ell$.
 Then, $\TT|_{\omega^{D_{\rm grad}}(T,\widehat\TT_\ell)}$ is a local refinement of $\widehat\TT_\ell$.
\end{lemma}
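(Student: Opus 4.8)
The plan is to translate the statement into a bound on the \emph{levels} of the elements of $\TT$ that meet the $\widehat\TT_\ell$-patch around $T$, and then to extract this bound from the grading hypothesis~\eqref{eq:graded}. First the reduction. By Definition~\ref{def:widehat} every element of $\widehat\TT_\ell$ has level $k_{\rm mesh}\ell$, and since all meshes in $\T$ arise by newest vertex bisection from the single mesh $\TT_0$, the admissible elements form a forest of binary trees in which ${\rm level}$ equals the depth and $\widehat\TT_\ell$ is exactly the set of depth-$k_{\rm mesh}\ell$ nodes. Hence an element $S$ occurring in some $\TT'\in\T$ is contained in a (necessarily unique) element of $\widehat\TT_\ell$ precisely when ${\rm level}(S)\ge k_{\rm mesh}\ell$ (pass to its ancestor at that depth), whereas ${\rm level}(S)<k_{\rm mesh}\ell$ forces $S$ to be a union of several elements of $\widehat\TT_\ell$. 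In particular $\TT|_T=\{T\}$ being a \emph{strict} local refinement of $\widehat\TT_\ell$ is equivalent to ${\rm level}(T)>k_{\rm mesh}\ell$. Writing $\omega:=\bigcup\omega^{D_{\rm grad}}(T,\widehat\TT_\ell)$, which by construction is a union of full elements of $\widehat\TT_\ell$, the claim follows once we show that every $S\in\TT$ with $S^\circ\cap\omega\ne\emptyset$ satisfies ${\rm level}(S)\ge k_{\rm mesh}\ell$: on $\omega$ the mesh $\TT$ then only refines $\widehat\TT_\ell$, and the required $\TT''\in{\rm refine}(\widehat\TT_\ell)$ with $\TT''|_\omega=\TT|_\omega$ is obtained by subdividing, inside $\widehat\TT_\ell$, exactly those of its elements that are subdivided in $\TT$ and running the bisection closure, which does not propagate refinement into $\omega$ because all elements of $\TT$ touching $\omega$ are already at least as fine as $\widehat\TT_\ell$. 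This last bookkeeping is a routine consequence of the standard newest-vertex-bisection machinery and I would only sketch it.

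For the level bound, assume towards a contradiction that some $S\in\TT$ meets $\omega$ with ${\rm level}(S)\le k_{\rm mesh}\ell-1$. By the reduction $S$ contains an element $\widehat S\in\widehat\TT_\ell$ with $\widehat S\in\omega^{D_{\rm grad}}(T,\widehat\TT_\ell)$, so $\dist(T,S)\le\dist(T,\widehat S)\lesssim D_{\rm grad}\,C_{\rm base}C_{\rm mesh}^{-\ell}$, since each patch layer of $\widehat\TT_\ell$ enlarges the region by at most a fixed multiple of $C_{\rm base}C_{\rm mesh}^{-\ell}$. On the other hand, apply~\eqref{eq:graded} repeatedly, starting from $S$: every element lying within $j\,D_{\rm grad}$ layers of $S$ in $\TT$ has level at most ${\rm level}(S)+j$. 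Since ${\rm level}(T)\ge k_{\rm mesh}\ell+1\ge{\rm level}(S)+2$, the element $T$ is separated from $S$ by strictly more than $D_{\rm grad}$ layers of $\TT$, and all of the first $D_{\rm grad}$ of those layers (the ones surrounding $S$) consist of elements of level at most ${\rm level}(S)+1\le k_{\rm mesh}\ell$, hence of diameter at least $C_{\rm base}^{-1}C_{\rm mesh}^{-\ell}$. As a chain of pairwise-touching elements crossed by a straight segment is essentially injective in space, these $D_{\rm grad}$ consecutive rings of coarse elements around $S$ span geometric distance $\gtrsim D_{\rm grad}C_{\rm base}^{-1}C_{\rm mesh}^{-\ell}$, with the hidden constant depending only on the shape regularity of $\TT_0$. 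Comparing this lower bound with the upper bound $\dist(T,S)\lesssim D_{\rm grad}C_{\rm base}C_{\rm mesh}^{-\ell}$, and using that $D_{\rm grad}$ has been fixed large enough relative to the shape-regularity constants $C_{\rm base},C_{\rm mesh}$ and to $k_{\rm mesh}$, yields the contradiction. Therefore ${\rm level}(S)\ge k_{\rm mesh}\ell$ for every $S\in\TT$ meeting $\omega$, and the reduction concludes the proof.

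I expect the main obstacle to be exactly this quantitative step: one has to make precise how the grading condition propagates outward and how the geometric sizes of the intermediate elements enter, so that the ``coarse collar'' of level-$\ge k_{\rm mesh}\ell$ elements that grading forces around the fine element $T$ is verifiably wide enough to contain the $\widehat\TT_\ell$-patch $\omega^{D_{\rm grad}}(T,\widehat\TT_\ell)$; both widths are comparable to $D_{\rm grad}C_{\rm mesh}^{-\ell}$, but with shape-regularity-dependent constants, and reconciling them is precisely what makes the grading hypothesis (with $D_{\rm grad}$ sufficiently large) indispensable here. The forest/level reformulation and the assembly of the matching refinement $\TT''$ in the first step are conceptually straightforward but need a careful invocation of the newest-vertex-bisection conformity machinery.
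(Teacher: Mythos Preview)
Your argument has a genuine gap at the closing step. You derive a lower bound $\dist(T,S)\geq c_1\, D_{\rm grad}\,C_{\rm mesh}^{-\ell}$ (from the $D_{\rm grad}$ coarse $\TT$-rings around $S$) and an upper bound $\dist(T,S)\leq c_2\, D_{\rm grad}\,C_{\rm mesh}^{-\ell}$ (from $S$ meeting $\omega^{D_{\rm grad}}(T,\widehat\TT_\ell)$), with $c_1,c_2>0$ depending only on the shape regularity of $\TT_0$. But both bounds scale \emph{linearly} in $D_{\rm grad}$, so enlarging $D_{\rm grad}$ cannot produce a contradiction: you would need $c_1>c_2$, and nothing in the setup forces this. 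Your own final paragraph isolates exactly this tension but misdiagnoses its resolution---the paper's proof in fact uses no largeness of $D_{\rm grad}$ at all, so that cannot be what closes the argument. Passing through Euclidean distances introduces two uncontrolled shape-regularity constants on the two sides, and this is where the approach breaks.

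The paper avoids metric distances entirely and compares patches combinatorially. From the offending $T'\in\TT$ with ${\rm level}(T')<L$ (your $S$), the grading condition~\eqref{eq:graded} gives ${\rm level}(T'')\leq L$ for every $T''\in\omega^{D_{\rm grad}}(T',\TT)$. Since these $\TT$-elements are thus at least as coarse as $\widehat\TT_\ell$-elements, one obtains the \emph{exact} inclusion
\[
\bigcup\omega^{D_{\rm grad}}(T',\widehat\TT_\ell)\ \subseteq\ \bigcup\omega^{D_{\rm grad}}(T',\TT),
\]
because each $\TT$-layer around $T'$ covers at least one full $\widehat\TT_\ell$-layer, with no constants lost. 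By symmetry of patches in $\widehat\TT_\ell$, the element $T$ lies in the left-hand side (since $T'$ lies in $\omega^{D_{\rm grad}}(T,\widehat\TT_\ell)$), hence in the right-hand side; one more application of~\eqref{eq:graded} then gives ${\rm level}(T)\leq{\rm level}(T')+1\leq L$, contradicting ${\rm level}(T)>L$. The fix, in short, is to compare $D_{\rm grad}$ patch layers in the two meshes directly rather than the Euclidean widths they span.
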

\begin{proof}
Let $L\in\N$ denote the level of the elements in $\widehat\TT_\ell$. By assumption, there holds ${\rm level}(T)>L$. Assume there exists $T^\prime\in \TT|_{\omega^{D_{\rm grad}}(T,\widehat\TT_\ell)}$
with ${\rm level}(T^\prime)<L$. Assumption~\eqref{eq:graded} implies ${\rm level}(T^{\prime\prime})\leq L$ for all $T^{\prime\prime}\in \omega^{D_{\rm grad}}(T^\prime,\TT)$.
This shows that there holds 
\begin{align*}
 T\in \bigcup \omega^{D_{\rm grad}}(T^\prime,\widehat\TT_\ell)\subseteq \bigcup\omega^{D_{\rm grad}}(T^\prime,\TT).
\end{align*}
With~\eqref{eq:graded}, this implies ${\rm level}(T)\leq  L$, which contradicts the assumption that $\TT|_{T}$ is a strict local refinement of $\widehat \TT_\ell$.
\end{proof}}

The following theorem establishes the Riesz basis.
\begin{theorem}\label{thm:stableBbase}
With the spaces from Definition~\ref{def:s2}, define 
\begin{align*}
 B_0^1:=\set{\frac{v_0}{\norm{v_0}{H^1(\Omega)}}}{v_0\in\SS^{2+}_B(\TT_0)}
\end{align*}
and for $\ell\geq 1$
\begin{align*}
B_\ell^1:=\set{\frac{(1-J_{\ell-1})v_0}{\norm{(1-J_{\ell-1})v_0}{H^1(\Omega)}}}{v_0\in\Big(\bigcup_{k\in\N}\SS^{2+}_B(\TT_k\setminus\TT_{k-1})\Big)\cap (\SS^{2+}(\widehat\TT_\ell)\setminus\SS^{2+}(\widehat\TT_{\ell-1}))}.
\end{align*}
Moreover, define $B_\ell^{1/2}:=\set{v|_\Gamma}{v\in B^1_\ell}\setminus\{0\}$ for all $\ell\geq 0$ as well as $B_\ell^{-1/2}:=\set{v^\prime}{v\in B_\ell^{1/2}}$ for all $\ell\geq 1$ with $B_0^{-1/2}= \set{v^\prime}{v\in B_0^{1/2}}\cup \{1\}$. 
Define $B^s:=\bigcup_{\ell\in\N}B_\ell^s$ for all $s\in\{1,1/2,-1/2\}$.  Then, $B^1$, $B^{1/2}$, and $B^{-1/2}$ form Riesz bases of ${\overline{\bigcup_{\ell\in\N}\SS^{2+}(\TT_\ell)}\subseteq }H^1(\Omega)$,
${\overline{\bigcup_{\ell\in\N}\SS^{2}(\TT_\ell\cap\Gamma)}\subseteq }H^{1/2}(\Gamma)$, and ${\overline{\bigcup_{\ell\in\N}\PP^{1}(\TT_\ell\cap\Gamma)}\subseteq }H^{-1/2}(\Gamma)$ respectively. There holds
\begin{align}\label{eq:rieszB}
\begin{split}
\norm{\sum_{v\in B^1} \alpha_v v}{H^{1}(\Gamma)}&\simeq \Big(\sum_{v\in B^1} \alpha_v^2\Big)^{1/2},\\
\norm{\sum_{v\in B^{1/2}} \alpha_v v}{H^{1/2}(\Gamma)}&\simeq \Big(\sum_{v\in B^{1/2}} \alpha_v^2\Big)^{1/2},\\
\norm{\sum_{v\in B^{-1/2}} \alpha_v v}{H^{-1/2}(\Omega)}&\simeq \Big(\sum_{v\in B^{-1/2}} \alpha_v^2\Big)^{1/2}.
\end{split}
\end{align}
Moreover, ${\rm diam}({\rm supp}(v))\simeq C_{\rm mesh}^{-\ell} $ for all $v\in B_\ell^1\cup B_\ell^{1/2}\cup B_\ell^{-1/2}$ and there holds  
\begin{subequations}\label{eq:scaling}
\begin{align}\label{eq:scaling1}
\norm{v}{H^{s}({\rm supp}(v))}&\simeq C_{\rm mesh}^{-s\ell}\norm{v}{L^2({\rm supp}(v))}\quad\text{\rm for all } v\in B_\ell^1\text{ and all } -1\leq s<3/2,\\
\norm{v}{H^{s}({\rm supp}(v))}&\simeq C_{\rm mesh}^{-s\ell}\norm{v}{L^2({\rm supp}(v))}\quad\text{\rm for all } v\in B_\ell^{1/2}\text{ and all } -1\leq s<3/2,
\label{eq:scaling12}\\
\norm{v}{H^{s}({\rm supp}(v))}&\simeq C_{\rm mesh}^{-s\ell}\norm{v}{L^2({\rm supp}(v))}\quad\text{\rm for all } v\in B_\ell^{-1/2}\text{ and all } -2\leq s<1/2.\label{eq:scalingm12}
\end{align}
\end{subequations}
Finally, with~\eqref{eq:graded} for $D_{\rm grad}\geq 1$, there holds for all $\ell\in\N$
\begin{align}\label{eq:span}
\begin{split}
 \SS^{2+}(\TT_\ell)&={\rm span}\set{v\in B^1}{v\in \SS^{2+}(\TT_\ell)},\\
 \PP^{1}(\TT_\ell|_\Gamma)&={\rm span}\set{v\in B^{-1/2}}{v\in \PP^1(\TT_\ell|_\Gamma)}.
 \end{split}
\end{align}
\end{theorem}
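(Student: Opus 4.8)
The plan is to derive the Riesz-basis property from a multilevel norm equivalence for the projections $S_\ell$ of Theorem~\ref{thm:szb}, combined with a one-level $L^2$-stability estimate, and to establish the span identities \eqref{eq:span} — hence the density needed for the Riesz-basis statement — via the grading condition \eqref{eq:graded} and Lemma~\ref{lem:graded0}.

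First I would collect the elementary scaling facts. The diameter estimate follows from Definition~\ref{def:widehat} together with the locality of $J_{\ell-1}$ in Lemma~\ref{lem:gensz}: $(1-J_{\ell-1})v_0$ is supported in a $\widehat\TT_{\ell-1}$-patch around the level-$\ell$ support of $v_0$ and, by a finite-dimensional reference-configuration argument, does not vanish identically there. The estimates \eqref{eq:scaling} follow analogously, since every $v\in B_\ell^s$ is, up to an affine map of scale $C_{\rm mesh}^{-\ell}$, one of finitely many fixed functions on a reference patch, and there $\norm{\cdot}{H^\sigma}\simeq\norm{\cdot}{L^2}$ holds in the stated ranges (the thresholds $3/2$ and $1/2$ being the Sobolev regularity of continuous piecewise quadratics, resp. discontinuous piecewise affines). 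The same reference-patch argument together with \eqref{eq:szzero}, a Poincar\'e inequality on the level-$(\ell-1)$ patch and an inverse estimate gives the normalizations $\norm{v}{H^1(\Omega)}=1$, $\norm{v}{L^2(\Omega)}\simeq C_{\rm mesh}^{-\ell}$ for $v\in B_\ell^1$, and the analogous facts on $\Gamma$.

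By \eqref{eq:Sid} one has $(S_\ell-S_{\ell-1})v=(1-J_{\ell-1})v$ on $\SS^{2+}(\widehat\TT_\ell)$, and $S_\ell S_k=S_{\min\{\ell,k\}}$ makes the $P_\ell:=S_\ell-S_{\ell-1}$ (with $S_{-1}:=0$) idempotents with $P_\ell P_k=\delta_{\ell k}P_\ell$ summing to the identity on $\overline{\bigcup_k\SS^{2+}(\widehat\TT_k)}$. Feeding the Jackson estimate \eqref{eq:sapprox}, the Bernstein/stability estimate \eqref{eq:scont} and the Gelfand triple of Lemma~\ref{lem:gelfand} into the classical multilevel machinery (Oswald; Bramble--Pasciak--Vassilevski) — a strengthened Cauchy--Schwarz inequality for $\langle P_\ell v,P_k w\rangle_{H^1}$ followed by a Schur bound — yields
\begin{align*}
\norm{v}{H^1(\Omega)}^2\simeq\sum_{\ell\ge 0}C_{\rm mesh}^{2\ell}\,\norm{P_\ell v}{L^2(\Omega)}^2 .
\end{align*}
Restriction to $\Gamma$ gives the same with $H^{1/2}(\Gamma)$, the factor $C_{\rm mesh}^{\ell}$ and the traced projections; for $H^{-1/2}(\Gamma)$ I would differentiate this equivalence along $\Gamma$, using that $\frac{d}{ds}$ is an isomorphism from $H^{1/2}(\Gamma)$ modulo constants onto the mean-zero functionals in $H^{-1/2}(\Gamma)$ and that $H^{-1/2}(\Gamma)$ splits stably into $\R\cdot 1$ and the mean-zero subspace, the $\R\cdot 1$ part being carried by $1\in B_0^{-1/2}$. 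Since the level-$\ell$ part $\sum_{v\in B_\ell^1}\alpha_v v$ equals $P_\ell\big(\sum_m\sum_{v\in B_m^1}\alpha_v v\big)$ and lies in $\mathrm{ran}(P_\ell)$, combining the equivalence with the one-level bound $\norm{\sum_{v\in B_\ell^1}\alpha_v v}{L^2(\Omega)}^2\simeq C_{\rm mesh}^{-2\ell}\sum_{v\in B_\ell^1}\alpha_v^2$ (finite overlap of the level-$\ell$ supports and the normalizations above) telescopes all scales and proves $\norm{\sum_{v\in B^1}\alpha_v v}{H^1(\Omega)}^2\simeq\sum_{v\in B^1}\alpha_v^2$; the $H^{1/2}$- and $H^{-1/2}$-cases of \eqref{eq:rieszB} follow verbatim with the boundary scalings.

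It remains to prove \eqref{eq:span}, which I would do by induction over the adaptive history $\TT_0,\TT_1,\dots$; the inclusion $\supseteq$ is trivial. For $\subseteq$, each hierarchical basis function $v_0\in\SS^{2+}_B(\TT_{n+1}\setminus\TT_n)$ of level $\ell$ has $J_{\ell-1}v_0\in\SS^{2+}(\widehat\TT_{\ell-1})$ supported in a $\widehat\TT_{\ell-1}$-patch of $\mathrm{supp}(v_0)$; the grading condition \eqref{eq:graded} together with Lemma~\ref{lem:graded0} forces $\TT_{n+1}$ to be a local refinement of $\widehat\TT_{\ell-1}$ on that patch, whence $J_{\ell-1}v_0\in\SS^{2+}(\TT_{n+1})$ and thus $(1-J_{\ell-1})v_0$ is a multiple of an element of $B_\ell^1$ lying in $\SS^{2+}(\TT_{n+1})$; the change of basis $v_0\leftrightarrow(1-J_{\ell-1})v_0$ is triangular with respect to the level, which closes the induction, and the $\PP^1$-version on $\Gamma$ is identical. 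Summing \eqref{eq:span} over $\ell$ identifies $\overline{\mathrm{span}\,B^1}$ with $\overline{\bigcup_\ell\SS^{2+}(\TT_\ell)}$, upgrading the $\ell_2$-equivalences of the previous paragraph to the stated Riesz-basis property. The main obstacle is precisely this last induction: without \eqref{eq:graded}, $J_{\ell-1}v_0$ need not lie in $\SS^{2+}(\TT_n)$, the triangular structure collapses and one loses both the basis property and the density used above — this is exactly where \eqref{eq:graded} and Lemma~\ref{lem:graded0} are indispensable; by contrast the multilevel equivalence is standard once Theorem~\ref{thm:szb} and Lemma~\ref{lem:gelfand} are available, the only extra care being the constant-function bookkeeping in the $H^{-1/2}$-case.
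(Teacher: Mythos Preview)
Your approach is essentially the paper's: both feed the projections $S_\ell$ of Theorem~\ref{thm:szb} and the Gelfand triple of Lemma~\ref{lem:gelfand} into the standard multilevel machinery (the paper cites Dahmen, you cite Oswald/BPV) to obtain the level-wise norm splitting, reduce to one-level stability via finitely many reference configurations, pass to $H^{-1/2}$ by arc-length differentiation plus bookkeeping of the constant, and derive \eqref{eq:span} from Lemma~\ref{lem:graded0}. The one place where the paper is noticeably more careful than your sketch is the $H^{1/2}$ step: ``restriction to $\Gamma$ gives the same'' glosses over why $B^{1/2}$ is a \emph{basis} rather than merely a generating set, which the paper handles by showing that $W|_\Gamma=0$ forces $\alpha_v=0$ for every $v\in B^1$ with $v|_\Gamma\neq 0$, using that $S_\ell$ preserves homogeneous boundary data.
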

\begin{proof}
We aim to employ~\cite{Dahmen} with the operators $(S_\ell)_{\ell\in\N_0}$ from Theorem~\ref{thm:szb}. The $S_\ell$ are uniformly $H^1(\Omega)$ bounded and satisfy $S_\ell S_k= S_\ell$ for all $\ell\leq k$. 
Moreover, their ranges $\SS^{2+}(\widehat \TT_\ell)$ form a dense and nested sequence of subspaces of $H^1(\Omega)$. Lemma~\ref{lem:gelfand} confirms that $H^{3/4}(\Omega)$ is the dual space of $H^{5/4}(\Omega)$ with respect to the $H^1(\Omega)$-scalar product. 
Theorem~\ref{thm:szb} confirms the approximation estimates
\begin{align*}
\norm{(1-S_\ell) u}{H^{3/4}(\Omega)}&\lesssim C_{\rm mesh}^{-\ell/4}\norm{u}{H^1(\Omega)},\\
\norm{(1-S_\ell) u}{H^{1}(\Omega)}&\lesssim C_{\rm mesh}^{-\ell/4}\norm{u}{H^{5/4}(\Omega)}
\end{align*}
as well as uniform boundedness $S_\ell\colon H^{3/4}(\Omega)\to H^{3/4}(\Omega)$. Standard inverse estimates prove
\begin{align*}
\norm{S_\ell u}{H^{5/4}(\Omega)}&\lesssim C_{\rm mesh}^{\ell/4}\norm{S_\ell u}{H^1(\Omega)},\\
\norm{S_\ell u}{H^{1}(\Omega)}&\lesssim C_{\rm mesh}^{\ell/4}\norm{S_\ell u}{H^{3/4}(\Omega)}.
\end{align*}
Therefore, we may apply~\cite[Theorems~3.1\&3.2]{Dahmen} to prove
\begin{align}\label{eq:Did}
\norm{u}{H^1(\Omega)}^2\simeq \sum_{\ell=0}^\infty \norm{(S_\ell-S_{\ell-1})u}{H^1(\Omega)}^2,
\end{align}
where we define $S_{-1}:=0$.
The identity~\eqref{eq:Sid} implies that for $v_0\in \SS^{2+}(\widehat\TT_k)\setminus\SS^{2+}(\widehat\TT_{k-1})$, there holds
\begin{align*}
 (1-J_{\ell-1})v_0= (1-S_{\ell-1})v_0
\end{align*}
and Theorem~\ref{thm:szb} shows
\begin{align*}
(S_\ell-S_{\ell-1})(1-S_{k-1})v_0&=S_\ell v_0 - S_\ell S_{k-1}v_0 - S_{\ell-1}v_0+ S_{\ell-1}S_{k-1}v_0 \\
&=\begin{cases}
S_\ell v_0- S_{\ell-1}v_0 =0 &k<\ell,\\ 
(1-S_{\ell-1}) v_0& k=l,\\ 
0 & k>\ell.
\end{cases}
\end{align*}
Thus, writing $w=\sum_{\ell\in\N_0}\sum_{v\in B^1_\ell} \alpha_v v$, we get with~\eqref{eq:Did}
\begin{align}\label{eq:firstriesz}
\norm{w}{H^{1}(\Omega)}^2&\simeq \sum_{\ell\in\N_0}\norm{(S_\ell-S_{\ell-1})w}{H^1(\Omega)}^2\simeq \sum_{\ell\in\N_0}\norm{\sum_{v\in B^1_\ell} \alpha_v v}{H^1(\Omega)}^2.
\end{align}
We define
\begin{align*}
	\widetilde B_\ell^1:=\Big(\bigcup_{k\in\N}\SS^{2+}_B(\TT_k\setminus\TT_{k-1})\Big)\cap (\SS^{2+}(\widehat\TT_\ell)\setminus\SS^{2+}(\widehat\TT_{\ell-1})).
\end{align*}
Each $v\in B^1_\ell$ is of the form $v=(1-J_{\ell-1})v_0$ for some $v_0\in\widetilde B_\ell^1$.

Let $\ell\geq 2$.
First, we prove that $\widetilde B_\ell^1|_T\setminus\{0\}$ is linearly independent for all $T\in\widehat\TT_{\ell-2}$.
To that end, assume $g:=\sum_{i=1}^n\alpha_i v_{0,i}|_T =0$ for $v_{0,i}\in \widetilde B_\ell^1$ with $v_{0,i}|_T\neq 0$ and $\alpha_i\in\R$.
Since $v_{0,i}\in\SS^{2+}_B(\TT_{k_i}\setminus\TT_{k_i-1})\cap\SS^{2+}(\widehat\TT_\ell)\setminus\SS^{2+}(\widehat\TT_{\ell-1})$ for some minimal $k_i\in\N$, 
we know that ${\rm supp}(v_{0,i})$ contains at least one element $T^\prime\in\TT_{k_i}$ such 
that $T^\prime\notin \widehat\TT_{\ell-1}$. This together with~\eqref{eq:graded} for $D_{\rm grad}\geq 1$ shows that $\TT_k|_{{\rm supp}(v_{0,i})}$ is a local refinement of $\widehat\TT_{\ell-2}$.
This and $v_{i,0}|_T\neq 0$, then implies the following:
\begin{itemize}
 \item If $v_{0,i}$ is a hat function, its unique corresponding node $z_i$ satisfies $z_i\in T$.
  \item If $v_{0,i}$ is an edge bubble function, its unique corresponding edge $E_i$ satisfies $E_i\subset T$.
   \item If $v_{0,i}$ is an element bubble function, its unique corresponding element $T_i$ satisfies $T_i\subseteq T$.
\end{itemize}
Hence, with $g=0$, we see $g(z_i)=0$ and hence all $\alpha_i$ corresponding to hat-functions are zero. Next, we find $f(E_i)=0$ and hence all $\alpha_i$ corresponding to edge-bubble functions are zero.
Finally, we see $f(T_i)=0$, which concludes that all $\alpha_i$ are zero. This proves that $\widetilde B_\ell^1|_T\setminus\{0\}$ is linearly independent.

Moreover, since newest-vertex bisection produces only finitely many shapes, and the number of refinements between $T$ and $T_i$, $E$ and $E_i$ is bounded in terms of $C_{\rm mesh}$,
we see that $\widetilde B_\ell^1|_T\setminus\{0\}$ belongs to a finite family of sets (up to
scaling).
Thus, a scaling argument shows 
for all  $T\in\widehat \TT_{\ell-2}$ 
\begin{align}\label{eq:szhelp}
\begin{split}
\norm{(1-S_{\ell-1})&\sum_{v_0\in \widetilde B_\ell^1}\alpha_{v_0}v_0}{H^1(T)}^2{\gtrsim}\norm{\nabla\sum_{v_0\in \widetilde B_\ell^1\atop {\rm supp}(v_0)\cap T\neq \emptyset}\alpha_{v_0}v_0}{L^2(T)}^2\\
&\simeq\sum_{v_0\in \widetilde B_\ell^1\atop {\rm supp}(v_0)\cap T\neq \emptyset}\alpha_{v_0}^2\norm{\nabla v_0}{L^2(T)}^2
\simeq\sum_{v_0\in \widetilde B_\ell^1\atop {\rm supp}(v_0)\cap T\neq \emptyset}\alpha_{v_0}^2\norm{v_0}{H^1(T)}^2,
\end{split}
\end{align}
where in the first estimate we used that {$\sum_{v_0\in \widetilde B_\ell^1}\alpha_{v_0}v_0 \notin \SS^{2+}(\widehat\TT_{\ell-1})$ as well as norm equivalence on finite dimensional spaces
(note that the hidden constants depend only on the shape regularity).} For $\ell\in\{0,1\}$, we obtain 
\begin{align*}
\norm{(1-S_{\ell-1})\sum_{v_0\in \widetilde B_\ell^1}\alpha_{v_0}v_0}{H^1(\Omega)}^2{\lesssim}
\sum_{v_0\in \widetilde B_\ell^1}\alpha_{v_0}^2\norm{v_0}{H^1(\Omega)}^2
\end{align*}
by stability of $S_{\ell-1}$.
Summing up, this means with~\eqref{eq:firstriesz}
\begin{align*}
\norm{w}{H^{1}(\Omega)}^2\simeq \sum_{\ell\in\N_0}\sum_{v\in B^1_\ell} \alpha_v^2\norm{ v_0}{H^1(\Omega)}^2.
\end{align*}
By~\eqref{eq:szhelp}, we obtain particularly $\norm{ v_0}{H^1(\Omega)}\lesssim \norm{v}{H^1(\Omega)}$. Continuity of $J_\ell$ implies $\norm{ v}{H^1(\Omega)}\lesssim \norm{v_0}{H^1(\Omega)}$. Altogether, we have
\begin{align*}
\norm{w}{H^{1}(\Omega)}^2\simeq \sum_{\ell\in\N_0}\sum_{v\in B^1_\ell} \alpha_v^2\norm{ v}{H^1({\rm supp}(v))}^2.
\end{align*}
Therefore, the operator $\iota\colon \ell_2(B^1)\to H^{1}(\Omega)$, $\iota(\alpha):=\sum_{v\in B^1}\alpha_v v$ is
bounded and has a bounded inverse on its closed range ${\overline{\bigcup_{\ell\in\N}\SS^{2+}(\TT_\ell)}\subseteq H^1(\Omega)}$. Obviously, the range is dense and hence $\iota$ is bijective.
This concludes that $B^1$ is a Riesz basis of ${\overline{\bigcup_{\ell\in\N}\SS^{2+}(\TT_\ell)}\subseteq }H^1(\Omega)$.

From this, we immediately deduce that $B^{1/2}$ is a Schauder generating set of $H^{1/2}(\Gamma)$ 
(since all $w\in H^{1/2}(\Gamma)$ have an extension $W\in H^1(\Omega)$). We need to show that the representation is unique also on the boundary. To that end, assume $W=\sum_{v\in B^1}\alpha_v v\in H^1(\Omega)$ with $W|_\Gamma=0$.
Lemma~\ref{lem:gensz} shows that $v|_\Gamma=0$ for all $v\in B^1_\ell$ for which the corresponding $v_0\in \widetilde B_\ell^1$ satisfies $v_0|_\Gamma=0$. 
Moreover, Theorem~\ref{thm:szb} shows that $(S_\ell-S_{\ell-1})W)|_\Gamma =0$. Hence, we have for all $\ell\in\N$
\begin{align}\label{eq:linind}
 0=\sum_{v\in B_\ell^1}\alpha_v ((S_\ell-S_{\ell-1})v)|_\Gamma = \sum_{v_0\in \widetilde B_\ell^1\atop v_0|_\Gamma\neq 0}\alpha_v(v_0|_\Gamma- (J_{\ell-1}v_0)|_\Gamma). 
\end{align}
Obviously, the set $\SS:=\set{v_0|_\Gamma}{v_0\in \widetilde B_\ell^1,\, v_0|_\Gamma\neq 0}$ is linearly independent (it consist of hat and bubble functions).
By definition, $\SS\cap \SS^{2+}(\widehat\TT_{\ell-1})|_\Gamma =\emptyset$. Hence,~\eqref{eq:linind} implies
\begin{align}\label{eq:traceid}
 W|_\Gamma =0 \quad\implies \quad \alpha_v=0\quad\text{for all }v\in B^1\text{ with } v|_\Gamma\neq 0.
\end{align}
Moreover, there holds for all $w\in H^{1/2}(\Gamma)$
\begin{align*}
\norm{w}{H^{1/2}(\Gamma)}^2=\inf_{W\in H^1(\Omega)\atop W|_\Gamma=w}\norm{W}{H^1(\Omega)}^2\simeq\inf_{W\in H^1(\Omega)\atop W|_\Gamma=w}
\sum_{\ell\in\N_0}\sum_{v\in B^1_\ell} (\iota^{-1}(W)_v)^2\norm{ v}{H^1({\rm supp}(v))}^2
\end{align*}
Since $B^{1/2}$ is a generating set of $H^{1/2}(\Gamma)$, we may represent $w$ as $w=\sum_{\ell\in\N_0}\sum_{v\in B^1_\ell} \alpha_v v|_\Gamma$, 
where we assume that all $v\in B^1$ which are zero on $\Gamma$ satisfy $\alpha_v=0$. Due to~\eqref{eq:traceid}, $W|_\Gamma=w$ implies $(\iota^{-1}(W))_v=\alpha_v$ for all $v\in B^1$ which are not zero on $\Gamma$.
This and the above shows
\begin{align*}
\norm{w}{H^{1/2}(\Gamma)}^2&=\inf_{W\in H^1(\Omega)\atop W|_\Gamma=w}\norm{W}{H^1(\Omega)}^2=
\sum_{\ell\in\N_0}\sum_{v\in B^1_\ell\atop v|_\Gamma\neq 0} \alpha_v^2\norm{ v}{H^1({\rm supp}(v))}^2.
\end{align*}
Let $v\in B^1_\ell$ with $v|_\Gamma$ non-zero. Let $\widehat\omega$ be one of finitely many reference patches
such that ${\rm supp}(v)$ and $\widehat\omega$ have the same shape. Let $\widehat\Sigma$ be one of finitely many
reference patches on the boundary, such that ${\rm supp}(v)\cap\Gamma$ and $\widehat\Sigma$ have the same shape.
Then, for $c\in\R$ a scaling argument shows
\begin{align*}
\norm{ v}{H^1({\rm supp}(v))}^2\simeq \norm{ \nabla  v}{L^2({\rm supp}(v))}^2\simeq 
\norm{ \nabla  \widehat v}{L^2(\widehat\omega)}^2 \simeq \norm{ \widehat v-c}{L^2(\widehat \Sigma)}^2\simeq C_{\rm mesh}^{\ell}
\norm{ \widehat v-c}{L^2({\rm supp}(v)\cap \Gamma)}^2.
\end{align*}
Choosing $c\in\R$ is the integral mean of $v$, a Poincar\'e inequality proves
\begin{align*}
\norm{ v}{H^1({\rm supp}(v))}^2\lesssim \norm{ v}{H^{1/2}({\rm supp}(v)\cap \Gamma)}^2.
\end{align*}
The continuity of the trace operator shows $\norm{ v}{H^1({\rm supp}(v))}=\norm{ v}{H^1(\Omega)}\gtrsim \norm{ v}{H^{1/2}(\Gamma)}\gtrsim\norm{ v}{H^{1/2}({\rm supp}(v)\cap \Gamma)} $, and hence
\begin{align*}
\norm{w}{H^{1/2}(\Gamma)}^2&=\inf_{W\in H^1(\Omega)\atop W|_\Gamma=w}\norm{W}{H^1(\Omega)}^2\simeq 
\sum_{\ell\in\N_0}\sum_{v\in B^1_\ell\atop v|_\Gamma\neq 0} \alpha_v^2\norm{ v}{H^{1/2}({\rm supp}(v)\cap\Gamma)}^2\\
&=
\sum_{\ell\in\N_0}\sum_{v\in B^{1/2}_\ell} \alpha_v^2\norm{ v}{H^{1/2}({\rm supp}(v))}^2.
\end{align*}
This concludes that $B^{1/2}$ is a Riesz basis of ${\overline{\bigcup_{\ell\in\N}\SS^{2}(\TT_\ell\cap\Gamma)}\subseteq }H^{1/2}(\Gamma)$.

 Note that all $v\in B^{1/2}\setminus B^{1/2}_0$ have zero integral mean (by Lemma~\ref{lem:gensz}).
Moreover, $\partial_\Gamma v$ has zero integral mean on ${\rm supp}(v)$ (since $v$ has zero boundary values on ${\rm supp}(v)$).
Thus, Lemma~\ref{lem:derivative}, Poincar\'e estimates, and inverse estimates show for $v\in B_\ell^{1/2}$, $\ell\geq 1$
\begin{align*}
\norm{\partial_\Gamma v}{\widetilde H^{-1/2}({\rm supp}(v))}\simeq C_{\rm mesh}^{\ell/2}\norm{\partial_\Gamma v}{\widetilde H^{-1}({\rm supp}(v))}=C_{\rm mesh}^{\ell/2} \norm{v}{L^2({\rm supp}(v))}\simeq \norm{v}{H^{1/2}({\rm supp}(v))}.
\end{align*}
For $v\in B_0^{1/2}$, the equivalence holds due to the fact that the number of functions in $B_0^{1/2}$ is bounded in terms of $\TT_0$.
Thus, again with Lemma~\ref{lem:derivative},
we derive for $ w = \sum_{\ell=1}^\infty\sum_{v\in B^{1/2}_\ell} \alpha_v  v$ 
\begin{align*}
\norm{\partial_\Gamma w}{H^{-1/2}(\Gamma)}^2&\simeq
\norm{ w}{H^{1/2}(\Gamma)}^2
\simeq 
\sum_{\ell=1}^\infty\sum_{v\in B^{1/2}_\ell} \alpha_v^2\norm{ v}{H^{1/2}({\rm supp}(v))}^2\\
&\simeq
\sum_{\ell=1}^\infty\sum_{v\in B^1_\ell} \alpha_v^2\norm{ \partial_\Gamma v}{\widetilde H^{-1/2}({\rm supp}(v))}^2.
\end{align*}
Therefore, the operator $\iota_1\colon \ell_2\to H^{-1/2}(\Gamma)$, $\iota(\alpha):=\sum_{v\in B^{-1/2}\setminus B_0^{-1/2}}\alpha_v v$ is
bounded and has a bounded inverse on its closed range. The operator $\iota_0\colon \ell_2\to H^{-1/2}(\Gamma)$, $\iota(\alpha):=\sum_{v\in  B_0^{-1/2}}\alpha_v v$ has finite dimensional range and trivial kernel, and is thus bounded with bounded inverse.
Obviously, the direct sum $\iota_0(\ell_2) \oplus \iota_1(\ell_2)\subseteq H^{-1/2}(\Gamma)$ is dense, and since the first summand is finite dimensional, the sum is even closed.
Thus, we have that $\iota:=\iota_0 + \iota_1$, is surjective, injective, and thus bijective. The open mapping theorem concludes that $\iota$ is bounded with bounded inverse.
This proves that $B^{-1/2}$ is a Riesz basis of ${\overline{\bigcup_{\ell\in\N}\PP^{1}(\TT_\ell\cap\Gamma)}\subseteq }H^{-1/2}(\Gamma)$.
Particularly, we proved~\eqref{eq:rieszB}.

The local support claims follow from the local support of the $v_0\in \SS^{2+}_B(\widehat\TT_\ell)$ and the fact that ${\rm supp}(1-J_{\ell-1})v_0\subseteq \omega(\TT_{\ell-1},{\rm supp}(v_0))$.

The scaling estimates~\eqref{eq:scaling1}--\eqref{eq:scalingm12} can be proved as follows: Let $v=(1-J_{\ell-1})v_0\in B_\ell^1$ for some $v_0\in \widetilde B_\ell^1$ and let $\omega:={\rm supp}(v)$. 
Lemma~\ref{lem:gensz} shows that
$v$ has element wise zero integral mean. Hence, we have for $0\leq s\leq 1$
\begin{align*}
\norm{w}{\widetilde H^{-s}(\omega)}\lesssim C_{\rm mesh}^{-s\ell}\norm{w}{L^2(\omega)}.
\end{align*}
The approximation property~\eqref{eq:sza} and the projection property of $J_{\ell-1}$ show
\begin{align*}
\norm{w}{L^2(\omega)}\lesssim C_{\rm mesh}^{-s\ell}\norm{w}{H^s(\omega)}
\end{align*}
for all $0\leq s<3/2$.
The converse estimates $\norm{w}{\widetilde H^{-s}(\omega)}\gtrsim C_{\rm mesh}^{-s\ell}\norm{w}{L^2(\omega)}$ as well as $\norm{w}{ L^2(\omega)}\gtrsim C_{\rm mesh}^{-s\ell}\norm{w}{H^s(\omega)}$
for $0\leq s<3/2$ follow from  standard inverse estimates. 
This concludes~\eqref{eq:scaling1}.
The estimate~\eqref{eq:scaling12} for $-1\leq s\leq 1$ follows from the fact that all $v\in B^{1/2}\setminus B_0^{1/2}$ have zero integral mean, Poincar\'e and inverse estimates.
For $1<s<3/2$,~\eqref{eq:scaling12} follows from $v\in H^1_0({\rm supp}(v))$ with a Friedrich's inequality together with inverse estimates. 
Moreover, there holds for $v=\partial_\Gamma w\in B_\ell^{-1/2}$ and an affine function $A\colon {\rm supp}(v)\to \R$
\begin{align*}
\norm{v}{\widetilde H^{-2}({\rm supp}(v))} &= \sup_{u\in H^2({\rm supp}(v))}\frac{\dual{\partial_\Gamma w}{u}_{{\rm supp}(v)}}{\norm{u}{H^2({\rm supp}(v))}}= \sup_{u\in H^2({\rm supp}(v))}\frac{-\dual{w}{\partial_\Gamma(u-A)}_{{\rm supp}(v)}}{\norm{u}{H^2({\rm supp}(v))}}
\\
&\leq\norm{w}{L^2({\rm supp}(v))} \sup_{u\in H^2({\rm supp}(v))}\frac{\norm{u-A}{H^1({\rm supp}(v))}}{\norm{u}{H^2({\rm supp}(v))}},
\end{align*}
where we used that $w\in H^1_0({\rm supp}(w))$ and $\int_{{\rm supp}(w)} w\,dx =0$ (by Lemma~\ref{lem:gensz}).
Since the above holds for all affine $A$, we get 
\begin{align*}
\norm{v}{\widetilde H^{-2}({\rm supp}(v))} \lesssim C_{\rm mesh}^{-\ell}\norm{w}{L^2({\rm supp}(v))} \simeq C_{\rm mesh}^{-\ell}\norm{v}{\widetilde H^{-1}({\rm supp}(v))}
\end{align*}
by an approximation estimate and Lemma~\ref{lem:derivative}. By use of the fact that $v$ has zero integral mean, 
we also obtain $\norm{v}{\widetilde H^{-2}({\rm supp}(v))} \lesssim C_{\rm mesh}^{-2\ell}\norm{v}{L^2({\rm supp}(v))}$. Inverse estimates finally prove~\eqref{eq:scalingm12} for all $-2\leq s\leq 0$. 
Again, with Poincar\'e and inverse estimates, we prove~\eqref{eq:scalingm12} for the remaining $0<s<1/2$.
This concludes the proof of the scaling estimates. 

{Finally, to see~\eqref{eq:span}, we note that $\SS^{2+}(\TT_j)={\rm span}\set{v_0\in\bigcup_{\ell=0}^\infty\widetilde B_\ell^1}{v_0\in\SS^{2+}(\TT_j)}$.
For each $v_0 \in \SS^{2+}(\TT_j)\cap \widetilde B_\ell^1$, we note that $J_{\ell-1}v_0$ is supported on $\omega({\rm supp}(v_0),\widehat\TT_{\ell-1})$.
Since $\TT_j|_{{\rm supp}(v_0)}$ is a strict local refinement of $\widehat\TT_{\ell-1}$ (at least one element is finer than $\widehat\TT_{\ell-1}$, Lemma~\ref{lem:graded0} shows
that $\TT_j|_{\omega({\rm supp}(v_0),\widehat\TT_{\ell-1} )}$ is a local refinement of $\widehat\TT_{\ell-1}$. 
This implies that $J_{\ell-1}v_0\in \SS^{2+}(\TT_j)$ and thus shows $\SS^{2+}(\TT_j)={\rm span}\set{v\in B^1}{v\in\SS^{2+}(\TT_j)}$. The argument works analogously for $\PP^1(\TT_j|_\Gamma)$
and thus concludes the proof.}
\end{proof}

\section{Application}\label{section:apps}
\subsection{Model problem}
Although the framework developed above seems to be fairly general, our main goal here is to prove general quasi-orthogonality 
for the non-symmetric Johnson-N\'ed\'elec coupling. This FEM/BEM coupling method stems from a transmission problem of the form
\begin{align}\label{eq:transmission}
\begin{split}
 -\Delta u &= F\quad\text{in }\Omega,\\
 -\Delta u &= 0\quad\text{in }\R^2\setminus\overline{\Omega},\\
 [u]&= U\quad\text{on }\Gamma,\\
 [\partial_n u]&=\Phi\quad\text{on }\Gamma,\\
 |u(x)|&=a+b\log|x|\quad\text{as }|x|\to \infty,
\end{split}
 \end{align}
for given functions $F\in L^2(\Omega)$, $U\in H^{1/2}(\Gamma)$, $\Phi\in L^2(\Gamma)$, and constants $a,b\in\R$. Here, $[\cdot]$ denotes the jump over $\Gamma$ and $\partial_n$ is the normal derivative on $\Gamma$.

The first FEM/BEM coupling approach for this problem was Costabel`s symmetric coupling~\cite{costabel}. While this coupling method induces an operator which is symmetric, it lacks positive definiteness. 
Reformulation of the method into a positive definite one destroys the symmetry. Therefore, rate optimality of the adaptive algorithm is also open for this \emph{symmetric} method. In principle, the methods
developed here can be used directly to prove optimality for Costabel`s symmetric coupling. In this work, however, we focus on another coupling method (which is very poplar amongst engineers due to its simpler
implementation) called one-equation coupling or Johnson-N\'ed\'elec coupling first proposed in~\cite{johned} (see also~\cite{fembem} for further details).

We define $\XX:=H^1(\Omega)\times H^{-1/2}(\Gamma)$. With the integral operators
\begin{align}\label{eq:integralops}
(V\phi)(x):=-\frac{1}{2\pi}\int_\Gamma \log|x-y|\phi(y)\, dy \quad \text{ and } \quad 
(K g)(x):=-\frac{1}{2\pi}\int_\Gamma \partial_{n(y)} \log|x-y|g(y)\, dy 
\end{align}
for all $x\in\Gamma$, we may consider the weak form of the problem above
\begin{align*}
 \widetilde a((\uint,\phi),(\vint,\psi))=\widetilde f(\vint,\psi)\quad\text{for all }(\vint,\psi)\in \XX
\end{align*}
with
\begin{align*}
 \widetilde a((\uint,\phi),(\vint,\psi)):= \dual{\nabla \uint}{\nabla \vint}_\Omega -\dual{\phi}{\vint}_\Gamma + \dual{(1/2-K)\uint}{\psi}_\Gamma +\dual{V\phi}{\psi}_\Gamma
\end{align*}
and
\begin{align*}
 \widetilde f(\vint,\psi):= \dual{F}{\vint}_\Omega +\dual{\Phi}{\vint}_\Gamma + \dual{\psi}{(1/2-K)U}_\Gamma.
\end{align*}

The connection to the transmission problem~\eqref{eq:transmission} is given by
\begin{align*}
 u|_\Omega = u^{\rm int}, \quad \partial_n u|_{\R^2\setminus\Omega} = -\phi,\quad\text{and } u|_{\R^2\setminus\Omega} = V\phi + K u^{\rm int}.
\end{align*}

Existence of unique solutions of the above method was first proved in~\cite{johned} for the case of smooth $\Gamma$. 
Almost three decades later, Sayas~\cite{sayas} proved existence of unique solutions also for the case of polygonal boundaries $\Gamma$. This work was extended in~\cite{fembem} to
nonlinear material parameters and other coupling methods.

Given a Galerkin solution in the sense of~\eqref{eq:solutions}, i.e., $(\uint_\TT,\phi_\TT)\in \XX_\TT:=\SS^{2+}(\TT)\times \PP^1(\TT|_\Gamma)$,
the corresponding residual-based error estimator (see e.g.~\cite{afp,fembem} for the derivation) reads element wise for all $T\in\TT$
 \begin{align*}
\eta_T(\TT)^2&:={\rm diam}(T)^2\norm{F+\Delta \uint_\TT)}{L^2(T)}^2 +{\rm diam}(T)\norm{[\partial_n \uint_\TT]}{L^2(\partial T\cap\Omega)}^2\\
 &\qquad + {\rm diam}(T)\norm{\Phi+\phi_\TT-\partial_n \uint_\TT}{L^2(\partial T\cap\Gamma)}^2\\
 &\qquad+{\rm diam}(T)\norm{\partial_\Gamma((\tfrac12-K)(U-\uint_\TT)-V\phi_\TT)}{L^2(\partial T\cap\Gamma)}^2,
\end{align*}
where $\partial_\Gamma$ denotes the arc-length derivative on $\Gamma$.
Note that the exterior problem affects the estimator only on elements $T\in\widehat\TT$ with $T\cap\Gamma\neq \emptyset$. 
The overall estimator reads
\begin{align*}
 \eta(\TT):=\Big(\sum_{T\in\TT}\eta_T(\TT)^2\Big)^{1/2}\quad\text{for all }\TT\in\T.
\end{align*}

We define $\XX_\ell:=\SS^{2+}(\TT_\ell)\times \PP^1(\TT_\ell|_\Gamma)$ and to fit the problem into our abstract framework, we choose the following Riesz basis from Theorem~\ref{thm:stableBbase}: 
\begin{align*}
	\bB:=\set{(v,0)}{v\in B^1}\cup \set{(0,w)}{w\in B^{-1/2}}.
\end{align*}
We recall that $\XX_\ell\subseteq \XX_{\ell+1}\subset \XX$ are nested finite dimensional spaces generated 
by the adaptive algorithm described in Section~\ref{section:mesh}.
We order the functions in $\bB$ such that $\XX_\ell={\rm span}\{\bw_1,\bw_2,\ldots,\bw_{N_\ell}\}$ for particular $N_\ell\in\N$ and all $\ell\in\N$ (note that this is possible due to~\eqref{eq:span}).

\begin{remark}
 We restrict to the spaces $\SS^{2+}(\TT_\ell)\times \PP^1(\TT_\ell|_\Gamma)$ for technical reasons only. As it becomes obvious in the proof of Theorem~\ref{thm:stableBbase}, it is essential
 to exploit a number of moment conditions of the basis functions. The author is convinced that this could also be done for the lowest order case $\SS^1(\TT_\ell)\times\PP^0(\TT_\ell|_\Gamma)$ at the expense
 of greater care for technical details. Similarly, the techniques should transfer straightforwardly to higher order discretizations.
\end{remark}

Since $\widetilde a(\cdot,\cdot)$ is not elliptic, we have to seek an elliptic reformulation. This was first demonstrated in~\cite[Section~4.2]{fembem}. The forms $\widetilde a$ and $\widetilde f$
can be replaced by equivalent forms which produce the same solutions~\eqref{eq:solutions} as long as $\bigcap_{\ell\in\N}\XX_\ell$ contains
a function $\xi$ which satisfies $\dual{\xi}{1}_\Gamma\neq 0$. This is obviously the case for our choice of $\XX_\ell$, and therefore, we may theoretically consider
\begin{align*}
 a((\uint,\phi),(\vint,\psi)):=  \widetilde a&((\uint,\phi),(\vint,\psi))\\
 &+\dual{\xi}{(1/2-K)\uint + V\phi}_\Gamma\dual{\xi}{(1/2-K)\vint+V\psi}_\Gamma
\end{align*}
and
\begin{align*}
 f(\vint,\psi):= \widetilde f(\vint,\psi) + \widetilde f(0,\xi)\dual{\xi}{(1/2-K)\vint+V\psi}_\Gamma.
\end{align*}
The result~\cite[Theorem~14]{fembem} states that $a(\cdot,\cdot)$ is elliptic~\eqref{eq:aelliptic}.
With $u:=(\uint,\phi)$ and $v:=(\vint,\psi)$, this fits the abstract from~\eqref{eq:solutions} and we define $u_\ell:=(\uint_\ell,\phi_\ell)$ for all $\ell\in\N$
with $\XX_\ell:= \SS^{2+}(\TT_\ell)\times \PP^1(\TT_\ell|_\Gamma)$. {Standard arguments (see, e.g.,~\cite[Corollary~4.8]{axioms}) prove that the exact solution satisfies
$u\in\overline{\bigcup_{\ell\in\N}\XX_\ell}\subseteq \XX$. Therefore, we can restrict the problem to the space $\XX^\prime:= \overline{\bigcup_{\ell\in\N}\XX_\ell}$.}

\begin{remark}
 Note that $a$ and $f$ produce the same solutions~\eqref{eq:solutions} as $\widetilde a$ and $\widetilde f$. Thus, for actual computation it is more convenient to use $\widetilde a$ and $\widetilde f$,
 whereas for our theoretical considerations,
 it is mandatory to use $a$ and $f$.
\end{remark}

\subsection{Main result}

The following result shows rate optimality of the adaptive algorithm and is the main result of the paper. 

\begin{theorem}[Optimality of the adaptive algorithm]\label{thm:opt}
Given sufficiently small $\theta>0$ and sufficiently large $D_{\rm grad}\geq 1$, 
Algorithm~\ref{algorithm} applied to the Johnson-N\'ed\'elec FEM-BEM coupling as described above guarantees rate-optimal convergence, i.e., there exists a constant $C_{\rm opt}>0$ such that
\begin{align*}
 C_{\rm opt}^{-1}\norm{u}{\A_s}\leq\sup_{\ell\in\N_0}\frac{\eta(\TT_\ell)}{(\#\TT_\ell-\#\TT_0+1)^{-s}}\leq C_{\rm opt}\norm{u}{\A_s},
\end{align*}
for all $s>0$ with $\norm{u}{\A_s}<\infty$.
\end{theorem}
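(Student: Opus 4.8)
The plan is to verify the four axioms (A1)--(A4) of Section~\ref{sec:axioms} for the Johnson-N\'ed\'elec coupling, since the abstract optimality theorem of~\cite{axioms} then yields rate-optimality directly, with the additional observation via Lemma~\ref{lem:approxclass} that $\A_s^{\rm grad}=\A_s$ so the grading restriction does not affect the rate. Concretely, I would organize the proof as a checklist: (i) stability on non-refined elements (A1) and the reduction property (A2) follow from standard inverse-estimate and triangle-inequality arguments applied to the residual estimator $\eta_T$; these are by now routine for both the FEM and BEM parts of the estimator and only need to be combined (the volume and jump residuals of the Laplacian part are handled exactly as in the FEM literature, and the boundary-integral-operator terms $(\tfrac12-K)(U-\uint_\TT)-V\phi_\TT$ are handled as in the BEM literature, using the $H^{\pm1/2}$-mapping properties of $V$ and $K$). (ii) Discrete reliability (A4) is likewise assembled from the FEM and BEM discrete-reliability estimates: one chooses $\RR(\TT,\widehat\TT)$ to be a fixed-size enlargement of $\TT\setminus\widehat\TT$, uses the Scott-Zhang-type quasi-interpolation to localize the Galerkin error $u_{\widehat\TT}-u_\TT$ onto the refined region, and invokes ellipticity of $a(\cdot,\cdot)$ (Theorem~14 of~\cite{fembem}) together with the $L^2$-orthogonality / moment properties to bound the coupling cross-terms.

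The heart of the argument, and the step I expect to be the main obstacle, is the general quasi-orthogonality (A3). Here the plan is to apply Theorem~\ref{thm:luqo3}. I would take the ``test'' Riesz basis $(w_n)$ to be $\bB$ from Theorem~\ref{thm:stableBbase}, which is a Riesz basis of $\XX'=\overline{\bigcup_\ell \XX_\ell}$ by~\eqref{eq:rieszB} and spans the nested subspaces $\XX_\ell$ by~\eqref{eq:span}. For the ``trial'' basis $(v_n)$ I would use the same $\bB$ (so $M_{ij}=a(\bw_j,\bw_i)$, and ellipticity of $a$ gives ellipticity~\eqref{eq:matelliptic} of $M$ while boundedness of $a$ plus the Riesz property gives boundedness of $M$ on $\ell_2$). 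The work is then to produce, for each $\eps>0$, a perturbation $M^\eps$ with $\|M-M^\eps\|_2\le\eps$ which is elliptic and lies in the Jaffard class $\JJ$ with respect to one of the metrics $d_2$ or $d_3$ from Definition~\ref{def:metric}. This is exactly what Sections~\ref{section:metric}--\ref{section:disc} are designed to deliver: the scaling estimates~\eqref{eq:scaling} and the local support properties of $\bB$, combined with the mapping properties of the single-layer potential $V$ and the double-layer $K$ (which are pseudodifferential operators whose kernels decay, giving entry-wise exponential decay of the matrix blocks in the metric $d_3$), show that the FEM block $\dual{\nabla\cdot}{\nabla\cdot}_\Omega$ is banded in $d_1$, the BEM blocks are Jaffard-class in $d_3$ up to an $\eps$-truncation, and the rank-one stabilization term in $a$ is a bounded finite-rank (hence Jaffard) perturbation. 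The grading condition~\eqref{eq:graded} enters precisely to make the metrics $d_2,d_3$ behave well (Lemmas~\ref{lem:d2metric}, \ref{lem:d3metric}) and to guarantee~\eqref{eq:span}.

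Once (A3) is in hand via Theorem~\ref{thm:luqo3}, I would assemble the pieces: (A1)+(A2)+(A4) give quasi-monotonicity of $\eta$ and hence (via Lemma~\ref{lem:approxclass}) $\A_s^{\rm grad}=\A_s$; then the abstract result of~\cite{axioms} applies verbatim to Algorithm~\ref{algorithm}, whose marking step~\eqref{eq:doerfler} uses the D\"orfler criterion in the $\sum\eta_T$ form required there, and whose extra refinement step~(5) keeps all meshes in $\T_{\rm grad}$. For sufficiently small $\theta$ (smaller than a threshold depending on $\Crel,\Cdlr$ and the quasi-orthogonality constant) and sufficiently large $D_{\rm grad}$ (large enough for all the geometric lemmas in Sections~\ref{section:metric}--\ref{section:szb} and for Lemma~\ref{lem:graded0}), the abstract theorem yields both the upper and lower bounds in the claimed estimate with $C_{\rm opt}$ depending only on $\TT_0$, $\theta$, $D_{\rm grad}$, the ellipticity and boundedness constants of $a$, and the Riesz-basis constants. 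I expect the only genuinely delicate points to be the verification that the coupling matrix really is an $\eps$-perturbation of a Jaffard-class matrix (the kernel-decay estimates for $V,K$ against the hierarchical basis, done in Sections~\ref{section:metric} and~\ref{section:disc}) and the bookkeeping that the stabilization term does not destroy this structure; everything else is a combination of known FEM and BEM adaptivity arguments.
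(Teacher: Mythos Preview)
Your high-level outline---verify (A1)--(A4), invoke Lemma~\ref{lem:approxclass} and the abstract theorem of~\cite{axioms}---matches the paper's proof of Theorem~\ref{thm:opt}. The treatment of (A1), (A2), (A4) is also in line with the paper.

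However, your plan for (A3) has a genuine gap. You propose to take $(v_n)=(w_n)=\bB$ and show that the resulting matrix $M$ is $\eps$-close to a Jaffard-class matrix in one of the metrics $d_2$ or $d_3$. This does not work, and the paper explains why in the two remarks following Theorem~\ref{thm:qosum}. The metric $d_2$ does \emph{not} satisfy~\eqref{eq:Mdist}, so a matrix that is merely banded in $d_2$ is not in $\JJ(d_2)$ and Theorem~\ref{thm:luqo2}/\ref{thm:luqo3} do not apply. Conversely, $d_3$ does satisfy~\eqref{eq:Mdist}, but $d_3(v,w)=\gamma^{\max\{L(v),L(w)\}}$ whenever $L(v)\neq L(w)$, so Jaffard decay in $d_3$ forces essentially block-diagonal structure; the off-level entries of $M$ (coming from $V$, $K$, and even the FEM block) decay only like $C_{\rm mesh}^{-c|L(v)-L(w)|}$, which is far too slow. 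The second remark argues that no metric simultaneously makes $M$ close to banded and satisfies~\eqref{eq:Mdist}. Your claim that ``the BEM blocks are Jaffard-class in $d_3$ up to an $\eps$-truncation'' is therefore incorrect, and ``finite-rank hence Jaffard'' for the stabilization term is also false (finite rank does not imply entrywise decay).

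The paper's actual route for (A3), carried out in Theorem~\ref{thm:qosum}, is a genuine detour you are missing: one first shows (Lemmas~\ref{lem:Mscaling}--\ref{lem:Ascaling}) that the \emph{level-permuted} matrix $\widetilde A$ is $\eps$-close to a matrix that is block-banded and in $\BB(d_2)$; then Theorem~\ref{thm:blockLU} produces an approximate block-$LDU$ factorization with $L^{-1},U^{-1}\in\BB(d_2)$ block-banded and $D$ block-diagonal. The factors $L^{-1},U^{-1}$ are used to define \emph{new} Riesz bases $B^L,B^U$, and it is only the matrix $C=L^{-1}\widetilde A U^{-1}$ in these new bases that is $\eps$-close to the block-diagonal $D\in\JJ(d_3)$, so that Theorem~\ref{thm:luqo3} finally applies. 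The grading condition~\eqref{eq:graded} enters here (via Lemma~\ref{lem:graded}) to guarantee that the new bases still span the discrete spaces $\XX_\ell$; this is a different and more essential role than the one you assign it.
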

\begin{proof}
We have to specify a mesh-refinement strategy which ensures~\eqref{eq:graded} and fits into the framework of~\cite[Section~2.4]{axioms}.
To that end, we use the strategy specified in~\cite[Section~A.3]{l2opt} (note that the condition in~\cite[Section~A.3]{l2opt} and~\eqref{eq:graded} are equivalent up to shape regularity).
Then, the result follows immediately from~\cite[Theorem~4.1]{axioms} and Lemma~\ref{lem:approxclass}, after we prove the axioms (A1)--(A4) in the sections below.
\end{proof}

\subsection{Proof of (A3)}
The main innovation of this paper is the proof of general quasi-orthogonality~(A3).
\begin{lemma}\label{lem:graded}
 Let $\TT\in\T$ such that $\v\in \SS^{2+}(\TT)\times \PP^1(\TT|_\Gamma)$ for some $\v\in \B_\ell$.
Let $D_{\rm grad}>0$ and assume~\eqref{eq:graded}.
Then, $k<\ell$ implies
\begin{align*}
 \big(\w\in \B_k,\,\delta_k(\v,\w)\leq D_{\rm grad}-C_{\rm grad}\big)\quad\implies\quad \w\in \SS^{2+}(\TT)\times \PP^1(\TT|_\Gamma),
\end{align*}
with $\delta_k(\cdot,\cdot)$ from Definition~\ref{def:metric} and $C_{\rm grad}\geq 1$ depends only on $\TT_0$ and $C_{\rm mesh}$. 
\end{lemma}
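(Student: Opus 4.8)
The plan is to read this as a locality statement and to transport refinement information from a neighbourhood of $\v$ to a neighbourhood of $\w$ by means of Lemma~\ref{lem:graded0} and the grading~\eqref{eq:graded}, mirroring the argument for~\eqref{eq:span} in the proof of Theorem~\ref{thm:stableBbase}. First I would reduce to the case that the non-vanishing component of $\v$ is the $H^1$-part, $\v=(v,0)$ with $v\in B_\ell^1$; writing $v=(1-J_{\ell-1})v_0$ for the associated $v_0\in\widetilde B_\ell^1\subseteq\SS^{2+}(\widehat\TT_\ell)\setminus\SS^{2+}(\widehat\TT_{\ell-1})$, recall ${\rm supp}(v)\subseteq\bigcup\omega({\rm supp}(v_0),\widehat\TT_{\ell-1})$ and ${\rm diam}({\rm supp}(v_0))\simeq C_{\rm mesh}^{-\ell}$. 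The case $\v=(0,\phi)$ with $\phi\in B_\ell^{-1/2}$ is handled verbatim, replacing $\SS^{2+}$ by $\PP^1(\cdot|_\Gamma)$, using the corresponding part of~\eqref{eq:span}, and taking the element $T_0$ below to be a boundary element; the finitely many level-$0$ basis functions (in particular $1\in B_0^{-1/2}$) either make the statement vacuous (if $\v$ is one of them there is no $k<\ell=0$) or lie in every admissible discrete space. Likewise I write $\w=(1-J_{k-1})w_0$ with $w_0\in\widetilde B_k^1\subseteq\SS^{2+}(\widehat\TT_k)\setminus\SS^{2+}(\widehat\TT_{k-1})$.

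The decisive step is to extract a fine element of $\TT$ near $\v$. Since $v_0\notin\SS^{2+}(\widehat\TT_{\ell-1})$ and $J_{\ell-1}v_0\in\SS^{2+}(\widehat\TT_{\ell-1})$, we have $v\notin\SS^{2+}(\widehat\TT_{\ell-1})$. If every $T'\in\TT$ meeting ${\rm supp}(v)$ had ${\rm level}(T')\le{\rm level}(\widehat\TT_{\ell-1})=(\ell-1)k_{\rm mesh}$, then each such $T'$ would be a union of elements of $\widehat\TT_{\ell-1}$; bisecting $\TT$ on these elements down to $\widehat\TT_{\ell-1}$ yields $\TT''\in{\rm refine}(\TT)$ that agrees with $\widehat\TT_{\ell-1}$ on a neighbourhood of ${\rm supp}(v)$, and then $v\in\SS^{2+}(\TT)\subseteq\SS^{2+}(\TT'')$ together with $v$ being supported there forces $v\in\SS^{2+}(\widehat\TT_{\ell-1})$, a contradiction. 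Hence there is $T_0\in\TT$ with $T_0\cap{\rm supp}(v)\neq\emptyset$ and ${\rm level}(T_0)>(\ell-1)k_{\rm mesh}$; in particular, for every $m\le\ell-1$ we have $T_0\in\TT\setminus\widehat\TT_m$ and $\TT|_{T_0}$ is a strict local refinement of $\widehat\TT_m$, since $T_0$ is a standard bisection element strictly contained in its ancestor in $\widehat\TT_m$.

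Now I apply Lemma~\ref{lem:graded0} with $\widehat\TT_k$ in place of $\widehat\TT_\ell$ and $T_0$ in place of $T$ (admissible as $k\le\ell-1$): $\TT|_{\omega^{D_{\rm grad}}(T_0,\widehat\TT_k)}$ is a local refinement of $\widehat\TT_k$. It remains to check $\omega({\rm supp}(w_0),\widehat\TT_{k-1})\subseteq\bigcup\omega^{D_{\rm grad}}(T_0,\widehat\TT_k)$, which is a routine estimate. Since $\ell>k$, the set ${\rm supp}(v)$ has diameter $\lesssim C_{\rm mesh}^{-(\ell-1)}\le C_{\rm mesh}^{-k}$, so $T_0$ and ${\rm mid}(\v)$ are joined by at most $C_{\TT_0}$ elements of $\widehat\TT_k$ (shape regularity of the $\widehat\TT_j$); likewise $\omega({\rm supp}(w_0),\widehat\TT_{k-1})$ has diameter $\lesssim C_{\rm mesh}^{-(k-1)}$, so each of its points is joined to ${\rm mid}(\w)$ by at most $C_{\TT_0}C_{\rm mesh}$ elements of $\widehat\TT_k$; and a chain realising $\delta_k(\v,\w)$ joins ${\rm mid}(\v)$ to ${\rm mid}(\w)$ by at most $D_{\rm grad}-C_{\rm grad}$ elements of $\widehat\TT_k$. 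Concatenating these chains and fixing $C_{\rm grad}\ge1$ large enough, depending only on $\TT_0$ and $C_{\rm mesh}$, to absorb the overhead and the passage from a chain bound to a patch inclusion, yields the claimed inclusion.

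It follows that $\TT$ restricted to $\omega({\rm supp}(w_0),\widehat\TT_{k-1})\supseteq{\rm supp}(w_0)$ is a local refinement of $\widehat\TT_k$. Since $w_0\in\SS^{2+}(\widehat\TT_k)$ is supported in ${\rm supp}(w_0)$, and $J_{k-1}w_0\in\SS^{2+}(\widehat\TT_{k-1})\subseteq\SS^{2+}(\widehat\TT_k)$ is supported in $\omega({\rm supp}(w_0),\widehat\TT_{k-1})$, nestedness of the hierarchical spaces under refinement (exactly as used for~\eqref{eq:span}) gives $w_0,J_{k-1}w_0\in\SS^{2+}(\TT)$ and hence $\w=(1-J_{k-1})w_0\in\SS^{2+}(\TT)\times\PP^1(\TT|_\Gamma)$; the boundary case is identical on $\Gamma$. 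The main obstacle is the fine-element extraction in the second paragraph — the \emph{reverse} of the mechanism behind~\eqref{eq:span} — which is where the hypothesis $\v\in\SS^{2+}(\TT)\times\PP^1(\TT|_\Gamma)$ is genuinely used and which rests on nestedness of the hierarchical spaces and on localising membership in $\SS^{2+}(\TT)$; the remaining combinatorics (tracking $\widehat\TT_k$-patches against supports of basis functions and pinning down $C_{\rm grad}$) is straightforward but needs care, since $\delta_k$ and the patches $\omega^{D_{\rm grad}}(\cdot,\widehat\TT_k)$ refer to different meshes.
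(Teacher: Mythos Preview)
Your argument is correct and lands on the same conclusion, but it is organised differently from the paper. The paper argues by contradiction in the opposite direction: it fixes an arbitrary $T\in\TT$ with $T\cap{\rm supp}(\w)\neq\emptyset$, assumes ${\rm level}(T)<L_k$ (the uniform level of $\widehat\TT_k$), and then uses the grading~\eqref{eq:graded} directly on $\TT$ to propagate this coarseness to every $T'\in\TT$ meeting ${\rm supp}(\v)$, obtaining ${\rm level}(T')\le L_k\le L_{\ell-1}$; this contradicts $\v\in\B_\ell$ together with $\v\in\SS^{2+}(\TT)\times\PP^1(\TT|_\Gamma)$, so in fact ${\rm level}(T)\ge L_k$ for all such $T$, and $\TT|_{{\rm supp}(\w)}$ is a local refinement of $\widehat\TT_k$. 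The constant $C_{\rm grad}=2C+1$ arises from a single count $C$ of how many $\widehat\TT_k$-elements the supports can occupy, used to pass between $\delta_k(\v,\w)$ and $\delta_k(\chi_T,\chi_{T'})$.

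Your route is the dual one: you first extract a \emph{fine} $T_0\in\TT$ near $\v$ (this is precisely the step the paper leaves implicit in the phrase ``contradicts $\v\in B_\ell$''), then invoke Lemma~\ref{lem:graded0} at level $k$ to push refinement outward to a $\widehat\TT_k$-patch containing ${\rm supp}(\w)$. What your approach buys is a clean reuse of Lemma~\ref{lem:graded0} as a black box; what the paper's buys is brevity, since it works entirely with~\eqref{eq:graded} on $\TT$ and avoids both the extraction argument and the translation between $\delta_k$-chains and $\omega^{D_{\rm grad}}(\cdot,\widehat\TT_k)$-patches. One minor wording slip: in your extraction step the correct relation is that $\widehat\TT_{\ell-1}$ refines each such $T'$ (not that $T'$ is a union of $\widehat\TT_{\ell-1}$-elements in the sense you wrote), but your subsequent localisation argument is sound.
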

{\begin{proof}
Define 
\begin{align*}
 C:=\sup_{k\in\N}\max_{\w\in\bigcup_{j=k}^\infty\B_j}\#\set{T\in\widehat\TT_k}{T\subseteq {\rm supp}(\w)}.
\end{align*}
By definition of $\B$, we see that $C<\infty$ in terms of $\TT_0$ and $C_{\rm mesh}$.
Assume $T,T^\prime\in \TT$ with $T\cap {\rm supp}(\w)\neq \emptyset$ and $T^\prime\cap {\rm supp}(\v)\neq \emptyset$.
Let $L_\ell$ denote the uniform level of elements in $\widehat\TT_\ell$. Assume  ${\rm level}(T)< L_{k}$.
Assumption~\eqref{eq:graded} implies that all $T^{\prime\prime}\in \omega^{D_{\rm grad}}(T,\TT)$
satisfy ${\rm level}(T^{\prime\prime})\leq L_{k}$. This means that $\widehat\TT_{k}|_{\omega^{D_{\rm grad}}(T,\TT)}$ is a local refinement of $\TT$.
With $\chi_T$ denoting the function which is one on $T$ and zero elsewhere, we get
\begin{align*}
 \delta_k(\chi_T,\chi_{T^\prime})\leq 2C + \delta_k(\v,\w).
\end{align*}
Thus, $\delta_k(\v,\w)\leq {D_{\rm grad}}-2C-1$ implies $\delta_k(\chi_{T},\chi_{T^\prime})\leq D_{\rm grad}$ and,
since $\widehat\TT_k|_{\omega^{D_{\rm grad}}(T,\TT)}$ is a local refinement of $\TT$, also $T^\prime \in \omega^{{D_{\rm grad}}}(T,\TT)$.
This shows ${\rm level}(T^\prime)\leq L_k\leq L_{\ell-1}$ and contradicts the assumption $\v\in B_\ell$.
Thus, we proved ${\rm level}(T)\geq  L_{k}$ for all $T\in\TT$ with $T\cap {\rm supp}(\w)\neq \emptyset$. This implies that $\TT|_{{\rm supp}(\w)}$ is a local refinement of $\widehat\TT_{k}$ and thus
$\w\in \SS^{2+}(\TT)\cup \PP^1(\TT|_\Gamma)$. We conclude the proof with $C_{\rm grad}:=2C+1$.
\end{proof}}

\begin{theorem}\label{thm:qosum}
 Given $\eps>0$ and under all previous assumptions, there exists $D_{\rm grad}>0$ sufficiently large such that 
 the solutions~\eqref{eq:solutions} of the transmission problem~\eqref{eq:transmission} satisfy general quasi-orthogonality~\eqref{eq:qosum2}.
\end{theorem}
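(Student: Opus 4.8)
The plan is to deduce the statement from Theorem~\ref{thm:luqo2}, applied with the Riesz basis $\bB$ of $\XX' = \overline{\bigcup_{\ell\in\N}\XX_\ell}$ constructed above. First, by~\eqref{eq:span} (which is where the grading condition with $D_{\rm grad}\ge 1$ first enters) the functions in $\bB$ can be ordered so that $\XX_\ell = {\rm span}\{\bw_1,\dots,\bw_{N_\ell}\}$, and Theorem~\ref{thm:stableBbase} shows that $\bB$ satisfies the norm equivalence~\eqref{eq:schauder}. Since $a(\cdot,\cdot)$ is bounded and, by~\cite[Theorem~14]{fembem}, elliptic~\eqref{eq:aelliptic} on $\XX'$, the system matrix $M_{ij}:=a(\bw_j,\bw_i)$ is a bounded and elliptic~\eqref{eq:matelliptic} operator on $\ell_2$. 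By Theorem~\ref{thm:luqo2} it therefore suffices to construct, for the prescribed $\eps>0$ (possibly after shrinking it by an absolute factor), a matrix $M^\eps$ with $\norm{M-M^\eps}{2}\le\eps$ which lies in some Jaffard class $\JJ$; ellipticity of $M^\eps$ is then automatic, because $\norm{M-M^\eps}{2}\le\eps$ together with ellipticity of $M$ gives ellipticity of $M^\eps$ with constant $C_{\rm ell}-\eps>0$.

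The construction of $M^\eps$, carried out in Sections~\ref{section:banded}--\ref{section:disc}, is the technical core. I would split $a$ into the local volume term $\dual{\nabla\uint}{\nabla\vint}_\Omega$, the local coupling terms $-\dual{\phi}{\vint}_\Gamma$ and $\tfrac12\dual{\uint}{\psi}_\Gamma$, the nonlocal terms $-\dual{K\uint}{\psi}_\Gamma$ and $\dual{V\phi}{\psi}_\Gamma$, and the rank-one stabilization term, and estimate the matrix entries of each piece separately. For the local terms the matrix is essentially banded: a basis function in $B_\ell$ has support of diameter $\simeq C_{\rm mesh}^{-\ell}$, two basis functions couple only when their supports meet, and the locality of the commuting Scott-Zhang operators (Theorem~\ref{thm:szb}) together with the grading (Lemma~\ref{lem:graded}) bounds both the range of levels and the spatial range over which this happens; the elementwise and edgewise vanishing-moment properties of the basis (Lemma~\ref{lem:gensz}) upgrade this to genuine entrywise decay across levels. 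For the nonlocal terms the kernels $\log|x-y|$ and $\partial_{n(y)}\log|x-y|$ are smooth away from the diagonal, so a standard Calderón--Zygmund/wavelet-compression argument — using the vanishing moments of $B^{1/2}$, the fact that $B^{-1/2}$ consists of arc-length derivatives of zero-mean functions, and the scaling estimates~\eqref{eq:scaling1}--\eqref{eq:scalingm12} — yields $|M_{ij}|\lesssim C_{\rm mesh}^{-p\,|L(i)-L(j)|}\,(1+d_1(i,j))^{-p}$ with $p$ as large as the available number of moments permits. Summing these bounds with the annulus count of Lemma~\ref{lem:annulus} shows that the part of $M$ outside a band $\{\,|L(i)-L(j)|\le\Lambda,\ d_1(i,j)\le b\,\}$ is small in $\norm{\cdot}{1}$ and in $\norm{\cdot}{\infty}$, hence small in $\norm{\cdot}{2}$ by the interpolation inequality used in Lemma~\ref{lem:norm}; defining $M^\eps$ as the restriction of $M$ to this band, with $\Lambda=\Lambda(\eps)$ and $b=b(\eps)$, gives $\norm{M-M^\eps}{2}\le\eps$, and the band inclusion combined with Lemma~\ref{lem:d3metric} (respectively Lemma~\ref{lem:d2metric}) shows that $M^\eps$ lies in the Jaffard class associated with the metric built in Section~\ref{section:metric}.

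Given such an $M^\eps$, Theorem~\ref{thm:luqo2} applied with $\XX=\XX'$ and the basis $\bB$ yields~\eqref{eq:qosum2}, i.e.\ general quasi-orthogonality (A3), with a constant $C_{\rm qo}$ depending on $\eps$ and on the data; since $\eps>0$ was arbitrary and enters~\eqref{eq:qosum2} exactly as (A3) requires, this proves the theorem, the required size of $D_{\rm grad}$ being dictated by Lemma~\ref{lem:graded} and by the bandwidth $b(\eps)$ of the truncation. I expect the main obstacle to be the decay estimate for the boundary-integral contributions to $M$: one must make the Calderón--Zygmund estimate uniform both in the level $\ell$ and over all meshes produced by the adaptive algorithm, and then convert spatial decay into decay with respect to the correct abstract metric — this is precisely the point where essentially all of the machinery of Sections~\ref{section:metric}--\ref{section:szb}, namely the moment-preserving Scott-Zhang variant, the commuting projections $S_\ell$, and the metrics $d_1,d_2,d_3$, is used.
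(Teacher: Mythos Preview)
Your plan has a genuine gap, and it is exactly the obstacle the paper isolates in the two Remarks following Theorem~\ref{thm:qosum}. You want to apply Theorem~\ref{thm:luqo2} directly, which requires $M^\eps\in\JJ(d,\gamma,C)$ for some metric $d$ satisfying~\eqref{eq:Mdist}. But neither of the metrics from Section~\ref{section:metric} works for your truncated matrix. With respect to $d_2$, your $M^\eps$ is indeed banded (this is what Lemmas~\ref{lem:Mscaling}--\ref{lem:Ascaling} give), but $d_2$ fails~\eqref{eq:Mdist}: for $v,w\in B^1_\ell$ on the same level one has $d_2(v,w)\simeq 1+\log(1+d_1(v,w))$, so $\exp(-\eps d_2(v,w))\simeq (1+d_1(v,w))^{-\eps}$, and Lemma~\ref{lem:annulus} with $d=2$ shows the sum over $w\in B^1_\ell$ diverges for small $\eps$. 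With respect to $d_3$ (which does satisfy~\eqref{eq:Mdist} by Lemma~\ref{lem:d3metric}), your $M^\eps$ is \emph{not} of Jaffard class: $d_3(v,w)=\gamma^{\max\{L(v),L(w)\}}$ whenever $L(v)\neq L(w)$, so membership in $\JJ(d_3,\cdot,\cdot)$ would force super-exponential decay of all off-level-diagonal entries, whereas the actual entries of $M$ between neighbouring levels are of order one. The second Remark after the theorem argues that no metric can simultaneously satisfy~\eqref{eq:Mdist} and make a merely band-truncated $M$ Jaffard-class.

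The paper's proof circumvents this by an extra layer: after permuting to order by level, Theorem~\ref{thm:blockLU} produces an approximate block-$LDU$ factorization of $\widetilde A$ with $L^{-1},U^{-1}\in\BB(d_2)$ block-banded. One then \emph{changes bases} via $U^{-1}$ and $L^{-T}$, obtaining two new Riesz bases $B^U,B^L$ in which the system matrix is $L^{-1}\widetilde A U^{-1}\approx D$. The point is that $D$ is block-\emph{diagonal}, and block-diagonal matrices \emph{do} lie in $\JJ(d_3)$; now Theorem~\ref{thm:luqo3} (two different bases) applies. The grading condition~\eqref{eq:graded} with $D_{\rm grad}$ large is needed precisely here, via Lemma~\ref{lem:graded}, to guarantee that the new bases still span the adaptive spaces $\XX_\ell$; it is not merely a bookkeeping device for the truncation bandwidth as you suggest. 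Your sketch also omits the rank-one stabilization term in $a(\cdot,\cdot)$, but that is a minor issue (rank-one perturbations of $\ell_2$-bounded operators can be absorbed into the $\eps$-approximation); the essential missing ingredient is the block-$LDU$ step and the basis change.
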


\begin{remark}
 The idea of the proof of the theorem is to consider the matrix $A_{ij}:= a(\bw_i,\bw_j)$. The lemmas from Section~\ref{section:banded} below show that $A$ is close to a matrix $M^\eps$ which is banded with
 respect to the metric $d_2(\cdot,\cdot)$ from Section~\ref{section:metric}. Ideally, we would like to apply Theorem~\ref{thm:luqo2} directly. This, however, is not possible since $d_2(\cdot,\cdot)$ does
 not satisfy~\eqref{eq:Mdist}. Thus, $M^\eps$ is not of any Jaffard class and therefore Theorem~\ref{thm:luqo2} does not apply.
 The remedy is to first consider a permutation of $A$ which is block-banded. We then prove that there exists a block banded $LDU$-factorization which is used to define two new Riesz bases of $\XX$.
 The corresponding matrix $M$ turns out to be close to the block diagonal matrix $D$, which satisfies $D\in\JJ$. Thus, Theorem~\ref{thm:luqo3} applies and concludes the proof. 
 The redefinition of the Riesz bases is the reason for the grading condition~\eqref{eq:graded} on the adaptive meshes. This condition ensures that the spaces $\XX_\ell$ are still spanned by
 the new Riesz bases.
\end{remark}

\begin{remark}
One might ask the question if there exists a better metric $d_2(\cdot,\cdot)$ such that $A$ is still close to a  matrix in $\BB(d_2,\cdot)$, and that $d_2(\cdot,\cdot)$ satisfies~\eqref{eq:Mdist}.
This would simplify the proof and remove the grading condition~\eqref{eq:graded}. We argue that such a metric is likely not to exists.

Given $\bw_i,\bw_j\in\bB_\ell$, we choose $\bw_{ij,k}\in \bigcup_{\nu\leq \ell}\bB_\nu$, $k=1,\ldots,n$ with $\bw_{ij,1}=\bw_i$ and $\bw_{ij,n}=\bw_j$ as follows:
We require that $\bw_{ij,k}\in \bB_{\ell-k}$ for all $k\leq n/2$
and that their supports overlap. Moreover, we choose the sequence such that at some point
the support of $\bw_{ij,n/2}$ overlaps
with $\bw_j$. The second half of the $\bw_{ij,k}\in \bB_{\ell-n/2+k}$ is chosen analogously.
Thus, if the distance of functions with supporting overlap and level difference one is uniformly bounded, the triangle inequality implies
\begin{align*}
 d_2(\bw_i,\bw_j)\leq d_2(\bw_i,\bw_{ij,1})+d_2(\bw_{ij,1},\bw_{ij,2}) +\ldots+ d_2(\bw_{ij,n},\bw_j)\lesssim n,
\end{align*}
where $n\simeq \log_{C_{\rm mesh}}(\delta_\ell(\bw_i,\bw_j))$. ( Since the supports grow/shrink in size by the factor of $C_{\rm mesh}$ from level to level, 
we see the condition on $n$.) With $d_2(\bw_i,\bw_j)\lesssim \log(\delta_\ell(\bw_i,\bw_j))$, condition~\eqref{eq:Mdist} cannot hold.
Thus, the distance of functions with supporting overlap and level difference one must not be uniformly bounded. This, however, implies that any matrix being close to $\BB(d_2)$ or $\JJ(d_2)$ is
essentially diagonal as row and column numbers approach infinity. This would imply a much stronger condition on the chosen Riesz bases and it is not clear how to construct such a basis.
\end{remark}

\begin{proof}
With the basis $\bB$, define the matrix $A\in\R^{\N\times \N}$ by
\begin{align*}
 A_{ij}:=a(\bw_j,\bw_i). 
\end{align*}
Since $\bB$ is a Riesz basis, boundedness and ellipticity of $a(\cdot,\cdot)$ imply that also $A$ is elliptic and bounded.
By reordering $\bB$ such that $L(\bw_{k_i})\leq L(\bw_{k_j})$ for all $i\leq j$, we obtain a permuted matrix
\begin{align*}
 \widetilde A_{ij}:= A_{k_ik_j}\quad\text{or}\quad  \widetilde A=P^TAP
\end{align*}
for some permutation matrix $P\in \{0,1\}^{\N\times\N}$ defined by $P_{ij}=1$ if and only if $i=k_j$.
We introduce a block-structure on $B$ with $n_1,n_2,\ldots$ such that $\set{\bw_{k_i}}{i=n_r,\ldots,n_{r+1}-1}=\set{\bw\in\bB}{L(\bw)=r}$.
Then, Lemmas~\ref{lem:Mscaling}--\ref{lem:Ascaling} below show that there exists $ \widetilde A^\eps$ which is block-banded for some bandwidth $b$ such that
\begin{align*}
 \norm{ \widetilde A- \widetilde A^\eps}{2}\leq \eps.
\end{align*}
Moreover, if we identify $i\mapsto \bw_i$, there holds $ \widetilde A^\eps\in\BB(d_2)$ with the metric $d_2(\cdot,\cdot)$ from Definition~\ref{def:metric} (we verify the conditions (i)--(iv) posed on $\bB$
in Section~\ref{section:metric} with standard arguments). Choosing $\eps>0$ sufficiently small, we ensure that $\widetilde A^\eps$ is elliptic and bounded.
Thus, Theorem~\ref{thm:blockLU} shows that there exists an approximate block-$LDU$-factorization with $\norm{ \widetilde A-LDU}{2}\leq 2\eps$. Moreover, the factors $L^{-1},D,U^{-1}\in\BB(d_2)$ are block-banded 
with bandwidth $b$.
Since $D$ is block-diagonal, we also have $D\in\BB(d_3)$ for the metric $d_3(\cdot,\cdot)$ from Definition~\ref{def:metric}.
Together with Lemma~\ref{lem:d3metric}, this shows $D\in \JJ(d_3)$.
We define a new basis $B^U$ by
\begin{align*}
 B^L:=\bigcup_{\ell\in\N}B^U_\ell,\quad\text{and}\quad B^U_\ell:=\set{v_{k_j}:=\sum_{i=1}^\infty (U^{-1})_{ij}\bw_{k_i}}{j\in\N,\,L(\bw_{k_j})=\ell}.
\end{align*}
and we define $B^L$ by
\begin{align*}
 B^L:=\bigcup_{\ell\in\N}B^L_\ell,\quad\text{and}\quad B^L_\ell:=\set{w_{k_j}:=\sum_{i=1}^\infty (L^{-1})_{ji}\bw_{k_i}}{j\in\N,\,L(\bw_{k_j})=\ell}.
\end{align*}
Since $L^{-1},U^{-1}$ are inversely bounded uniformly in $\eps$, we see that also $B^U$ and $B^L$ are a Riesz basis of $\XX$.
Moreover, since $L^{-1},U^{-1}\in\BB(d_2,b^\prime)$ and block-banded with a certain bandwidth $b^\prime$, and since the diagonal blocks of $L^{-1}$ and $U^{-1}$ are just identities, we have
\begin{align*}
 v_{k_j}&= \bw_{k_j} + \sum_{\ell=L(\bw_{{k_j}})-b^\prime}^{L(\bw_{{k_j}})-1} \sum_{\bw_{k_i}\in B_\ell\atop\delta_\ell(\bw_{k_j},\bw_{k_i})\leq n}(U^{-1})_{ij}\bw_{k_i},\\
  w_{k_j}&= \bw_{k_j} + \sum_{\ell=L(\bw_{{k_j}})-b^\prime}^{L(\bw_{{k_j}})-1} \sum_{\bw_{k_i}\in B_\ell\atop \delta_\ell(\bw_{k_j},\bw_{k_i})\leq n}(L^{-1})_{ji}\bw_{k_i} 
\end{align*}
for some $n\in\N$ which depends only on $b^\prime$.
Lemma~\ref{lem:graded} shows that since all $\TT_\ell$ satisfy~\eqref{eq:graded} for sufficiently large $D_{\rm grad}\geq n+C_{\rm grad}$, we have that $\bw_j\in\XX_\ell$ implies $v_j,w_j\in\XX_\ell$.
Therefore, we have
\begin{align*}
 \XX_\ell={\rm span}\{v_1,v_2,\ldots,v_{N_\ell}\}={\rm span}\{w_1,w_2,\ldots,w_{N_\ell}\}.
\end{align*}
Moreover, with $C_{ij}:= a(v_{k_j},w_{k_i})$, we see $C=L^{-1}\widetilde A U^{-1}$ and hence
\begin{align*}
\norm{C-D}{2}&=\norm{L^{-1}(\widetilde A-LDU) U^{-1}}{2}\lesssim \eps,
\end{align*}
where we used that $L^{-1}$ and $U^{-1}$ are bounded uniformly in $\eps$ (see Theorem~\ref{thm:blockLU}).
Considering $M:= PCP^T$ and $M^\eps:= PDP^T$, we obtain with the above $\norm{M-M^\eps}{2}\lesssim \eps$ as well as $M^\eps\in \JJ(d_3)$.
Moreover, Theorem~\ref{thm:blockLU} shows that $D$ and thus $M^\eps$ are elliptic~\eqref{eq:matelliptic}. Choosing $\eps>0$ sufficiently small, we ensure that also $M$ is elliptic.
Thus, Theorem~\ref{thm:luqo3} {(for $\XX^\prime=\overline{\bigcup_{\ell\in\N}\XX_\ell}$ instead of $\XX$)} applies and concludes the proof.
\end{proof}

\subsection{Proof of (A1), (A2), and (A4)}
The proofs of the properties (A1), (A2), and (A4) are combinations of techniques from the FEM case and from the BEM case (mainly from~\cite{fkmp}). While no expert will be surprised by the following
proofs, they cannot be found in the literature and we included them for completeness.
\begin{proof}[Proof of (A1)--(A2)]
 The statements~(i) and~(ii) are part of the proof of~\cite[Theorem~25]{fembem} and follow from the triangle inequality and 
 local inverse estimates for the non-local operators $ V$ and $ K$ from~\cite{invest}. The constants $C_{\rm stab},C_{\rm red},q_{\rm red}$ depend only on $\Gamma$ and the shape regularity of $\TT$
 and $\widehat\TT$.
\end{proof}

\begin{proof}[Proof of (A4)]
 The proof is essentially the combination of the corresponding proofs for FEM in~\cite{stevenson07,ckns} and BEM in~\cite{fkmp}. There holds with ellipticity~\cite[Theorem~14]{fembem} and 
  $v_{\widehat{\TT}}:=(\uint_{\widehat{\TT}},\psi_{\widehat{\TT}}):=u_{\widehat{\TT}}-u_{{\TT}}\in\SS^{2+}(\widehat\TT)\times \PP^1(\widehat\TT|_\Gamma)$ by use of Galerkin orthogonality
 \begin{align*}
  \norm{u_{\widehat{\TT}}-u_\TT}{\XX}^2&\lesssim a(u_{\widehat{\TT}}-u_{{\TT}},v_{\widehat{\TT}})=\widetilde a(u_{\widehat{\TT}}-u_{{\TT}},v_{\widehat{\TT}})\\
  &=f(v_{\widehat{\TT}}-v_{{\TT}})-\widetilde a(u_{{\TT}},v_{\widehat{\TT}}-v_{{\TT}})\quad\text{for all }v_{{\TT}}\in\XX_\ell.
 \end{align*}
Recall the Scott-Zhang operator $J_\TT: H^1(\Omega)\to \SS^{2+}(\TT)$ from Section~\ref{section:szb} as well as the $L^2(\Gamma)$-orthogonal projection 
$\Pi_\TT:L^2(\Gamma) \to \PP^{1}(\TT|_\Gamma)$. With this, define
\begin{align*}
 v_{{\TT}}:=(J_\TT\uint_{\widehat{\TT}},\Pi_\TT \psi_{\widehat{\TT}})\in\SS^{2+}(\TT)\times \PP^1(\TT|_\Gamma).
\end{align*}
This implies
\begin{align}\label{eq:johneddrelstart}
\begin{split}
  \norm{u_{\widehat{\TT}}-u_{{\TT}}}{\XX}^2&\lesssim \dual{F}{(1-J_\TT)\uint_{\widehat{\TT}}}_{L^2(\Omega)}-\dual{\nabla \uint_\TT}{\nabla (1-J_\TT)\uint_{\widehat{\TT}}}_{L^2(\Omega)}\\
  &\qquad  +  \dual{\Phi+\phi_\TT}{(1-J_\TT)\uint_{\widehat{\TT}}}_{L^2(\Gamma)}\\
  &\qquad + \dual{(1/2- K)(U-\uint_\TT)- V\phi_\TT}{(1-\Pi_\TT)\psi_{\widehat{\TT}}}_{L^2(\Gamma)}.
  \end{split}
\end{align}
$\TT$-piecewise integration by parts shows
\begin{align*}
 \dual{&F}{(1-J_\TT)\uint_{\widehat{\TT}}}_{L^2(\Omega)}-\dual{\nabla \uint_\TT}{\nabla (1-J_\TT)\uint_{\widehat{\TT}}}_{L^2(\Omega)}+\dual{\Phi+\phi_\TT}{(1-J_\TT)\uint_{\widehat{\TT}}}_{L^2(\Gamma)}\\
 &\lesssim\sum_{T\in\TT}\normLtwo{F+\Delta\uint_\TT}{T}\normLtwo{(1-J_\TT)\uint_{\widehat{\TT}}}{T}\\
 &\qquad+\sum_{T\in\TT}\Big(\normLtwo{[\partial_n\uint_\TT]}{\partial T\cap\Omega}
 + \normLtwo{\Phi+\phi_\TT-\partial_n\uint_\TT }{\partial T\cap\Gamma}\Big)\normHeh{(1-J_\TT)\uint_{\widehat{\TT}}}{T}.
\end{align*}
Since all $T\in\TT$ with $T\notin \omega(\TT\setminus\widehat\TT,\TT)$ satisfy $((1-J_\TT)\uint_{\widehat{\TT}})|_T = 0$ 
 and by use of the first-order approximation properties of $J_\TT$, the above estimate implies
\begin{align}\label{eq:johnedhelp}
 \dual{&F}{(1-J_\TT)\uint_{\widehat{\TT}}}_{L^2(\Omega)}-\dual{\nabla \uint_\TT}{\nabla (1-J_\TT)\uint_{\widehat{\TT}}}_{L^2(\Omega)}+\dual{\Phi-\phi_\TT}{(1-J_\TT)\uint_{\widehat{\TT}}}_{L^2(\Gamma)}\nonumber\\
 &\lesssim\sum_{T\in\omega(\TT\setminus\widehat\TT,\TT)}\Big({\rm diam}(T)\normLtwo{F+\Delta\uint_\TT}{T}+{\rm diam}(T)^{1/2} \normLtwo{[\partial_n\uint_\TT]}{\partial T\cap\Omega}\nonumber\\
 &\qquad+ {\rm diam}(T)^{1/2}\normLtwo{\Phi+\phi_\TT-\partial_n\uint_\TT}{\partial T\cap\Gamma}\Big)\normLtwo{\nabla\uint_{\widehat{\TT}}}{T},
\end{align}
where the hidden constant depends only on the shape regularity of $\TT$ and $\Omega$.
Consider a partition of unity of $\Gamma$ in the sense
\begin{align*}
 \sum_{z\in\Gamma\atop z\text{ node of }\TT} \xi_z = 1\quad\text{on }\Gamma
\end{align*}
with the nodal hat functions $\xi_z\in\NN(\TT|_\Gamma)$.
Since $(1-\Pi_\TT)\psi_{\widehat{\TT}} = 0 $ on $\TT\cap\widehat\TT$, the last term on the right-hand side of~\eqref{eq:johneddrelstart} satisfies
\begin{align*}
  \dual{(1/2- K)(U-\uint_\TT)&- V\phi_\TT}{(1-\Pi_\TT)\psi_{\widehat{\TT}}}_{L^2(\Gamma)}\\
  &=
   \dual{\sum_{z\in\bigcup(\TT\setminus\widehat\TT)\cap\Gamma\atop z\text{ node of }\TT}\xi_z\big((1/2- K)(U-\uint_\TT)- V\phi_\TT\big)}{(1-\Pi_\TT)\psi_{\widehat{\TT}}}_{L^2(\Gamma)}.
\end{align*}
Galerkin orthogonality shows $ \dual{1}{(1/2- K)(U-\uint_\TT)- V\phi_\TT}_{L^2(T\cap\Gamma)}=0$ for all $T\in\TT$ allows to follow the arguments of the proof of~\cite[Proposition~5.3]{fkmp} resp.~\cite[Proposition~4]{ffkmp:part1}. This shows
\begin{align}\label{eq:johnedhelp2}
 \dual{(1&-\Pi_\TT)\psi_{\widehat{\TT}}}{(1/2- K)(U-\uint_\TT)- V\phi_\TT}_{L^2(\Gamma)}\\\nonumber
 &\lesssim \Big(\sum_{T\in\omega(\TT\setminus\widehat\TT,\TT)}{\rm diam}(T)^{1/2}\normLtwo{\nabla_\Gamma\big((1/2- K)(U-\uint_\TT)- V\phi_\TT\big)}{T\cap\Gamma}\Big)\normHmeh{\psi_{\widehat{\TT}}}{\Gamma}.
\end{align}
The combination of~\eqref{eq:johnedhelp}--\eqref{eq:johnedhelp2} with~\eqref{eq:johneddrelstart} concludes the proof of the discrete reliability~(A4) with $\RR(\TT,\widehat\TT):=\omega(\TT\setminus\widehat\TT,\TT)$
and $C_{\rm ref}$ depending only on shape regularity. 
\end{proof}

\subsection{Discretization of integral operators}\label{section:disc}
In view of the application in Section~\ref{section:apps}, we look at matrices stemming from discretizations of certain integral operators.
\begin{lemma}\label{lem:intop1}
Given $r\in\N\cup\{0\}$, let $G\colon \Gamma\times\Gamma \to \R$ denote an integral kernel. Let $v,w\in L^2(\Gamma)$ 
respectively have connected support with 
\begin{align*}
\dist(v,w):=\inf_{x\in {\rm supp}(v)\atop y \in {\rm supp}(w)} |x-y|>0.
\end{align*}
Let $\nu,\sigma\in\N_0$ 
such that $\partial_x^\nu\partial_y^\sigma G\in L^\infty({\rm supp}(v)\times {\rm supp}(w))$,
where $\partial_x,\partial_y$ denote the arc-length derivatives on $\Gamma$ with respect to the arguments $x$ and $y$ of $G$.
Then, there holds
\begin{align*}
\Big|&\int_\Gamma\int_\Gamma G(x,y)w(x)v(y)\,dx\,dy\Big|\\
&\leq 
C_{\rm far}\norm{v}{\widetilde H^{-\sigma}({\rm supp}(v))}\norm{w}{\widetilde H^{-\nu}({\rm supp}(v))}
\frac{\sup_{0\leq \alpha\leq \nu\atop 0\leq \beta\leq \sigma}\norm{\partial_x^\alpha \partial_y^\beta G}{L^\infty({\rm supp}(v)\times {\rm supp}(w))}}{|{\rm supp}(w)|^{-1/2}|{\rm supp}(w)|^{-1/2}}.
\end{align*}
The constant $C_{\rm far}>0$ depends only on $\Gamma$, $\sigma$, and $\nu$.
\end{lemma}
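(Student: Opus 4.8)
The plan is to derive the bound from two successive duality arguments, using the separation $\dist(v,w)>0$ to guarantee that on ${\rm supp}(v)\times{\rm supp}(w)$ the kernel $G$ and its arc-length derivatives are bounded.

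First I would introduce the partial potential
\begin{align*}
 g_w(y):=\int_\Gamma G(x,y)\,w(x)\,dx,\qquad y\in{\rm supp}(v),
\end{align*}
and observe that, because $\dist(v,w)>0$ and $\partial_x^\alpha\partial_y^\beta G\in L^\infty({\rm supp}(v)\times{\rm supp}(w))$ for $0\le\alpha\le\nu$, $0\le\beta\le\sigma$, differentiation under the integral sign is justified by dominated convergence, so that $\partial_y^\beta g_w(y)=\int_\Gamma\partial_y^\beta G(x,y)\,w(x)\,dx$ for all $\beta\le\sigma$ and in particular $g_w\in H^\sigma({\rm supp}(v))$. Writing the double integral as the extended $L^2$-pairing $\dual{v}{g_w}_{{\rm supp}(v)}$ and invoking the definition of $\widetilde H^{-\sigma}({\rm supp}(v))$ as the dual of $H^\sigma({\rm supp}(v))$, I obtain
\begin{align*}
 \Big|\int_\Gamma\int_\Gamma G(x,y)w(x)v(y)\,dx\,dy\Big|\le \norm{v}{\widetilde H^{-\sigma}({\rm supp}(v))}\,\norm{g_w}{H^\sigma({\rm supp}(v))}.
\end{align*}

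It remains to estimate $\norm{g_w}{H^\sigma({\rm supp}(v))}$. For fixed $y\in{\rm supp}(v)$ and $\beta\le\sigma$, the same duality argument applied now in the $x$-variable gives
\begin{align*}
 |\partial_y^\beta g_w(y)|=\big|\dual{w}{\partial_y^\beta G(\cdot,y)}_{{\rm supp}(w)}\big|\le \norm{w}{\widetilde H^{-\nu}({\rm supp}(w))}\,\norm{\partial_y^\beta G(\cdot,y)}{H^\nu({\rm supp}(w))}.
\end{align*}
Since $\nu\in\N_0$, one has $\norm{\partial_y^\beta G(\cdot,y)}{H^\nu({\rm supp}(w))}^2\le\sum_{\alpha=0}^\nu\norm{\partial_x^\alpha\partial_y^\beta G(\cdot,y)}{L^2({\rm supp}(w))}^2\le(\nu+1)\,|{\rm supp}(w)|\,G_{\max}^2$, where $G_{\max}:=\sup_{0\le\alpha\le\nu,\,0\le\beta\le\sigma}\norm{\partial_x^\alpha\partial_y^\beta G}{L^\infty({\rm supp}(v)\times{\rm supp}(w))}$. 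Taking the $L^2({\rm supp}(v))$-norm in $y$, summing over $\beta\le\sigma$, and using $\norm{\cdot}{L^2({\rm supp}(v))}\le|{\rm supp}(v)|^{1/2}\norm{\cdot}{L^\infty({\rm supp}(v))}$, I get
\begin{align*}
 \norm{g_w}{H^\sigma({\rm supp}(v))}^2\lesssim |{\rm supp}(v)|\,|{\rm supp}(w)|\,\norm{w}{\widetilde H^{-\nu}({\rm supp}(w))}^2\,G_{\max}^2
\end{align*}
with a constant depending only on $\nu$ and $\sigma$. Inserting this into the previous display proves the claim (reading the denominator as $|{\rm supp}(v)|^{-1/2}|{\rm supp}(w)|^{-1/2}$ and the norm of $w$ over ${\rm supp}(w)$), with $C_{\rm far}$ depending only on $\Gamma$, $\nu$, and $\sigma$.

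The argument is mostly bookkeeping; the points needing care are (a) the legitimacy of differentiating $g_w$ under the integral and the resulting membership $g_w\in H^\sigma({\rm supp}(v))$ — this is precisely where the hypothesis $\dist(v,w)>0$ is used — and (b) that $\partial_x,\partial_y$ are arc-length derivatives along the only piecewise-smooth curve $\Gamma$; since we only need these derivatives to be $L^\infty$-bounded off the diagonal and the pairings live intrinsically on the connected pieces of $\Gamma$ carrying $v$ and $w$, no real difficulty arises. I expect (a) to be the main — though still mild — obstacle.
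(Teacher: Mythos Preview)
Your proof is correct and follows essentially the same double-duality argument as the paper, only with the order of the two pairings swapped (you pull out $v$ first and then $w$, while the paper pulls out $w$ first and then $v$). You also correctly note the evident typos in the statement regarding $|{\rm supp}(v)|$ versus $|{\rm supp}(w)|$ in the final bound.
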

\begin{proof}
There holds 
\begin{align}\label{eq:start}
\begin{split}
\Big|\int_\Gamma\int_\Gamma G(x,y)w(x)v(y)\,dx\,dy\Big|&\leq \norm{w}{\widetilde H^{-\nu}({\rm supp}(w))}\norm{\int_\Gamma v(y)G(\cdot,y)\,dy}{H^{\nu}({\rm supp}(w))}.
\end{split}
\end{align}
We obtain that 
\begin{align*}
\norm{\int_\Gamma v(y)G(\cdot,y)\,dy}{H^{\nu}({\rm supp}(w))}^2=\sum_{\alpha=0}^\nu\norm{\int_\Gamma v(y)\partial_x^\alpha G(\cdot,y)\,dy}{L^2({\rm supp}(w))}^2.
\end{align*}
There holds
\begin{align*}
\norm{\int_\Gamma &v(y)\partial_x^\alpha G(\cdot,y)\,dy}{L^2( {\rm supp}(w))}\\
&\leq \norm{v}{\widetilde H^{-\sigma}({\rm supp}(v))}
\Big(\int_{ {\rm supp}(w)}\norm{\partial_x^\alpha G(x,\cdot)}{H^\sigma({\rm supp}(v))}^2\,dx\Big)^{1/2}\\
&\leq \norm{v}{\widetilde H^{-\sigma}({\rm supp}(v))}\sup_{x\in{\rm supp}(w)}\norm{\partial_x^\alpha G(x, \cdot)}{H^\sigma({\rm supp}(v))}|{\rm supp}(w)|^{1/2}.
\end{align*}
Together with the above, this shows
\begin{align*}
\norm{\int_\Gamma v(y)&G(\cdot,y)\,dy}{H^{\nu}({\rm supp}(w))}^2\\
&\leq \sum_{\alpha=0}^\nu\norm{v}{\widetilde H^{-\sigma}({\rm supp}(v))}\sup_{x\in{\rm supp}(w)}\norm{\partial_i^\alpha G(x, \cdot)}{H^\sigma({\rm supp}(v))}|{\rm supp}(w)|^{1/2}.
\end{align*}
For $0\leq\alpha\leq \nu$, we get
\begin{align*}
\sup_{x\in{\rm supp}(w)}\norm{\partial_x^\alpha G(x, \cdot)}{H^\sigma({\rm supp}(v))}&\lesssim
|{\rm supp}(v)|^{1/2}\sup_{x\in{\rm supp}(w)\atop 0\leq \beta\leq \sigma}\norm{\partial_x^\alpha\partial_y^\beta G(x, \cdot)}{L^\infty({\rm supp}(v))}\\
&=|{\rm supp}(v)|^{1/2}\sup_{ 0\leq \beta\leq \sigma}\norm{\partial_x^\alpha\partial_y^\beta G}{L^\infty({\rm supp}(v)\times {\rm supp}(w))}.
\end{align*}
The combination of the above estimates concludes the proof.
\end{proof}

Recall the definition of a level of a function $L(\cdot)$ from Section~\ref{section:metric} as well as the Riesz bases $B^1$, $B^{1/2}$, $B^{-1/2}$ from Theorem~\ref{thm:stableBbase}. In this situation, we have 
\begin{align*}
L(v)=\ell\quad\text{for all } v\in B^1_\ell\cup B^{1/2}_\ell\cup B^{-1/2}_\ell
\end{align*}
for all $\ell\in\N\cup\{0\}$.

\begin{lemma}\label{lem:Vbase}
Let $G(x,y):=-1/(2\pi)\log|x-y|$ denote the kernel of the single-layer potential.
Then, there holds for $w,v\in B^{-1/2}$ with $\dist(v,w)>0$ that
\begin{align}\label{eq:Vred}
\Big|\int_\Gamma\int_\Gamma G(x,y)w(x)v(y)\,dx\,dy\Big|\leq C_{\rm B} \frac{C_{\rm mesh}^{-2(L(v)+L(w))}}{{\rm dist}(v,w)^{4}}.
\end{align}
The constant $C_{\rm B}>0$ depends only on $\TT_0$.
\end{lemma}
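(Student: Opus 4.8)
The plan is to apply Lemma~\ref{lem:intop1} with the kernel $G(x,y) = -\tfrac{1}{2\pi}\log|x-y|$ and a suitable choice of the differentiation orders $\nu,\sigma$. Since $v,w\in B^{-1/2}$ are derivatives of functions with zero boundary values and vanishing integral mean (recall from Theorem~\ref{thm:stableBbase} that $B^{-1/2}_\ell$ consists of arc-length derivatives of functions in $B^{1/2}_\ell$), the scaling estimate~\eqref{eq:scalingm12} gives us good control on the negative-order Sobolev norms of $v$ and $w$. Concretely, $\norm{v}{\widetilde H^{-\sigma}({\rm supp}(v))} \simeq C_{\rm mesh}^{-\sigma L(v)}\norm{v}{L^2({\rm supp}(v))}$ for $0\le\sigma<1/2+2 = 5/2$, and similarly for $w$. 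Thus the plan is to pick $\nu=\sigma=2$, which is admissible in~\eqref{eq:scalingm12}, and apply Lemma~\ref{lem:intop1} to obtain the factor $C_{\rm mesh}^{-2L(v)}C_{\rm mesh}^{-2L(w)}$ after combining with the $L^2$-norms and the $|{\rm supp}(\cdot)|^{1/2}\simeq C_{\rm mesh}^{-L(\cdot)}$ factors appearing in the denominator of that lemma.

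First I would verify the hypotheses of Lemma~\ref{lem:intop1}. The supports of $v$ and $w$ are connected (property of the basis functions) and $\dist(v,w)>0$ by assumption. The key analytic input is the bound $\norm{\partial_x^\alpha\partial_y^\beta G}{L^\infty({\rm supp}(v)\times{\rm supp}(w))} \lesssim \dist(v,w)^{-(\alpha+\beta)}$ for $\alpha+\beta\ge 1$, and $\norm{G}{L^\infty} \lesssim 1 + |\log\dist(v,w)|$ for $\alpha=\beta=0$; here one uses that the arc-length derivatives of $\log|x-y|$ along $\Gamma$ behave like $|x-y|^{-(\alpha+\beta)}$ up to curvature terms that are harmless since $\Gamma$ is polygonal and $v,w$ have small connected supports (at the relevant scale the curve is locally a straight segment, or the supports avoid corners at fine enough levels; at coarse levels the number of functions is bounded in terms of $\TT_0$). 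Since we are taking $\nu=\sigma=2\ge 1$, the dominant term in the supremum over $0\le\alpha\le\nu$, $0\le\beta\le\sigma$ is $\dist(v,w)^{-4}$ (the log term for $\alpha=\beta=0$ is dominated by $\dist(v,w)^{-4}$ when $\dist(v,w)$ is bounded, and when $\dist(v,w)$ is large the whole quantity is tiny anyway — one can absorb this into $C_{\rm B}$ using $\diam(\Gamma)<\infty$).

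Then I would assemble the estimate. Lemma~\ref{lem:intop1} yields
\begin{align*}
\Big|\int_\Gamma\int_\Gamma G(x,y)w(x)v(y)\,dx\,dy\Big|
\lesssim \norm{v}{\widetilde H^{-2}({\rm supp}(v))}\norm{w}{\widetilde H^{-2}({\rm supp}(w))}\,\frac{\dist(v,w)^{-4}}{|{\rm supp}(v)|^{-1/2}|{\rm supp}(w)|^{-1/2}}.
\end{align*}
Using~\eqref{eq:scalingm12} with $s=-2$, $\norm{v}{\widetilde H^{-2}({\rm supp}(v))}\simeq C_{\rm mesh}^{-2L(v)}\norm{v}{L^2({\rm supp}(v))}$, and similarly for $w$; moreover $|{\rm supp}(v)|^{1/2}\simeq C_{\rm mesh}^{-L(v)}$ and $\norm{v}{L^2({\rm supp}(v))}\simeq 1$ (the basis functions are normalized up to uniform constants, and $\diam({\rm supp}(v))\simeq C_{\rm mesh}^{-L(v)}$ from Theorem~\ref{thm:stableBbase}). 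Multiplying everything out produces $C_{\rm mesh}^{-2L(v)}\cdot C_{\rm mesh}^{-2L(w)}\cdot C_{\rm mesh}^{-L(v)}\cdot C_{\rm mesh}^{-L(w)} = C_{\rm mesh}^{-3(L(v)+L(w))}$, which is even better than the claimed $C_{\rm mesh}^{-2(L(v)+L(w))}$ — but since $C_{\rm mesh}\ge 1$, the weaker exponent $2$ follows a fortiori, so I would just state the claimed bound with $C_{\rm B}$ depending only on $\TT_0$ (via shape regularity, $C_{\rm base}$, $C_{\rm mesh}$ which are all determined by $\TT_0$).

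The main obstacle I anticipate is the careful bookkeeping of the kernel derivative bounds near corners of $\Gamma$ and the verification that the Sobolev scaling~\eqref{eq:scalingm12} is being applied on the exact supports of $v$ and $w$ rather than on reference patches — one must make sure the norm equivalence constants there are uniform in $\ell$, which is exactly what Theorem~\ref{thm:stableBbase} provides. A minor subtlety: $\widetilde H^{-2}$-norms require the functions to have enough vanishing moments for the Poincaré-type scaling to hold, which is guaranteed by the zero integral mean property recorded in Lemma~\ref{lem:gensz} and used in the proof of~\eqref{eq:scalingm12}. Everything else is routine scaling.
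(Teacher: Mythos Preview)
Your approach is exactly the paper's: apply Lemma~\ref{lem:intop1} with $\nu=\sigma=2$, bound the kernel derivatives by ${\rm dist}(v,w)^{-4}$, and finish with the scaling~\eqref{eq:scalingm12}. However, your final assembly contains two bookkeeping slips that produce the spurious exponent $-3$.

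First, the supports of $v,w\in B^{-1/2}$ lie on the one-dimensional curve $\Gamma$, so $|{\rm supp}(v)|\simeq{\rm diam}({\rm supp}(v))\simeq C_{\rm mesh}^{-L(v)}$ and hence $|{\rm supp}(v)|^{1/2}\simeq C_{\rm mesh}^{-L(v)/2}$, not $C_{\rm mesh}^{-L(v)}$. Second, the functions in $B^{-1/2}$ are normalized in $H^{-1/2}(\Gamma)$ (this is what~\eqref{eq:rieszB} says), not in $L^2$; the scaling~\eqref{eq:scalingm12} with $s=-1/2$ gives $\norm{v}{L^2({\rm supp}(v))}\simeq C_{\rm mesh}^{L(v)/2}$, so that in fact $\norm{v}{\widetilde H^{-2}({\rm supp}(v))}\simeq C_{\rm mesh}^{-2L(v)}\cdot C_{\rm mesh}^{L(v)/2}=C_{\rm mesh}^{-3L(v)/2}$. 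Multiplying the four factors correctly,
\[
C_{\rm mesh}^{-3L(v)/2}\cdot C_{\rm mesh}^{-3L(w)/2}\cdot C_{\rm mesh}^{-L(v)/2}\cdot C_{\rm mesh}^{-L(w)/2}=C_{\rm mesh}^{-2(L(v)+L(w))},
\]
which is precisely the claimed exponent. Your ``a fortiori'' step is therefore unnecessary, and the stronger bound you believe you have obtained is an artifact of these two errors rather than an actual improvement.
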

\begin{proof}
Since, for $\sigma=\nu=2$, there holds
\begin{align*}
 \sup_{0\leq \alpha\leq \nu\atop 0\leq \beta\leq \sigma}\norm{\partial_x^\alpha \partial_y^\beta G}{L^\infty({\rm supp}(v)\times {\rm supp}(w))}
 \lesssim
 \sup_{x\in {\rm supp}(v)\atop y\in {\rm supp}(w)} |x-y|^{-4} = {\rm dist}(v,w)^4,
\end{align*}
Lemma~\ref{lem:intop1} shows 
\begin{align*}
\Big|\int_\Gamma\int_\Gamma G(x,y)w(x)v(y)\,dx\,dy\Big|\lesssim\norm{v}{\widetilde H^{-2}({\rm supp}(v))}\norm{w}{\widetilde H^{-2}({\rm supp}(w))}\frac{C_{\rm mesh}^{(-L(w)-L(v))/2}}{\dist(v,w)^{4}}.
\end{align*}
The scaling estimates~\eqref{eq:scalingm12} show $\norm{v}{\widetilde H^{-2}({\rm supp}(v))}\simeq C_{\rm mesh}^{-3L(v)/2}$ for all $v\in B^{-1/2}$ and thus conclude the proof.
\end{proof}

\begin{lemma}\label{lem:Kbase}
Let $G(x,y):=-1/(2\pi)(x-y)\cdot n(y)/|x-y|^2$ denote the kernel of the double-layer potential.
Then, there holds for $v\in B^{1/2}$, $w\in B^{-1/2}$ with $\dist(v,w)>0$ that
\begin{align}\label{eq:Kred}
\Big|\int_\Gamma\int_\Gamma G(x,y)w(x)v(y)\,dx\,dy\Big|\leq C_{\rm B} \frac{C_{\rm mesh}^{-2(L(v)+L(w))}}{{\rm dist}(v,w)^{4}}.
\end{align}
The constant $C_{\rm B}>0$ depends only on $\TT_0$.
\end{lemma}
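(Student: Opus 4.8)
The plan is to imitate the proof of Lemma~\ref{lem:Vbase} almost verbatim, the only new ingredient being the size of the arc-length derivatives of the double-layer kernel. First I would record that, since $\Gamma$ is polygonal, the unit normal $n(y)$ is piecewise constant, and on any two (open) edges of $\Gamma$ the function $G(x,y)=-\tfrac{1}{2\pi}(x-y)\cdot n(y)/|x-y|^2$, expressed in the affine arc-length parameters of $x$ and of $y$, is smooth and positively homogeneous of degree $-1$ in $x-y$. Hence $|\partial_x^\alpha\partial_y^\beta G(x,y)|\le C\,|x-y|^{-1-\alpha-\beta}$ for all $\alpha,\beta\in\N_0$ and all $x,y$ in the interiors of edges of $\Gamma$, with $C$ depending only on $\TT_0$ (number of edges and interior angles); the corners form a null set and are irrelevant for the $L^\infty$-bounds below. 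Since ${\rm supp}(v),{\rm supp}(w)\subseteq\Gamma$ have diameter at most $\diam(\Omega)$, this also gives $\sup_{0\le\alpha\le 2,\,0\le\beta\le 1}\|\partial_x^\alpha\partial_y^\beta G\|_{L^\infty({\rm supp}(v)\times{\rm supp}(w))}\lesssim\dist(v,w)^{-4}$, the lower-order derivatives being dominated by the top-order one at the cost of powers of $\diam(\Omega)$.

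Next I would apply Lemma~\ref{lem:intop1} with the kernel $G$, the pairing $w(x)$, $v(y)$, and the choice $\nu=2$, $\sigma=1$; note that $v\in B^{1/2}$ admits the $\widetilde H^{-1}$-scaling~\eqref{eq:scaling12}, that $w\in B^{-1/2}$ admits the $\widetilde H^{-2}$-scaling~\eqref{eq:scalingm12}, and that $1+\nu+\sigma=4$. This yields
\begin{align*}
 \Big|\int_\Gamma\int_\Gamma G(x,y)w(x)v(y)\,dx\,dy\Big|\lesssim \|v\|_{\widetilde H^{-1}({\rm supp}(v))}\,\|w\|_{\widetilde H^{-2}({\rm supp}(w))}\,\dist(v,w)^{-4}\,|{\rm supp}(v)|^{1/2}|{\rm supp}(w)|^{1/2}.
\end{align*}

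Finally I would insert the scaling estimates, exactly as in the proof of Lemma~\ref{lem:Vbase}. One has $|{\rm supp}(v)|^{1/2}\simeq C_{\rm mesh}^{-L(v)/2}$ and $|{\rm supp}(w)|^{1/2}\simeq C_{\rm mesh}^{-L(w)/2}$, while~\eqref{eq:scaling12} gives $\|v\|_{\widetilde H^{-1}({\rm supp}(v))}\simeq C_{\rm mesh}^{-3L(v)/2}$ for $v\in B^{1/2}$ and~\eqref{eq:scalingm12} gives $\|w\|_{\widetilde H^{-2}({\rm supp}(w))}\simeq C_{\rm mesh}^{-3L(w)/2}$ for $w\in B^{-1/2}$. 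Multiplying the four $C_{\rm mesh}$-powers produces exactly $C_{\rm mesh}^{-2(L(v)+L(w))}$, which is the asserted bound. The finitely many level-$0$ functions (in particular the constant $w\equiv1\in B_0^{-1/2}$) are treated separately by the same computation, the norm equivalences being trivial there because $\#B_0^{1/2}$ and $\#B_0^{-1/2}$ are bounded in terms of $\TT_0$.

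The one genuinely new point, and the step I expect to require the most care, is the kernel estimate $|\partial_x^\alpha\partial_y^\beta G|\lesssim|x-y|^{-1-\alpha-\beta}$ with a constant uniform over $\Gamma$: this is where the polygonal structure enters (piecewise constant normal, affine arc-length parametrisations, finitely many edges), and one must be careful to differentiate inside a single edge so that no corner contributions appear. Everything else is a line-by-line repetition of the single-layer argument.
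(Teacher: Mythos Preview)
Your route is genuinely different from the paper's, and the difference is not cosmetic. You apply Lemma~\ref{lem:intop1} directly with $(\nu,\sigma)=(2,1)$, using the $\widetilde H^{-1}$-scaling~\eqref{eq:scaling12} of $v\in B^{1/2}$. The paper instead uses $(\nu,\sigma)=(2,0)$: it subtracts from $G$ the edge-wise mean $G_0(x,\cdot)\in\PP^0(\widehat\TT_\ell|_\Gamma)$, exploits that $v$ has zero integral on \emph{every} edge of $\widehat\TT_\ell|_\Gamma$ (not merely on ${\rm supp}(v)$, cf.~\eqref{eq:szzero}), and recovers the missing factor $C_{\rm mesh}^{-L(v)}$ via an element-wise Poincar\'e inequality on the modified kernel $\widetilde G=G-G_0$.

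The reason the paper takes this detour is precisely the point you flag and then dismiss. When ${\rm supp}(v)$ contains a corner of $\Gamma$, the outward normal $n(y)$ jumps there, so $y\mapsto G(x,y)$ is \emph{discontinuous} and hence $G(x,\cdot)\notin H^1({\rm supp}(v))$. Your claim that ``corners form a null set and are irrelevant for the $L^\infty$-bounds'' is correct for the pointwise derivative, but the proof of Lemma~\ref{lem:intop1} needs $\|\partial_x^\alpha G(x,\cdot)\|_{H^\sigma({\rm supp}(v))}$, and for $\sigma=1$ this involves the weak $y$-derivative, which picks up a Dirac at the corner and is not in $L^2$. So Lemma~\ref{lem:intop1} with $\sigma=1$ is simply not applicable for those $v\in B^{1/2}$ whose support straddles a corner. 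The paper's element-wise construction avoids this because every $E\in\widehat\TT_\ell|_\Gamma$ lies inside a single straight segment of $\Gamma$ (corners are resolved by $\TT_0$), so $y\mapsto G(x,y)$ is smooth on each $E$ and the Poincar\'e step is clean; the paper remarks on this explicitly. Away from corners your argument is fine and indeed shorter; to cover all $v\in B^{1/2}$ you need the paper's edge-wise mechanism or an equivalent localisation.
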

\begin{proof}
First, we assume $v\in B_\ell^{1/2}$ for some $\ell>0$. Lemma~\ref{lem:gensz} ensures that $\int_E v\,dx=0$ for all $E\in\widehat\TT_\ell|_\Gamma$.
For all $x\in\Gamma$, we may define $G_0(x,\cdot)\in\PP^0(\widehat\TT_\ell|_\Gamma)$ by $G_0(x,\cdot)|_E:=|E|^{-1}\int_E G(x,y)\,dy$ for all $E\in\widehat\TT_\ell|_\Gamma$.
We define the new kernel $\widetilde G(x,y):= G(x,y)-G_0(x,y)$ and observe that
\begin{align*}
 \int_\Gamma\int_\Gamma G(x,y)w(x)v(y)\,dx\,dy=\int_\Gamma\int_\Gamma \widetilde G(x,y)w(x)v(y)\,dx\,dy.
\end{align*}
Note that for $\sigma=0$ and $\nu=2$, there holds
\begin{align*}
 \sup_{0\leq \alpha\leq \nu\atop 0\leq \beta\leq \sigma}&\norm{\partial_x^\alpha \partial_y^\beta \widetilde G}{L^\infty({\rm supp}(v)\times {\rm supp}(w))}\\
  &=
 \sup_{0\leq \alpha\leq 2}\sup_{E\in\widehat\TT_\ell|_\Gamma}
 \sup_{x\in {\rm supp}(v)\cap E\atop y\in {\rm supp}(w)} |\partial_x^\alpha G(x,y)-|E|^{-1}\int_E\partial_x^\alpha G(x,y)\,dy|\\
 &\lesssim C_{\rm mesh}^{-L(v)}
 \sup_{0\leq \alpha\leq 2}\sup_{E\in\widehat\TT_\ell|_\Gamma}
 \sup_{x\in {\rm supp}(v)\cap E\atop y\in {\rm supp}(w)} |\partial_y\partial_x^\alpha G(x,y)|,
\end{align*}
where we used element wise continuity of $y\mapsto G(x,y)$ and a Poincar\'e inequality in the last step.
(Note that $\partial_y\partial_x^\alpha G(x,y)$ is well-defined since $n(y)$ jumps only at corners of $\Gamma$, which are resolved by $\widehat\TT_0$.)
Repeating the steps, we also get the same estimate for $v\in B^{1/2}_0$ by choosing $G_0(x)=0$ since $L(v)=0$.

With this, and the fact that for $x,y$ not being a corner of $\Gamma$ there holds
\begin{align*}
 |\partial_y\partial_x^\alpha G(x,y)|\lesssim |x-y|^4,
\end{align*}
Lemma~\ref{lem:intop1} shows  
\begin{align*}
\Big|\int_\Gamma\int_\Gamma G(x,y)w(x)v(y)\,dx\,dy\Big|\lesssim\norm{v}{L^2({\rm supp}(v))}\norm{w}{\widetilde H^{-2}({\rm supp}(w))}\frac{C_{\rm mesh}^{-L(v)}C_{\rm mesh}^{(-L(w)-L(v))/2}}{\dist(v,w)^{4}}.
\end{align*}
The scaling estimates~\eqref{eq:scalingm12} and~\eqref{eq:scaling12} show $\norm{v}{L^2({\rm supp}(v))}\simeq C_{\rm mesh}^{-L(v)/2}$ for all $v\in B^{1/2}$ 
as well as $\norm{w}{\widetilde H^{-2}({\rm supp}(v))}\simeq C_{\rm mesh}^{-3L(w)/2}$ for all $v\in B^{-1/2}$ and thus conclude the proof.
\end{proof}
The following lemma shows that a matrix whith decay properties as above can be approximated by a banded matrix.
\begin{lemma}\label{lem:l2approx}
For $X,Y\in \{B^{-1/2},B^{1/2}\}$, let $M=(M_{vw})_{v\in X, w\in Y}$ denote an infinite matrix with
\begin{align*}
|M_{vw}|\leq  C_{\rm B} \frac{C_{\rm mesh}^{-2(L(v)+L(w))}}{{\rm dist}(v,w)^{4}}
\end{align*}
for all $v\in X$, $w\in Y$ with ${\rm dist}(v,w)>0$. For $c\in\N$ define
\begin{align*}
M^c:=(M^c_{vw})_{v\in X,w\in Y}\quad\text{\rm with}\quad M^c_{vw}:=\begin{cases}
M_{vw}&\text{\rm if }d_1(v,w)\leq c,\\
0&\text{\rm else,}
\end{cases}
\end{align*}
where $d_1(\cdot,\cdot)$ is defined in Section~\ref{section:metric}.
Then, there holds $\lim_{c\to\infty}\norm{M-M^c}{2}=0$.
\end{lemma}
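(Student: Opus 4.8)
The plan is to bound the operator norm $\norm{M-M^c}{2}$ via the Schur test, i.e., by controlling both $\norm{M-M^c}{1}=\sup_{v\in X}\sum_{w\in Y}|(M-M^c)_{vw}|$ and $\norm{M-M^c}{\infty}=\sup_{w\in Y}\sum_{v\in X}|(M-M^c)_{vw}|$, and then using $\norm{\cdot}{2}^2\leq \norm{\cdot}{1}\norm{\cdot}{\infty}$ as in Lemma~\ref{lem:norm}. Since the matrix entry bound is symmetric in $v$ and $w$ (both the kernel decay $\dist(v,w)^{-4}$ and the factor $C_{\rm mesh}^{-2(L(v)+L(w))}$ are symmetric), it suffices to estimate one of these sums; I will do the row sum for fixed $v\in X$.

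First I would fix $v\in X$ with $L(v)=\ell$ and split the sum $\sum_{w\in Y,\,d_1(v,w)>c}|M_{vw}|$ according to the level $m:=L(w)$ of $w$ and, within each level, according to the combinatorial distance $\delta_m(v,w)$ (which is comparable to $\dist(v,w)C_{\rm mesh}^{m}+1$ up to shape regularity of the uniform meshes $\widehat\TT_m$, exactly as used in the proof of Lemma~\ref{lem:annulus}). Using $d_1(v,w)=\delta_{\min\{\ell,m\}}(v,w)$ together with the equivalence~\eqref{eq:dequiv} relating $\delta_k$ on different levels, the constraint $d_1(v,w)>c$ translates into a lower bound on $\delta_m(v,w)$, hence into a lower bound $r\gtrsim c\,C_{\rm mesh}^{-\max\{\ell-m,0\}}$ on the geometric distance-index. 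Then $\dist(v,w)\gtrsim r\,C_{\rm mesh}^{-\max\{\ell,m\}}$ roughly (more precisely one compares to $C_{\rm mesh}^{-m}$ and $C_{\rm mesh}^{-\ell}$), and Lemma~\ref{lem:annulus} bounds the number of $w\in Y_m$ with $\delta_m(v,w)\approx r$ by $\OO(r\,C_{\rm mesh}^{\max\{m-n,0\}\cdot d})$ for the appropriate $n$ and $d\in\{1,2\}$ (here $n=\min\{\ell,m\}$ or $n=m$ depending on how one slices; in all cases the count is polynomial in $r$ times a controlled power of $C_{\rm mesh}$).

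Plugging the entry bound $|M_{vw}|\lesssim C_{\rm mesh}^{-2(\ell+m)}\dist(v,w)^{-4}$ into this and summing over $r$ in the dyadic-type annuli, the sum over a fixed level $m$ becomes a convergent series $\sum_{r\gtrsim r_0(c,\ell,m)} r^{d}\cdot r^{-4}\cdot(\text{powers of }C_{\rm mesh})$, which is summable since $d\le 2<3$, and produces a bound of the form $C\,c^{-1}\,C_{\rm mesh}^{-\kappa|\ell-m|}$ for some $\kappa>0$ — the key point being that the positive powers of $C_{\rm mesh}$ coming from the annulus cardinality are dominated by the negative powers $C_{\rm mesh}^{-2(\ell+m)}$ from the entry decay, leaving exponential decay in $|\ell-m|$ (and, crucially for the $\ell$-independence needed for the $\norm{\cdot}{1}$ bound, also in $\max\{\ell,m\}$ or at least a uniform-in-$\ell$ geometric series). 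Summing the geometric series over $m\in\N_0$ then gives $\norm{M-M^c}{1}\lesssim c^{-1}\to 0$, and by symmetry the same for $\norm{M-M^c}{\infty}$, whence $\norm{M-M^c}{2}\lesssim c^{-1}\to 0$.

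The main obstacle I anticipate is the careful bookkeeping in the second paragraph: translating the cutoff $d_1(v,w)>c$ into a usable lower bound on $\dist(v,w)$ when $\ell$ and $m$ differ a lot requires invoking~\eqref{eq:dequiv} in the right direction, and one must make sure the resulting estimate is uniform in $v$ (equivalently in $\ell$) so that the supremum over rows is finite and tends to zero — i.e., the decay in $|\ell-m|$ must come with no leftover growth in $\ell$. A clean way to organize this is to treat the three regimes $m<\ell$, $m=\ell$, $m>\ell$ separately, using $\dist(v,w)\gtrsim C_{\rm mesh}^{-\ell}$ as a crude floor in the first two and Lemma~\ref{lem:annulus} with $n=m$ in the last, and in each regime to check that the power of $C_{\rm mesh}$ left after combining cardinality and entry bounds is a negative power of $C_{\rm mesh}^{|\ell-m|}$ (the gap $2(\ell+m)$ versus at most $2\max\{\ell,m\}$ from the $r$-summation always wins). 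Everything else — the Schur test, the geometric summation, the $r^{-4}$ vs.\ $r^{2}$ convergence — is routine.
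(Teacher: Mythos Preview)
Your approach is essentially the paper's: Schur test via $\norm{\cdot}{2}^2\leq\norm{\cdot}{1}\norm{\cdot}{\infty}$, partitioning the row sum by level $m=L(w)$ and by annuli in $d_1$, invoking Lemma~\ref{lem:annulus} for the cardinality, and summing a geometric series in $|\ell-m|$. The paper organizes the annuli directly as $P_\ell^n(v)=\{w\in Y_\ell: c+n-1\le d_1(v,w)\le c+n\}$ and obtains $\sum_{n\ge 0}(c+n-\gamma)^{-4}\to 0$.

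There is, however, one point on which you hedge and which is \emph{not} optional. You write ``$d\in\{1,2\}$'' and later assert that ``the gap $2(\ell+m)$ versus at most $2\max\{\ell,m\}$ \ldots\ always wins''. This is false for $d=2$: with the volume count $\#\{w\}\lesssim r\,C_{\rm mesh}^{2(m-\min\{\ell,m\})}$, the $C_{\rm mesh}$-exponent after combining with $C_{\rm mesh}^{-2(\ell+m)+4\min\{\ell,m\}}$ is $2\min\{\ell,m\}-2\ell$, which for $m\ge \ell$ equals $0$, so there is no decay in $m$ and the level sum diverges. The argument works precisely because $X,Y\in\{B^{1/2},B^{-1/2}\}$ are \emph{boundary} bases, so the $R_1$ case of Lemma~\ref{lem:annulus} applies: the annulus cardinality is $\lesssim C_{\rm mesh}^{m-\min\{\ell,m\}}$ with \emph{no} polynomial growth in $r$, and the resulting $C_{\rm mesh}$-exponent is $3\min\{\ell,m\}-2\ell-m\le -|\ell-m|$, which is what yields the geometric series. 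The paper flags this explicitly. Once you commit to $d=1$, your sketch goes through (and in fact gives $\norm{M-M^c}{2}\lesssim c^{-3}$ rather than $c^{-1}$).
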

\begin{proof}
Note that $\norm{\cdot}{2}$ is permutation invariant, so that the ordering of the indices in $X$ and $Y$ does not matter. Let $X_\ell$, $Y_\ell$ denote the corresponding subspaces $B_\ell^{-1/2}$ and $B_\ell^{1/2}$.
Define for $\ell,n\in\N$ and $v\in X$
\begin{align*}
P_\ell^n(v):=\set{w\in Y_\ell}{c+n-1\leq d_1(v,w)\leq c+n}.
\end{align*}
Lemma~\ref{lem:annulus} shows (note that we use the $R_1$ case of Lemma~\ref{lem:annulus})
\begin{align}\label{eq:pnum}
\#P_\ell^n(v)\lesssim C_{\rm mesh}^{\ell-\min\{L(v),\ell\}}.
\end{align}
Note that $w\in P_\ell^n(v)$ implies that at least $c+n-1$ elements of $\widehat\TT_{\min\{L(v),\ell\}}$ are needed to connect ${\rm mid}(v)$ and ${\rm mid}(w)$. 
Since both $w$ and $v$ have local support on $\widehat\TT_{\min\{L(v),\ell\}}$ (depending on $C_{\rm mesh}$), we obtain that at least $c+n-\gamma$ elements of
$\widehat\TT_{\min\{L(v),\ell\}}$ are needed to connect ${\rm supp}(v)$ and ${\rm supp}(w)$, where $\gamma\in\N$ depends only on $C_{\rm mesh}$ and $\TT_0$.
Let $S$ denote the shortest line connecting ${\rm supp}(v)$ and ${\rm supp}(w)$. Shape regularity implies that $S$ can be covered with $\mathcal{O}({\rm dist}(v,w)C_{\rm mesh}^{\min\{L(v),\ell\}})$
elements of $\widehat\TT_{\min\{L(v),\ell\}}$. This shows that ${\rm dist}(v,w)\gtrsim C_{\rm mesh}^{-\min\{L(v),\ell\}}(c+n-\gamma)$, where the hidden constant depends only on $\Omega$ and $\TT_0$.
For $c\geq \gamma$, this allows to bound the row sum of $(M-M^c)$ by
\begin{align*}
\sum_{w\in Y}|M-M^c|_{vw}&=\sum_{n=0}^\infty\sum_{\ell=0}^\infty\sum_{w\in P_\ell^{n}(v)}|M|_{vw}\\
&\lesssim \sum_{n=0}^\infty\sum_{\ell=0}^\infty\sum_{w\in P_\ell^{n}(v)}\frac{C_{\rm mesh}^{-2(L(v)+\ell)}}{{\rm dist}(v,w)^{4}}\\
&\lesssim \sum_{n=0}^\infty\sum_{\ell=0}^\infty\sum_{w\in P_\ell^n(v)}\frac{C_{\rm mesh}^{-2(L(v)+\ell)}}{(c+n-\gamma)^{4}C_{\rm mesh}^{-4\min\{L(v),\ell\}}}\\
&\lesssim \sum_{n=0}^\infty\sum_{\ell=0}^\infty C_{\rm mesh}^{\ell-\min\{L(v),\ell\}}\frac{C_{\rm mesh}^{-2(L(v)+\ell)}}{(c+n-\gamma)^{4}C_{\rm mesh}^{-4\min\{L(v),\ell\}}}\\
&\lesssim \sum_{n=0}^\infty\sum_{\ell=0}^\infty(c+n-\gamma)^{-{4}}C_{\rm mesh}^{3\min\{L(v),\ell\}}C_{\rm mesh}^{-2L(v)-\ell}\\
&\leq \sum_{n=0}^\infty\sum_{\ell=0}^\infty(c+n-\gamma)^{-{4}}C_{\rm mesh}^{-|L(v)-\ell|}\lesssim 
\sum_{n=0}^\infty(c+n-\gamma)^{-{4}}.
\end{align*}
The analogous result holds for the column sums. Thus, we may estimate
\begin{align*}
\norm{M-M^c}{2}^2\leq \norm{M-M^c}{1}\norm{M-M^c}{\infty}\lesssim \Big(\sum_{n=0}^\infty(c+n-\gamma)^{-{4}}\Big)^2.
\end{align*}
The sum on the right-hand side is finite and tends to zero for $c\to\infty$. This concludes the proof.
\end{proof}

\subsection{Almost bandedness of differential/integral operator matrices}\label{section:banded}

This section uses the established techniques to prove that the FEM/BEM-coupling matrices are close to banded matrices in the sense of Definition~\ref{def:banded}.
To that end, recall $d_1(\cdot,\cdot)$ from Section~\ref{section:metric}.
\begin{lemma}\label{lem:Mscaling}
Let $M_{ij}:=\dual{v_i}{w_j}_\Gamma$ for all $i,j\in\N$ with $v_i\in B^{-1/2}$ and $w_j\in B^{1/2}$.
Given $\eps>0$, there exists $M^\eps\in\R^{\N\times \N}$ and a constant $C_M>0$ such that
\begin{align*}
 \norm{M^\eps-M}{2}\leq \eps.
\end{align*}
as well as
\begin{align}\label{eq:Mbanded}
 \Big(|L(v_i)-L(w_j)|> C_M \text{ or } d_1(v_i,w_j)> 1\Big)\quad\implies\quad M^\eps_{ij}=0.
\end{align}
\end{lemma}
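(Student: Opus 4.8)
The matrix $M_{ij}=\dual{v_i}{w_j}_\Gamma$ couples $v_i\in B^{-1/2}$ and $w_j\in B^{1/2}$, and the goal is to show it is, up to an arbitrarily small $\ell_2$-perturbation, banded with respect to $d_2$ (equivalently: supported on pairs with $|L(v_i)-L(w_j)|\le C_M$ and $d_1(v_i,w_j)\le 1$). The plan is to separate the two bandedness requirements and handle them by entirely different mechanisms. First I would note that $\dual{v_i}{w_j}_\Gamma=0$ whenever ${\rm supp}(v_i)\cap{\rm supp}(w_j)$ has empty interior, so only pairs with overlapping supports contribute. For overlapping supports on levels $\ell=L(v_i)$ and $m=L(w_j)$, condition~(iii) on $B$ forces $d_1(v_i,w_j)$ to be bounded in terms of $C_{\rm mesh}$ once $|\ell-m|$ is bounded; conversely overlapping supports cannot be far in $d_1$. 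So the truly essential cut-off is the level difference $|\ell-m|\le C_M$, and the $d_1\le 1$ part is automatic up to adjusting constants (or, if one wants $d_1\le 1$ literally, one replaces $B$-functions by their refinements as in the block structure — but here I would just absorb it into $C_M$ by using $d_2$-bandedness, matching Definition~\ref{def:banded}).

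The heart of the matter is the decay in $|\ell-m|$. Without loss of generality take $\ell\le m$, so $w_j$ lives on a finer mesh. Here I would use the moment conditions built into the Riesz basis: by Lemma~\ref{lem:gensz} every $w_j\in B^{1/2}\setminus B^{1/2}_0$ has vanishing integral mean on its support (indeed element-wise on $\widehat\TT_m|_\Gamma$), and likewise $v_i\in B^{-1/2}$ is a derivative $\partial_\Gamma(\cdot)$ of a zero-mean $H^{1/2}$-function, hence has zero integral mean. Pairing $v_i$ against $w_j$, one can subtract from $v_i$ (or from a primitive) a piecewise constant on $\widehat\TT_\ell|_\Gamma$ without changing the inner product, and then estimate $|\dual{v_i}{w_j}_\Gamma|$ by a product of negative/positive Sobolev norms on the overlap region together with an approximation estimate. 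Using the scaling estimates~\eqref{eq:scaling12}, \eqref{eq:scalingm12}, $\norm{v_i}{\widetilde H^{-s}}\simeq C_{\rm mesh}^{-s\ell}\norm{v_i}{L^2}$ etc., this yields a bound of the form $|M_{ij}|\lesssim C_{\rm mesh}^{-\alpha|\ell-m|}$ (for some fixed $\alpha>0$, uniformly over overlapping pairs), where the gain comes from the mismatch of scaling exponents once a moment is annihilated. This is precisely the same mechanism as in Lemmas~\ref{lem:Vbase}--\ref{lem:Kbase}, only simpler because $G$ is now the identity kernel: I would essentially reuse that computation with $\dist(v,w)$ replaced by the trivial lower bound for overlapping supports and the extra decay extracted purely from the level difference.

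Given the exponential-in-$|\ell-m|$ bound on $|M_{ij}|$ together with the finite-overlap property (from conditions~(ii)--(iv) on $B$, each $w_j$ on level $m$ overlaps only $\mathcal{O}(1)$ functions $v_i$ on each fixed level $\ell$, with cardinality controlled via Lemma~\ref{lem:annulus}), I would define $M^\eps$ by truncating to $|L(v_i)-L(w_j)|\le C_M$ and estimate $\norm{M-M^\eps}{1}$ and $\norm{M-M^\eps}{\infty}$ by summing the geometric series $\sum_{|\ell-m|>C_M} C_{\rm mesh}^{-\alpha|\ell-m|}$, which is $\lesssim C_{\rm mesh}^{-\alpha C_M}\to 0$; then $\norm{M-M^\eps}{2}^2\le\norm{M-M^\eps}{1}\norm{M-M^\eps}{\infty}$ gives the claim for $C_M=C_M(\eps)$ large enough. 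This is the same pattern as the proof of Lemma~\ref{lem:l2approx}. The main obstacle is the second paragraph: getting a genuine \emph{geometric} decay factor $C_{\rm mesh}^{-\alpha|\ell-m|}$ out of the moment conditions, rather than just $\mathcal{O}(1)$ — one has to be careful that the zero-mean property of $v_i$ (on level $\ell$) can actually be exploited against the \emph{finer} function $w_j$, which requires subtracting a $\widehat\TT_\ell$-piecewise constant and invoking the approximation/inverse estimates at the coarse scale; handling the single boundary case $v_i\in B^{-1/2}_0$ or $w_j\in B^{1/2}_0$ separately (finitely many functions, bounded in terms of $\TT_0$) is a minor wrinkle.
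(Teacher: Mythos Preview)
Your plan is on the right track but has a genuine gap in the final counting step, and you overcomplicate the decay estimate.

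For the decay, the paper does not re-derive anything from moment conditions; it simply pairs in suitable Sobolev norms and invokes the scaling estimates~\eqref{eq:scaling} (which already encode the moments). Writing $\ell=L(v_i)$, $k=L(w_j)$: for $k\le\ell$ one estimates $|\dual{v_i}{w_j}_\Gamma|\le\norm{v_i}{\widetilde H^{-1}({\rm supp}(v_i))}\norm{w_j}{H^1({\rm supp}(v_i))}$, and for $k>\ell$ one uses $L^2\times L^2$ on ${\rm supp}(w_j)$. In both cases a scaling argument for the restriction to the smaller support plus~\eqref{eq:scaling12}--\eqref{eq:scalingm12} give $|M_{ij}|\lesssim C_{\rm mesh}^{-|k-\ell|}$, i.e.\ $\alpha=1$. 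No subtraction of piecewise constants is needed here.

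The gap is in your $\ell_1/\ell_\infty$ bound. Your overlap claim ``each $w_j$ on level $m$ overlaps only $\mathcal O(1)$ functions $v_i$ on each fixed level $\ell$'' is only true for $\ell\le m$; for $\ell>m$ a coarse $w_j$ overlaps $\sim C_{\rm mesh}^{\ell-m}$ fine functions $v_i$, and symmetrically for the other index. Hence with $\alpha=1$ the tail row sum becomes $\sum_{k>\ell+C_M} C_{\rm mesh}^{k-\ell}\cdot C_{\rm mesh}^{-(k-\ell)}=\infty$, and the same for columns, so $\norm{M-M^\eps}{1}$ and $\norm{M-M^\eps}{\infty}$ are both useless. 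The paper circumvents this by applying Lemma~\ref{lem:geodecay} with $q=C_{\rm mesh}^{-1}$: that lemma works block-wise on $M|_{B_\ell\times B_k}$, balances the asymmetric overlap counts $C_{\rm mesh}^{k-\min\{k,\ell\}}$ and $C_{\rm mesh}^{\ell-\min\{k,\ell\}}$ via the Schur/interpolation estimate $\norm{\cdot}{2}^2\le\norm{\cdot}{1}\norm{\cdot}{\infty}$ \emph{at the block level}, and delivers $\norm{M-M^\eps}{2}\lesssim C_{\rm mesh}^{-r/2}$. Your argument can be repaired either by invoking Lemma~\ref{lem:geodecay} or by pushing the decay to some $\alpha>1$ (which would need an extra vanishing moment beyond what~\eqref{eq:scaling} immediately gives), but as written it does not close.
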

\begin{proof}
Define $I:=\set{(i,j)\in\N^2}{{\rm supp}(v_i)\cap {\rm supp}(v_i)\neq \emptyset}$.
Let $i,j\in I$ and set $\ell:=L(v_i)$, $k:=L(w_j)$.
First, consider the case $k\leq \ell$. There holds
\begin{align*}
|\dual{v_i}{w_j}_\Gamma|\leq \norm{v_i}{\widetilde H^{-1}({\rm supp}(v_i))}\norm{w_j}{H^1({\rm supp}(v_i))}.
\end{align*}
There holds with an inverse estimate,
scaling estimates, and~\eqref{eq:scaling12}
\begin{align*}
\norm{w_j}{H^1({\rm supp}(v_i))}&\lesssim C_{\rm mesh}^{-|k-\ell|/2}\norm{w_j}{H^1({\rm supp}(w_j))}
\lesssim  C_{\rm mesh}^{k/2-|k-\ell|/2}.
\end{align*}
The estimate~\eqref{eq:scalingm12} shows
\begin{align*}
\norm{v_i}{\widetilde H^{-1}({\rm supp}(v))}\simeq C_{\rm mesh}^{-\ell/2}.
\end{align*}
The combination of the above estimates shows $|M_{ij}|\lesssim C_{\rm mesh}^{-|k-\ell|}$.


Second, for $k>\ell$, we have
\begin{align*}
|\dual{v_i}{w_j}_\Gamma|\leq \norm{v_i}{L^2({\rm supp}(w_j))}\norm{w_j}{L^2({\rm supp}(w_j))}.
\end{align*}
As above, a scaling estimate and~\eqref{eq:scalingm12} shows
\begin{align*}
\norm{v_i}{L^2({\rm supp}(w_j))}&\lesssim C_{\rm mesh}^{-|k-\ell|/2}\norm{v_i}{L^2({\rm supp}(v_i))}
\lesssim  C_{\rm mesh}^{k/2-|k-\ell|/2}.
\end{align*}
Together with~\eqref{eq:scaling12}, this proves $|M_{ij}|\lesssim C_{\rm mesh}^{-|k-\ell|}$. Moreover, for $(i,j)\notin I$, we have $M_{ij}=0$ by definition.

Define $\widetilde I:=\set{(i,j)\in \N^2}{|L(v_i)-L(w_j)|\leq r}$ for some $r\in\N$.
Note that for $v_i\in B_\ell^{-1/2}$, there holds $\#\set{w_j\in B_k^{1/2}}{M_{ij}\neq 0}\lesssim C_{\rm mesh}^{k-\min\{k,\ell\}}$ and 
$\#\set{v_i\in B_\ell^{-1/2}}{M_{ij}\neq 0}\lesssim C_{\rm mesh}^{\ell-\min\{k,\ell\}}$. Thus, 
Lemma~\ref{lem:geodecay} shows with $q=C_{\rm mesh}^{-1}$ that $\norm{M|_{\N^2\setminus \widetilde I}}{2}\lesssim C_{\rm mesh}^{-r/2}$.

We define  $M^\eps_{ij}=M_{ij}$ for all $(i,j)\in \widetilde I$ and $M^\eps_{ij}=0$ else. The above estimates 
show $\norm{M-M^\eps}{2}\leq \eps$ for sufficiently large $r$. The bandedness~\eqref{eq:Mbanded} follows from the definition of the $\widetilde I$.
 This concludes the proof.
\end{proof}

\begin{lemma}\label{lem:Kscaling}
Let $M_{ij}:=\dual{K w_i}{v_j}_\Gamma$ for all $i,j\in\N$ with $w_i\in B^{1/2}$ and $v_j\in B^{-1/2}$.
Given $\eps>0$, there exists $M^\eps\in\R^{\N\times \N}$ and a constant $C_M>0$ such that
\begin{align*}
 \norm{M^\eps-M}{2}\leq \eps.
\end{align*}
as well as
\begin{align}\label{eq:Kbanded}
 \Big(|L(v_i)-L(w_j)|> C_M \text{ or } d_1(v_i,w_j)> C_M\Big)\quad\implies\quad M^\eps_{ij}=0.
\end{align}
\end{lemma}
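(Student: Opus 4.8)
The plan is to follow the proof of Lemma~\ref{lem:Mscaling} closely, but with an extra truncation step. First I would truncate $M$ with respect to the metric $d_1(\cdot,\cdot)$ — this controls the long-range behaviour of the \emph{non-local} operator $K$ and is handled by Lemma~\ref{lem:Kbase} together with Lemma~\ref{lem:l2approx}. Then I would truncate the remaining, $d_1$-localised part with respect to the level difference, which mirrors the scaling argument in Lemma~\ref{lem:Mscaling} but requires one genuinely new ingredient for overlapping supports.

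For the first step: for all $i,j$ with $\dist(v_j,w_i)>0$, Lemma~\ref{lem:Kbase} applied with $v:=w_i\in B^{1/2}$ and $w:=v_j\in B^{-1/2}$ (so that $\int_\Gamma\int_\Gamma G(x,y)w(x)v(y)\,dx\,dy=\dual{Kw_i}{v_j}_\Gamma$ up to a harmless relabelling of the integration variables) gives $|M_{ij}|\le C_{\rm B}\,C_{\rm mesh}^{-2(L(w_i)+L(v_j))}/\dist(v_j,w_i)^4$. Hence $M$ satisfies the hypothesis of Lemma~\ref{lem:l2approx} with $X:=B^{-1/2}$ and $Y:=B^{1/2}$ (the $\ell_2$-operator norm is invariant under transposition, so the ordering of rows and columns is irrelevant), and Lemma~\ref{lem:l2approx} produces $c=c(\eps)\in\N$ with $\norm{M-M^c}{2}\le\eps/2$, where $M^c_{ij}=M_{ij}$ if $d_1(v_j,w_i)\le c$ and $M^c_{ij}=0$ otherwise.

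For the second step I would truncate $M^c$ in the level difference. For the entries of $M^c$ with $\dist(v_j,w_i)>0$, the constraint $d_1(v_j,w_i)\le c$ localises the supports geometrically (exactly as in Lemma~\ref{lem:Mscaling}), so Lemma~\ref{lem:Kbase} together with the scaling estimates \eqref{eq:scaling12}--\eqref{eq:scalingm12} yields $|M_{ij}|\le C\, C_{\rm mesh}^{-\alpha|L(w_i)-L(v_j)|}$ for some fixed $\alpha>0$ and $C=C(c)$. The remaining entries are those with $\dist(v_j,w_i)=0$, i.e.\ overlapping supports; here I would use that the double-layer kernel $\partial_{n(y)}\log|x-y|$ vanishes identically whenever $x$ and $y$ lie on a common straight sub-segment of $\Gamma$. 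Since the corners of $\Gamma$ are resolved by $\widehat\TT_0$, for all but $\mathcal{O}(1)$ overlapping pairs the relevant portion of $\Gamma$ is flat, the kernel there is real-analytic in $x$ with all derivatives bounded solely in terms of $\TT_0$ and $\Gamma$ (the distance to corners and to non-collinear edges being $\gtrsim 1$), and the moment conditions of $w_i,v_j$ from Lemma~\ref{lem:gensz} allow a scaling argument of the type in Lemma~\ref{lem:intop1}/Lemma~\ref{lem:Kbase} to give the same bound $|M_{ij}|\lesssim C_{\rm mesh}^{-\alpha|L(w_i)-L(v_j)|}$; the finitely many corner-touching pairs are bounded using boundedness of $K\colon H^{1/2}(\Gamma)\to H^{1/2}(\Gamma)$ (or the local inverse estimates of~\cite{invest}) together with \eqref{eq:scaling12}--\eqref{eq:scalingm12}, which again give geometric decay in the level difference (and for those $d_1(v_j,w_i)=\mathcal{O}(1)$ anyway). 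Since for each level pair $(k,\ell)$ the number of $w_i\in B^{1/2}_k$ overlapping a fixed $v_j\in B^{-1/2}_\ell$ (and vice versa) is bounded in terms of $\TT_0$ — cf.\ Lemma~\ref{lem:annulus} — Lemma~\ref{lem:geodecay} with $q=C_{\rm mesh}^{-\alpha}$ shows $\norm{M^c|_{\{|L(w_i)-L(v_j)|>r\}}}{2}\lesssim C_{\rm mesh}^{-\alpha r/2}$ (constant depending on $c$). Choosing $r=r(\eps)$ accordingly and setting $M^\eps_{ij}:=M_{ij}$ if $d_1(v_j,w_i)\le c$ and $|L(w_i)-L(v_j)|\le r$, and $M^\eps_{ij}:=0$ otherwise, yields $\norm{M-M^\eps}{2}\le\eps$ and the bandedness statement \eqref{eq:Kbanded} (up to the evident relabelling of indices) with $C_M:=\max\{c,r\}$.

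The main obstacle is the overlapping-support case: unlike the purely local $L^2$-pairing in Lemma~\ref{lem:Mscaling}, $\dual{Kw_i}{v_j}_\Gamma$ for overlapping supports involves the singular double-layer kernel, and one must combine its vanishing on flat pieces of $\Gamma$, the corner-resolving property of $\widehat\TT_0$, the moment conditions of the basis functions, and the mapping properties of $K$ near corners, in order to recover the geometric decay in the level difference that drives the second truncation.
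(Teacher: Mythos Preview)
Your first truncation step (in $d_1$, via Lemma~\ref{lem:Kbase} and Lemma~\ref{lem:l2approx}) is correct and matches the paper exactly. The gap is in the second step, specifically in how you handle overlapping supports.

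Your dichotomy ``both supports on one flat segment $\Rightarrow$ entry is zero'' versus ``corner-touching'' is correct, but the claim that there are only \emph{finitely many} corner-touching pairs is false: for every level $\ell$ there are $\mathcal{O}(1)$ functions of $B^{1/2}_\ell$ and of $B^{-1/2}_\ell$ whose support contains a given corner, so the set of overlapping corner pairs is infinite (there are $\mathcal{O}(1)$ such pairs for \emph{each} level pair $(k,\ell)$). More importantly, the bound you invoke for these pairs, boundedness of $K\colon H^{1/2}(\Gamma)\to H^{1/2}(\Gamma)$, yields only $|\dual{Kw_i}{v_j}_\Gamma|\lesssim \norm{w_i}{H^{1/2}(\Gamma)}\norm{v_j}{H^{-1/2}(\Gamma)}\simeq 1$, i.e.\ \emph{no} decay in the level difference. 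Hence for a fixed corner-touching $w_i$, the row sum over all corner-touching $v_j$ at different levels diverges, and Lemma~\ref{lem:geodecay} is not applicable.

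The paper avoids this problem entirely by a different mechanism: it never distinguishes overlapping from non-overlapping supports for the level-decay estimate. Instead it uses the extra Sobolev regularity available on polygons, namely continuity of $K\colon H^{1+\delta}(\Gamma)\to H^{1+\delta}(\Gamma)$ and $K^\prime\colon H^{\delta}(\Gamma)\to H^{\delta}(\Gamma)$ for some $\delta>0$ (\cite[Theorem~6.34]{steinbach}). Pairing $w_i$ and $v_j$ through these mappings and using the scaling~\eqref{eq:scaling12}--\eqref{eq:scalingm12} plus an inverse estimate gives the uniform bound $|M_{ij}|\lesssim C_{\rm mesh}^{-(1/2+\delta)|L(w_i)-L(v_j)|}$ for \emph{all} $i,j$. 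This is precisely the input Lemma~\ref{lem:geodecay} needs, and it works uniformly across corners without any special treatment. The vanishing of the double-layer kernel on straight segments is not used at all.
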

\begin{proof}
Since $\Gamma$ is polygonal,~\cite[Theorem~6.34 and subsequent remark]{steinbach} ensures that there exists $\delta>0$ (depending only on the smallest interior angle of $\Gamma$) 
such that $K\colon H^{1+\delta}(\Gamma)\to H^{1+\delta}(\Gamma)$ and $K^\prime\colon H^{\delta}(\Gamma)\to H^{\delta}(\Gamma)$ are continuous.
	This shows for $k:=L(v_j)\leq \ell:=L(w_i)$
	\begin{align*}
		|\dual{Kw_i}{v_j}_\Gamma|&\leq \norm{w_i}{\widetilde H^{-\delta}({\rm supp}(w_i))}\norm{K^\prime v_j}{H^{\delta}({\rm supp}(w_i))}\lesssim 
		\norm{w_i}{\widetilde H^{-\delta}({\rm supp}(w_i))}\norm{K^\prime v_j}{H^{\delta}(\Gamma)}\\
		&\lesssim 
		\norm{w_i}{\widetilde H^{-\delta}({\rm supp}(w_i))}\norm{v_j}{H^{\delta}(\Gamma)}.	
	\end{align*}
	An inverse estimate for $v_j$ shows $\norm{v_j}{H^{\delta}(\Gamma)}\lesssim C_{\rm mesh}^{\delta k}\norm{v_j}{L^2(\Gamma)}=
	C_{\rm mesh}^{\delta k}\norm{v_j}{L^2({\rm supp}(v_j))}$ and hence~\eqref{eq:scaling12} together with~\eqref{eq:scalingm12} conclude
	\begin{align}\label{eq:Kscaling}
	 |\dual{Kw_i}{v_j}_\Gamma|\lesssim C_{\rm mesh}^{(1/2+\delta)|L(v_j)-L(w_i)|}.
	\end{align}
	For $k\geq \ell$, we have
	\begin{align*}
		|\dual{Kw_i}{v_j}_\Gamma|&\leq \norm{K w_i}{\widetilde H^{1+\delta}({\rm supp}(v_j))}\norm{v_j}{H^{-1-\delta}({\rm supp}(v_j))}\lesssim 
		\norm{K w_i}{\widetilde H^{1+\delta}(\Gamma)}\norm{v_j}{H^{-1-\delta}({\rm supp}(v_j))}\\
		&\lesssim 
		\norm{ w_i}{\widetilde H^{1+\delta}(\Gamma)}\norm{v_j}{H^{-1-\delta}({\rm supp}(v_j))}.	
	\end{align*}
	An inverse estimate for $w_i$ shows $\norm{w_i}{H^{1+\delta}(\Gamma)}\lesssim C_{\rm mesh}^{(1+\delta) \ell}\norm{w_i}{L^2(\Gamma)}=
	C_{\rm mesh}^{(1+\delta) \ell}\norm{w_i}{L^2({\rm supp}(w_i))}$ and hence~\eqref{eq:scaling12} together with~\eqref{eq:scalingm12} conclude~\eqref{eq:Kscaling} for all $i,j\in\N$.
	
	We first restrict the index set $\N\times\N$ by
 \begin{align*}
  \widetilde I:=\set{(i,j)\in\N^2}{d_1(w_i,v_j)\leq c },
 \end{align*}
 for some constant $c\geq 1$.
 Furthermore, define for some $r\in\N$
 \begin{align*}
  I:=\set{(i,j)\in \widetilde I}{|L(w_i)-L(v_j)|\leq r}.
 \end{align*}
	Define $\widetilde{M}^\eps_{ij}:=M_{ij}$ for all $(i,j)\in \widetilde I$ and zero elsewhere, and define $M^\eps_{ij}:=M_{ij}$ for all $(i,j)\in I$ and zero elsewhere.
	Then, Lemma~\ref{lem:Kbase} together with Lemma~\ref{lem:l2approx} show $\norm{M-\widetilde{M}^\eps} {2}\leq \eps/2$ for sufficiently large $c$.
	Moreover, Lemma~\ref{lem:geodecay} shows $\norm{\widetilde{M}^\eps-M^\eps}{2}\lesssim C_{\rm mesh}^{-\delta r}$ and hence $\norm{M-M^\eps}{2}\leq \eps$ for sufficiently large $r$.
The definition of $\widetilde I$ and $I$ implies that $M^\eps$ satisfies~\eqref{eq:Kbanded}.
\end{proof}

\begin{lemma}\label{lem:Vscaling}
 Let $M_{ij}:=\dual{V v_i}{w_j}_\Gamma$ for all $i,j\in\N$ with $v_i,w_j\in B^{-1/2}$.
Given $\eps>0$, there exists $M^\eps\in\R^{\N\times \N}$ and a constant $C_M>0$ such that
\begin{align*}
 \norm{M^\eps-M}{2}\leq \eps.
\end{align*}
as well as
\begin{align}\label{eq:Vbanded}
 \Big(|L(v_i)-L(w_j)|> C_M \text{ or } d_1(v_i,w_j)> C_M\Big)\quad\implies\quad M^\eps_{ij}=0.
\end{align}
\end{lemma}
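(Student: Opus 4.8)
The plan is to follow the proof of Lemma~\ref{lem:Kscaling} almost verbatim, splitting $M$ into a far-field part (disjoint supports) and a near-field part (small level difference), and exploiting that $V$ is self-adjoint with respect to $\dual{\cdot}{\cdot}_\Gamma$, so that $\dual{V v_i}{w_j}_\Gamma=\dual{V w_j}{v_i}_\Gamma$. First I would treat the far-field. The kernel $G(x,y):=-\frac{1}{2\pi}\log|x-y|$ of $V$ satisfies the hypotheses of Lemma~\ref{lem:Vbase}, hence
\begin{align*}
 |M_{ij}|\le C_{\rm B}\,\frac{C_{\rm mesh}^{-2(L(v_i)+L(w_j))}}{\dist(v_i,w_j)^{4}}\qquad\text{whenever }\dist(v_i,w_j)>0.
\end{align*}
Since $v_i,w_j\in B^{-1/2}$, Lemma~\ref{lem:l2approx} applies with $X=Y=B^{-1/2}$ and produces, for the prescribed $\eps>0$, a constant $c\in\N$ and the truncation $\widetilde M^\eps:=M^c$ with $\widetilde M^\eps_{ij}=0$ for $d_1(v_i,w_j)>c$ and $\norm{M-\widetilde M^\eps}{2}\le\eps/2$.

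Next I would establish a geometric decay of all entries in the level difference. The functions in $B^{-1/2}$ are (apart from the finitely many level-$0$ functions, which are handled trivially since their number depends only on $\TT_0$) arclength derivatives of $H^1_0$-functions with vanishing integral mean, hence they possess two vanishing moments on their supports. Combining this with the smoothing property $V\colon L^2(\Gamma)\to H^1(\Gamma)$ on the polygon $\Gamma$ (see \cite{steinbach}) and a Poincar\'e inequality applied on the support of the function sitting on the \emph{coarser} mesh --- which forces a case distinction between $L(v_i)\le L(w_j)$ and $L(v_i)>L(w_j)$, using $\dual{Vv_i}{w_j}_\Gamma=\dual{Vw_j}{v_i}_\Gamma$ --- together with inverse estimates and the scaling estimates~\eqref{eq:scalingm12}, one arrives at $|M_{ij}|\lesssim C_{\rm mesh}^{-c_0|L(v_i)-L(w_j)|}$ for some $c_0>0$ depending only on $\TT_0$.

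Finally I would combine the two steps as in the proof of Lemma~\ref{lem:Kscaling}. Put $M^\eps_{ij}:=M_{ij}$ if $d_1(v_i,w_j)\le c$ and $|L(v_i)-L(w_j)|\le r$, and $M^\eps_{ij}:=0$ otherwise. For $v_i\in B^{-1/2}_\ell$, the number of $w_j\in B^{-1/2}_k$ with $d_1(v_i,w_j)\le c$ is $\mathcal{O}(C_{\rm mesh}^{\max\{k-\ell,0\}})$ by Lemma~\ref{lem:annulus}, and symmetrically for the columns; hence Lemma~\ref{lem:geodecay}, applied with the entrywise decay $C_{\rm mesh}^{-c_0|L(v_i)-L(w_j)|}$, gives $\norm{\widetilde M^\eps-M^\eps}{2}\lesssim C_{\rm mesh}^{-c_1 r}$ for some $c_1>0$, which is $\le\eps/2$ for $r$ large. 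Together with the first step this proves $\norm{M-M^\eps}{2}\le\eps$, and by construction $M^\eps$ satisfies~\eqref{eq:Vbanded} with $C_M:=\max\{c,r\}$. The only genuinely new point --- hence the main obstacle --- is the near-field estimate of the second step: one has to invest the smoothing of $V$ and the vanishing moments of the Riesz-basis functions into whichever of $v_i,w_j$ lies on the finer mesh, and to keep track of inverse versus approximation estimates carefully enough that the resulting rate $c_0$ suffices for the application of Lemma~\ref{lem:geodecay}; the far-field step and the assembly are identical to the single- and double-layer cases already treated.
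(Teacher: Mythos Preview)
Your overall architecture matches the paper's proof: handle the far field via Lemma~\ref{lem:Vbase} and Lemma~\ref{lem:l2approx}, establish an entrywise decay in the level difference for the near field, and then combine via Lemma~\ref{lem:geodecay} exactly as in the proof of Lemma~\ref{lem:Kscaling}. The far-field step and the assembly are correct as you describe them.

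The gap is in the near-field rate. You invoke only $V\colon L^2(\Gamma)\to H^1(\Gamma)$. With that mapping property the sharpest bound one obtains is
\[
|M_{ij}|\lesssim C_{\rm mesh}^{-\tfrac12\,|L(v_i)-L(w_j)|},
\]
e.g.\ for $k:=L(v_i)\le\ell:=L(w_j)$ one has $|\dual{Vv_i}{w_j}_\Gamma|\le\norm{w_j}{\widetilde H^{-1}({\rm supp}(w_j))}\norm{Vv_i}{H^1(\Gamma)}\lesssim\norm{w_j}{\widetilde H^{-1}}\norm{v_i}{L^2}\simeq C_{\rm mesh}^{-\ell/2}C_{\rm mesh}^{k/2}$, and neither the extra vanishing moments nor a Poincar\'e inequality on the coarse support improves this exponent (the missing ingredient would be $H^2$-regularity of $Vv_i$, which $V\colon L^2\to H^1$ does not provide). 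But Lemma~\ref{lem:geodecay} requires the exponent to be strictly larger than $1/2$: with $q=C_{\rm mesh}^{-1}$ it asks for $|M_{ij}|\le Cq^{(1/2+\varepsilon)|\ell-k|}$, so $c_0=1/2$ is exactly the borderline and the lemma does not apply.

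The paper closes this gap by using the stronger polygon regularity $V\colon H^\delta(\Gamma)\to H^{1+\delta}(\Gamma)$ for some $\delta>0$; this yields $|M_{ij}|\lesssim C_{\rm mesh}^{-(1/2+\delta)|L(v_i)-L(w_j)|}$ and makes Lemma~\ref{lem:geodecay} applicable with $\varepsilon=\delta$. Once you replace $V\colon L^2\to H^1$ by this sharper mapping property (and put the fine-level function into $\widetilde H^{-1-\delta}$ via~\eqref{eq:scalingm12} while applying an inverse estimate to the coarse one), your argument goes through and coincides with the paper's.
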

\begin{proof}
Assume $k:=L(v_i)\leq \ell:=L(w_j)$. 
Since $\Gamma$ is polygonal, there exists $\delta>0$ such that $V\colon H^{\delta}(\Gamma)\to H^{1+\delta}(\Gamma)$ is continuous.
This shows
\begin{align*}
|\dual{Vv_i}{w_j}_\Gamma|&\leq \norm{v_i}{\widetilde H^{-1-\delta}({\rm supp}(v_i))}\norm{Vw_j}{H^{1+\delta}({\rm supp}(v_i))}\lesssim 
 \norm{v_i}{\widetilde H^{-1-\delta}({\rm supp}(v_i))}\norm{Vw_j}{H^{1+\delta}(\Gamma)}\\
 &\lesssim 
 \norm{v_i}{\widetilde H^{-1-\delta}({\rm supp}(v_i))}\norm{w_j}{H^{\delta}(\Gamma)}.	
\end{align*}
An inverse estimate for $w_j$ shows $\norm{w_j}{H^{\delta}(\Gamma)}\lesssim C_{\rm mesh}^{\delta \ell}\norm{w_j}{L^2(\Gamma)}=
C_{\rm mesh}^{\delta\ell}\norm{w_j}{L^2({\rm supp}(w_j))}$ and hence~\eqref{eq:scalingm12} concludes
\begin{align*}
 |\dual{Vv_i}{w_j}_\Gamma|\lesssim C_{\rm mesh}^{(1/2+\delta)|L(v_j)-L(w_i)|}.
\end{align*}
Symmetry of the problem shows the above result also for $k\geq \ell$ and hence for all $i,j\in\N$. With the index sets $I$ and $\widetilde I$ from the proof of Lemma~\ref{lem:Kscaling}, we conclude the proof
by use of Lemma~\ref{lem:Vbase}, Lemma~\ref{lem:l2approx}, and Lemma~\ref{lem:geodecay} as in the proof of Lemma~\ref{lem:Kscaling}.
\end{proof}

\begin{lemma}\label{lem:Ascaling}
Let $M_{ij}:=\dual{\nabla v_i}{\nabla v_j}_\Omega$ for all $i,j\in\N$ with $v_i,v_j\in B^{1}$.
Given $\eps>0$, there exists $M^\eps\in\R^{\N\times \N}$ and a constant $C_M>0$ such that
\begin{align*}
 \norm{M^\eps-M}{2}\leq \eps.
\end{align*}
as well as
\begin{align}\label{eq:Abanded}
 \Big(|L(v_i)-L(v_j)|> C_M \text{ or } d_1(v_i,v_j)> C_M\Big)\quad\implies\quad M^\eps_{ij}=0.
\end{align}
\end{lemma}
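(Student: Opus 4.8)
The plan is to mirror the two–step scheme of Lemmas~\ref{lem:Kscaling}--\ref{lem:Vscaling}: first establish a pointwise off–diagonal decay bound for the entries $M_{ij}=\dual{\nabla v_i}{\nabla v_j}_\Omega$, and then cut off the far entries and control the remainder in the operator norm with Lemma~\ref{lem:geodecay}. I begin with the cheap structural facts. Since $(u,v)\mapsto\dual{\nabla u}{\nabla v}_\Omega$ is a local pairing, $M_{ij}=0$ unless ${\rm supp}(v_i)\cap{\rm supp}(v_j)$ has positive measure; and whenever the supports do overlap, the properties~(i)--(iv) of $B^1$ together with the shape regularity of $(\widehat\TT_\ell)_{\ell\in\N}$ force $\delta_{\min\{L(v_i),L(v_j)\}}(v_i,v_j)\le C_0$, i.e.\ $d_1(v_i,v_j)\le C_0$, with $C_0$ depending only on $\TT_0$ and $C_{\rm mesh}$. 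Hence every restriction of $M$ --- in particular the matrix $M^\eps$ constructed below --- automatically satisfies the $d_1$--part of~\eqref{eq:Abanded}; only the level--difference part must be arranged.

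The heart of the matter is the claim that, for overlapping supports and $|L(v_i)-L(v_j)|$ exceeding a constant $C_0'=C_0'(\TT_0,C_{\rm mesh})$, one has $|M_{ij}|\lesssim C_{\rm mesh}^{-2|L(v_i)-L(v_j)|}$. To prove this, put $k:=L(v_i)\le L(v_j)=:\ell$. By construction $v_j=(1-J_{\ell-1})v_0/\norm{(1-J_{\ell-1})v_0}{H^1(\Omega)}$ for some $v_0\in\SS^{2+}(\widehat\TT_\ell)$, so by~\eqref{eq:szzero} (Lemma~\ref{lem:gensz}) $v_j$ has vanishing integral mean over every element of $\widehat\TT_{\ell-1}$ and over every edge of $\EE(\widehat\TT_{\ell-1})$, while $\norm{v_j}{L^\infty(\Omega)}\lesssim1$. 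Let $\widehat\omega_j$ be the union of those $\widehat\TT_{\ell-1}$--elements meeting ${\rm supp}(v_j)$; it has diameter $\simeq C_{\rm mesh}^{-\ell}$, meets only $\OO(1)$ elements of $\widehat\TT_k$, on each of which $v_i$ is a polynomial of degree $\le3$, and one has $\norm{v_j}{L^2(\widehat\omega_j)}\lesssim C_{\rm mesh}^{-\ell}$ and $\norm{v_j}{L^2(E)}\lesssim C_{\rm mesh}^{-\ell/2}$ for $E\in\EE(\widehat\TT_{\ell-1})$. Green's identity, applied on each $\widehat\TT_{\ell-1}$--element of $\widehat\omega_j$ (the interior contributions across edges where $\nabla v_i$ is continuous cancelling), yields
\begin{align*}
 M_{ij}=-\dual{\Delta v_i}{v_j}_{\widehat\omega_j}+\sum_{E}\int_E \big(\nabla v_i\cdot n\ \text{resp.\ its jump}\big)\,v_j\,ds,
\end{align*}
the sum running over the $\OO(1)$ edges $E\in\EE(\widehat\TT_{\ell-1})$ lying in $\partial\widehat\omega_j$ or on a $\widehat\TT_k$--interface interior to $\widehat\omega_j$. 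In the volume term one subtracts the (piecewise-on-$\widehat\TT_k$) mean of $\Delta v_i$ --- legitimate since $\int v_j=0$ over each $\widehat\TT_k$--piece of $\widehat\omega_j$ --- and invokes the inverse estimate $\norm{\nabla^3 v_i}{L^\infty}\lesssim C_{\rm mesh}^{3k}$ together with ${\rm diam}(\widehat\omega_j)\simeq C_{\rm mesh}^{-\ell}$, bounding it by $\lesssim C_{\rm mesh}^{-3(\ell-k)}$; in each edge term one subtracts the mean of $\nabla v_i\cdot n$ (or of its jump) over $E$ --- legitimate since $\int_E v_j=0$ --- and uses $\norm{\nabla^2 v_i}{L^\infty}\lesssim C_{\rm mesh}^{2k}$ and $|E|\simeq C_{\rm mesh}^{-\ell}$, bounding it by $\lesssim C_{\rm mesh}^{-2(\ell-k)}$. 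Adding the two proves the claim.

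To conclude, set $C_M:=\max\{C_0,C_0',r\}$ and define $M^\eps_{ij}:=M_{ij}$ if $|L(v_i)-L(v_j)|\le C_M$, and $M^\eps_{ij}:=0$ otherwise; then $M^\eps$ satisfies~\eqref{eq:Abanded}. For the remainder, Lemma~\ref{lem:annulus} shows that a fixed $v_i$ of level $k$ overlaps $\lesssim C_{\rm mesh}^{2s}$ functions of level $k+s$, whereas a fixed $v_j$ of level $\ell$ overlaps only $\OO(1)$ functions of level $\ell-s$; combined with the decay $|M_{ij}|\lesssim C_{\rm mesh}^{-2s}$ at level difference $s=|L(v_i)-L(v_j)|>C_M$, Lemma~\ref{lem:geodecay} --- applied with $q:=C_{\rm mesh}^{-2}$, separately to the coarse--row/fine--column and fine--row/coarse--column parts, so that the growing count $C_{\rm mesh}^{2s}$ always meets the faster decay $C_{\rm mesh}^{-2s}$ in the $1$--norm and the $\OO(1)$ count the full decay in the $\infty$--norm --- gives $\norm{M-M^\eps}{2}\lesssim C_{\rm mesh}^{-C_M}$, which is $\le\eps$ once $C_M=C_M(\eps)$ is large enough.

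The main obstacle is precisely the pointwise decay estimate with an exponent strictly larger than $1$. The crude Cauchy--Schwarz bound gives only $|M_{ij}|\lesssim C_{\rm mesh}^{-|L(v_i)-L(v_j)|}$, which is not summable against the two--dimensional overlap count $C_{\rm mesh}^{2s}$; squeezing out the extra power forces one to exploit the element-- and edge--wise moment conditions of $B^1$ produced by the modified Scott--Zhang projection of Section~\ref{section:szb}, which is also the reason the basis was engineered with these moments.
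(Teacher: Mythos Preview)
Your proof is correct and follows the same overall scheme as the paper: (i) derive a pointwise decay estimate $|M_{ij}|\lesssim C_{\rm mesh}^{-\alpha|L(v_i)-L(v_j)|}$ with $\alpha>1$ via integration by parts against the coarse function, (ii) truncate in level difference, (iii) bound the tail with Lemma~\ref{lem:geodecay} at $q=C_{\rm mesh}^{-2}$. The genuine difference is in step~(i). The paper integrates by parts over the coarse cells $\omega_T={\rm supp}(v_j)\cap T$, $T\in\widehat\TT_k$, and estimates the volume and boundary contributions by $\norm{v_i}{H^2(\omega_T)}\norm{v_j}{L^2(\omega_T)}$ and $\norm{v_i}{H^{3/2}(\omega_T)}\norm{v_j}{H^{1/2+\delta}(\omega_T)}$, respectively, invoking the scaling relations~\eqref{eq:scaling1} (which already absorb the moment information) to reach the exponent $\alpha=3/2-\delta$. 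You instead integrate by parts on the fine mesh $\widehat\TT_{\ell-1}$ and make the element- and edge-wise moment conditions~\eqref{eq:szzero} explicit: subtracting the local means of $\Delta v_i$ and of $\partial_n v_i$ (or its jump) before applying $L^\infty$ inverse estimates yields the sharper exponent $\alpha=2$. Your route is more elementary (no fractional Sobolev norms, no trace inequalities on irregular subdomains) and makes the role of the engineered moments visible; the paper's route is more compact because it reuses~\eqref{eq:scaling1} as a black box. One small remark: Lemma~\ref{lem:geodecay} is designed precisely for the asymmetric overlap counts you describe, so there is no need to split into ``coarse--row/fine--column'' and its transpose---a single application to $M-M^\eps$ with $q=C_{\rm mesh}^{-2}$ and $\eps=1/2$ gives $\norm{M-M^\eps}{2}\lesssim C_{\rm mesh}^{-r}$ directly.
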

\begin{proof}
Assume $k:=L(v_i)\leq \ell:=L(v_j)$. Define $\omega_T:={\rm supp}(v_j)\cap T$ and compute for some $\delta>0$
\begin{align*}
|\dual{\nabla v_i}{\nabla v_j}_\Omega| &\leq\sum_{T\in\widehat\TT_k} |\dual{\nabla v_i}{\nabla v_j}_{\omega_T}|\\
&\leq \sum_{T\in\widehat\TT_k} |\dual{\Delta v_i}{v_j}_{\omega_T}|+ |\dual{\partial_n v_i}{v_j}_{\partial\omega_T}|\\
&\lesssim \sum_{T\in\widehat\TT_k}\norm{v_i}{H^2(\omega_T)}\norm{v_j}{L^2(\omega_T)}+\norm{v_i}{H^{3/2}(\omega_T)}\norm{v_j}{H^{1/2+\delta}(\omega_T)}
\end{align*}
From this, a scaling argument together with an inverse estimate concludes for $s\in\{1/2,1\}$
\begin{align*}
  \norm{v_i}{H^{1+s}(\omega_T)}
 &\lesssim C_{\rm mesh}^{-|k-\ell|/2}\norm{v_i}{H^{1+s}(T)}\leq C_{\rm mesh}^{-|k-\ell|/2+s k}\norm{v_i}{H^1({\rm supp}(v_i))}.
\end{align*}
Altogether,~\eqref{eq:scaling1} shows
\begin{align}\label{eq:Ascaling}
 |\dual{\nabla v_i}{\nabla v_j}_\Omega| \lesssim C_{\rm mesh}^{-(3/2-\delta)|L(v_i)-L(v_j)|}.
\end{align}
Symmetry of the problem shows the above also for $\ell\leq k$ and hence for all $i,j\in\N$.
We restrict the index set by
\begin{align*}
  I:=\set{(i,j)\in \N^2}{|L(v_i)-L(v_j)|\leq r}
 \end{align*}
 and define $M^\eps_{ij}:=M_{ij}$ for all $(i,j)\in I$ and zero elsewhere. 
 Note that $\#\set{v_j\in B_k^1}{M_{ij}\neq 0}\lesssim C_{\rm mesh}^{2(k-\min\{\ell,k\})}$ and $\#\set{v_i\in B^1_\ell}{M_{ij}\neq 0}\lesssim C_{\rm mesh}^{2(\ell-\min\{k,\ell\})}$.
 Estimate~\eqref{eq:Ascaling} and Lemma~\ref{lem:geodecay} with $q=C_{\rm mesh}^{-2}$ show $\norm{M-M^\eps}{2}\lesssim C_{\rm mesh}^{-\eps r}$.
 The implication~\eqref{eq:Abanded} follows as in the proof of Lemma~\ref{lem:Mscaling}.
 Thus, we conclude the proof by choosing $r\in\N$ sufficiently large.
\end{proof}

\section{Auxiliary Results}
\begin{lemma}\label{lem:log}
 For $a,b\geq 0$, there holds
 \begin{align}\label{eq:log}
  \log(1+a+b)\leq \log(1+a)+\log(1+b),
 \end{align}
 whereas for $b\geq 1$, there holds
 \begin{align}\label{eq:log2}
  \log(1+ab)\leq \log(1+a) +\log(b).
 \end{align}
 \end{lemma}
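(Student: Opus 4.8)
The plan is to reduce both inequalities to the monotonicity of the logarithm on $[0,\infty)$ together with elementary algebraic comparisons, so no analysis beyond that is needed.

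For~\eqref{eq:log}, I would start from the right-hand side and use the functional equation of the logarithm: $\log(1+a)+\log(1+b)=\log\big((1+a)(1+b)\big)=\log(1+a+b+ab)$. Since $a,b\ge 0$ we have $ab\ge 0$, hence $1+a+b\le 1+a+b+ab$, and monotonicity of $\log$ gives $\log(1+a+b)\le\log(1+a+b+ab)$, which is exactly~\eqref{eq:log}.

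For~\eqref{eq:log2}, I would argue analogously: for $b\ge 1$ the right-hand side equals $\log(1+a)+\log b=\log\big(b(1+a)\big)=\log(b+ab)$. The assumption $b\ge 1$ yields $1+ab\le b+ab$, and again monotonicity of $\log$ gives $\log(1+ab)\le\log(b+ab)$, which is~\eqref{eq:log2}.

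There is no genuine obstacle here; the only point requiring a word of care is that the logarithm is well defined and increasing on the relevant arguments, which holds since all quantities $1+a$, $1+b$, $b$, $1+ab$ are $\ge 1>0$ under the stated hypotheses. One could also phrase the proof by exponentiating both sides, which turns each inequality into the trivial polynomial statement $0\le ab$ respectively $1\le b$; I would mention this as the cleaner route if a one-line argument is preferred.
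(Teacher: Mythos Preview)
Your proof is correct and essentially identical to the paper's: the paper simply writes ``Exponentiation trivializes the inequalities,'' which is exactly the one-line route you mention at the end, and your main argument via $\log(1+a)+\log(1+b)=\log((1+a)(1+b))$ is the same thing in reverse.
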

\begin{proof}
Exponentiation trivializes the inequalities.
\end{proof}
\begin{lemma}\label{lem:derivative}
Given a connected $\omega\subseteq \Gamma$, the arc-length derivative satisfies
\begin{align*}
\norm{\partial_\Gamma v}{\widetilde H^{-1}(\omega)}\simeq\norm{ v}{L^2(\omega)}\quad\text{for all }v\in H^1_0(\omega),
\end{align*}
where the hidden constant depends only on the arc-length of $\Gamma$.
Moreover, the arc-length derivative is an isomorphism $\partial_\Gamma\colon  H^s_\star(\Gamma)\to \widetilde H^{s-1}_\star(\Gamma)$ for all $0\leq s\leq 1$, where
$\YY_\star:= \set{v\in \YY}{\dual{v}{1}_\Gamma =0}$ for $\YY\in\{H^s(\Gamma),\widetilde H^{s-1}(\Gamma)\}$.
\end{lemma}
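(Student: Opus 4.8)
\textbf{Proof plan for Lemma~\ref{lem:derivative}.}
The plan is to split the statement into the local equivalence on a connected subset $\omega\subseteq\Gamma$ and the global isomorphism property, and to treat the latter via real interpolation between the endpoints $s=0$ and $s=1$. For the local statement, first I would recall that for $v\in H^1_0(\omega)$ the arc-length antiderivative $w$ with $\partial_\Gamma w = v$ and $w$ vanishing at (one endpoint of) $\omega$ is well-defined, lies in $H^1(\omega)$, and satisfies $\norm{w}{H^1(\omega)}\simeq\norm{v}{L^2(\omega)}$ by a Poincar\'e/Friedrichs inequality on the interval of length $|\omega|$ (the constant depending only on the arc-length of $\Gamma$). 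Then, by the definition of the $\widetilde H^{-1}$-norm as the dual of $H^1$ via the extended $L^2$-product,
\begin{align*}
 \norm{\partial_\Gamma v}{\widetilde H^{-1}(\omega)} = \sup_{\varphi\in H^1(\omega)}\frac{\dual{\partial_\Gamma v}{\varphi}_\omega}{\norm{\varphi}{H^1(\omega)}} = \sup_{\varphi\in H^1(\omega)}\frac{-\dual{v}{\partial_\Gamma\varphi}_\omega}{\norm{\varphi}{H^1(\omega)}},
\end{align*}
where the integration by parts produces no boundary terms because $v\in H^1_0(\omega)$. The upper bound $\lesssim\norm{v}{L^2(\omega)}$ is then immediate from Cauchy--Schwarz and $\norm{\partial_\Gamma\varphi}{L^2(\omega)}\leq\norm{\varphi}{H^1(\omega)}$; for the lower bound I would test with $\varphi:=w$ the antiderivative above, giving $-\dual{v}{\partial_\Gamma w}_\omega = -\norm{v}{L^2(\omega)}^2$ up to sign conventions, and divide by $\norm{w}{H^1(\omega)}\simeq\norm{v}{L^2(\omega)}$.

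For the global isomorphism $\partial_\Gamma\colon H^s_\star(\Gamma)\to\widetilde H^{s-1}_\star(\Gamma)$, I would first settle the two endpoint cases. At $s=1$: the range condition $\dual{\partial_\Gamma v}{1}_\Gamma=0$ holds automatically since $\Gamma$ is closed, so $\partial_\Gamma$ maps $H^1_\star(\Gamma)$ into $L^2_\star(\Gamma)$; it is injective because a function on the closed curve $\Gamma$ with vanishing arc-length derivative is constant, hence zero under the mean-zero constraint; and it is surjective because given $g\in L^2_\star(\Gamma)$ the arc-length primitive is single-valued precisely thanks to $\dual{g}{1}_\Gamma=0$, and its mean-zero normalization lies in $H^1_\star(\Gamma)$ with $\norm{v}{H^1(\Gamma)}\simeq\norm{g}{L^2(\Gamma)}$ by a global Poincar\'e inequality. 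At $s=0$: boundedness $\partial_\Gamma\colon L^2_\star(\Gamma)\to\widetilde H^{-1}_\star(\Gamma)$ follows by the same duality/integration-by-parts computation as in the local case (now with periodic test functions on $\Gamma$, so again no boundary terms), and the lower bound $\norm{\partial_\Gamma v}{\widetilde H^{-1}(\Gamma)}\gtrsim\norm{v}{L^2(\Gamma)}$ follows by testing against the arc-length primitive of $v$, which is admissible since $v$ is mean-zero. This gives that $\partial_\Gamma$ is an isomorphism at $s=0$ and $s=1$; real interpolation of the spaces $H^s_\star(\Gamma)=[L^2_\star,H^1_\star]_{s}$ and $\widetilde H^{s-1}_\star(\Gamma)=[\widetilde H^{-1}_\star,L^2_\star]_{s}$ then yields the isomorphism for all $0\leq s\leq 1$, the inverse being the (interpolated) arc-length primitive operator restricted to mean-zero functions.

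The main obstacle I expect is the careful bookkeeping of the mean-zero constraints under interpolation, i.e.\ verifying that the closed subspaces $\YY_\star$ interpolate correctly, namely $[L^2_\star(\Gamma),H^1_\star(\Gamma)]_s = H^s_\star(\Gamma)$ and the analogous identity for the dual scale. This is standard (the mean-value functional $v\mapsto\dual{v}{1}_\Gamma$ is bounded on the whole scale, so one may use the retraction/co-retraction given by subtracting the mean, which commutes with interpolation), but it is the step that needs to be spelled out rather than waved through; everything else reduces to the one-dimensional Poincar\'e inequality on an interval and the elementary fact that on a closed curve the arc-length primitive of a mean-zero function is globally single-valued.
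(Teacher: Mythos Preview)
Your proposal is correct and follows essentially the same route as the paper: both prove the local equivalence by testing the dual norm against an antiderivative of $v$ (you normalize it to vanish at an endpoint, the paper normalizes it to have zero mean, but either Friedrichs/Poincar\'e inequality works), and both obtain the global isomorphism by verifying the endpoints $s=0,1$ and interpolating. Your explicit discussion of why the mean-zero subspaces interpolate correctly is a welcome addition that the paper leaves implicit.
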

\begin{proof}
Let $V\in H^1(\omega)$ be the anti-derivative such that $\partial_\Gamma V=v$ in $\omega$ and $\int_\omega V\,dx =0$. Then, there holds
\begin{align*}
\norm{\partial_\Gamma v}{\widetilde H^{-1}(\omega)}= \sup_{w\in H^1(\omega)}\frac{-\dual{v}{\partial_\Gamma w}_\omega}{\norm{w}{H^1(\omega)}}
\geq \frac{-\dual{v}{\partial_\Gamma V}_\omega}{\norm{V}{H^1(\omega)}}
\gtrsim \frac{-\dual{v}{\partial_\Gamma V}_\omega}{\norm{\partial_\Gamma V}{L^2(\omega)}}
=\norm{v}{L^2(\omega)},
\end{align*}
where we used a Poincar\'e inequality in the penultimate step.
On the other hand, we have
\begin{align*}
\norm{\partial_\Gamma v}{\widetilde H^{-1}(\omega)} = \sup_{w\in H^1(\omega)}\frac{-\dual{v}{\partial_\Gamma w}_\omega}{\norm{w}{H^1(\omega)}}\leq \norm{v}{L^2(\omega)}.
\end{align*}
This concludes the proof of the first statement. The second statement follows analogously for $s=0$. The case $s=1$ is obvious. Interpolation between those cases concludes the proof.
\end{proof}

\begin{lemma}\label{lem:geodecay}
	Let $M\in\R^{\N\times\N}$ and let $\N=\bigcup_{\ell\in\N}B_\ell$ such that $|M_{ij}|\leq Cq^{(1/2+\eps)|\ell-k|}$ for all $i\in B_\ell$, $j\in B_k$,
	where for all $i\in B_\ell$ there holds $\#\set{j\in B_k}{M_{ij}\neq 0}\leq C q^{\min\{k,\ell\}-k}$ as well as  for all $j\in B_k$ there holds $\#\set{i\in B_\ell}{M_{ij}\neq 0}\leq C q^{\min\{k,\ell\}-\ell}$
	for some $0<q<1$ and some $C>0$. 
	Let additionally $M_{ij}=0$ if $i\in B_\ell$, $j\in B_k$ and $|\ell-k|> r$ for some $r\in\N$. Then, there holds
	\begin{align*}
	 \norm{M}{2}\leq C_{\rm geo} q^{\eps r},
	\end{align*}
	where $C_{\rm geo}>0$ depends only on $C$ and $q$.
\end{lemma}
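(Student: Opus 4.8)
The plan is to bound $\norm{M}{2}$ by a \emph{weighted} Schur test, the weight being the geometric sequence $p_i:=q^{\ell/2}$ for $i\in B_\ell$. First I would replace $M$ by its entrywise modulus $(|M_{ij}|)$, which does not decrease the spectral norm, so that we may assume $M_{ij}\ge 0$. The tool is the standard weighted Schur estimate
\[
 \norm{M}{2}^2\le\Big(\sup_{i}\frac{1}{p_i}\sum_{j}M_{ij}p_j\Big)\Big(\sup_{j}\frac{1}{p_j}\sum_{i}M_{ij}p_i\Big),
\]
which follows from one Cauchy--Schwarz estimate (namely $\norm{Mx}{\ell_2}^2\le\sum_i(\sum_j M_{ij}p_j)(\sum_j M_{ij}x_j^2/p_j)$, bounding the first factor by $\alpha p_i$) followed by an interchange of the order of summation. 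It therefore suffices to bound both weighted row and column sums by a multiple of $q^{\eps r}$.

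For the weighted row sum at $i\in B_\ell$ I would group the columns $j$ by their block $k$ (so $j\in B_k$); by hypothesis only $k$ with $|\ell-k|\ge r+1$ contribute. For such a $k$, the entry bound $M_{ij}\le Cq^{(1/2+\eps)|\ell-k|}$, the count bound $\#\set{j\in B_k}{M_{ij}\ne 0}\le Cq^{\min\{k,\ell\}-k}$ and $p_j=q^{k/2}$ give $\sum_{j\in B_k}M_{ij}p_j\le C^2 q^{\min\{k,\ell\}-k}\,q^{(1/2+\eps)|\ell-k|}\,q^{k/2}$. The decisive observation is that the exponent collapses: for $k\le\ell$ it equals $\eps(\ell-k)+\tfrac\ell2$, and for $k>\ell$ --- where the multiplicity factor $q^{\ell-k}=q^{-|\ell-k|}$ is \emph{large} --- that factor is cancelled exactly and the exponent equals $\eps(k-\ell)+\tfrac\ell2$; in either case the bound is $C^2 q^{\eps|\ell-k|}p_i$. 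Summing the resulting geometric series over $|\ell-k|\ge r+1$ (at most two values of $k$ per value of $|\ell-k|$) yields $p_i^{-1}\sum_j M_{ij}p_j\le \tfrac{2C^2}{1-q^{\eps}}q^{\eps r}$. The weighted column sum is estimated verbatim, now invoking $\#\set{i\in B_\ell}{M_{ij}\ne 0}\le Cq^{\min\{k,\ell\}-\ell}$ and $p_i=q^{\ell/2}$, and gives the same bound. Plugging both into the displayed inequality gives $\norm{M}{2}\le\tfrac{2C^2}{1-q^{\eps}}q^{\eps r}=:C_{\rm geo}q^{\eps r}$.

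The one genuinely nonobvious point --- the main obstacle --- is realising that the naive unweighted Schur test is useless here: a coarse index $i\in B_\ell$ can have of order $q^{-(k-\ell)}$ nonzero entries against a finer block $B_k$, so the plain row sum $\sum_j |M_{ij}|$ need not even converge (without the cutoff $M$ is merely bounded, not summable in rows). Inserting the weight $q^{\ell/2}$ --- exactly the square root of the multiplicity --- is what turns both the ``fine tested against coarse'' and the ``coarse tested against fine'' contributions into the benign geometric decay $q^{\eps|\ell-k|}$. Everything else (the case split $k\le\ell$ versus $k>\ell$, and the check that the weight $q^{\ell/2}$ is admissible for every $\eps>0$, with decay rate $q^{\eps r}$ independent of $\eps$) is routine arithmetic once the weight has been chosen.
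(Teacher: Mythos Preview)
Your proof is correct and takes a genuinely different route from the paper's. The paper works at the block level: it first bounds each block via the interpolation $\norm{M|_{B_\ell\times B_k}}{2}^2\le\norm{M|_{B_\ell\times B_k}}{1}\norm{M|_{B_\ell\times B_k}}{\infty}$ --- this is exactly where the two asymmetric multiplicity hypotheses combine to give the symmetric estimate $\norm{M|_{B_\ell\times B_k}}{2}\lesssim q^{\eps|\ell-k|}$ --- and then expands $\norm{Mx}{\ell_2}^2$ as a double sum over block pairs, applying the arithmetic--geometric mean inequality to $\norm{x|_{B_k}}{\ell_2}\norm{x|_{B_n}}{\ell_2}$ to collapse everything to a geometric series. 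Your weighted Schur test with $p_i=q^{\ell/2}$ achieves the same exponent collapse in one step at the entry level: the weight is precisely the square root of the multiplicity asymmetry, so that the weighted row and column sums both reduce to the symmetric bound $q^{\eps|\ell-k|}$. Your argument is shorter and makes the cancellation mechanism more transparent; the paper's block decomposition is perhaps more in line with the surrounding Section~\ref{section:lu} material, where block structure is exploited repeatedly. One small remark: your constant $C_{\rm geo}=\tfrac{2C^2}{1-q^{\eps}}$ depends on $\eps$, as does the implicit constant in the paper's own proof, so the statement's claim that $C_{\rm geo}$ depends only on $C$ and $q$ appears to be a minor imprecision in the paper rather than a defect in either argument.
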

\begin{proof}
	Let $k,\ell\in\N$. There holds
	\begin{align*}
	 \norm{M|_{B_\ell\times B_k}}{\infty}=\sup_{i\in B_\ell} \sum_{j\in B_k}|M_{ij}|\lesssim q^{(1/2+\eps)|\ell-k|+\min\{k,\ell\}-k}
	\end{align*}
	as well as
	\begin{align*}
	 \norm{M|_{B_\ell\times B_k}}{1}=\sup_{j\in B_k} \sum_{i\in B_\ell}|M_{ij}|\lesssim q^{(1/2+\eps)|\ell-k|+\min\{k,\ell\}-\ell}.
	\end{align*}
	The standard interpolation estimate shows
	 \begin{align*}
	 \norm{M|_{B_\ell\times B_k}}{2}^2\leq \norm{M|_{B_\ell\times B_k}}{1}\norm{M|_{B_\ell\times B_k}}{\infty}\lesssim q^{(1+2\eps)|\ell-k|+2\min\{k,\ell\}-\ell-k}=q^{2\eps|\ell-k|}.
	 \end{align*}
	 With this, we prove
	 \begin{align*}
	  \norm{Mx}{\ell_2}^2&=\sum_{\ell\in\N}\norm{(Mx)|_{B_\ell}}{\ell_2}^2 = \sum_{\ell\in\N}\norm{\sum_{k\in\N\atop |\ell-k|>r}M|_{B_\ell\times B_k}x|_{B_k}}{\ell_2}^2\\
	  &=\sum_{\ell\in\N}\sum_{k\in\N\atop |\ell-k|>r}\sum_{n\in\N\atop |\ell-n|>r}(M|_{B_\ell\times B_k}x|_{B_k})\cdot (M|_{B_\ell\times B_n}x|_{B_n})\\
	  &\lesssim \sum_{\ell\in\N}\sum_{k\in\N\atop |\ell-k|>r}\sum_{n\in\N\atop |\ell-n|>r}q^{\eps(|\ell-k|+|\ell-n|)} \norm{x|_{B_k}}{\ell_2}\norm{x|_{B_n}}{\ell_2}.
	 \end{align*}
	 With $2\norm{x|_{B_k}}{\ell_2}\norm{x|_{B_n}}{\ell_2}\leq \norm{x|_{B_k}}{\ell_2}^2 +\norm{x|_{B_n}}{\ell_2}^2$, we obtain
	 \begin{align*}
	  \norm{Mx}{\ell_2}^2&\lesssim \sum_{\ell\in\N}\sum_{k\in\N\atop |\ell-k|>r}\sum_{n\in\N\atop |\ell-n|>r}q^{\eps(|\ell-k|+|\ell-n|)}\big(\norm{x|_{B_k}}{\ell_2}^2 +\norm{x|_{B_n}}{\ell_2}^2\big)\\
	  &\leq 2\sum_{\ell\in\N}\sum_{k\in\N\atop |\ell-k|>r}\sum_{n\in\N\atop |\ell-n|>r}q^{\eps(|\ell-k|+|\ell-n|)}\norm{x|_{B_k}}{\ell_2}^2\\
	  &=2\sum_{k\in\N}\norm{x|_{B_k}}{\ell_2}^2\sum_{\ell\in\N\atop |\ell-k|>r}q^{\eps|\ell-k|}\sum_{n\in\N\atop |\ell-n|>r}q^{\eps|\ell-n|}\lesssim q^{\eps r}\norm{x}{\ell_2}^2.
	 \end{align*}
This concludes the proof.
\end{proof}
\begin{lemma}\label{lem:blockstab}
 Let $M\in\R^{\N\times\N}$ and let there exist a block-structure $n_1,n_2,\ldots\in\N$ such that $n_1=1$ and $n_i<n_j$ for all $i<j$.
 Let $M$ be block-banded in the sense that there exists $b\in\N$ such that $M(i,j)=0$ for all $|i-j|>b$. 
 Then, there holds
 \begin{align*}
  \norm{M}{2}\leq (2b+1)^2\sup_{i,j\in\N}\norm{M(i,j)}{2}.
 \end{align*}
\end{lemma}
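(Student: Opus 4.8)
The plan is to estimate the operator norm $\norm{M}{2}$ by splitting $M$ into at most $2b+1$ "diagonal bands'' of blocks, estimating each band, and summing. The key observation is that a block-banded matrix with bandwidth $b$ is a sum $M = \sum_{k=-b}^{b} M_k$, where $M_k$ contains exactly the blocks $M(i,j)$ with $j-i = k$ and is zero elsewhere. Since there are $2b+1$ such terms, the triangle inequality gives $\norm{M}{2}\leq \sum_{k=-b}^b \norm{M_k}{2}\leq (2b+1)\max_{|k|\leq b}\norm{M_k}{2}$, so it suffices to bound the norm of a single "block-diagonal shift'' $M_k$ by $(2b+1)\sup_{i,j}\norm{M(i,j)}{2}$.

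First I would reduce further: after a permutation (which does not change $\norm{\cdot}{2}$), $M_k$ is block-diagonal with diagonal blocks $M(i,i+k)$ — except one must be careful that the block structure is non-uniform, so $M(i,i+k)$ need not be square and the "permutation'' is really a rearrangement of the coordinate axes pairing the $i$-th row-block with the $(i+k)$-th column-block. For a block-diagonal operator $\mathrm{diag}(N_1,N_2,\ldots)\colon \ell_2\to\ell_2$ one has the exact identity $\norm{\mathrm{diag}(N_i)}{2} = \sup_i \norm{N_i}{2}$, by decomposing $\ell_2$ as the orthogonal direct sum of the coordinate subspaces corresponding to the blocks. Hence $\norm{M_k}{2} = \sup_i \norm{M(i,i+k)}{2}\leq \sup_{i,j}\norm{M(i,j)}{2}$. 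Combining with the band decomposition yields $\norm{M}{2}\leq (2b+1)\sup_{i,j}\norm{M(i,j)}{2}$, which is even slightly stronger than the stated bound with $(2b+1)^2$; so I would simply prove the sharper inequality, which trivially implies the one in the statement.

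Concretely, the steps in order: (1) write $M=\sum_{k=-b}^{b} M_k$ with $M_k$ the $k$-th block-diagonal; (2) for each fixed $k$, observe that $\ell_2 = \bigoplus_i \ell_2(\{n_i,\ldots,n_{i+1}-1\})$ orthogonally, that $M_k$ maps the $(i+k)$-th summand into the $i$-th summand via $M(i,i+k)$ and annihilates no-overlap, hence $\norm{M_k x}{\ell_2}^2 = \sum_i \norm{M(i,i+k)\, x|_{\text{block }i+k}}{\ell_2}^2 \leq (\sup_{i,j}\norm{M(i,j)}{2})^2 \sum_i \norm{x|_{\text{block }i+k}}{\ell_2}^2 \leq (\sup_{i,j}\norm{M(i,j)}{2})^2\norm{x}{\ell_2}^2$; (3) apply the triangle inequality over the $2b+1$ values of $k$.

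**The main obstacle** is really just bookkeeping rather than mathematics: one must make sure the orthogonal decomposition of $\ell_2$ into the coordinate blocks $\{n_i,\ldots,n_{i+1}-1\}$ is handled cleanly so that the Pythagorean identity in step (2) is rigorous, and one must be slightly careful that when $i+k\leq 0$ the block $M(i,i+k)$ simply does not exist (the corresponding term is absent), which only helps. No boundedness hypothesis on $M$ is needed a priori — the estimate produces boundedness — so there is nothing subtle about convergence: each $M_k x$ is a direct sum of finitely-supported-per-block pieces and the bound is established by the elementary inequality above. I would therefore expect the proof to be three or four lines once the band decomposition is written down.

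\begin{proof}
 For $k\in\{-b,\ldots,b\}$ let $M_k\in\R^{\N\times\N}$ be the matrix whose blocks agree with those of $M$ on the $k$-th block diagonal and vanish otherwise, i.e.\ $M_k(i,j)=M(i,j)$ if $j-i=k$ and $M_k(i,j)=0$ else. Since $M$ is block-banded with bandwidth $b$, there holds $M=\sum_{k=-b}^{b}M_k$. The space $\ell_2$ decomposes orthogonally as $\ell_2=\bigoplus_{i\in\N}\ell_2(\{n_i,\ldots,n_{i+1}-1\})$, and for $x\in\ell_2$ we write $x_i:=x|_{\{n_i,\ldots,n_{i+1}-1\}}$. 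Fix $k$. Then $(M_kx)_i=M(i,i+k)x_{i+k}$ whenever $i+k\geq 1$, and $(M_kx)_i=0$ otherwise. Hence, by the Pythagorean identity,
 \begin{align*}
  \norm{M_kx}{\ell_2}^2=\sum_{i\in\N\atop i+k\geq 1}\norm{M(i,i+k)x_{i+k}}{\ell_2}^2\leq \Big(\sup_{i,j\in\N}\norm{M(i,j)}{2}\Big)^2\sum_{i\in\N\atop i+k\geq 1}\norm{x_{i+k}}{\ell_2}^2\leq \Big(\sup_{i,j\in\N}\norm{M(i,j)}{2}\Big)^2\norm{x}{\ell_2}^2.
 \end{align*}
 Therefore $\norm{M_k}{2}\leq \sup_{i,j\in\N}\norm{M(i,j)}{2}$ for each $k\in\{-b,\ldots,b\}$. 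The triangle inequality over the $2b+1$ values of $k$ yields
 \begin{align*}
  \norm{M}{2}\leq \sum_{k=-b}^{b}\norm{M_k}{2}\leq (2b+1)\sup_{i,j\in\N}\norm{M(i,j)}{2}\leq (2b+1)^2\sup_{i,j\in\N}\norm{M(i,j)}{2}.
 \end{align*}
 This concludes the proof.
\end{proof}
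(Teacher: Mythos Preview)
Your proof is correct. The paper's own argument works directly with $\norm{Mx}{\ell_2}^2$: it expands $(Mx)_i=\sum_{|j-i|\leq b}M(i,j)x_j$, applies the crude inequality $\norm{\sum_{j=i-b}^{i+b}a_j}{\ell_2}^2\leq (2b+1)\sum_{j}\norm{a_j}{\ell_2}^2$, bounds each $\norm{M(i,j)x_j}{\ell_2}$ by $\sup_{i,j}\norm{M(i,j)}{2}\,\norm{x_j}{\ell_2}$, and then swaps the order of summation to pick up a second factor $2b+1$. You instead split $M=\sum_{k=-b}^{b}M_k$ into its $2b+1$ block-diagonals, observe that each $M_k$ is (after the natural block identification) block-diagonal so that $\norm{M_k}{2}=\sup_i\norm{M(i,i+k)}{2}$ exactly, and then apply the triangle inequality. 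The two arguments are really the same idea organized differently; your version is a little cleaner and makes it transparent that the sharper constant $(2b+1)$ (rather than $(2b+1)^2$) already holds, which the paper's computation in fact also yields once the obvious squares are tracked.
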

\begin{proof}
 We obtain for $x\in\ell_2$ that
 \begin{align*}
  \norm{Mx}{\ell_2}^2 &= \sum_{i=1}^\infty \norm{\sum_{j=i-b}^{i+b} M(i,j) x|_{\{n_j,\ldots,n_{j+1}-1\}}}{\ell_2}^2\\
  &\leq (2b+1)
  \sum_{i=1}^\infty \sum_{j=i-b}^{i+b}\norm{ M(i,j) x|_{\{n_j,\ldots,n_{j+1}-1\}}}{\ell_2}^2\\
  &\leq  (2b+1)\Big(\sup_{i,j\in\N}\norm{M(i,j)}{2}\Big)\sum_{i=1}^\infty \sum_{j=i-b}^{i+b}\norm{ x|_{\{n_j,\ldots,n_{j+1}-1\}}}{\ell_2}^2\\
  &= (2b+1)\Big(\sup_{i,j\in\N}\norm{M(i,j)}{2}\Big)\sum_{j=1}^\infty \norm{ x|_{\{n_j,\ldots,n_{j+1}-1\}}}{\ell_2}^2\sum_{i=j-b}^{j+b}\\
  &\leq (2b+1)^2 \Big(\sup_{i,j\in\N}\norm{M(i,j)}{2}\Big)\norm{x}{\ell_2}.
 \end{align*}
This concludes the proof.
\end{proof}

\bibliographystyle{plain}
\bibliography{literature}
\end{document}